\newcommand{\pref}[1]{Proposition \ref{prop:#1}}
\newcommand{\cref}[1]{Corollary \ref{lm:#1}}
\theoremstyle{definition}
\newtheorem{theorem}{Theorem}
\newtheorem{lemma}{Lemma}
\newtheorem{definition}{Definition}
\newtheorem{proposition}{Proposition}
\newtheorem{corollary}{Corollary}
\newtheorem{claim}{Claim}
\newtheorem{conjecture}{Conjecture}
\newtheorem{assumption}{Assumption}
\newtheorem{fact}{Fact}
\newcommand{\f}[1]{\textsc{#1}}
\newcommand{\ind}{\perp\!\!\!\!\perp} 
\newcommand{\baf}[4]{\prescript{#1}{#2}{\unrhd}^{#3}_{#4}}
\newcommand{\bafd}{\baf{{\cal B}}{H_1}{}{H_2}} 
\newcommand{\sbaf}[4]{\prescript{#1}{#2}{\rhd}^{#3}_{#4}}
\newcommand{\sbafd}{\prescript{{\cal B}}{H_1}{\rhd}^{}_{H_2}}
\newcommand{\nul}{\mathcal{N}}
\newcommand{\btl}{\blacktriangleleft}
\newcommand{\btleq}{~ \underline{\blacktriangleleft} ~}
\newcommand{\nbtleq}{~ \cancel{\underline{\blacktriangleleft}} ~}
\newcommand{\bteq}{~ \stackrel{\blacktriangle}{=} ~}
\DeclareMathOperator{\ran}{ran}
\newcommand{\Ax}[1]{Axiom #1}
\newcommand{\todoinline}[1]{\textcolor{red}{\textbf{TO-DO:\{}} #1\textcolor{red}{\textbf{\}}}}
\title{Robust Bayesianism and Likelihoodism}
\author{Conor Mayo-Wilson and Aditya Saraf}
\begin{document}

\maketitle

\begin{abstract}
    We defend a new theory of statistical evidence, which we call \textit{Robust Bayesianism} (\f{rb}).  We prove that, under widely accepted assumptions,  \f{rb} entails the law of likelihood \citep{royall_statistical_1997}, the likelihood principle \citep{berger_likelihood_1988}, and a variety of other widely-accepted ``statistical principles'', e.g., the sufficiency principle \citep{birnbaum_foundations_1962, birnbaum_more_1972} and stopping-rule principle \citep{berger_likelihood_1988}.  The main technical contribution of this paper is to extend \emph{some} of those results to a qualitative framework in which experimenters are  justified only in making comparative, non-numerical judgments of the form ``$A$ given $B$ is more likely than $C$ given $D$.''
\end{abstract}

We defend a new theory of statistical evidence, which we call \textit{Robust Bayesianism} (\f{rb}). Roughly, \f{rb} is the thesis that (1) evidence is what makes all decision-makers with \emph{permissible beliefs}
more confident in some hypothesis, and (2)  one piece of evidence is stronger than another if the former effects a larger change in confidence than the latter.  We prove that if  the set of permissible prior beliefs equals the set of probability functions, then \f{rb} entails the law of likelihood \citep{royall_statistical_1997}, the likelihood principle \citep{berger_likelihood_1988}, and a variety of other widely-accepted ``statistical principles'', e.g., the sufficiency principle \citep{birnbaum_foundations_1962, birnbaum_more_1972}.  The main technical contribution of this paper is to extend \emph{some} of those results to a qualitative framework in which experimenters are  justified only in making comparative, non-numerical judgments of the form ``$A$ given $B$ is more likely than $C$ given $D$.'' 

Our framework and results are important for three reasons. First,  many instances of good scientific reasoning do not employ probability or statistics.  For instance, when Lavoisier judged that his data provided good evidence against the phlogiston theory of combustion and for the existence of what we now call ``oxygen'', no statistics was invoked. Thus, one might ask which statistical methods -- if any -- are special cases of more general non-probabilistic, methods.   Although there many attempts to generalize Bayesian methods by using non-probabilistic models of belief (e.g., by using imprecise probabilities \citep{walley_statistical_1991} or  Dempster-Shafer belief functions \citep{dempster_upper_1968}), there are comparatively fewer attempts to generalize classical/frequentist or likelihoodist statistical methods and principles to qualitative/comparative frameworks.  In this paper, we \emph{attempt} to extend likelihood-based principles to a purely qualitative setting.

Second, our results highlight limitations of \emph{likelihoodism}, which is, very roughly, the thesis that all evidential meaning of some observation is captured by the likelihood function.  One (slightly) more precise likelihoodist thesis -- the so-called \emph{law of likelihood} -- asserts that the likelihood ratio is a measure of evidential strength. 
We prove that, unfortunately, there is no qualitative analog of the law.  To us, that suggests the plausibility of the likelihoodism may depend on a modeling artifact, namely, the assumption that hypotheses assign \emph{precise} numerical probabilities to experimental outcomes. 

To see the implications for methodology, notice that the law of likelihood is sometimes used to motivate maximum-likelihood estimation and related techniques that make exclusive use of the likelihood function \citep[Chapter 5]{edwards_likelihood_1984}.  Although there may be good reasons for making use of such methods, we think our results show the law of likelihood is not one of them. In the end, we believe our results provide reason to endorse \citet[p. 141]{berger_likelihood_1988}'s conclusion that ``the only satisfactory method of [statistical] analysis based on [the likelihood principle] seems to be robust Bayesian analysis.'' 

Finally, we show that, unlike the law of likelihood, the famous \emph{likelihood principle} (\f{lp}) has a qualitative analog, and similarly, several of \f{lp}'s controversial consequences (including the stopping rule principle) do generalize naturally to the qualitative setting.  Further, those principles are also consequences of our theory of evidence, \f{rb}.  Thus, proponents of classical statistics cannot dismiss those controversial principles by criticizing \f{lp}  or orthodox Bayesianism, as those principles follow from a substantially weaker theses.  

As the name suggests, our theory of evidence is inspired by ``robust Bayesian analysis'' \citep{berger_robust_1984, berger_robust_1990},  which is the practice of ensuring that statistical recommendation are robust across a wide range of prior distributions and utility/loss functions.  The standard justification for robust Bayesian analysis is that, practically speaking, a statistician can rarely precisely/completely elicit a decision-maker's prior probability and loss function.  Thus, to ensure that the statistician's recommendations match what the decision-maker would choose were she maximally informed, the statistician must recommend decisions that are optimal (or nearly so) for all of the priors and loss functions compatible with the results of elicitation.

The motivation for our theory of evidence is different from that of robust Bayesian analysis.  Instead of trying to aid a \emph{single} decision-maker make a \emph{specific} decision, our theory of evidence aims to provide a  qualitative summary of ``what the evidence says'', which we interpret as specifying how \emph{all} decision-makers' degrees of confidence should change in light of various experimental outcomes.  Such a summary might, we hope, aid decision-makers in a \emph{variety} of decision problems, regardless of their prior beliefs or preferences.

In \autoref{sec:TheoriesOfEvidence}, we first describe the purposes of a theory of statistical evidence and then summarize the two theories that we will study in this paper:  \f{rb} (\autoref{subsec:RobustBayesianism}) and likelihoodism (\autoref{subsec:likelihoodism}). We prove that the two theories are equivalent when the set of permissible prior beliefs is the set of all probability distributions over the parameter space. Further, we provide an equivalent formulation of \f{lp}, that we call the \textit{Mixture-Sufficiency Principle} (\f{msp}), which generalizes to qualitative contexts.

In \autoref{sec:qual}, we show that \f{msp} and the \textit{weak} law of likelihood can be formalized in a system of qualitative probability. In \autoref{sec:qualdiff}, we explore some of the major differences between the qualitative and quantitative framework.
Finally, in \autoref{sec:sps}, we consider the controversial \textit{stopping rule principle}, which states that an experiment's stopping rule is irrelevant. We prove that a qualitative analog of the stopping rule principle is entailed by \f{rb}, and thus one cannot object to the stopping rule principle by simply rejecting \f{lp} or quantitative Bayesianism.







\section{Theories of Statistical Evidence:  Robust Bayesianism and Likelihoodism}
\label{sec:TheoriesOfEvidence}

Theories of statistical evidence abound.  For Fisherians, a $p$-value is a measure of evidence against the null hypothesis.    For likelihoodists (e.g., \citep{hacking_logic_1965}, \citep{edwards_likelihood_1984} \citep{royall_statistical_1997}, \citep{sober_evidence_2008}), the likelihood ratio $P_{\theta_1}(E)/P_{\theta_2}(E)$ specifies how much the evidence ``favors'' $\theta_1$ over $\theta_2$.  For Bayesian confirmation theorists, there are many possible measures of evidence (e.g., see \citep{fitelson_plurality_1999} and \citep[p. 107]{evans_measuring_2015}), but the Bayes Factor is popular \citep{goodman_toward_1999-1}.


As we have said, our goal is to investigate which existing theories of statistical evidence are special cases of more general, qualitative/comparative theories of evidence and how those theories interact with similar qualitative theories of rational belief/confidence.   
When we began this project, we believed that likelihoodism (see \autoref{subsec:likelihoodism}) was the most plausible theory of statistical evidence.  But we encountered a series of obstacles formulating qualitative analogs of fundamental likelihoodist theses, and we began to see those theses as obscuring, rather than clarifying, when all decision-makers' beliefs should (or should not) change.  Our struggles led to an alternative theory: robust Bayesianism (\f{rb}).

According to \f{rb}, evidence is, by definition, that which makes \emph{all} decision-makers with \emph{permissible prior beliefs} more confident in some hypothesis.   What counts as ``permissible'' will depend on context.  Background scientific knowledge might make certain prior beliefs impermissible.  Perhaps most commonly,   the set of permissible priors is constrained by knowledge that features of a randomizing device (e.g., the chance that a random number generator outputs 7) are causally independent of the parameter of interest (e.g., the survival rate of a drug).  We return to this type of constraint in later sections.

Institutional rules might also determine what is permissible.  For instance, in a courtroom, one might be obliged to assign high prior probability to a defendant's innocence.     Thus, all of the definitions below contain a parameter $\mathcal{B}$ that represents the set of permissible beliefs in a given context.


\subsection{Notation}
Before formalizing any theories of evidence, we introduce some notation.  For now, we will think of an experiment $\mathbb{E}$ as a pair $\langle \Omega^{\mathbb{E}}, \{P^{\mathbb{E}}_{\theta}\}_{\theta \in \Theta} \rangle$, where $\Omega^{\mathbb{E}}$ represents the possible outcomes of the experiment and $P^{\mathbb{E}}_{\theta}(\cdot)$ is a probability distribution over $\Omega^{\mathbb{E}}$  that specifies how likely each outcome is if $\theta$ is the true value of the parameter.  Elements of $\Theta$ represent \emph{simple/point} hypotheses.

Given an experiment $\mathbb{E}$, we let $\Delta^{\mathbb{E}}=\Theta \times \Omega^{\mathbb{E}}$.  We use $H_1, H_2$ etc. to denote non-empty subsets of $\Theta$, and $E, F$, etc. to denote non-empty subsets of $\Omega$. We drop the superscript $\mathbb{E}$ when it is clear from context.

We will typically assume that both $\Theta$ and $\Omega$ are finite so that it is possible for $P_{\theta}$ to be defined on the power set algebra of $\Omega$ (and similarly, for joint distributions $Q$ on $\Delta$ to be defined on its power set).  However, the results in this section  extend to continuous spaces in straightforward ways.  Thus, some of our examples involve continuous distributions, and in such cases, the reader is expected to fill in the relevant measure-theoretic details.

\subsection{Robust Bayesianism}
\label{subsec:RobustBayesianism}
Recall, \f{rb} is roughly the doctrine that evidence is that which makes every decision-maker with \emph{permissible} prior beliefs comparatively more confident in some hypothesis.     In this section, we assume that permissible beliefs are always representable by a prior probability distribution $Q$ over $\Theta$.  We relax this assumption in upcoming sections. 

Given a prior probability distribution $Q$ over $\Theta$, we define the posterior in the standard way: 
$$Q^{\mathbb{E}}(H|E) = \frac{Q^{\mathbb{E}}(E \cap H)}{Q^{\mathbb{E}}(E)}  := \frac{\sum_{\theta \in H}P^{\mathbb{E}}_{\theta}(E) \cdot Q(\theta)}{\sum_{\theta \in \Theta }P^{\mathbb{E}}_{\theta}(E) \cdot Q(\theta)}$$
for any $H \subseteq \Theta$.  Here, the sums should be replaced by appropriate integrals in the continuous case.

We will write $Q(\cdot|H)$ and $Q(\cdot|E)$ instead of $Q(\cdot|H \times \Omega)$ and $Q(\cdot|\Theta \times E)$, and similarly for events to the left of the conditioning bar.
 
We can now introduce the central definition/concept of robust Bayesianism.

\begin{definition}\label{defn:bSupport}
 Suppose $H_1, H_2 \subseteq \Theta$ are disjoint.   Let $E$ and $F$ be (sets of) outcomes of experiments $\mathbb{E}$ and $\mathbb{F}$ respectively. Say $E$ ${\cal B}$-\emph{Bayesian supports} $H_1$ over $H_2$ \emph{at least as much as} $F$ if $Q^{\mathbb{E}}(H_1|E \cap (H_1 \cup H_2)) \geq Q^{\mathbb{F}}(H_1|F \cap (H_1 \cup H_2))$ for all priors $Q \in \mathcal{B}$ for which $Q(F \cap (H_1 \cup H_2)) > 0$.  In this case, we write $E \bafd F$.  If the inequality is strict for at least one such $Q$, then we say the support is \emph{strict}, and we write  $E \sbafd F$.
\end{definition}

In words, $E \bafd F$ if, given that hypothesis $H_1$ or $H_2$ must be true, observing $E$ raises your posterior in $H_1$ at least as much as observing $F$.
Strictly speaking, one should write $\langle \mathbb{E}, E \rangle \bafd \langle \mathbb{F}, F \rangle$ (instead of $E \bafd F$) to keep track of the experiments from various outcomes are obtained.  Why?  A set of outcomes $E$ might be obtained in several different experiments, as shown in the next example.  However, we typically write $E \bafd F$ when the experiments are clear from context.\\

\noindent \textbf{Example 1:} Suppose $\mathbb{E}$ is the experiment in which a coin of unknown bias $\theta \in [0,1]$ is flipped $10$ times, and let  $\mathbb{F}$ be the experiment in which the coin is flipped until eight successes. Let $E$ be an outcome of $\mathbb{E}$ in which eight heads are observed, and let $F$ denote an outcome of $\mathbb{F}$ in which 12 trials are required.

Suppose $\mathcal{B}$ consists of all Beta$(\alpha, \beta)$ priors such that  $1 \leq \alpha, \beta < (\alpha + \beta) = 4$ and $\alpha \leq 2$.  Let $H_1 = (1/2,1]$ be the hypothesis the coin is biased towards heads, and let $H_2 = [0,1/2]$ be the complement.

After learning $E$, every permissible permissible posterior is of the form Beta($\alpha + 8, \beta + 2)$ where $\alpha$ and $\beta$ are subject to constraints (a) and (b) above.  Similarly, upon learning $F$, a permissible prior is of the form  Beta($\alpha + 8, \beta + 4)$.  Some quick computations show that $E$ ${\cal B}$-Bayesian supports $H_1$ over $H_2$ strictly more than $F$.

Notice that, if $E$ is a sequence in which the last heads occurs on the tenth toss, then $E$ is an outcome of both $\mathbb{E}$ and of $\mathbb{F}$.  It is a familiar fact that the beta prior is conjugate to both the binomial and negative binomial distributions.   Thus, regardless of which experiment is conducted, after learning $E$, every permissible permissible posterior is of the form Beta($\alpha + 8, \beta + 2)$.  It follows that both $\langle \mathbb{E}, E \rangle \bafd \langle \mathbb{F}, E \rangle$ and vice versa.
\begin{flushright}
    $\Box$
\end{flushright}

Importantly, Bayesian support is, in general, not a total relation. In other words, there often exist $E$ and $F$ such that neither $E \bafd F$ nor $F \bafd E$.   Such cases arise when decision-makers may permissibly disagree about whether $E$ should increase one's comparative confidence in $H_1$ over $H_2$ more than $F$. Thus, Bayesian support is not \emph{quantifiable}:  in general, there is no real-valued function $f_{H_1,H_2}$ such that $f_{H_1,H_2}(E) \geq f_{H_1,H_2}(F)$ precisely when $E \bafd F$.    This means that, unlike other theories of evidence, \f{rb} will in general not permit one to quantify strength of statistical evidence.

However, it turns out that the Bayesian support relation is total for \emph{simple/point} hypotheses.  Why?  \autoref{clm:BSupportCharacterized} below entails that $E$ Bayesian supports $\theta_1$ over $\theta_2$ at least as much as $F$ if and only if  $P_{\theta_1}(E)/P_{\theta_2}(E) \geq P_{\theta_1}(F)/P_{\theta_2}(F)$.  Because the standard ordering $\leq$ of the real numbers is total -- and hence, two likelihood ratios can always be compared -- it follows that the Bayesian support relation is likewise total for simple hypotheses. In Section 3.3, we show that the qualitative analog of the support relation is not total, even for simple/point hypotheses.  


The definition of ``support'' allows us to compare the strength of two pieces of evidence.  But we might also want to consider a fixed piece of evidence $E$ and ask which hypotheses $E$ supports.  The following definition of ``favoring'' does that.

\begin{definition}\label{defn:bFavors}
  $E$ $\mathcal{B}$-\emph{Bayesian favors} $H_1$ to $H_2$ if $E \bafd \Omega$.  Say it strictly does if $E \sbafd \Omega$. 
\end{definition}

Bayesian favoring is a special case of Bayesian support.  In essence, the support relation \emph{compares} (a) how much $E$ Bayesian favors $H_1$ to $H_2$ to (b) how much $F$ Bayesian favors $H_1$ to $H_2$.   Like support, favoring is also not total, in general.  In other words, sometimes $E$ neither favors $H_1$ over $H_2$ nor vice versa.\\

\noindent \textbf{Example 1 (continued):} Consider again the experiment $\mathbb{E}$ in Example 1.  Some straightforward calculations show that both $E$ and $F$ ${\cal B}$-favor $H_1$ over $H_2$ if the set of permissible priors $\mathcal{B}$ equals the specific subset of the Beta priors described in Example 1. If $\mathcal{B}$ contains \emph{all} Beta priors, however, neither $E$ nor $F$ favors $H_1$ over $H_2$.  For example, consider the Beta(54,1) prior $Q$, which is heavily peaked near $1$.  Direct computation shows that, although $Q(H_2|E) \approx 1.12 \cdot 10^{-16}$ is very small, it is approximately twice the prior probability $Q(H_2)$.  Thus, learning $E$ raises the probability of $H_2$, not $H_1$, for an experimenter with prior $Q$.
\begin{flushright}
    $\Box$
\end{flushright}

The notions of support and favoring both involve two fixed hypotheses, $H_1$ and $H_2$.  The last definitions we consider allow us to investigate when two observations provide equivalent evidence in favor of or against \emph{all} hypotheses of interest.  In other words, the last two definitions that we introduce will allows us to investigate when all decision-makers regard $E$ and $F$ as being equivalently informative, in the sense that any decision-maker will judge $E$ and $F$ to have the same effect on her beliefs.

\begin{definition}\label{defn:bPosteriorEquivalence}    Let $E$ and $F$ denote outcomes of experiments $\mathbb{E}$ and $\mathbb{F}$ respectively.  Say $E$ and $F$ are \emph{Bayesian posterior equivalent} if for all priors $Q$  (1)   $Q^{\mathbb{E}}(\cdot|E)$ is well-defined if and only if $Q^{\mathbb{F}}(\cdot|F)$ is, and
(2) $Q^{\mathbb{E}}(H|E) = Q^{\mathbb{F}}(H|F)$ for all hypotheses $H$ and for which those conditional probabilities are well-defined.
\end{definition}

We are not the first researchers to isolate posterior equivalence as an important notion (e.g., see Example 3 of \cite{wechsler_birnbaums_2008}), and as we show in future sections section,  posterior equivalence is implicitly discussed in many presentations of the likelihood principle.  It is natural to ask how posterior equivalence is related to the following equivalence relation, which is defined in terms of support.

\begin{definition}\label{defn:bFavoringEquivalence}  Let $E$ and $F$ be outcomes of experiments $\mathbb{E}$ and $\mathbb{F}$ respectively.  $E$ and $F$ are $\mathcal{B}$-\emph{Bayesian support equivalent} if  $F \bafd E$ and  $E \bafd F$ for all disjoint hypotheses $H_1$ and $H_2$.
\end{definition}

It is easy to show that these two definitions are equivalent.

\begin{claim}
For all $\mathcal{B}$, two experimental outcomes are ${\cal B}$-posterior equivalent if and only if they are ${\cal B}$-support equivalent.
\end{claim}

We omit the proof, as it uses only basic probability theory. However, details of all omitted proofs (including the elementary ones) in this paper can be found in the supplementary materials.

Like support, the domains of the binary relations called ``favoring'', ``support equivalence'', and ``posterior equivalence'' all contain \emph{pairs} of the form $\langle \mathbb{E}, E \rangle$, where the first coordinate specifies a experiment and the second specifies an outcome.  When the relevant experiments are clear from context, we omit mentioning them.\\

\noindent \textbf{Example 1 (continued):} The events $E$ and $F$ from Example 1 are \emph{not} ${\cal B}$-posterior equivalent (and hence, not support equivalent) as learning $E$ produces a Beta$(\alpha+8, \beta+2)$ posterior whereas learning $F$ produces a Beta$(\alpha+8, \beta+4)$ posterior.  In contrast, $\langle \mathbb{E}, E \rangle $ and $\langle \mathbb{F}, E \rangle$ are posterior equivalent (and hence, support equivalent).
\begin{flushright}
    $\Box$
\end{flushright}

\subsection{Likelihoodism}
\label{subsec:likelihoodism}
A common criticism of Bayesian measures of statistical evidence is that they are ``subjective'', i.e., they often depend on the specific prior used. \f{rb} partially avoids this criticism because support and favoring are assessed relative to a \emph{set} of permissible priors. But one might still worry that there is no ``objective'' way to pick a set of permissible priors in a given context.

Worries like these often motivate \emph{likelihoodism}, which is roughly the thesis that all evidential meaning of an observation is captured by the likelihood function.   Because experimenters often agree on likelihood functions (but not priors), likelihoodism provides a potentially more objective way of measuring evidence.  At least three  distinct theses are called ``the likelihood principle'' and are used to motivate likelihood-based methods (e.g., maximum likelihood estimation).  For clarity, we distinguish the three:
    \begin{itemize}
       
        \item Likelihood principle (\textsc{lp}):  Let $E$ and $F$ be outcomes of two experiments $\mathbb{E}$ and $\mathbb{F}$ respectively.  If there is some $c > 0$ such that $P^{\mathbb{E}}_{\theta}(E) =  c \cdot P^{\mathbb{F}}_{\theta}(F)$ for all $\theta \in \Theta$, then $E$ and $F$ are evidentially equivalent.\footnote{Our statement of \f{lp} is identical to \citet{birnbaum_more_1972}'s.  It is perhaps slightly weaker than \citet[p. 271]{birnbaum_foundations_1962}'s statement, which contains the additional (but imprecise) clause that, ``the evidential meaning of any outcome $x$ of any experiment E is characterized fully by giving the likelihood function''.  \citet[p. 19]{berger_likelihood_1988}'s statement of \f{lp} contains a similar second clause.
        However, note that \citet{berger_likelihood_1988} then defend stronger versions of \f{lp} that account for nuisance parameters (Section 3.5) and explain how to generalize the principle beyond the discrete case (Section 3.4).}
        
        \item Weak Law of likelihood (\textsc{ll}): $P^{\mathbb{E}}_{\theta_1}(E)>P^{\mathbb{E}}_{\theta_2}(E)$ if and only if the data $E$ favors $\theta_1$ over $\theta_2$.\footnote{
         \citet{sober_evidence_2008} and \citet{forster_why_2010} carefully distinguish between \f{ll} and $\f{ll}^+$ but endorse both theses.  Although both \cite[p. 19]{royall_statistical_1997} and \cite[p. 32]{sober_evidence_2008} claim Hacking endorses $\f{ll}^+$, what \citep[Chapter 5]{hacking_logic_1965} calls the ``law of likelihood'' is closest to \f{ll}, as he never discusses the \emph{degree} to which one hypothesis is favored over another.}
        \item Strong Law of likelihood ($\textsc{ll}^+$): $P^{\mathbb{E}}_{\theta_1}(E)/P^{\mathbb{E}}_{\theta_2}(E) > P^{\mathbb{E}}_{\theta_1}(F)/P^{\mathbb{E}}_{\theta_2}(F)$ if and only if $E$ supports $\theta_1$ over $\theta_2$ more than $F$ does. Throughout this paper, we will use the term \textit{supports} to indicate this quaternary relation between $E$, $F$, $\theta_1$, and $\theta_2$.
    \end{itemize}
For likelihoodists (who endorse the above principles but do not embrace Bayesianism), the notions of ``evidential equivalence'' and ``favoring'' are undefined primitives that are axiomatized by principles like \f{lp} and \f{ll}. Likelihoodists, therefore, defend \f{lp}, \f{ll}, and $\f{ll}^+$ by arguing the three principles accord with (i)  our informed reflections about evidential strength, (ii) accepted statistical methods that have been successfully applied in science, and (iii) other plausible and less-controversial ``axioms'' for statistical inference (e.g., the sufficiency principle).
    
Notice that \f{ll} and $\f{ll}^+$ concern only  \emph{simple/point} hypotheses (i.e., elements of $\Theta$); the extension of those theses to  \emph{composite} hypotheses (i.e., subsets of $\Theta$) is controversial, especially when  nuisance parameters are present (see e.g., \cite{royall_statistical_1997} [\S 1.7 and Chapter 7]
and \cite{bickel_strength_2012} for different strategies).

Some statisticians argue for an even stronger law of likelihood.
\begin{itemize}
    \item Numerical law of likelihood ($\f{nll}$): The likelihood ratio $P^{\mathbb{E}}_{\theta_1}(E)/P^{\mathbb{E}}_{\theta_2}(E)$ quantifies the degree to which the data $E$ favors $\theta_1$ over $\theta_2$.\footnote{Proponents of $\f{nll}$ typically endorse it in conjunction with $\f{ll}$. For instance, \citet[p. 3]{royall_statistical_1997} defines  the ``law of likelihood'' to be the thesis that, ``If hypothesis $A$ implies that the probability that a random variable $X$ takes the value $x$ is $p_A(x)$, while hypothesis $B$ implies that the probability is $p_B(x)$, then the observation $X=x$ is evidence supporting $A$ over $B$ if and only if $p_A(x)>p_B(x)$, and the likelihood ratio, $p_A(x)/p_B(x)$ measures the strength of that evidence.''  \citet[p. 31]{edwards_likelihood_1984}'s ``likelihood axiom'' is the conjunction of what we call $\f{ll}^+$ and the stronger claim that ``Within the framework of a statistical model, all the information which the data provide concerning the relative merits of two
        hypotheses is contained in the likelihood ratio of those hypotheses.'' The second claim is similar to the second clauses of   \citet[p. 271]{birnbaum_foundations_1962} and \citep[p. 9]{berger_likelihood_1988}'s statements of \f{lp}.}  
\end{itemize}
Advocates of \f{nll} may even propose specific numerical cutoffs that indicate strength of evidence.  For example,  \citet[p. 12]{royall_statistical_1997} claims ``a likelihood ratio of 8 is pretty strong evidence.'' 

Our main goal is to investigate which principles extend to a purely comparative statistical framework. While we can formulate analogs of $\f{ll}$ and $\f{lp}$ in a non-numerical framework (despite their use of multiplication and division, respectively), $\f{nll}$ clearly lacks any purely qualitative analog. Thus, we will not consider $\f{nll}$ further.    An example can clarify and distinguish the three remaining theses.  \\

\noindent \textbf{Example 2:} Suppose two experiments are designed to distinguish $\Theta = \{\theta_1, \theta_2\}$.  The possible outcomes of the experiments ($\omega_1, \omega_2, \omega_3, \omega_1'$, etc.) are the column headers in the tables below, and the likelihood functions  are the column vectors.   We intentionally list only two possible outcomes of Experiment $\mathbb{F}$.

\begin{center}
    \begin{tabular}{l r}
         
	\begin{tabular}{l | l | l | l}
		\multicolumn{4}{c}{Experiment $\mathbb{E}$} \\
		\hline
		$\ $ & $\omega_1$ & $\omega_2$ & $\omega_3$ \\
		\hline
		$\theta_1$ & .423 & .564 & .011\\
		$\theta_2$ & .039 & .052 & .909 \\
		\end{tabular}
&
	\begin{tabular}{l | l | l | l}
		\multicolumn{4}{c}{Experiment $\mathbb{F}$} \\
		\hline
		$\ $ & $\omega_1'$ & $\omega_2'$ & $\ldots$ \\
		\hline
		$\theta_1$ & .846 & .12  & $\ldots$ \\
		$\theta_2$ & .078 &  .01 & $\ldots$ \\
		\end{tabular}\\ 
    \end{tabular}
\end{center}

According to \f{lp}, the outcomes $\omega_1$ and $\omega_2$ from $\mathbb{E}$ are evidentially equivalent to one another because $P^{\mathbb{E}}_{\theta_1}(\omega_1) = 3/4 \cdot P^{\mathbb{E}}_{\theta_1}(\omega_2)$ for all $\theta \in \{\theta_1, \theta_2\}$.     Similarly, both $\omega_1$ and $\omega_2$ are evidentially equivalent to $\omega_1'$ in $\mathbb{F}$ as $P^{\mathbb{E}}_{\theta_1}(\omega_1) = 1/2 \cdot P^{\mathbb{F}}_{\theta_1}(\omega_1')$ for all $\theta$. However, \f{lp} says nothing about which of the hypotheses in $\Theta$ are favored by $\omega_1,\omega_2$, and $\omega_1'$.  In general, \f{lp} tells one only when two samples should yield identical estimates, but it says nothing about what those estimates should be.

According to $\f{ll}$, the outcome $\omega_1$ ``favors'' $\theta_1$ over $\theta_2$ because $P^{\mathbb{E}}_{\theta_1}(\omega_1) > P^{\mathbb{E}}_{\theta_2}(\omega_1)$.  Similarly, \f{ll} entails  that $\omega_2$ favors $\theta_1$ over $\theta_2$.  However, $\f{ll}$ tells one nothing about how strong those pieces of evidence are, and in particular, whether $A$ and $B$ provide evidence of equal or different strength.

In contrast, $\f{ll}^+$ entails that both $\omega_1$ and $\omega_2$ are weaker pieces of evidence than than $\omega_2'$ because $P^{\mathbb{E}}_{\theta_1}(A)/ P^{\mathbb{E}}_{\theta_2}(\omega_1) < 12 = P^{\mathbb{F}}_{\theta_1}(\omega_2')/ P^{\mathbb{F}}_{\theta_2}(\omega_2')$.  Notice that both $\f{ll}^+$ and $\f{lp}$ allow one to draw conclusions about evidential value \emph{without knowing the sample spaces of the two experiments}, as comparing the likelihood functions of $E$ and $F$ does not require one to know what outcomes other than $E$ and $F$ might have been observed.  It is for this reason that authors often stress that likelihoodist principles entail the ``irrelevancy of the sample space'' \citep[\S 1.11]{royall_statistical_1997}.
\begin{flushright}
    (End Example)
\end{flushright}


Thus, distinguishing the three theses is important for understanding which statistical tests and techniques are justified by each.  For instance, although \f{ll} entails that the \f{mle} is always favored over rivals,  it does not say by how much. Thus, \f{ll} is of little use when the fit of the  \f{mle} needs to be weighed against considerations of simplicity and prior plausibility.  Only \f{nll} and $\f{ll}^+$, we think, can be used to justify likelihoodist estimation procedures, but sadly, neither principle has a qualitative analog.


Although the three theses should be distinguished carefully, they are, unsurprisingly, related.  Notice that $\f{ll}^+$ \emph{unifies} \f{ll} and \f{lp}, in the sense that it entails both theses (given plausible additional assumptions).    Why? Let $E$ be a piece of evidence and let $\theta_1$ and $\theta_2$ be two different hypotheses. Say that $E$ \textit{favors} $\theta_1$ over $\theta_2$ if $E$ supports $\theta_1$ over $\theta_2$ more than the sure event $\Omega$. To show that $\f{ll}^+$ entails \f{ll}, notice that $E$ supports $\theta_1$ over $\theta_2$ more than $\Omega$ if $P^{\mathbb{E}}_{\theta_1}(E)/P^{\mathbb{E}}_{\theta_2}(E) > P^{\mathbb{E}}_{\theta_1}(\Omega)/P^{\mathbb{E}}_{\theta_2}(\Omega)$. Since the likelihood of $\Omega$ is always $1$, this is equivalent to $P^{\mathbb{E}}_{\theta_1}(E) > P^{\mathbb{E}}_{\theta_2}(E)$. So if we define favoring in terms of support, $\f{ll}^+$ entails that $E$ favors $\theta_1$ over $\theta_2$ precisely if $P_{\theta_1}(E) > P_{\theta_2}(E)$, exactly as $\f{ll}$ asserts.

Similarly, to show that $\f{ll}^+$ entails $\f{lp}$ under plausible assumptions, say that $E$ and $F$ are evidentially equivalent if $E$ supports $\theta_1$ over $\theta_2$ exactly as much as $F$ does, for all $\theta_1$ and $\theta_2$. In other words, $E$ and $F$ favor all hypotheses by equal amounts. If $\f{ll}^+$ holds, then $E$ and $F$ are evidentially equivalent precisely if 
\begin{equation}\label{eqn:lrentailslp}
\frac{P_{\theta_1}(E)}{P_{\theta_2}(E)} = \frac{P_{\theta_1}(F)}{P_{\theta_2}(F)}  
\end{equation}
for all $\theta_1$ and $\theta_2$.  If there is $c>0$ such that $P_{\theta}(E) = c \cdot P_{\theta}(F)$ for all $\theta$, then \autoref{eqn:lrentailslp} holds.  So $\f{ll}^+$ entails \f{lp}.

The careful reader will have noticed that, in deriving \f{ll} from $\f{ll}^+$, we defined ``favors'' in a way analogous to the way that ``Bayesian favoring'' was defined in terms of ``Bayesian support.''  This is not a coincidence.  As we show in the next section, the relations picked out by our definitions of Bayesian support, Bayesian favoring, and posterior/support equivalence are, under one assumption, mathematically equivalent to the likelihoodist relations of support, favoring, and evidential equivalence as axiomatized by $\f{ll}^+$, \f{ll}, and \f{lp} respectively.

\subsection{The Link:  Likelihoodism and Robust Bayesianism}
\label{subsec:LikeRB}
In this section, we discuss three elementary propositions that show that \f{rb} and likelihoodism are equivalent when the set of permissible beliefs $\mathcal{B}$ is the \textit{universal} set of all possible priors, $\mathcal{U}$.    The main results of our paper -- in the next section -- are the qualitative analogs of the second and third claim below.

To motivate our results, notice that \f{ll}, \f{lp}, and $\f{ll}^+$ are often said to be ``compatible'' with Bayes rule \cite[p. 28]{edwards_likelihood_1984}.  Here is how that ``compatibility'' is often explained for $\f{ll}^+$.   Notice Bayes' Rule entails that
$$ \frac{Q^{\mathbb{E}}(\theta_1|E)}{Q^{\mathbb{E}}(\theta_2|E)} = \frac{P^{\mathbb{E}}_{\theta_1}(E)}{P^{\mathbb{E}}_{\theta_2}(E)} \cdot \frac{Q(\theta_1)}{Q(\theta_2)}$$
Although this calculation is standard in introductory remarks about Bayes rules (e.g., see \cite[p. 8]{gelman_bayesian_2013}), we note that it holds  \emph{for any prior} $Q$.  So the likelihood ratio is a  measure of  the degree to which \emph{all} Bayesians' posterior degrees of belief in $\theta_1$ increase (or decrease) upon learning $E$. That simple calculation can be used to prove: 

\begin{claim}\label{clm:BSupportCharacterized}
Suppose $H_1$ and $H_2$ are finite and disjoint and that $\mathcal{U}$ is the set of all probability distributions over $\Theta$.  Let $E$ and $F$ be outcomes of experiments $\mathbb{E}$ and $\mathbb{F}$ respectively. Then  $E \baf{{\cal U}}{H_1}{}{H_2}{} F$ if and only if (1) for all $\theta \in H_1 \cup H_2$, if $P^{\mathbb{F}}_{\theta}(F) > 0$, then $P^{\mathbb{E}}_{\theta}(E) > 0$, and (2) for all $\theta_1 \in H_1$ and $\theta_2 \in H_2$:
\begin{equation*}
    P^{\mathbb{E}}_{\theta_1}(E) \cdot  P^{\mathbb{F}}_{\theta_2}(F) \geq P^{\mathbb{E}}_{\theta_2}(E) \cdot P^{\mathbb{F}}_{\theta_1}(F)
\end{equation*}
Similarly, $E \sbaf{{\cal U}}{H_1}{}{H_2}{} F$ if and only if condition 1 holds and the inequality in condition 2 is strict for at least choice of $\theta_1 \in H_1$ and $\theta_2 \in H_2$.
\end{claim}

Though mathematically simple, the claim is conceptually important.  The right-hand side of the biconditional in \autoref{clm:BSupportCharacterized} contains only likelihoods.  So the claim says that, when the set of permissible priors is as large as possible, claims about Bayesian support are equivalent to claims about likelihoods.  Specifically, the claim asserts that $E \baf{{\cal U}}{H_1}{}{H_2}{} F$ exactly when $\f{ll}^+$ says that $E$ provides at least as strong evidence as $F$ does for every $\theta \in H_1$ over every $\theta_2 \in H_2$.

Because we define ``favoring'' in terms of ``support'', it is perhaps unsurprising that likelihood functions completely characterize the favoring relation.

\begin{claim}\label{clm:bLLFavoring} Suppose $\mathcal{U}$ is the set of all probability distributions over $\Theta$.  Then $E$ $\mathcal{U}$-Bayesian favors $H_1$ to $H_2$ if and only if (1) $P_{\theta}(E) > 0$ for all $\theta \in H_1 \cup H_2$ and (2) $P_{\theta_1}(E) \geq P_{\theta_2}(E)$ for all $\theta_1 \in H_1$ and $\theta_2 \in H_2$.  The favoring is strict if and only if condition 1 holds and the inequality in condition 2 is strict for at least choice of $\theta_1 \in H_1$ and $\theta_2 \in H_2$.
\end{claim}

The claim entails that, if $H_1=\{\theta_1\}$ and $H_2=\{\theta_2\}$ are simple hypotheses, then $E$  strictly Bayesian favors $H_1$ to $H_2$ if and only if \f{ll} entails that $E$ favors $H_1$ to $H_2$.

Finally, it is well-known that, if \f{lp} entails that $E$ and $F$ are evidentially equivalent, then they are posterior equivalent \citep[p. 56]{edwards_bayesian_1984}.  The converse is also true.

\begin{claim}\label{clm:bLPFavoringPosteriorEquivalence}
	\f{lp} entails that $E$ and $F$ are evidentially equivalent if and only if $E$ and $F$ are $\mathcal{U}$-posterior equivalent and $\mathcal{U}$-support equivalent.  
\end{claim}

The three claims above show that there is an alternative way of explaining why (1) $\f{ll}^+$ and $\f{lp}$ are intuitively plausible in many cases and (2) $\f{ll}^+$ ``unifies'' \f{ll} and \f{lp} as shown in the previous section.  To see why, consider one last philosophical thesis about statistical evidence.\\

\noindent Robust Bayesian Support Principle:  In settings in which $\mathcal{B}$ is the set of permissible prior beliefs,  $E$ provides at least as good statistical evidence for $H_1$ over $H_2$ as $F$ if $E \bafd F$.\\

By \autoref{clm:BSupportCharacterized}, the robust Bayesian support principle is equivalent to $\f{ll}^+$ when there are no restrictions on permissible priors.  And by \autoref{clm:bLLFavoring} and \autoref{clm:bLPFavoringPosteriorEquivalence}, it entails \f{ll} and \f{lp} with some plausible, additional assumptions, thereby ``unifying'' \f{ll} and \f{lp} in roughly the same way $\f{ll}^+$ does.  Thus, $\f{ll}^+$ may be plausible only because it is equivalent to the robust Bayesian support principle in settings in which there are no obvious constraints on permissible priors.  And the robust Bayesian support principle is plausible, we conjecture, precisely because it captures the idea that evidence persuades \emph{all} agents with permissible beliefs to change their beliefs in particular ways. This is important because, as we show later, the robust Bayesian principle generalizes to qualitative settings in which $\f{ll}^+$ cannot be formulated.

\subsection{LP:  Equivalent Statements}
\label{sec:lpequiv}

We will show that \f{lp} has a natural generalization in our qualitative framework, but to motivate our generalization, we first explore some statistical principles that are known to be equivalent to \f{lp} in the quantitative setting.

Statisticians with divergent philosophies often nonetheless agree that the evidence in a sample is entirely summarized by a sufficient statistic.  Some call this thesis the \emph{sufficiency} principle.\footnote{What we call ``sufficiency'' is often called ``weak sufficiency'', and ``sufficiency'' is often used to describe a strictly stronger principle \citep{birnbaum_more_1972}.  We ignore those distinctions here.}
Recall, a statistic $T:\Omega^{\mathbb{E}} \rightarrow \mathbb{R}$ is \textbf{sufficient} for an experiment $\mathbb{E}$ if $P^{\mathbb{E}}_{\theta}(\omega|T=T(\omega))$ does not depend on $\theta$, or in other words if:
\[ P^{\mathbb{E}}_{\theta}(\omega|T=T(\omega)) = P^{\mathbb{E}}_{\upsilon}(\omega|T=T(\omega)) \mbox{ for all } \theta, \upsilon \in \Theta \]

The following fact is well-known (e.g.,  \citep[Theorem 2] {birnbaum_more_1972}):

\begin{fact}\label{fact:SufficientFactorization}
  The following two assertions are equivalent:
\begin{enumerate}
    \item There is a sufficient statistic $T$ for $\mathbb{E}$ such that $T(\omega_1) = T(\omega_2)$.
    \item There is some $c > 0$ such that  $P^{\mathbb{E}}_{\theta}(\omega_1) = c \cdot P^{\mathbb{E}}_{\theta}(\omega_2)$ for all $\theta \in \Theta$.
\end{enumerate} 
\end{fact}

In other words, for outcomes of the same experiment, \f{lp} and the sufficiency principle are equivalent.

To explore the relevance of sufficient statistics to outcomes of distinct experiments, we must say a few words about \emph{mixed experiments}.  We say an experiment $\mathbb{M}$ is a mixture of $\mathbb{E}$ and $\mathbb{F}$ if (1) a randomizing device is used to choose which of $\mathbb{E}$ and $\mathbb{F}$ to perform and (2) both component experiments have some positive probability of being chosen.  Here is a slightly odd technical principle that will play an important role in ensuing sections.  \\

\noindent Mixture-Sufficiency Principle ($\f{msp}$): Let $\omega_1$ be an outcome of an experiment $\mathbb{E}$ and $\omega_2$ be an outcome of experiment $\mathbb{F}$.  Suppose that, in every mixture $\mathbb{M}$ of $\mathbb{E}$ and $\mathbb{F}$, there is some sufficient statistic $T$ for $\mathbb{M}$ such that $T(\omega_1)=T(\omega_2)$.  Then $\omega_1$ and $\omega_2$ are evidentially equivalent.\\

It is easy to see that that \f{msp} and \f{lp} are equivalent.  Why?  Suppose that \f{lp} entails that $\omega_1$ and $\omega_2$ are evidentially equivalent, i.e., that there is some $c > 0$ such that  $P^{\mathbb{E}}_{\theta}(\omega_1) = c \cdot P^{\mathbb{F}}_{\theta}(\omega_2)$ for all $\theta \in \Theta$.  Let $q \in (0,1)$ be the probability that $\mathbb{E}$ is performed in some mixture $\mathbb{M}$. It follows that
\[P^{\mathbb{M}}_{\theta}(\omega_1) = q \cdot P^{\mathbb{E}}_{\theta}(\omega_1) = q\cdot (c \cdot  P^{\mathbb{F}}_{\theta}(\omega_2)) = \frac{qc}{(1-q)} P^{\mathbb{M}}_{\theta}(\omega_2)\]
Thus, there is some sufficient statistic $T$ \emph{for} $\mathbb{M}$ such that $T(\omega_1)=T(\omega_2)$ (by \autoref{fact:SufficientFactorization}). Hence, \f{msp} likewise entails that $\omega_1$ and $\omega_2$ are evidentially equivalent.  The converse -- that \f{msp} entails \f{lp} -- is verified similarly.

By \autoref{clm:bLPFavoringPosteriorEquivalence}, if $\mathcal{B}$ is the set of all priors, then two experimental outcomes are $\mathcal{B}$ posterior equivalent if and only if \f{lp} entails that they are evidentially equivalent.  It follows from the last argument that the same holds if \f{lp} is replaced by \f{msp}.    This is important because, as we now show, \f{msp} has a natural qualitative analog and can be used to characterize a qualitative notion of posterior equivalence.

\section{Qualitative Robust Bayesianism}
\label{sec:qual}

In this section, we generalize Claims 1-4 to settings in which experiments can justify only qualitative, non-numerical judgments of the form, ``experimental outcome $A$ is at least as likely under supposition $\theta$ as outcome $B$ is under supposition $\eta$.''  Before doing so, we explain why a qualitative framework is important.

\subsection{Motivation for Qualitative Probabilities}
Our qualitative/comparative probabilistic framework is intended to address inference problems with at least one of the following two characteristics; we believe such problems are ubiquitous in science, the law, and corporate settings.

First, when a novel experimental procedure or test is performed, the error rates of the test will generally not be known. Thus, experimenters may not agree upon precise, numerical likelihood functions even if they agree upon how to \emph{order} the likelihood of various experimental outcomes conditional on the point hypotheses.  

For example, imagine a software company has designed a new interview process in which five applicants are assigned several programming tasks.  Let $I= \{1,2,3,4,5\}$ represent the five applicants, and suppose hypotheses are functions $\theta:I \rightarrow \{Y,N\}$ specifying which applicants are qualified ($Y$ for `yes', and $N$ for `no').  Then interviewers might disagree about how \emph{precisely} likely qualified applicants are to complete the various tasks, but they might agree that, for each task, a qualified candidate is more likely to complete the task than an unqualified candidate, i.e., ``$i$ is successful at task $x$ given $\theta(i)=Y$'' is more likely than ``$i$ is successful at task $x$ given $\theta(i)=N$.'' 

Second, even when an experimental procedure is easily repeated, scientists may be able to sample from \emph{only one} ``distribution'' -- namely the one generated by the true hypothesis  $\theta_0$.  Why? Hypotheses in $\Theta$ may describe some feature of the world that is, for all intents and purposes,  unchangeable. Thus, it may be impossible to empirically estimate the likelihood of various experimental outcomes conditional on anything other than the true (yet unknown) parameter. 

For example, in 1610, Galileo observed that four objects move in retrograde motion with Jupiter.   Call those observations $E$.  Galileo believed $E$ was evidence for the hypothesis $\theta_1$ ``The four objects orbit Jupiter''; that hypothesis was in tension with Aristotelian cosmology, which prohibited any object other than Earth from being a source of gravity.  Sadly, Galileo could not travel to alternate universes to estimate the frequency with which the four objects would appear in the same positions that he had observed them.  In other words, he lacked any way to \emph{empirically} estimate the probability of $E$ (or any other telescopic observations) conditional on $\theta_2$ ``The four objects do \emph{not} orbit Jupiter.''  And since the truth of $\theta_1$ was in dispute, Galileo could not simply assume he was sampling from the distribution $P_{\theta_1}$.

We claim that Galileo and his contemporaries, therefore, lacked any basis for agreeing upon numerical probabilities of the form $P_{\theta_1}(E)$ and $P_{\theta_2}(E)$.  As we noted, those probabilities could not be estimated from empirical frequencies.  Galileo's observations did not \emph{logically} refute $\theta_2$:  the retrograde motion of four independent planets could coincide with that of Jupiter.  Nor did $\theta_1$ guarantee that Galileo would observe the four objects precisely where he did in relation to Jupiter. So neither $P_{\theta_1}(E)$ and $P_{\theta_2}(E)$ was identically zero or one. Assigning precise intermediate probabilities also likewise seemed unjustified.  Yet scientists (reasonably) agree that $E$ is more likely if $\theta_1$ is true than if $\theta_2$ were true.

Inference problems with the two above features are ubiquitous.  Further, we often need to weigh probabilistic evidence against qualitative/comparative evidence of the above two types.  
In such cases, one piece of evidence may have precise numerical probabilities conditional on the various hypotheses, but the conjunction of that piece of evidence with other relevant information may lack a precise numerical likelihood. 

For example, consider a United States judge who must determine whether to admit expert testimony that is based on some novel scientific methodology.  The so-called Daubert standard explicitly requires the judge to weigh probabilistic evidence like the ``known or potential error rate'' of a test  (or lack thereof) against qualitative evidence like ``widespread acceptance within a relevant scientific community'' (or lack thereof).  It is, we think, often impossible to justify assigning a precise numerical probability to the ``widespread acceptance'' of some hypothesis or experimental procedure, conditional on the hypothesis being false or the procedure being less reliable than imagined.  It is for this reason and others, we conjecture, that the US Supreme Court advises judges that, when determining whether expert testimony is admissible,  ``The inquiry is a flexible one'', suggesting that a mechanical application of the probability calculus may not be appropriate \citep{daubert_1993}.

Here are two objections that, for reasons of space, we cannot respond to in detail.  First, one might argue that although the above examples motivate developing a comparative, non-numerical framework for evaluating evidence, they do not motivate our specific axioms below.  We agree; in future publications, we hope to show why standard decision-theoretic considerations of coherence motivate our particular axioms.  However, for now, we emphasize that the theorems in ensuing sections provide indirect reason to endorse the axioms, as our results show the axioms entail principles that accord with widely accepted (if controversial) statistical principles like the sufficiency principle.   

Second, some reader may object that, although the above examples require robust statistical analyses -- in which a range of prior distributions \emph{and} likelihood functions are considered -- the examples do not motivate the use of non-numerical probabilities.  Such readers, we conjecture, believe that the only legitimate representation of uncertainty is probability.  We reject that claim.  Typical defenses of that claim fall into two categories:  (1) Dutch Book \citep{de_finetti_foresight:_1937}, forecasting \citep{finetti_theory_1974}, or accuracy arguments \citep{joyce_nonpragmatic_1998, pettigrew_accuracy_2016}, in which degrees of belief are assumed to be representable by real numbers (e.g., the price an agent would be willing to pay for a gamble), and (2) representation theorems in which a numerical probability function is derived from a preference or comparative belief relation (e.g., \citep{savage_foundation_1954} and \citep{krantz_foundations_2006}). The first type of argument simply assumes that rational uncertainty or belief admits a real-valued representation (which we deny); the only question it answers is whether, degrees of belief should obey the probability axioms \emph{if} they are real-valued.  The second type of argument requires the preference relations to be defined over a large/rich algebra of events, and eliciting such preferences (even if they exist) is typically impossible in practice.  

Our framework makes no assumptions about the size or richness of the algebra of events over which an experimenter's beliefs are defined. 

\subsection{Key Concepts}
\begin{table*}[bp]
    \small
    \centering
    \begin{tabular}{p{7cm}|p{7cm}}
        \textbf{Quantitative Probabilistic Notions} & \textbf{Qualitative Analog}  \\
        \hline
        $P^{\mathbb{E}}_{\theta}(E) \leq P^{\mathbb{F}}_{\upsilon}(F)$
         &  $ E |^{\mathbb{E}} \theta \sqsubseteq  F
         |^{\mathbb{F}} \upsilon$ \\
         
         $Q^{\mathbb{E}}(H_1|E) \leq Q^{\mathbb{F}}(H_2|F)$
         &  $H_1 |^{\mathbb{E}} E \preceq H_2 |^{\mathbb{F}} F$ \\
         
         $\f{ll} \Leftrightarrow \omega$ Bayesian favors $H_1$ to $H_2$ & $\f{qll} \Leftrightarrow \omega$ qualitatively favors $H_1$ to $H_2$. \\
         $\f{lp}/\f{msp} \Leftrightarrow \omega_1$ and $\omega_2$ are Bayesian posterior and support equivalent & $\f{qmsp} \Leftrightarrow \omega_1$ and $\omega_2$ are qualitatively posterior and support equivalent. 
    \end{tabular}
\end{table*}

To move from quantitative to qualitative probability, we replace probability functions with two \emph{orderings}. As before, let $\Theta$ be the set of simple hypotheses, and for any experiment $\mathbb{E}$, we let $\Omega^{\mathbb{E}}$ the set of experimental outcomes. The first relation, $\sqsubseteq$, is the qualitative analog of the set of likelihood functions.  
Informally, $A |^{\mathbb{E}} \theta \sqsubseteq B |^{\mathbb{F}} \eta$ represents the claim that ``The outcome $B$ of experiment $\mathbb{F}$ is at least as likely under supposition $\eta$ as outcome $A$ is in experiment $\mathbb{E}$ under supposition $\theta$''; that claim is the qualitative analog of $P^{\mathbb{E}}_{\theta}(A) \leq P^{\mathbb{F}}_{\eta}(B)$.  We write  $A |^{\mathbb{E}} \theta \equiv B |^{\mathbb{F}} \upsilon$  if $A |^{\mathbb{E}} \theta \sqsubseteq B |^{\mathbb{F}} \upsilon$ and vice versa.      As before, we drop the superscripts $\mathbb{E}$ and $\mathbb{F}$ when they're clear from context.

Although we typically consider expressions of the form $A |^{\mathbb{E}}  \theta \sqsubseteq B |^{\mathbb{F}}  \eta$, the $\sqsubseteq$ relation is also defined when experimental outcomes appear to the right of the conditioning bar, i.e., $A|^{\mathbb{E}} \theta \cap E \sqsubseteq B|^{\mathbb{F}}\theta \cap F$ is a well-defined expression if $E \subseteq \Omega^{\mathbb{E}}$ and $F \subseteq \Omega^{\mathbb{F}}$.  However, it is not-well defined when composite hypotheses $H$ appear to the right of the conditioning bar, just as there are no likelihood functions $P_H(\cdot)$ for composite hypotheses in the quantitative case.

Bayesians assume that beliefs are representable by a probability function.  We weaken that assumption and assume beliefs are representable by an ordering $\preceq$. 
The notation is suggestive; $A|B \preceq C|D$ if the experimenter regards $C$ as at least as probable under supposition $D$ as $A$ would be under supposition $B$.  Just as Bayesians condition on both composite hypotheses and experimental outcomes, there are no restrictions on what can appear to the right of the conditioning bar in expressions involving $\preceq$.\footnote{In the body of this paper, we assume $\sqsubseteq$ and $\preceq$ are proper-class relations like $=$, $\subseteq$ and $\in$.  Such relations do not have fixed  sets as their domains, i.e., they can be used to relate \emph{any} two sets. They are unlike, for instance, the standard ordering $\leq$ of real numbers, which is a subset of $\mathbb{R} \times \mathbb{R}$.  In other words, just as $A \subseteq B$ or $A \nsubseteq B$ for any two sets $A$ and $B$, any set $\Omega$ could be used to represent the outcome of some experiment $\mathbb{E}$, and so for simplicity, we assume $\sqsubseteq$ and $\preceq$ are be defined over pairs of all sets.    Such an assumption is not necessary (i.e., we could fix the domains of these relations to a collection of experiments of interest), but it simplifies notation.  
}

As before,  we define $\Delta^{\mathbb{E}}$ to be $\Delta^{\mathbb{E}} = \Theta \times \Omega^{\mathbb{E}}$.  If  $E \subseteq \Omega^{\mathbb{E}}$ and $H \subseteq \Theta$ we write $E|H$ instead of $\Theta \times E| H \times \Omega^{\mathbb{E}}$.  In the special case in which $H=\{\theta\}$ is a singleton, we omit the curly brackets and write $E|\theta$ instead of $E|\{\theta\}$. We write $A | B \sim C|D$ if $A | B \preceq C|D$ and vice versa.  

As in the quantitative setting, we assume the experimenter's ``prior'' does not vary with the experiment performed, and hence, for all subsets $H_1, H_2 \subseteq \Theta$, we assume $H_1 |^{\mathbb{E}} \Delta^{\mathbb{E}} \preceq H_2 |^{\mathbb{E}} \Delta^{\mathbb{E}}$ if and only $H_1  |^{\mathbb{F}} \Delta^{\mathbb{F}} \preceq H_2 |^{\mathbb{F}} \Delta^{\mathbb{F}}$.

Clearly, to prove anything, we must assume that $\sqsubseteq $ and $\preceq$ satisfy certain axioms; those axioms are stated in \autoref{subsec:axioms}.    But before listing out axioms, we state our main results.

\subsection{Main Results}
First, we state a qualitative analog of \f{ll}:

\begin{itemize}
    \item Qualitative law of likelihood (\f{qll}):  $E$ favors $\theta_1$ to $\theta_2$ if $E|\theta_2 \sqsubset E|\theta_1$.  
\end{itemize}

 \autoref{clm:bLLFavoring} asserts that \f{ll} characterizes precisely when $E$ Bayesian favors $\theta_1$ over $\theta_2$, i.e., when all Bayesian agents agree $E$ favors one hypothesis.  So by analogy to the definitions of ``Bayesian support'' and ``Bayesian favoring'', we let $\mathcal{B}$ denote a set of \emph{orderings} satisfying our axioms below and define:
 
 \begin{definition}\label{defn:qSupport}
    $E$ ${\cal B}$-\emph{qualitatively supports} $H_1$ to $H_2$ at least as much as $F$ if $H_1|E \cap (H_1 \cup H_2) \succeq H_1|F \cap (H_1 \cup H_2)$ for all orderings $\preceq$ in ${\cal B}$ for which the expression $\cdot|F \cap (H_1 \cup H_2)$ is well-defined.
\end{definition}

\begin{definition}\label{defn:qFavor}
    $E$ $\mathcal{B}$-\emph{qualitatively favors} $H_1$ to $H_2$ if $E$ $\mathcal{B}$-qualitatively supports $H_1$ to $H_2$ at least as much as $\Omega$.
\end{definition}

Our first major result is the qualitative analog of \autoref{clm:bLLFavoring}.  Just as \autoref{clm:bLLFavoring} entails that $E$ Bayesian favors $\theta_1$ to $\theta_2$ when \f{ll} entails so, our first major result shows that $E$ qualitatively favors $\theta_1$ to $\theta_2$ if and only if \f{qll} entails so. Under mild assumptions, this equivalence can be extended to finite composite hypotheses.  To state the theorem, let $\mathcal{U}$ (again, for ``universal'') now denote that  set of \emph{all} orderings satisfying our axioms below.

\begin{theorem}\label{thm:QLLFavoring}
         Suppose $H_1$ and $H_2$ are finite. Then $E$ $\mathcal{U}$-qualitatively favors $H_1$ over $H_2$ if (1) $\emptyset|\theta \sqsubset E|\theta$ for all $\theta \in H_1 \cup H_2$ and (2) $E|\theta_2 \sqsubseteq E|\theta_1$ for all $\theta_1 \in H_1$ and $\theta_2 \in H_2$. Under \autoref{assum} (below), the converse holds as well. In both directions, the favoring inequality is strict exactly when the likelihood inequality is strict.  If $H_1 = \{\theta_1\}$ and $H_2 = \{\theta_2\}$ are simple, then $E$ qualitatively favors $H_1$ over $H_2$ if and only if \f{qll} entails so. No additional assumptions are required in this case.
\end{theorem}

\noindent  \autoref{assum} below more-or-less says that one's ``prior'' ordering over the hypotheses is unconstrained by the ``likelihood'' relation $\sqsubseteq$.  This is exactly analogous to quantitative probability theory.  In the quantitative case, a joint distribution $Q^{\mathbb{E}}$ on $\Delta = \Theta \times \Omega^{\mathbb{E}}$ is determined by (i) the measures $\langle P_{\theta}(\cdot) \rangle_{\theta \in \Theta}$ over experimental outcomes $\Omega^{\mathbb{E}}$ and (ii) one's prior $Q$ over the hypotheses $\Theta$.  Although the joint distribution $Q^{\mathbb{E}}$ is constrained by $\langle P_{\theta}(\cdot) \rangle_{\theta \in \Theta}$, one's prior $Q$ on $\Theta$ is not, and so for any non-empty hypothesis $H \subseteq \Theta$, one can define some $Q$ such that $Q(H)=1$ and $Q(\theta) > 0$ for all $\theta \in H$.  That's what the following assumption says in the qualitative case.

\begin{assumption}\label{assum}
For all orderings $\sqsubseteq$ satisfying the axioms in \autoref{subsec:axioms} and for all non-empty $H \subseteq \Theta$, there exists an ordering $\preceq$ satisfying the axioms such that (A)  $H|\Delta \sim \Delta|\Delta$ and (B)  $\theta|\Delta \succ \emptyset|\Delta$ for all $\theta \in H$.
\end{assumption}
We conjecture that \autoref{assum} follows from the axioms, but we do not have a proof.

The qualitative analogs of posterior and support equivalence are obvious, but we state them for clarity.

\begin{definition}\label{defn:qFavoringEquivalence}
    $E$ and $F$ are ${\cal B}$-\emph{qualitative support equivalent} if $E$  ${\cal B}$-supports $H_1$ over $H_2$ at least as much as $F$ and vice versa, for any two disjoint hypotheses $H_1$ and $H_2$.
\end{definition}
    
\begin{definition}\label{defn:qPosteriorEquivalence}
    $E$ and $F$ are  ${\cal B}$-\emph{qualitative posterior equivalent} if  for all orderings $\preceq$ in ${\cal B}$ that satisfy the axioms:
    \begin{enumerate}
        \item  the expression $\cdot|E$ is well-defined if and only if $\cdot|F$ is well-defined, and
        \item  $H|E \sim H|F$ for all hypotheses $H \subseteq \Theta$.
    \end{enumerate}
\end{definition}
\noindent As before, these two definitions are equivalent.
\begin{claim}\label{clm:qfavoring_and_posterior_equivalence}
    $E$ and $F$ are qualitative posterior equivalent if and only if they are qualitative support equivalent.
\end{claim}

Our second main result is a qualitative analog of \autoref{clm:bLPFavoringPosteriorEquivalence}, which says that \f{lp}/\f{msp} characterizes when two pieces of evidence are posterior and Bayesian-support equivalent.

To formalize \f{msp} in the qualitative setting, we need to find the qualitative analogs of (1) sufficient statistics and (2) mixed experiments.  Recall, a statistic $T$ is sufficient if $P_{\theta}(\omega|T=T(\omega)) = P_{\upsilon}(\omega|T=T(\omega)) $ for all $\theta$ and $\upsilon$.  That definition generalizes in the obvious way.

\begin{definition}\label{defn:qSufficient}
    A statistic $T:\Omega^{\mathbb{E}} \rightarrow \mathcal{R}$ is \textit{sufficient} if
\[ \omega|^{\mathbb{E}} \{T=T(\omega)\} \cap \theta \equiv  \omega|^{\mathbb{E}} \{T=T(\omega)\} \cap \upsilon \mbox{ for all } \theta, \upsilon \in \Theta\]
\end{definition}

How should we define a ``mixed experiment''?  At the heart of the concept of a mixed experiment is the use of a randomizing device, i.e., a device whose value is independent of the parameter of interest.  A randomizing device, therefore, is an \textit{ancillary statistic}, i.e., a statistic $T$ such that $P_{\theta}(T=t) = P_{\upsilon}(T=t)$ for all values $t$ and all $\theta, \upsilon \in \Theta$.  Again, that notion easily generalizes qualitatively. 

\begin{definition}\label{defn:qAncillary}
A statistic $T:\Omega^{\mathbb{E}} \rightarrow \mathcal{R}$ is called \textit{ancillary} if $\{T=t\}|\theta \equiv \{T=t\}|\upsilon$ for all values $t$ of $T$ and all $\theta, \upsilon \in \Theta$.
\end{definition}

We can now define the qualitative analog of a mixed experiment.
\begin{definition}\label{defn:qMixture}
Let $\{\mathbb{E}_i\}_{i\leq n}$ be a finite set of experiments.  An experiment $\mathbb{M}$ is a \textit{mixture} of $\{\mathbb{E}_i\}_{i\leq n}$ if
\begin{enumerate}
    \item The outcomes of $\Omega^{\mathbb{M}}$ can be represented by the disjoint union $\sqcup_{i \leq n} \Omega^{\mathbb{E}_i}$ of the outcomes of the component experiments.
     \item Let $T_{\pi}$ be the projection statistic $T_{\pi}:\Omega^{\mathbb{M}} \rightarrow I$ that maps each outcome $\omega \in \Omega^{\mathbb{M}}$ to the unique $i \leq n$ such that $\omega \in \Omega^{\mathbb{E}_i}$. Then $T_{\pi}$ is ancillary in $\mathbb{M}$.  
    \item $\omega ~|^{\mathbb{M}}~ \Omega^{\mathbb{E}_i} \cap \theta  \equiv 
    \omega|^{\mathbb{E}_i} \theta$ for all $i \leq n$ and all $\omega \in \Omega^{\mathbb{E}_i}$, and
    \item $\Omega^{\mathbb{E}_i}~|^{\mathbb{M}} \theta 
     \sqsupset \emptyset$ for all $i$ and all $\theta$, and
   
\end{enumerate}
\end{definition}

Condition 1 says that every outcome of $\mathbb{M}$ is an outcome of some component experiment $\mathbb{E}_i$.  We define the outcome space of $\mathbb{M}$ as a disjoint union to ensure that, if some outcome $\omega$ can be obtained in multiple component experiments, then there are multiple corresponding outcomes of $\mathbb{M}$ that encode which component experiment produced $\mathbb{M}$.  For instance, suppose $\mathbb{M}$ is a mixture of (i) an experiment $\mathbb{E}_1$ in which a coin is flipped twice and (ii) an experiment $\mathbb{E}_2$ in which a coin is flipped until one head is observed.  Then the outcome $\omega = \langle$Tails, Heads$\rangle$ can be obtained in both component experiments, and thus, there are two corresponding outcomes in the mixture $\mathbb{M}$ that specify which experiment was conducted to produce $\omega$.  We might naturally represent those two outcomes by $\langle 1, \omega \rangle$ and $\langle 2, \omega \rangle$, but other representations are possible.

Condition 2 says that the component experiment that will be performed is chosen by some randomizing device that is independent of the parameter of interest.  Condition 3 says that, if we know experiment $\mathbb{E}_i$ has been chosen by the randomizing device, then the probability of obtaining $\omega$ is the same as it would have been had $\mathbb{E}_i$ been conducted without the randomizing device being consulted.
Finally, condition 4 says every component experiment has some positive probability of being chosen.

With these definitions, the statement of the \textbf{qualitative mixture sufficiency principle} (\f{qmsp}) is identical to that of \f{msp}, but one replaces the quantitative/probabilistic definitions of sufficient and mixed experiment with the qualitative ones we have just introduced.  \\

\noindent Qualitative Mixture-Sufficiency Principle ($\f{qmsp}$): Let $\omega_1$ be an outcome of a (qualitative) experiment $\mathbb{E}$ and $\omega_2$ be an outcome of (qualitative) experiment $\mathbb{F}$.  Suppose that, in every mixture $\mathbb{M}$ of $\mathbb{E}$ and $\mathbb{F}$, there is some sufficient statistic $T$ for $\mathbb{M}$ such that $T(\omega_1)=T(\omega_2)$.  Then $\omega_1$ and $\omega_2$ are evidentially equivalent.\\

Our second major result is the following:

\begin{theorem}
\label{thm:qmsp}
       If \f{qmsp} entails that 
       $\omega_1$ and $\omega_2$ are evidentially equivalent, then they are $\mathcal{B}$  posterior equivalent for all sets of orderings $\mathcal{B}$ satisfying our axioms.  If  $\mathcal{B}=\mathcal{U}$ is the set of \emph{all} orderings satisfying the axioms below, the converse holds as well.
\end{theorem}

Because  \f{qmsp} is the qualitative analog of \f{lp}, \autoref{thm:qmsp} is the qualitative analog of \autoref{clm:bLPFavoringPosteriorEquivalence}:  both show that some likelihoodist thesis about ``evidential equivalence'' is equivalent to the Robust Bayesian notions of posterior/support equivalence.

 The proof of \autoref{thm:qmsp} relies on the following assumption.

\begin{assumption}\label{assum:qMixturesExist}
For any two experiments $\mathbb{E}$ and $\mathbb{F}$, there is a ``uniform'' mixture $\mathbb{M}$ of the two experiments such that each of the two component experiments has an equal probability of being conducted, i.e., $\Omega^{\mathbb{E}} |^{\mathbb{M}}  \theta \equiv \Omega^{\mathbb{E}} |^{\mathbb{M}}  \theta$ for all $\theta$.
\end{assumption}

We conjecture that assumption is derivable (from the axioms of set theory), but we have not yet produced a proof.\footnote{We say ``\emph{a} uniform mixture'' rather than ``\emph{the} uniform mixture'' because we lack a proof that the mixture $\mathbb{M}$ is unique, even when $\mathbb{E}$ and $\mathbb{F}$ are the only component experiments of $\mathbb{F}$.  In other words, the description ``flip a fair coin to decide whether to conduct $\mathbb{E}$ or $\mathbb{F}$'' may not determine a unique qualitative likelihood ordering $\sqsubseteq$.}

\subsection{Axioms for Qualitative Probability}
\label{subsec:axioms}
We assume that both $\sqsubseteq$ and $\preceq$ satisfy the first set of axioms below.  

To state the axioms, let $\btleq$ be either $\sqsubseteq$ or $\preceq$, and let $\btl$ be the corresponding ``strict'' inequality defined by $x \btl y$ if $x \btleq y$ and $y \nbtleq x$.  Define $A | B \bteq C | D$ if and only if $A | B \btleq C| D$ and vice versa. 

\begin{itemize}
    \item[] \hypertarget{Ax1}{Axiom 1:} $\btleq$ is a weak order (i.e., it is linear/total, reflexive, and transitive).
    \item[] \hypertarget{Ax3}{Axiom 3}: $A|A \bteq B|B$ and $ A|B \btleq \Delta | C$ for all $C \not \in {\cal N}$.
    \item[] \hypertarget{Ax4}{Axiom 4}: $A \cap B | B \bteq A | B$.
    \item[] \hypertarget{Ax5}{Axiom 5}: Suppose $A \cap B = A' \cap B' = \emptyset$.  If $A|C \btleq A'|C'$ and $B|C \btleq B'|C'$, then $A \cup B | C \btleq A' \cup B'|C'$; moreover, if either hypothesis is $\btl$, then the conclusion is $\btl$.
    \item[] \hypertarget{Ax6}{Axiom 6}: Suppose $C \subseteq B \subseteq A$ and $C' \subseteq B' \subseteq A'$.  
    \begin{itemize}
        \item[] \hypertarget{Ax6a}{Axiom 6a:} If $B|A \btleq C'|B'$ and $C|B \btleq B'|A'$, then $C|A \btleq C'|A'$; moreover, if either hypothesis is $\btl$, the conclusion is $\btl$.
        \item[] \hypertarget{Ax6b}{Axiom 6b:} If $B|A \btleq B'|A'$ and $C|B \btleq C'|B'$, then $C|A \btleq C'|A'$; moreover, if either hypothesis is $\btl$ and $C \notin \nul$, the conclusion is $\btl$.
    \end{itemize}	
\end{itemize}
\noindent We discuss \Ax{2} below, as it is different for $\sqsubseteq$ and $\preceq$. The above axioms are due to  \cite[p. 222]{krantz_foundations_2006-1} and enumerated in the same order as in that text.  What we call Axiom 6b is what they call Axiom 6' (p. 227).  Our axioms are necessary for $\preceq$ to be representable by a (conditional) probability measure, but for several reasons  (e.g., there is no Archimedean condition), they are not sufficient for representation by even
a set of probability measures.  See \cite{alon_subjective_2014} for a recent representation theorem for sets of probability measures.
  
Axioms 1, 3, 4, and 5, are fairly analogous to facts of quantitative probability.  \Ax{6} is useful because it allows us to ``multiply'' in a qualitative setting.  To see the motivation for \Ax{6a}, note that if $C \subseteq B \subseteq A$ and $C' \subseteq B' \subseteq A'$, then 
$$P(C|B) = \frac{P(C)}{P(B)} \mbox{ and } P(B|A) = \frac{P(B)}{P(A)},$$ and similarly for the $A', B',$ and $C'$.  So if $P(B|A) \geq  P(C'|B')$ and $P(C|B) \geq P(B'|A')$, then 
$$\frac{P(B)}{P(A)} \geq \frac{P(C')}{P(B')}\mbox{ and } \frac{P(C)}{P(B)} \geq \frac{P(B')}{P(A')}.$$  When we multiply the left and right-hand sides of those inequalities, we obtain $P(C)/P(A)  \geq P(C')/P(A')$, which is equivalent to $P(C|A) \geq P(C'|A')$ given our assumption about the nesting of the sets.  Axiom 6b can be motivated similarly.

In addition to these axioms, in statistical contexts, one typically assumes that experimenters agree upon the likelihood functions of the data, which means that, in discrete contexts, $Q^{\mathbb{E}}(\cdot|\theta) = P^{\mathbb{E}}_{\theta}(\cdot)$ whenever $Q(\theta)>0$.  Similarly, we assume that $\preceq$ extends $\sqsubseteq$ in the following sense:
\begin{itemize}
   \item[] \hypertarget{Ax0}{Axiom 0:}
   If $B, D \not \in {\cal N}_{\preceq}$ and $B,D \in \Theta \times {\cal P}(\Omega)$, then
   $A|B \preceq C|D$ if and only if $A|B \sqsubseteq C| D$. 
\end{itemize}

We now return to Axiom 2, which concerns probability zero events.    For any given fixed $\theta$, the conditional probability $P_{\theta}(\cdot|E)$  is undefined if and only if $P_{\theta}(E) = 0$.  Similarly, we define $\nul_{\sqsubseteq}$ to be all and only sets of that form $\{\theta\} \times E$ such that $E|\theta \sqsubseteq \emptyset|\theta$. We call such events $\sqsubseteq$-\emph{null}.  Notice that an experimenter may have a prior that assigns a parameter $\theta \in \Theta$ zero probability, even if $P_{\theta}(E) > 0$ for all events $E$.  So the set of null events, in the experimenter's joint distribution, will contain all of the null events for each $P_{\theta}$ plus many others. Accordingly, we assume the following about $\preceq$-null events.

\begin{itemize}
 \item[]  \hypertarget{Ax2'}{Axiom 2} (for $\sqsubseteq$):
 $\theta \times \Omega \not \in \nul_{\sqsubseteq}$ for all $\theta$, and $A \in \nul_{\sqsubseteq}$ if and only if $A= \theta \times E$ and $E|\theta \sqsubseteq \emptyset|\theta$.

 \item[] \hypertarget{Ax2}{Axiom 2} (for $\preceq$): $\Delta \not \in {\cal N}$, and $A \in {\cal N}$ if and only if $A|\Delta \preceq \emptyset| \Delta$.
\end{itemize}

We say an expression of the form $\cdot|A$ is \emph{undefined} with respect to $\preceq/\sqsubseteq$ if $A$ is null with respect to the appropriate relation.  Because it is typically clear from context, we do not specify with respect to which ordering an expression is undefined.

The final axiom just says that that permissible priors do not vary with the experiment.

\begin{itemize}
   \item[] \hypertarget{Ax7}{Axiom 7:}
   $\theta|^\mathbb{E}\Delta^\mathbb{E} \sim \theta|^\mathbb{F}\Delta^\mathbb{F}$  
   for any $\theta \in \Theta$ and for all experiments $\mathbb{E}$ and $\mathbb{F}$ 
   containing $\Theta$ as their parameter spaces.
\end{itemize}

\Ax{7} is plausible if a prior distribution represents a researcher's degrees of belief, and the researcher believes her choice of experiment does not influence the parameter of interest.  However, \Ax{7} does conflict with ``objective Bayesian'' approaches in which, for instance, reference priors \citep{bernardo_reference_1979} or maximum entropy priors \citep{jaynes_probability_2003} are used.

Axiomatic systems for non-numerical, comparative conditional probabilities are not new; see \citep{keynes_treatise_1921, koopman_axioms_1940} for instance.  Although there are a number of small technical differences between the axioms of the various systems proposed over the last hundred years, the main difference between our framework and previous systems is that we axiomatize \emph{two} orderings, i.e., we distinguish qualitative likelihoods from qualitative priors and joint distributions.

\subsection{Favoring and QLL}
Over the next two sections, we sketch proofs of  \autoref{thm:QLLFavoring} and \autoref{thm:qmsp} so the reader can see how our axioms work in ways analogous to quantitative probability theory. For ease of reading, we have stated all the propositions and lemmata below without qualification, even though several require the assumption that one or more events are non-null.  Again, for detailed proofs that handle the case of null events, see the supplementary materials.

Our goal in this section is to sketch a proof of  \autoref{thm:QLLFavoring} specifically.   The theorem follows immediately from the following two propositions:

\begin{proposition}\label{prop:QBayesFactorFavoring}
    Suppose $H_1 \cap H_2 = \emptyset$. Then $E|H_1 \succeq E|H_2$ if and only if $H_1|E\cap (H_1 \cup H_2) \succeq H_1|H_1\cup H_2$. Further, if either side of the biconditional contains a strict inequality $\succ$, then so does the other.
\end{proposition}

\begin{proposition}\label{prop:qBayesFactorLikelihoods}
        Suppose $H_1$ and $H_2$ are finite and that $H_1 \cap H_2 = \emptyset$.  Then $E|H_2 \preceq E|H_1$ for all orderings $\preceq$ (satisfying the axioms above) if $E|\theta_2 \sqsubseteq E|\theta_1$ for all $\theta_1 \in H_1$ and $\theta_2 \in H_2$.  Further, under \autoref{assum}, if $E|H_2 \preceq E|H_1$ for all orderings $\preceq$, then $E|\theta_2 \sqsubseteq E|\theta_1$ for all $\theta_1 \in H_1$ and $\theta_2 \in H_2$. 
        
        In both directions, the left-hand side contains the strict inequality $\prec$ if and only if the right-hand side contains the strict relation $\sqsubset$.
\end{proposition}
Note that if $H_1$ and $H_2$ are simple hypotheses, i.e. $H_1 = \theta_1, H_2 = \theta_2$, then we get that $E|H_1 \preceq E|H_2$ if and only if $E|\theta_1 \sqsubseteq E|\theta_2$ by \Ax{0}. The additional assumption is required only when $H_1$ or $H_2$ are composite.

The proofs of these propositions require the following lemmata; the first two are  analogs of the fact that $P(E|H_1 \cup H_2)$ must be between $P(E|H_1)$ and $P(E|H_2)$.
\begin{lemma}\label{lm:qllcf}
    Suppose $H_1 \cap H_2 = \emptyset$. Then $E|H_1 \succeq E|H_2$ if and only if $E|H_1 \succeq E|(H_1 \cup H_2)$. Further, if either side of the biconditional is strict, the other side is strict too.
\end{lemma}

\begin{lemma}\label{lm:qbfl2} Suppose $H_1 \cap H_2 = \emptyset$.
   \begin{enumerate}
       \item If $E|H_1, E|H_2 \preceq E|H_3$, then $E|(H_1 \cup H_2) \preceq E|H_3$.
       \item If $E|H_3 \preceq E|H_1, E|H_2$, then $E|H_3 \preceq E|(H_1 \cup H_2)$.
   \end{enumerate}
   If the premise is not strict \emph{and} $E|H_1 \sim E|H_2$, then the conclusion is not strict. Otherwise, the conclusion is strict. 
\end{lemma}

Finally, we often use the following variant of \Ax{6a}: 
\begin{lemma}\label{lm:Aditya}
    Suppose $A\supseteq B\supseteq C$ and $A\supseteq B'\supseteq C$. If $B|A \succeq C|B'$ and $B \not \in \nul_{\preceq}$, then $B'|A \succeq C|B$. Further, if the antecedent is $\succ$, the consequent is $\succ$. 
\end{lemma}

\noindent \textbf{Proof of \pref{QBayesFactorFavoring}:} 
First we prove the left-to-right direction. Suppose $E|H_1 \succeq E|H_2$, where $H_1 \cap H_2 = \emptyset$.
By \autoref{lm:qllcf},  we get $E|H_1 \succeq E|(H_1 \cup H_2)$.  Applying \Ax{4} to both sides of that inequality yields 
\begin{equation}\label{eqn:qbff1}
    E\cap H_1|H_1 \succeq E\cap (H_1 \cup H_2)|H_1 \cup H_2
\end{equation}
Define:
\begin{alignat*}{3}
    &A = H_1 \cup H_2 & & \\ 
    &B =  E \cap (H_1 \cup H_2) & \quad \quad & B' = H_1\\
    &C = E\cap H_1 & &
\end{alignat*}
Note that equation \eqref{eqn:qbff1} says that $C|B' \succeq B|A$. So \autoref{lm:Aditya} tells us that $C|B \succeq B'|A$, i.e.,
    $E\cap H_1|E\cap (H_1 \cup H_2) \succeq H_1|H_1\cup H_2$
Applying \Ax{4} to the left-hand side of that  inequality yields the desired result. Note that if we had assumed $E|H_1 \succ E|H_2$, then our conclusion would contain $\succ$ because both  \autoref{lm:Aditya} and \autoref{lm:qllcf}  yield strict comparisons. 

In the right to left direction, suppose $H_1|E\cap (H_1 \cup H_2) \succeq H_1|H_1\cup H_2$. By \Ax{4}:
\begin{align*}
    H_1&|E\cap (H_1 \cup H_2) \\
    &\sim H_1\cap E\cap (H_1 \cup H_2)|E\cap (H_1 \cup H_2) \\
    &\sim E \cap H_1|E\cap (H_1 \cup H_2)
\end{align*}
So, we know $E \cap H_1|E\cap (H_1 \cup H_2) \succeq H_1|H_1\cup H_2$. Now we apply  \autoref{lm:Aditya} to
\begin{alignat*}{3}
    &A = H_1 \cup H_2 &  \\
    &B =  H_1& \quad \quad 
    &B' = E \cap (H_1 \cup H_2)  \\
    &C = E\cap H_1 & &
\end{alignat*}
The containment relations are satisfied, and $C|B' \succeq B|A$. Thus, we get $C|B \succeq B'|A$, i.e.,   $E\cap H_1|H_1 \succeq E\cap (H_1\cup H_2)|(H_1 \cup H_2).$
Applying \Ax{4} to both sides of the last inequality yields $E|H_1 \succeq E|H_1 \cup H_2$. So by \autoref{lm:qllcf}, we get $E|H_1 \succeq E|H_2$, as desired. As before, if the premise were $\succ$, the conclusion would also be $\succ$ because the necessary lemmas would yield strict comparisons.
\qed
\\

\noindent \textbf{Proof of \pref{qBayesFactorLikelihoods}:} 
In the right-to-left direction, suppose $E|\theta_2 \sqsubseteq E|\theta_1$ for all $\theta_1 \in H_1$ and $\theta_2 \in H_2$.  We want to show $E|H_2 \preceq E|H_1$ for all orderings $\preceq$ satisfying the axioms.  So let $\preceq$ be any such ordering.  We show $E|H_2 \preceq E|H_1$ by induction on the maximum of the number of elements in $H_1$ or $H_2$.

In the base case, suppose $H_1 = \{\theta_1\}$ and $H_2=\{\theta_2\}$ both have one element.  Then by \Ax{0}, it immediately follows that $E|H_2 \preceq E|H_1$.

For the inductive step, suppose the result holds for all natural numbers $m \leq n$, and assume that $H_1 = \{\theta_{1,1}, \ldots \theta_{1,k}\}$ and $H_2=\{\theta_{2,1} \ldots, \theta_{2,l}\}$ where either $k$ or $l$ (or both) is equal to $n+1$.  Define $H_1'=\{\theta_{1,1}\}$ and $H_1'' = H_1 \setminus H_1'$, and similarly, define $H_2'=\{\theta_{2,1}\}$ and $H_2'' = H_2 \setminus H_2'$.  Then $H_1', H_1', H_2',$ and $H_2''$ all have $n$ or fewer  elements, and by assumption, $E|\theta_2 \sqsubseteq E|\theta_1$ for all $\theta_2 \in H_2', H_2''$ and all $\theta_1 \in H_1', H_1''$.  By inductive hypothesis, it follows that $E|H_1', E|H_1'' \preceq E|H_2', E|H_2''$.  By repeated application of \autoref{lm:qbfl2}, it follows that $E|H_1 = E|(H_1' \cup H_1'') \preceq E|(H_2' \cup H_2'') = E|H_2$.

Note that if the premise were $\sqsubset$, \Ax{0} would prove the strict version of the base case. In the inductive step, every inequality would also be strict because \autoref{lm:qbfl2} preserves the strictness. Thus, we would get $E|H_2 \prec E|H_1$.

In the left-to-right direction, suppose $E|H_2 \preceq E|H_1$ for all orderings $\preceq$.  We want to show that $E|\theta_2 \sqsubseteq E|\theta_1$ for all $\theta_1 \in H_1$ and $\theta_2 \in H_2$.  So fix $\theta_1 \in H_1$ and $\theta_2 \in H_2$.  We must show $E|\theta_2 \sqsubseteq E|\theta_1$.  Suppose for the sake of contradiction that $E|\theta_2 \not \sqsubseteq E|\theta_1$, and thus, by totality of $\sqsubseteq$, we know $E|\theta_1 \sqsubset E|\theta_2$.   We must find at least one ordering $\preceq$ that (i) satisfies Axioms 0-6 and (ii) entails that $E|H_2 \not \preceq E|H_1$.

Using \autoref{assum} with $H = \{\theta_1,\theta_2\}$, define $\preceq$ so that it satisfies all of the axioms and:
    \begin{eqnarray*}
        \theta_1 |\Delta, \theta_2| \Delta &\succ& \emptyset|\Delta \\
        A|\Delta  &\sim& \emptyset| \Delta \mbox{ if } \theta_1, \theta_2 \not \in A
    \end{eqnarray*}

Because  $\theta_1 |\Delta, \theta_2| \Delta \succ \emptyset|\Delta$, \Ax{2} entails that $\{\theta_1\}, \{\theta_2\} \not \in \nul$. Since $\{\theta_1\}, \{\theta_2\} \not \in \nul$ and $E|\theta_1 \sqsubset E|\theta_2$, by \Ax{0} we obtain that $E|\theta_1 \prec E|\theta_2$.

To finish the proof, we need one final lemma, which is the analog of the claim that $P(A|B \cup C) = P(A|B)$ whenever $P(B) > P(C)=0$.

\begin{lemma}\label{lm:krantz9}
Suppose $C \in \nul$ and $B \not \in \nul$.  Then $A|B \sim A|B \cup C$ for all $A$.
\end{lemma}

Now consider $H_1' = H_1 \setminus \{\theta_1\}$ and $H_2' = H_2 \setminus \{\theta_2\}$.  Since $H_1$ and $H_2$ are disjoint, neither $H_1'$ nor $H_2'$ contain either $\theta_1$ or $\theta_2$.  Thus by construction of $\preceq$, it follows that $H_1', H_2' \in \nul$.  So by \autoref{lm:krantz9}, we obtain that 
\begin{eqnarray*}
E|H_1 &=& E|H_1' \cup \{\theta_1\} \sim E|\theta_1 \mbox{ and } \\
E|H_2 &=& E|H_2' \cup \{\theta_2\} \sim E|\theta_2
\end{eqnarray*}
Since $E|\theta_1 \prec E|\theta_2$, it follows that $E|H_1 \prec E|H_2$, as desired.

If the premise were $\prec$, the conclusion would clearly be $\sqsubset$, by simply swapping all strict and weak inequalities in the preceding argument.

\begin{flushright}
    $\Box$
\end{flushright}

\subsection{Posterior Equivalence and QMSP}
In this section, we prove \autoref{thm:qmsp}, which says that \f{qmsp} is equivalent to posterior equivalence.  We use the following lemmata to prove this theorem.
\begin{lemma}\label{lm:qAncillaryNoUpdate}
    If $T$ is ancillary, then for all $t \in {\cal R}$, the events $\{T=t\}$ and $\Omega$ are posterior/support equivalent.
\end{lemma}

The previous lemma captures a simple intuition. Because the distribution of an ancillary statistic does not depend on the parameter of interest, learning the value of an ancillary statistic should not change one's beliefs about the parameter of interest:  ancillary statistics are uninformative, and so one's posterior beliefs (i.e., those conditional on   $\{T=t\}$) should equal one's prior beliefs (i.e., those conditional on the sure event $\Omega$).  For instance, if a callous experimenter flips a coin to select which patients will receive a treatment in a drug trial, then the sequence of coin flips should not change her beliefs about the efficacy of the drug!

\newcommand{\ce}{|^{\mathbb{E}}}
\newcommand{\cf}{|^{\mathbb{F}}}
\newcommand{\cm}{|^{\mathbb{M}}}

\begin{lemma}\label{lm:qBayesLTPGeneral}
    Let $A, B_1 \ldots B_n$ be events in some experiment $\mathbb{E}$, and let $C_1, \ldots C_n, B_1' \ldots B_n'$ be events in an experiment $\mathbb{F}$ (which may or may not be identical to $\mathbb{E}$).  Let $G = \bigcup_{i \leq n} B_i$ and $G' = \bigcup_{i \leq n} B_i'$. 
    Further, suppose
    \begin{enumerate}
        \item $B_1, \ldots, B_n$ are disjoint,
        \item $B_1', \ldots, B_n'$ are disjoint,
        \item $B_i|G \sim B_i'|G'$ for all $i \leq n$, and
        \item  $A \ce B_i \sim C_i \cf  B'_i$  for all $i \le n$.
   \end{enumerate}
 If $A \cap G \not \in \nul$, then  $B_i \ce A \cap G \sim B_i \cap C_i \cf \bigcup_{j \le n} B_j' \cap C_j$ for all $i \le n$.
\end{lemma}

\autoref{lm:qBayesLTPGeneral} can be thought of as an application of Bayes' theorem combined with the Law of Total Probability.  Consider the simple case in which $C_i = C$ and $B_i = B_i'$ for all $i$.  Further, suppose that $G=G' = \Delta$ is the sure event, i.e., the $B_i$s partition the entire space.  In that case, the quantitative version of the lemma says that, if $P(A|B_i)=P(C|B_i)$ for all $i$, then $P(B_i|A)=P(B_i|C)$ for all $i$. That claim follows easily from an application of Bayes' theorem and the Law of Total Probability.

\renewcommand{\oe}{\omega^{\mathbb{E}}}
\newcommand{\of}{\omega^{\mathbb{F}}}
\newcommand{\ome}{\omega^{\mathbb{M},\mathbb{E}}}
\newcommand{\omf}{\omega^{\mathbb{M},\mathbb{F}}}

Earlier, we noted that we may sometimes need to write $\langle \mathbb{E}, \omega \rangle$ to indicate that the outcome $\omega$ was observed in experiment $\mathbb{E}$, as the same outcome could appear in multiple experiments. That distinction is now relevant. Let $\oe = \langle \mathbb{E}, \omega \rangle$.  Importantly, $\oe$ must be distinguished from the situation in which (i) one conducts a mixed experiment $\mathbb{M}$ containing $\mathbb{E}$ as a component, (ii) $\mathbb{E}$ is selected (by one's randomizing device) as the component experiment to be performed, and (iii) one observes $\omega$.    The latter situation is represented by the pair $\ome := \langle \mathbb{M}, \langle \mathbb{E}, \omega \rangle \rangle$.


\begin{lemma} \label{lem:mixtureComponentEquivalence}
    Let $\omega$ be an outcome of some experiment $\mathbb{E}$ and let $\mathbb{M}$ be an arbitrary mixture including $\mathbb{E}$.  Then  $\ome$  is posterior equivalent to $\oe$.
\end{lemma}
\begin{proof}
    This follows directly from \autoref{lm:qBayesLTPGeneral}, with the appropriate substitutions.  Enumerate the elements of $\Theta = \{\theta_1, \ldots \theta_n\}$ and let:
    \begin{align*}
        A &= \oe \\
        B_i &= \theta_i \\
        B'_i &= \theta_i \cap \Omega^\mathbb{E} \\
        C_i &= \ome 
    \end{align*}
    We now show that the four conditions necessary to apply \autoref{lm:qBayesLTPGeneral} hold. Obviously, disjointness holds. Condition 4 says that $\oe \ce \theta_i \sim \ome \cm \theta_i \cap  \Omega^\mathbb{E}$, which is the third condition in the definition of a mixture.
    
    Condition 3 says that $\theta_i|^\mathbb{E}\Theta \sim \theta_i|^\mathbb{M}\Theta \cap \Omega^\mathbb{E}$. We prove this in two steps. First, recall that $\theta_i|^\mathbb{E} \Theta \sim \theta_i|^\mathbb{M} \Theta$, as we assume that priors don't vary across experiments (\Ax{7}). Next, the fourth condition in the definition of a mixture says that the projection statistic is ancillary. Thus, we can apply \autoref{lm:qAncillaryNoUpdate} to the projection statistic, giving us that $\Omega^\mathbb{E}$ is posterior equivalent to $\Omega^\mathbb{M}$ (and recall that $\Omega^\mathbb{M}$ and $\Theta$ are both shorthand for the same event, $\Delta^\mathbb{M}$). This means that $\theta_i|^\mathbb{M} \Theta \sim \theta_i|^\mathbb{M} \Theta, \Omega^\mathbb{E}$. So, we've shown Condition 3 by transitivity.
    
    Now, \autoref{lm:qBayesLTPGeneral} says that $\theta_i|^\mathbb{E} \ome \sim \theta_i|^\mathbb{M} \ome, \Omega^\mathbb{E} \sim \theta_i|^\mathbb{M} \ome$, as desired.
\end{proof}

\begin{proposition}\label{prop:qSufficientWeakPosteriorEquivalence}
    Suppose $\Theta$ is finite and let $\omega_1, \omega_2 \in \Omega^\mathbb{E}$ be arbitrary. Then the following claims are equivalent:
    \begin{enumerate}
        \item $\omega_1$ and $\omega_2$ are posterior/support equivalent.
        \item There is a sufficient statistic $T$ such that $T(\omega_1) = T(\omega_2)$ 
    \end{enumerate}
\end{proposition}

\begin{proof}[Proof of \autoref{thm:qmsp}]
    In the forward direction we prove that if \f{qmsp} entails that $\oe_1$ and $\of_1$ are evidentially equivalent, then they are $\mathcal{U}$ posterior equivalent, where $\mathcal{U}$ is the set of all orderings satisfying our axioms. So assume that, in every mixture $\mathbb{M}$ of $\mathbb{E}$ and $\mathbb{F}$, there is some sufficient statistic $T$ for $\mathbb{M}$ such that $T(\ome_1) = T(\omf_2)$. By \autoref{prop:qSufficientWeakPosteriorEquivalence}, this means that $\ome_1$ and $\omf_2$ are posterior equivalent. From \autoref{lem:mixtureComponentEquivalence}, we know that $\ome_1$ is posterior equivalent to $\oe_1$ and that $\omf_2$ is posterior equivalent to $\of_2$. By the transitivity of posterior equivalence, we conclude that $\oe_1$ and $\of_2$ are posterior equivalent, as desired.
    
    The reverse direction follows by symmetry; each step of the above argument is reversible. In more detail, first assume that $\oe_1$ and $\of_2$ are posterior equivalent; by the same steps as above, we conclude that $\ome_1$ and $\omf_2$ are posterior equivalent in any mixture $\mathbb{M}$. Then we use \autoref{prop:qSufficientWeakPosteriorEquivalence} to conclude that there is a sufficient statistic $T$ such that $T(\ome_1) = T(\omf_2)$, and thus $\oe_1$ and $\of_2$ are evidentially equivalent by \f{qmsp}.
\end{proof}

\section{Major Differences between the Qualitative and Quantitative Frameworks}
\label{sec:qualdiff}

In the previous section, we showed that some likelihoodist principles extend naturally to the qualitative setting. But one might ask, ``Which aspects of quantitative probabilistic reasoning do \textit{not} extend to the qualitative setting?'' In this section, we answer that question. 

In \autoref{subsec:noQLL+} we show that there is no natural qualitative analog of $\f{ll}^+$.  Recall, in the quantitative setting, the likelihood function fully captures our notion of robustness:  one can determine when all Bayesians will agree on some evidential statement about support, favoring, or posterior equivalence using only facts about the likelihood function.  
In the qualitative setting, however, the positions of $E$ and $F$ in the likelihood \emph{ordering} are no longer sufficient to determine which hypotheses the two observations support and to what degree.  Thus, to explore which hypotheses enjoy robust confirmation from the data, one must resort to our definition of support, which does easily translate to the qualitative setting.

In \autoref{subsec:JointNotUnique}, we highlight one possible explanation for why $\f{ll}^+$ does not extend qualitatively. In the quantitative setting, an experimenter's prior distribution and the likelihood function together determine a unique posterior distribution. In contrast, in the qualitative setting, an agent's prior ordering and the the likelihood ordering fail to determine a unique posterior ordering. Intuitively, the failure may explain why $\f{ll}^+$ cannot be extended qualitatively -- the likelihood ordering is simply not rich enough. But we have no formal argument for this.   Future research should investigate the following. Let ${\cal A}$ be an axiomatic system for qualitative probability extending our axioms, such that priors and likelihoods always determine a unique joint distribution in $\mathcal{A}$. Is there a qualitative version of $\f{ll}^+$ in $\mathcal{A}$?

In \autoref{subsec:qSupportNotTotal} we show that qualitative support is not total order over simple hypotheses. 

\subsection{No Qualitative Analog of LL+}
\label{subsec:noQLL+}

\citet[p. 22]{royall_statistical_1997} emphasizes that one virtue of $\f{ll}^+$ is that it entails the ``irrelevance of the sample space.''  Royall's point is that to assess whether $P^{\mathbb{E}}_{\theta_1}(\omega_1)/P^{\mathbb{E}}_{\theta_2}(\omega_1) 
\geq P^{\mathbb{F}}_{\theta_1}(\omega_2)/P^{\mathbb{F}}_{\theta_2}(\omega_2)$ or vice versa, one need not know what other observations one \emph{might} have made in the experiments $\mathbb{E}$ and $\mathbb{F}$: one needs to know only the likelihood functions of $\omega_1$ and $\omega_2$.  Birnbaum notices that \f{lp} has a similar implication:  to determine whether $P^{\mathbb{E}}_{\theta}(\omega_1) = c \cdot P^{\mathbb{F}}_{\theta}(\omega_2)$ for all $\theta$ -- and hence to determine whether $\omega_1$ and $\omega_2$ are ``evidentially equivalent'' -- one need know only the likelihood functions of $\omega_1$ and $\omega_2$.  \citet[p. 271]{birnbaum_foundations_1962} concludes  that \f{lp} entails the ``irrelevance of outcomes not actually observed.''  

In this section, we argue that Royall's and Birnbaum's remarks are misguided:  if one endorses the Robust Bayesian viewpoint, then one cannot in general determine whether $E$ supports one hypothesis over another as much as $F$ if one knows only how $E|\theta$ and $F|\theta'$ are ordered across all $\theta, \theta' \in \Theta$.    Thus, we also reject both \citep{birnbaum_foundations_1962}'s claim that  ``the evidential \emph{meaning} of any outcome $x$ of any experiment $E$ is characterized fully by giving the likelihood function [of $x$]'' and \citep{berger_likelihood_1988}'s similar thesis that ``all evidence, which is obtained from an experiment, about an unknown quantity $\theta$, is contained in the likelihood function of $\theta$ for the given data.''

Before stating our main result, it is important to notice that Royall undersells the convenience of $\f{ll}^+$.  In the quantitative setting, one can determine whether $\omega_1$ Bayesian supports $\theta_1$ over $\theta_2$ at least as much as $\omega_2$ by comparing only \emph{four} numbers: $P_{\theta_1}(\omega_1), P_{\theta_2}(\omega_1), P_{\theta_1}(\omega_2),$ and $P_{\theta_2}(\omega_2)$. That is,  $\f{ll}^+$ entails that it is unnecessary to know the \emph{full} likelihood functions of $\omega_1$ and $\omega_2$ to assess whether the former supports a particular hypothesis over another. 

Our main result is the following:

\begin{theorem}\label{thm:qLL+NoAnalog}
    There exist two qualitative experiments such that 
        \begin{itemize}
            \item  $E|\theta$ and $F|\theta'$ are ordered identically in the two experiments, for all $\theta, \theta'$.
            \item In the first experiment, $E$ qualitatively supports $\theta_1$ over $\theta_2$ strictly more than $F$, but in the second experiment, the support relation is reversed. 
        \end{itemize} 
\end{theorem}

The proof of \autoref{thm:qLL+NoAnalog}  relies on another substantial theorem.  To state the theorem, we must state a qualitative notion of conditional independence.  Given an ordering $\btleq$ satisfying the above axioms (so $\btleq$ could be either $\preceq$ or $\sqsubseteq$), define:

\begin{definition}\label{defn:qConditionalIndependence}
For any $C \not \in \nul$, the events $A$ and $B$ are said to be \textbf{conditionally independent} given $C$ if $A|B \cap C \bteq A|C$ or $B \cap C$ is null.   In this case, we write $A \ind_C B$.  When $C$ is the sure event, we say $A$ and $B$ are \textbf{independent} (full stop); in that case, we drop the subscript $C$ and write $A \ind B$.
\end{definition}

The relation $\ind$ so-defined satisfies the semi-graphoid properties and is thus an appropriate qualitative analog of conditional probability. See supplemental materials for proofs.\footnote{See \citep{pearl_causality:_2000} for statements of the semi-graphoid properties, some of which were studied earlier in \citep{dawid_conditional_1979}. When  $\btleq$ is $\preceq$, the relation $\ind$ satisfies the semi-graphoid properties without qualification.  When $\btleq$ is $\sqsubseteq$, however,  minor adjustments to the statements of the properties are necessary because some events can appear on only one side of the conditioning bar in expressions involving $\sqsubseteq$.  For instance,  $\theta$ can appear to the right but not left of the conditioning bar in expressions involving $\sqsubseteq$, and this limitation means $\ind$ is not fully symmetric, as the semi-graphoid properties require.}

 \begin{theorem}
    \label{thm:qLPEntailsPosteriorEquivalence}
    Suppose $\Theta$ is finite and that there exist events $\{C_{\theta} \}_{\theta \in \Theta}$ such that
    \begin{enumerate}
            \item $E| \theta \equiv F \cap C_{\theta} | \theta$ for all $\theta \in \Theta$, 
            \item $F \ind_\theta C_{\theta}$ with respect to $\sqsubseteq$ for all $\theta \in \Theta$,
            \item $C_{\theta} | \theta \equiv C_{\eta} | \eta$ for all $\theta, \eta \in \Theta$.
            \item $\emptyset|\theta \sqsubset C_\theta|\theta$ for all $\theta$.
    \end{enumerate}	 
    If $\Theta$ is finite, then $E$ and $F$ are  $\mathcal{U}$-posterior/support equivalent.
\end{theorem}

\autoref{thm:qLPEntailsPosteriorEquivalence} is a qualitative analog of \f{lp} that has a fairly intuitive quantitative analog (one can think of the events $C_{\theta}$ as encoding the constant $c > 0$ in \f{lp}), but we omit its proof and a discussion of its motivation.  See the supplementary materials for more detail.

\begin{proof}[Proof of \autoref{thm:qLL+NoAnalog}]
Perhaps surprisingly, the experiments can be constructed over a parameter space $\Theta$ containing only two elements, $\theta_1$ and $\theta_2$.   Consider two experiments with likelihood orderings summarized by the numerical probabilities in the table below.  In both experiments, we pull \emph{twice}  (with replacement) from an unknown urn.  The urn may be of two types, represented by $\theta_1$ and $\theta_2$ respectively in the tables below.  

    \begin{table}[H]
    \centering
    \begin{tabular}{l|llll}
               & Red & White & Cobalt &  Black \\ 
               \hline
    $\theta_1$ & 2/200 & 1/200  & 100/200 & 97/200   \\
    $\theta_2$ & 20/200  & 9/200 & 100/200 & 70/200 
    \end{tabular}
    \caption{Experiment $\mathbb{E}_1$}
    \end{table}
    \begin{table}[H]
    \centering
    \begin{tabular}{l|llll}
               & Red  & White  & Cobalt &  Black \\ 
               \hline
    $\theta_1$ & 2/200 & 1/200  & 100/200 & 97/200   \\
    $\theta_2$ & 20/200  & 11/200 & 100/200 & 69/200 
    \end{tabular}
    \caption{Experiment $\mathbb{E}_2$}
    \end{table}
    
Now consider the qualitative likelihood ordering generated by the above two experiments.  In other words, define $\sqsubseteq$ so that $E |^{\mathbb{E}_i} F, \theta \sqsubseteq E' |^{\mathbb{E}_j} F', \theta'$ if and only if $P^{\mathbb{E}_i}_{\theta}(E|F) \leq P^{\mathbb{E}_j}_{\theta'}(E'|F')$, where $i, j \in \{1,2\}$ range over the two possible experiments.

For example, let White and Red respectively denote the events that we draw a white/red ball on the first draw. Let Cobalt$_2$ denote the event we draw a Cobalt ball on the second draw.  Then since 
\[
P^{\mathbb{E}_1}_{\theta_1}(\mbox{Red}) \leq P^{\mathbb{E}_1}_{\theta_2}(\mbox{White}) \leq  P^{\mathbb{E}_2}_{\theta_2}(\mbox{White})\]
we have that
\[
\mbox{Red}|^{\mathbb{E}_1}~ \theta_1 \sqsubseteq \mbox{White} |^{\mathbb{E}_1}~ \theta_2 \sqsubseteq \mbox{White}|^{\mathbb{E}_2}~ \theta_2 \]
Similarly, since the draws are assumed to be conditionally independent (given the choice of urn), we have 
\[ P^{\mathbb{E}_1}_{\theta_1}(\mbox{Red} \cap  \mbox{Cobalt}_2) = 2/100 \cdot 1/2  = P^{\mathbb{E}_1}_{\theta_1}(\mbox{White}),\]
which translates to the qualitative condition that
\[ \mbox{Red} \cap  \mbox{Cobalt}_2|^{\mathbb{E}_1} \theta_1 \equiv  \mbox{White}|^{\mathbb{E}_1} \theta_1 \]

Now that the reader understands how to use the table to find the relevant qualitative conditions, we proceed with the proof.

First, note that in both experiments we have:
    \begin{align*}
        \text{White}|\theta_1 \sqsubset \text{Red}|\theta_1 \sqsubset \text{White}|\theta_2 \sqsubset \text{Red}|\theta_2
    \end{align*}
This gives us the first claim in the theorem above: the events White and Red are ordered identically by $\sqsubseteq$ in both experiments.

We now show Red qualitatively supports $\theta_1$ over $\theta_2$ strictly more than White in experiment 1, while the reverse is true is experiment 2. 
The following is the key proposition, as it describes sufficient conditions for (strict) qualitative support.
    \begin{proposition}\label{prop:qLL+NoAnalogMain}
        Suppose the following three conditions hold:
        \begin{enumerate}
            \item $B_1 \cap B_2 = \emptyset$,
            \item $A_1 | B_1 \succ A_2 | B_1 \succ \emptyset$, and
            \item $A_2 | B_2 \succeq A_1 |B_2 \succ \emptyset$.
        \end{enumerate}
        Then 
        \[ B_1 | A_1 \cap (B_1 \cup B_2) \succ B_1 | A_2 \cap (B_1 \cup B_2)\]
        If the three conditions hold for any prior that agrees with our axioms, then $A_1$ qualitatively supports $B_1$ over $B_2$ strictly more than $A_2$.
    \end{proposition}
To apply the proposition, let $B_i = \theta_i$, $A_1 =\text{Red} \cap \text{Cobalt}_2$, and $A_2 =$ White. All three conditions of \autoref{prop:qLL+NoAnalogMain} hold for any prior that agrees with the likelihood ordering in experiment 1; thus, $A_1 \sbaf{}{B_1}{}{B_2} A_2$. In experiment 2, the same conditions obtain if we swap $A_1$ and $A_2$; thus we get $A_2 \sbaf{}{B_1}{}{B_2} A_1$. However, $A_1$ and $A_2$ are not ordered identically by the likelihoods in the two experiments. 
    
We now claim that $A_1 =$ Red, Cobalt$_2$ is qualitative support equivalent to the event Red. \autoref{thm:qLPEntailsPosteriorEquivalence} entails exactly that. We can apply that theorem with $E = R$ and $F = A_1$. Let $C_{\theta} =$ Cobalt$_2$ for all $\theta$. Conditions 1, 3, and 4 follow obviously from the likelihoods in both experiments. For Condition 2, note that:
    \begin{align*}
        \text{Red} \cap \mbox{Cobalt}_2 | \text{Cobalt}_2, \theta \equiv \text{Red}|\text{Cobalt}_2, \theta \equiv \text{Red}|\theta
    \end{align*}
where the first equality follows from \Ax{4} and the second equality follows from the fact that the first draw is conditionally independent of the second draw, given $\theta$. Thus, $F \ind_\theta C_\theta$ for all $\theta$.
    
With the four conditions satisfied, the theorem tells us that $A_1$ and Red are posterior/support equivalent. Combining this with our earlier application of \autoref{prop:qLL+NoAnalogMain} means that Red$\sbaf{}{\theta_1}{}{\theta_2}$White in experiment 1, while White$\sbaf{}{\theta_1}{}{\theta_2}$Red in experiment 2.
\end{proof}

Let's summarize the upshot of the above theorem.  Although the two experiments order four important events (namely, $\text{Red}|\theta_1, \text{Red}|\theta_2, \text{White}|\theta_1$ and $\text{White}|\theta_2$) in the same way, they do \emph{not} order \emph{all} events identically.  Other events (e.g., Cobalt$_2$) influence the evidential impact of Red and White.  This is why, contra Royall's and Birnbaum's claims, the sample space is not ``irrelevant.''

\subsection{Qualitative Likelihoods and Priors do not Determine a Unique Posterior}
\label{subsec:JointNotUnique}
When working with numerical probabilities, the likelihood function of $E$ and prior $P$ together determine the posterior $P(H|E)$ by Bayes Rule and the law of total probability:
\begin{align*}
    P(E|H) &= \sum_{\theta \in H} P(E|\theta) P(\theta|H) \\
    P(H|E) &= \frac{P(E|H)P(H)}{P(E)}
\end{align*}
However, in the qualitative setting, a prior and likelihood ordering do not determine the posterior.

Consider the following scenarios, in which one flips a coin of unknown bias a single time. The parameter space $\Theta = \{\theta_1, \theta_2\}$ denotes the unknown bias of the coin, and the set of outcomes is $\Omega = \{\omega_H, \omega_T\}$.\\

\noindent \textbf{Scenario 1:} Your prior is defined by $P(\theta_1) = 0.49$ and $P(\theta_2) = 0.51$.   The likelihood functions of $\omega_1$ and $\omega_2$ are determined by two numbers:  $P_{\theta_1}(\omega_H) = 0.99$ and $P_{\theta_2}(\omega_H)  = 0.01$. In this case, $P(\theta_1| \omega_H) \approx 0.99$ while $P(\theta_2|\omega_H) \approx 0.01$.\\

\noindent \textbf{Scenario 2:} Your prior is defined by $P(\theta_1) = 0.01$ and $P(\theta_2) = 0.99$.   Again, the likelihood functions of $\omega_1$ and $\omega_2$ are determined by two numbers:  $P_{\theta_1}(\omega_H) = 0.51$ and $P_{\theta_2}(\omega_H)  = 0.49$. In this case, $P(\theta_1| \omega_H) \approx 0.01$ while $P(\theta_2|\omega_H) \approx 0.99$.\\

Intuitively, in scenario 1 your evidence ($\omega_H$) very strongly favors the hypothesis $\theta_1$ that you had a slight prior bias against. In scenario 2, your evidence very weakly favors the hypothesis that you had a large prior bias against. 

Define a joint ordering $\preceq$ using the numerical probabilities above, i.e., define $A|^{i}B \preceq A'|^{j} B'$ if and only if $P^i(A|B) \leq P^j(A|B)$ where $i, j = 1,2$ range over the two scenarios.

Notice that the qualitative likelihood orderings in both scenarios are identical:   $\omega_H|\theta_1 \equiv \omega_T|\theta_2 \sqsupset \omega_T|\theta_1 \equiv \omega_H|\theta_2$. Furthermore, both scenarios have the same qualitative prior orderings over hypotheses:  $\theta_1 \preceq \theta_2$. But in scenario 1, the posterior ordering is $\theta_1 | \omega_H \succ \theta_2|\omega_H$, while in scenario 2, we get $\theta_1 | \omega_H \prec \theta_2 | \omega_H$. Thus, a qualitative likelihood ordering and prior do not determine a unique qualitative posterior.

The example highlights the distinction between two questions that \citep{royall_statistical_1997} carefully distinguishes:  (1) What does the evidence support? and (2) What should I believe? Because the likelihood orderings are identical in the two scenarios, the (qualitative) favoring and posterior/support equivalence relations \textit{are} the same in the two experiments. 
For example, in this example, $\omega_H$ favors $\theta_1$ over $\theta_2$ in both scenarios, and there is no non-trivial support equivalence in either scenario. In both scenarios, the answers to Royall's first question is the same.  But the answer to the second differs:  the strength of your prior convictions determines what you should believe, and that strength might fail to be captured in the qualitative prior ordering.


\subsection{Qualitative Support is not Total over Simple Hypotheses}
\label{subsec:qSupportNotTotal}
In the quantitative case, for any set of outcomes $E$ and $F$ and any simple hypotheses $\theta_1$ and $\theta_2$ that are compatible with both $E$ and $F$,\footnote{In other words, $P_\theta(E), P_\theta(F) > 0$ for $\theta \in \{\theta_1, \theta_2\}$.} it must be that $E$ Bayesian supports $\theta_1$ over $\theta_2$ at least as much as $F$, or vice versa. Succinctly, Bayesian support between two simple hypotheses is total over compatible pieces of evidence. In the qualitative case, this is not true. 

Consider the following experiments:
\begin{table}[H]
\centering
\begin{tabular}{l|lll}
           & $\omega_1$  & $\omega_2$  & $\omega_3$ \\ \hline
$\theta_1$ & 0.02 & 0.15 & 0.965 \\
$\theta_2$ & 0.01 & 0.05 & 0.985 
\end{tabular}
\caption{Experiment A}
\end{table}
\begin{table}[H]
\centering
\begin{tabular}{l|lll}
           & $\omega_1$ & $\omega_2$  & $\omega_3$ \\ \hline
$\theta_1$ & 0.04 & 0.15 & 0.945 \\
$\theta_2$ & 0.01 & 0.05 & 0.985 
\end{tabular}
\caption{Experiment B}
\end{table}

Notice that, \emph{quantitatively}, in Experiment A, $\omega_2$ strictly Bayesian supports $\theta_1$ over $\theta_2$ more than $\omega_1$, while in Experiment $B$, the reverse is true. 

However, both experiments induce the same qualitative likelihood orderings.\footnote{The full qualitative likelihood ordering is $\emptyset \sqsubset F|\theta_2 \sqsubset E|\theta_2 \sqsubset E,F|\theta_2 \equiv F|\theta_1 \sqsubset E|\theta_1 \sqsubset E,F|\theta_1 \sqsubset G|\theta_1 \sqsubset G,F|\theta_1 \sqsubset G|\theta_2 \equiv E,G|\theta_1 \sqsubset G,F|\theta_2 \sqsubset G,E|\theta_2 \sqsubset \Omega$. }  Thus, observing $\omega_i$ in Experiment A is \emph{qualitatively} support equivalent to observing $\omega_i$ in Experiment B.  Thus, neither $\omega_1$ nor $\omega_2$  strictly \emph{qualitatively} supports $\theta_i$ over $\theta_j$ more than the other, as we have just seen that there are two numerical probability functions compatible with the (qualitative) likelihood ordering in which the quantitative support relations differ.


\section{The Stopping Rule Principle}
\label{sec:sps}
One of the most controversial consequences of \f{lp} is that it entails the ``irrelevance'' of stopping rules (e.g., see Savage's contribution to \cite{savage_foundations_1962}).  This is sometimes called the \emph{stopping rule principle} (\f{srp})  \citep[p. 76]{berger_likelihood_1988}.   Many statisticians and scientists reject \f{srp}, arguing that one's method for analyzing data should reflect the rule one uses to halt sampling; see \citep[pp. 42-51]{mayo_statistical_2018}  and references therein.  For example, according to critics of \f{srp}, a scientist who samples until she acquires a statistically significant result should analyze her data differently than a scientist who acquired the exact same data but had planned -- prior to the experiment -- to stop sampling after a fixed number of observations \citep{anscombe_fixed-sample-size_1954, fletcher_stopping_2022}.    

Our final major result is that \f{rb} likewise entails the irrelevance of a class of so-called ``non-informative stopping rules'' \citep{raiffa_applied_1961}, even in the qualitative setting.  Thus, one cannot object to \f{srp} simply by rejecting \f{lp} or orthodox Bayesianism:  one must reject substantially weaker principles about rational belief and evidence.

Let us first clarify what it means for a stopping rule to be ``irrelevant.'' From the robust Bayesian perspective, the choice between two non-informative stopping rules is \emph{irrelevant} if, whenever an outcome $\omega$ can be obtained from two experiments $\mathbb{E}$ and $\mathbb{F}$ that \emph{differ only} in their respective non-informative stopping rules, 
observing $\omega$ in $\mathbb{E}$ is posterior equivalent to observing $\omega$ in $\mathbb{F}$.  Below, we define ``stopping rule'' and ``non-informative''; we also explain what it means for experiments to ``differ only'' in their stopping rules.

\subsection{Non-Informative Stopping Rules}

Suppose $\theta_1 \in \Theta_1$ is the parameter of interest but that the probability of various experimental outcomes may be partially determined by a nuisance parameter $\theta_2 \in \Theta_2$.  Nonetheless, often the data-generating process in an experiment $\mathbb{E}$ is decomposable into two parts \citep[\S 2.3]{raiffa_applied_1961}.  The first part $$h^{\mathbb{E}}(\vec{x}; \theta_1,\theta_2) = f^{\mathbb{E}}(x_1; \theta_1, \theta_2) \cdot  f^{\mathbb{E}}(x_2|x_1; \theta_1, ,\theta_2) \cdots  f^{\mathbb{E}}(x_n|x_1, \ldots x_{n-1}; \theta_1,\theta_2)$$ determines the chances that a measurement device takes a sequence of values  $\vec{x} = \langle x_1, \ldots x_n \rangle$, assuming precisely $n$ data points are sampled. 
The second  component $\phi(k+1| x_1, \ldots, x_k; \theta_1, \theta_2)$, which is called the \emph{stopping rule},  determines the probability that an additional measurement is made at all, and its values might be affected by innumerable factors in addition to the value of the parameter of interest (e.g., budget, time, etc.); we can represent those factors by the nuisance parameter $\theta_2$.  Given an experimenter has observed a sequence of $n$ many points $\vec{x}$, the probability of stopping is 
$$s^{\mathbb{E}}(n|\vec{x}; \theta_1, \theta_2)= \phi(1| \theta_1, \theta_2) \cdots \phi(n| x_1, \ldots , x_{n-1}; \theta_1, \theta_2) \cdot (1-\phi(n+1| \vec{x}; \theta_1, \theta_2)),$$
and so letting $\theta = \langle \theta_1, \theta_2 \rangle$, one can factor the likelihood function as follows:
$$P_\theta^{\mathbb{E}}(\vec{x}) = h^{\mathbb{E}}(\vec{x}; \theta_1, \theta_2) \cdot s^{\mathbb{E}}(n|\vec{x}; \theta_1, \theta_2)$$

With this decomposition, define two experiments $\mathbb{E}$ and $\mathbb{F}$ to \textbf{differ only in stopping rule} if $f^{\mathbb{E}} = f^{\mathbb{F}}$, or equivalently, if $h^{\mathbb{E}} = h^{\mathbb{F}}$.

Everyone agrees that two experiments may differ only in stopping rule, and yet a common outcome $\omega$ of the two experiments may carry different evidential weight depending on the experiment in which it is obtained.  Why?  Some stopping rules are informative:  if $s^{\mathbb{E}}$ depends upon the parameter of interest $\theta_1$, then the point at which the experiment halts may provide one with information about the parameter.   This motivates the following definition:

\begin{definition}\label{defn:NonInformative}
$\mathbb{E}$ has a $\Theta_1$ \textbf{non-informative} stopping rule if
\begin{enumerate}
    \item $h^{\mathbb{E}}$ depends only on $\theta_1$, and
    \item $s^{\mathbb{E}}$ depends only on $\theta_2$.
\end{enumerate}
\end{definition}

Our definition is inspired by \citep[pp. 37-38]{raiffa_applied_1961}'s definition, but it differs in two ways.  First, our first condition is implicitly assumed in Raiffa and Schlaifer's presentation, as they define $f$ and $h$ to depend only on the parameter of interest $\theta_1$.  Second, we omit Raiffa and Schlaifer's requirement that ``$\theta_1$ and $\theta_2$ are independent \emph{a priori}.'' We omit that requirement because (1) we want to \emph{try} to formulate the stopping rule principle in a manner that does not presuppose a particular view on foundations of statistics (i.e., and so does not presuppose Bayesianism, likelihoodism, or some classical/frequentist approach), and (2) Raiffa and Schlaifer's additional requirement is Bayesian in nature, as ``independent a priori'' means that $\theta_1$ and $\theta_2$ are independent with respect to the experimenter's prior probability function.  Nonetheless, their second condition is critically important for reasons to be discussed momentarily.

The \textbf{stopping rule principle} (\f{srp}) asserts that if $\omega$ is a common outcome of two experiments $\mathbb{E}$ and $\mathbb{F}$ that differ only in  non-informative stopping rules, then $\langle \mathbb{E}, \omega \rangle$ and $\langle \mathbb{F}, \omega \rangle$ are evidentially equivalent.  Like \f{ll}, \f{lp}, and the related likelihoodist principles above, the statement of \f{srp} makes reference only to likelihood functions. 

The robust Bayesian perspective partially vindicates \f{srp}.  Why only a ``partial'' vindication?  If a decision-maker believes that learning about $\theta_2$ can tell one about $\theta_1$ or vice versa, then virtually no stopping rule is going to be uninformative to her, unless the rule depends on neither $\theta_1$ nor $\theta_2$.     Thus, from the robust Bayesian perspective, there are no stopping rules that \emph{all} decision-makers will regard as uninformative.  Nonetheless, the robust Bayesian can provide a non-trivial characterization of which rules are uninformative for decision makers who regard $\theta_1$ and $\theta_2$ as independent \emph{a priori}.  Let ${\cal B}(\ind)$ denote that set of priors.  

\begin{theorem}\label{thm:SRPEntailsPosteriorEquivalence}
\cite[p. 38]{raiffa_applied_1961}
   Suppose two experiments $\mathbb{E}$ and $\mathbb{F}$ differ only in their $\Theta_1$-non-informative stopping rules and that $\omega$ is a common outcome of both experiments.  For any hypothesis $H \subseteq \Theta_1$ about the parameter of interest and $\overline{H} = H \times \Theta_2$.  Then $Q^{\mathbb{E}}(\overline{H}| \omega) = Q^{\mathbb{F}}(\overline{H}| \omega)$  for all $H \subseteq \Theta_1$ and all priors $Q \in {\cal B}(\ind)$. In other words, if \f{srp} entails that two outcomes are evidentially equivalent, then the outcomes are ${\cal B}(\ind)$-posterior equivalent with respect to all hypotheses about the parameter of interest.
\end{theorem}

A special - but still controversial - case of \f{srp} is when two experiments $\mathbb{E}$ and $\mathbb{F}$ differ only in non-informative stopping rules that (a) depend on \emph{neither} parameter and (b) are always identically zero or one.    We call such stopping rules \textbf{data-driven} since they depend only the observed sample.

\begin{definition}\label{defn:DataDriven}
An experiment $\mathbb{E}$ has a \textbf{data-driven}  stopping rule if $s^{\mathbb{E}}(n|\vec{x})$ depends on neither parameter and is always identically zero or one. 
\end{definition}

If a sample $\vec{x}$ can be obtained with positive probability in two experiments with data-driven stopping rules, then
$$P^{\mathbb{E}}_{\theta}(\vec{x}) = h(\vec{x}; \theta) \cdot s^{\mathbb{E}}(n|\vec{x})  = h(\vec{x}; \theta) = h(\vec{x}; \theta) \cdot s^{\mathbb{F}}(n|\vec{x})= P^{\mathbb{F}}_{\theta}(\vec{x}).$$
Thus, the likelihood function of $\vec{x}$ is the \emph{exactly the same} (not just proportional!) in both experiments, and so observing $\vec{x}$ in $\mathbb{E}$ is posterior equivalent to observing $\vec{x}$ in $\mathbb{F}$, as claimed.  Notice that a likelihoodist who rejects a fully Bayesian viewpoint could also argue that, in this special case, the choice between such non-informative stopping rules is irrelevant because, by the likelihood principle, outcomes obtained in experiments that differ only by such stopping rules are evidentially equivalent.\\

\noindent \textbf{Example 3:} Suppose you are interested whether the fraction $\theta$ of people who survive three months of a new cancer treatment exceeds the survival rate of the conventional treatment, which is known to be $94\%$.  Two experimental designs are considered:  $\mathbb{E}$, which consists in treating 100 patients, and $\mathbb{F}$, which consists in treating patients until two deaths are recorded.  In both experiments, it is possible to treat 100 patients and record precisely two deaths, the second of which is the 100th patient.  Let $\vec{x}$ be a binary sequence of length 100 representing such a data set, and note that $P_{\theta}^{\mathbb{E}}(\vec{x}) = P_{\theta}^{\mathbb{F}}(\vec{x}) = \theta^2 \cdot (1-\theta)^{98}$ no matter the survival rate $\theta$.  Here, the stopping rules of the respective experiments are given by:
$$\phi^{\mathbb{E}}(n|x_1, \ldots x_{n-1} ;\theta) = \left\{ 
\begin{array}{l}
     1 \mbox{ if }  n < 100 \\
     0 \mbox{ otherwise}
\end{array}
\right.
$$
and 
$$\phi^{\mathbb{F}}(n|x_1, \ldots x_{n-1} ;\theta) = \left\{ 
\begin{array}{l}
     1 \mbox{ if }  \sum_{k \leq n-1} x_k < 2 \\
     0 \mbox{ otherwise}
\end{array}
\right.
$$
Thus, both stopping rules are data-driven.

By the arguments above, the choice between the two stopping rules are irrelevant from both a robust Bayesian and likelihoodist perspective.  However, a quick calculation shows that under the null hypothesis -- that the new drug is no more effective than the conventional treatment --  the chance of two or fewer deaths in $\mathbb{E}$ is $.0566$, which is not significant at the $.05$ level.  In contrast, in $\mathbb{F}$, the chance of treating $100$ or more patients under the null is $.014$, which is significant at the $.05$ level.  Advocates of classical testing sometimes conclude, therefore, that the latter experiment provides better evidence against the null than the former.  This indicates a way in which \f{srp} is at odds with some classical testing procedures.

\subsection{The qualitative setting}
Our final major result is that, even in the qualitative setting, when two experiments differ only in a data driven stopping rules, every experimental outcome is posterior equivalent to itself, regardless of which experiment it is obtained in.

\begin{theorem}\label{thm:qDataDrivenEntailsPosteriorequivalence}
If two experiments $\mathbb{E}$ and $\mathbb{F}$  differ only in data-driven stopping rules and $\omega$ is a common outcome of both, then observing $\omega$ in $\mathbb{E}$ is posterior equivalent to observing  $\omega$ in $\mathbb{F}$.
\end{theorem}

To rigorously state and prove the theorem, we must define ``differs only in stopping rule''  and ``data-driven'' in our qualitative framework.

Assume $\Theta = \Theta_1 \times \Theta_2$, where $\Theta_1$ contains the parameter of interest and $\Theta_2$ is the set of parameters that influence when the experiment stops.  In this section, we consider only experiments $\mathbb{E}$ such that (i) the outcome space $\Omega^{\mathbb{E}}$ contains finite and countable sequences and (ii) the set containing the empty sequence and all countable sequences of  $\Omega^{\mathbb{E}}$ is $\sqsubseteq$-null. The first condition says outcomes of the experiment represent multiple trials, i.e., the outcome $\langle \omega_1, \omega_2 \ldots \rangle \in \Omega^{\mathbb{E}}$ represents an outcome in which one observes $\omega_1$ in one trial, $\omega_2$ in a second, and so on. The second condition says that at least one trial will occur and that the experiment is sure to stop after a finite number of trials.    We let $\Omega^{\mathbb{E}}_{>n}$ represent all outcome sequences of length greater than $n$. As always, when the experiment $\mathbb{E}$ is clear from context, we again drop the superscript $\mathbb{E}$ from $\Omega^{\mathbb{E}}$.


We let $\Sigma$ denote the set of all prefixes/initial-segments of experimental outcomes.  In other words,  $\sigma \in \Sigma$ if and only if there is some $n \in \mathbb{N}$ and some $\omega = \langle \omega_1, \omega_2, \ldots \rangle \in \Omega$ such that $\sigma= \langle \omega_1, \omega_2, \ldots \omega_n \rangle$.  Given $\sigma \in \Sigma$ and $n \in \mathbb{N}$, we define  $\sigma_{\leq n}$ to be the set of experimental outcomes that agree with $\sigma$ through the first $n$ trials.  In symbols:
\[ \sigma_{\leq n} := \{\omega \in \Omega: \omega_k = \sigma_k \mbox{ for all } k \leq n \}\]
Notice when $n=0$, then $\sigma_{\leq n} = \Omega$.

In probability theory, it is standard to use a comma rather than the intersection symbol in some settings, e.g., one typically writes $P(X=x, Y=y)$ not $P(\{X=x\} \cap \{Y=y\})$.  We use the same abbreviation below to save horizontal space.

\begin{definition}\label{defn:qDifferStoppingRule}
Experiments $\mathbb{E}$ and $\mathbb{F}$ \textbf{differ only in stopping rule} if for any $n \geq 0$ and  any $\sigma \in \Sigma^{\mathbb{E}} \cap \Sigma^{\mathbb{F}}$ of length $n+1$:
\[ \sigma_{\leq n+1} |^{\mathbb{E}} \sigma_{\leq n}, \Omega^{\mathbb{E}}_{>n}, \langle \theta_1, \theta_2 \rangle   \equiv  \sigma_{\leq n+1} |^{\mathbb{F}}  \sigma_{\leq n} , \Omega^{\mathbb{F}}_{>n}, \langle \theta_1, \theta_2 \rangle \mbox{ for all } \theta_1 \in \Theta_1 \mbox{ and } \theta_2 \in \Theta_2\]
\end{definition}

This condition says that the data-generating process of the two experiments is identical.  Why?  It says that, given that another sample point will certainly be drawn after $\langle \sigma_1, \ldots \sigma_n \rangle$ has been observed, then the probability that the next observation takes a specific value is unaffected by whether one is conducting $\mathbb{E}$ or $\mathbb{F}$.

Our goal is to characterize which stopping rules are ``uninformative'' with respect to the parameter of interest.   The qualitative analog of \autoref{defn:NonInformative} is

\begin{definition}\label{defn:qNonInformative}
We say an $\mathbb{E}$ has a $\Theta_1$- \textbf{non-informative} stopping rule if for all $n \geq 0$ and all $\theta_1, \theta_1' \in \Theta_1$  and  all  $\theta_2, \theta_2' \in \Theta_2$, we have:
\begin{enumerate}
    \item For all for all non-null $\sigma$ of length $n+1$:
      \[ \sigma_{\leq n+1} |^{\mathbb{E}} \sigma_{\leq n}, \Omega^{\mathbb{E}}_{> n}, \langle \theta_1, \theta_2' \rangle \equiv \sigma_{\leq n+1}|^{\mathbb{E}} \sigma_{\leq n}, \Omega^{\mathbb{E}}_{> n},\langle \theta_1, \theta_2 \rangle \]
      \item For all for all non-null $\sigma$ of length $n$
    \[ \Omega_{>n}|^{\mathbb{E}} \sigma_{\leq n},\langle \theta_1, \theta_2 \rangle \equiv \Omega_{>n}|^{\mathbb{E}} \sigma_{\leq n},\langle \theta_1', \theta_2 \rangle \]
\end{enumerate}
\end{definition}

The first condition says that the data generating process depends only on $\theta_1$, and the second condition says that the probability of drawing another sample point depends only on $\theta_2$. 

Again, we focus on a special subclass of non-informative stopping rules:

\begin{definition}\label{defn:qDataDrive}
We say an $\mathbb{E}$ has a $\Theta_1$- \textbf{data-driven} stopping rule if
\begin{enumerate}
    \item The stopping rule depends on neither parameter.  In other words, for all $\theta_1, \theta_1' \in \Theta_1$, all $\theta_2, \theta_2' \in \Theta_2$, and all sequences $\sigma \in \Sigma$:
    \[ \Omega_{>n} \ce \sigma_{\leq n},\langle \theta_1, \theta_2 \rangle \equiv \Omega_{>n} \ce \sigma_{\leq n},\langle \theta_1', \theta_2' \rangle \]
    \item  The chance of stopping is always zero or one .  In other words, for all $\theta_1, \theta_1' \in \Theta_1$, all $\theta_2, \theta_2' \in \Theta_2$, and all all sequences $\sigma \in \Sigma$, either
   $\Omega_{>n} \ce \sigma_{\leq n},\langle \theta_1, \theta_2 \rangle \equiv \Omega^{\mathbb{E}} \ce \theta$ for all $\theta$
    or
   $\Omega_{>n} \ce \sigma_{\leq n},\langle \theta_1, \theta_2 \rangle \equiv \emptyset \ce \theta$ for all $\theta$.
\end{enumerate}
\end{definition}

By \Ax{0} and \Ax{3}, Condition 2 in the definition of ``data-driven'' immediately implies that, for all joint orderings $\preceq$ over the entire space $\Delta$, either $\Omega_{>n} \ce \sigma_{\leq n},\langle \theta_1, \theta_2 \rangle \sim  \Delta^{\mathbb{E}} \ce \theta$ for all $\theta$ or $\Omega_{>n} \ce \sigma_{\leq n},\langle \theta_1, \theta_2 \rangle \sim \emptyset \ce \Delta$.

With these definitions, we can now prove \autoref{thm:qDataDrivenEntailsPosteriorequivalence}. We'll need the following lemma, which is an analog of the chain rule for numerical probabilities.

\begin{lemma}\label{lm:chainrule}
    Let $1 \le n_0 \le n$ be arbitrary. Suppose, for all $m \le n_0$, we have that $A_m |^{\mathbb{E}} \bigcap_{k > m} A_k \sim B_m |^{\mathbb{F}} \bigcap_{k > m} B_k$. Then we know that $\bigcap_{j \le n_0} A_j |^{\mathbb{E}} \bigcap_{k > n_0} A_k \sim \bigcap_{j \le n_0} B_j |^{\mathbb{F}} \bigcap_{k > n_0} B_k$.
\end{lemma}
We also need the following lemma, which allows us to apply \f{qmsp} more easily.

\begin{lemma}\label{lm:qSufficientStatisticExamples}
Suppose $\mathbb{E}$ is an experiment with a finite set of outcomes $\Omega$.  Let $S$ be an equivalence relation such that $S(\omega, \omega')$ holds precisely when $\omega | \theta \equiv \omega'|\theta$ for all $\theta$. 
Let $T:\Omega \rightarrow \Omega/S$ be the quotient map $\omega \mapsto [\omega]_S$ that takes each outcome to its $S$-equivalence class. Then $T$ is sufficient.
\end{lemma}

\begin{proof}[Proof of \autoref{thm:qDataDrivenEntailsPosteriorequivalence}]
    It will help to prove a slightly stronger theorem.  Specifically, we show the theorem holds if instead of quantifying over all common outcomes $\omega$ of $\mathbb{E}$ and $\mathbb{F}$, we quantify over all \emph{prefixes/initial-segments} $\sigma$ of common outcomes of  $\mathbb{E}$ and $\mathbb{F}$, i.e., we show $H \ce \sigma_{\le n} \sim H \cf \sigma_{\le n}$ for all $n \in \mathbb{N}$ and $\sigma_{\le n}$ such that there is $\omega \in \Omega^{\mathbb{E}} \times \Omega^{\mathbb{F}}$ such that $\sigma_{\le n}$ is the first $n$ trials of $\omega$. Let $\oe = \langle \mathbb{E}, \omega \rangle$ and $\of = \langle \mathbb{F}, \omega \rangle$.
    
    We first handle case in which $\oe$ is null: we show that if $\oe$ is null, then so is $\of$.
    Assume $\oe$ is null, and suppose $\omega = \sigma_{\le n+1}$ (i.e., $\omega$ is of length $n+1$). Then $\emptyset \equiv \sigma_{\le n+1} \ce \sigma_{\le n}, \Omega^{\mathbb{E}}_{> n}, \theta$. 
    We also know that $\sigma_{\le n+1} \cf \sigma_{\le n}, \Omega^{\mathbb{F}}_{> n}, \theta \equiv \sigma_{\le n+1} \ce \sigma_{\le n}, \Omega^{\mathbb{E}}_{> n}, \theta$ since the experiments differ only in stopping rule. 
    Thus, $\of$ is null. For the remainder of the proof, we assume both outcomes are non-null.
    
    The following is the key proposition for this proof. 
    \begin{proposition}
        For all $n$ and for all $\theta$, let $\sigma_{\le n}$ be an arbitrary non-null shared prefix. Then  $\sigma_{\le n} \ce \theta \equiv \sigma_{\le n} \cf \theta$.
    \end{proposition}
    \begin{proof}
        The proof is by induction on the length of $\sigma$.
        
        For the base case, $\sigma_{\le 0} = \Delta$. In this case, we just want to show that $\Delta \ce \theta \sim \Delta \cf \theta$ for all $\theta$. This which follows from \Ax{3} and \Ax{4}. 
        
        Now, assume the result for all prefixes $\sigma_{\le n}$ of length $n$; we prove it for an arbitrary prefix $\sigma_{\le n+1}$ of length $n+1$. Notice that
        \[\sigma_{\le n+1} = \sigma_{\le n+1} \cap \Omega_{>n} \cap \sigma_{\le n}.\]
        Thus $\sigma_{\le n+1}^{\mathbb{E}} \ce \theta \equiv \sigma_{\le n+1}, \Omega_{>n}^{\mathbb{E}}, \sigma_{\le n} \ce \theta$ (and similarly for $\mathbb{F}$). We now apply \autoref{lm:chainrule} with:
        \begin{alignat*}{3}
            A_1 &= B_1 = \sigma_{\le n+1}\\
            A_2 &= \Omega_{> n}^{\mathbb{E}} \qquad &B_2 &= \Omega_{> n}^{\mathbb{F}} \\
            A_3 &= B_3 = \sigma_{\le n} \\
            A_4 &= B_4 = \theta\\
            n_0 &= 3
        \end{alignat*}
        We now show that the conditions of the lemma hold.
        \begin{enumerate}
            \item $A_1 \ce A_2, A_3, A_4 \equiv B_1 \cf B_2, B_3, B_4$. This is because experiments $\mathbb{E}$ and $\mathbb{F}$ differ only in stopping rule.
            \item $A_2 \ce A_3, A_4 \equiv B_2 \cf B_3, B_4$. Here, we need to use the fact that both experiments have a data-driven stopping rule. By condition 2 of that definition, either $\Omega_{> n}^{\mathbb{E}} \ce \sigma_{\le n}, \theta \sim \Delta$ or $\emptyset$. If the latter is true, then $\sigma_{\le n+1} \in \nul$, as $\sigma_{\le n+1} \subseteq \Omega_{> n}^{\mathbb{E}}$. Thus, we know the former is true: $\Omega_{> n}^{\mathbb{E}} \ce \sigma_{\le n}, \theta \equiv \Delta$.  By similar reasoning,  $\Omega_{> n}^{\mathbb{F}} \cf \sigma_{\le n}, \theta \sim \Delta$.  We then get the desired conclusion from \Ax{3}, which says that $\Delta \ce \Delta \sim \Delta \cf \Delta$.
            \item $A_3 \ce A_4 \equiv B_3 \cf B_4$. This is the inductive hypothesis. 
        \end{enumerate}
        With the conditions satisfied, the lemma says that $\sigma_{\le n+1}, \Omega_{> n}^{\mathbb{E}}, \sigma_{\le n} \ce \theta \equiv \sigma_{\le n+1} \ce \theta \equiv \sigma_{\le n+1} \cf \theta$, as desired.
    \end{proof}
    The proposition tells us that $\oe \ce \theta \equiv \of \cf \theta$ for all shared outcomes $\omega$. Now, let $\mathbb{M}$ be a ``uniform'' mixture in which the experiments $\mathbb{E}$ and $\mathbb{F}$ are each conducted with the same, ``positive'' probability; i.e., any mixture in which $\Omega^{\mathbb{E}}|\theta \equiv \Omega^{\mathbb{F}}|\theta$ for all $\theta$ and $\Omega^{\mathbb{E}}|\theta \sqsupset \emptyset$ for all $\theta$.  Such a mixture exists by \autoref{assum:qMixturesExist}. Let $\ome = \langle \mathbb{M}, \langle \mathbb{E}, \omega \rangle \rangle$, and $\omf = \langle \mathbb{M}, \langle \mathbb{F}, \omega \rangle \rangle$.
    From \autoref{lem:mixtureComponentEquivalence}, we know that $\oe$ is posterior equivalent to $\ome$. 
    Similarly, $\of$ is posterior equivalent to $\omf$. So it remains to show that $\ome$ is posterior equivalent to $\omf$.
    
    First, we apply \Ax{6b} with:
    \begin{alignat*}{3}
        X &= \theta \qquad &X' &= \theta \\
        Y &= \Omega^{\mathbb{E}} \cap \theta \qquad &Y' &= \Omega^{\mathbb{F}} \cap \theta \\
        Z &= \ome \cap \Omega^{\mathbb{E}} \cap \theta \qquad &Z' &= \omf \cap \Omega^{\mathbb{F}} \cap \theta
    \end{alignat*}
    By the proposition above and the definition of a mixture, we know that $\ome \cm \theta, \Omega^{\mathbb{E}} \equiv \omf \cm \theta, \Omega^{\mathbb{F}}$, i.e., $Z|Y \equiv Z'|Y'$. Since $\mathbb{M}$ is a uniform mixture, we know that $\Omega^{\mathbb{E}}|\theta \equiv \Omega^{\mathbb{F}}|\theta$, i.e., $Y|X \equiv Y'|X'$. By \Ax{6b}, we get that $Z|X \equiv Z'|X'$, i.e., $\ome, \Omega^{\mathbb{E}} | \theta \equiv \omf, \Omega^{\mathbb{F}} | \theta$. Since $\ome \cap \Omega^{\mathbb{E}} = \ome$ (and similarly for $\mathbb{F}$), this simplifies to $\ome|\theta \equiv \omf|\theta$.
    
    Now, let $T$ be the statistic that maps $\omega$ to its $S$-equivalence class, i.e., 
    \[T(\omega) = [\omega] = \{\omega':   \omega' | \theta \equiv \omega | \theta \mbox{ for all } \theta \}\]
    
    The previous paragraph shows that $T(\ome) = T(\omf)$, and \autoref{lm:qSufficientStatisticExamples} says that $T$ is sufficient. Thus, by \autoref{thm:qmsp}, we know $\ome$ is posterior equivalent to $\omf$ in $\mathbb{M}$, as desired.
\end{proof}

We end with a conjecture, namely, that the
qualitative analog of \autoref{thm:SRPEntailsPosteriorEquivalence} is likewise true.  Recall, we are interested in characterizing which stopping rules are uninformative for decision-makers whose beliefs are representable by an ordering for which $\theta_1 \ind \theta_2$ for all $\theta_1 \in \Theta_1$ and $\theta_2 \in \Theta_2$.   Let 
${\cal B}(\ind)$ be the set of all such orderings. 


\begin{conjecture}\label{con:SRPEntailsPosteriorEquivalence}
Suppose $\mathbb{E}$ and $\mathbb{F}$ differ only in stopping rule and that both have $\Theta_1$ non-informative stopping rules.    Let $n \geq 0$ and $\sigma \in \Sigma^{\mathbb{E}} \cap \Sigma^{\mathbb{F}}$ be a non-null sequence of observations of length $n$ that can be obtained in both experiments (i.e., there are outcomes $\omega^{\mathbb{E}}$ and $\omega^{\mathbb{F}}$ of $\mathbb{E}$ and $\mathbb{F}$ respectively for which $\sigma$ is a common prefix).  For an $H \subseteq \Theta_1$, let $\overline{H} = H \times \Theta_2$.  Then for all  $H \subseteq \Theta_1$, and all orderings $\preceq$ in ${\cal B}(\ind)$, it follows that $\overline{H}|^{\mathbb{E}} \sigma_{\leq n} \sim \overline{H}|^{\mathbb{F}} \sigma_{\leq n}$.
\end{conjecture}

\section{Conclusions and Future Work}


We began this project with the goal of defending likelihoodism, by showing that likelihoodist theses like \f{ll}, \f{lp}, the sufficiency principle, etc. can be articulated easily in qualitative probabilistic frameworks.  However, we eventually realized that only \textit{some} likelihoodist principles had qualitative analogs, and we developed a new theory of evidence to support this discovery. Our theory, Robust Bayesianism (\f{rb}), is  intended to formalize the idea that evidence is what would convince a large group of scientists, with varying priors, to change their minds in similar ways. 
Our theory remains close to likelihoodism; we show that, in the quantitative setting, likelihoodist principles are equivalent to our Robust Bayesian principles. However, our theory is compatible with weaker assumptions about (i) rational belief and (ii) what types of likelihood judgments are widely shared by scientists.
In this broader setting, only some likelihoodist principles have analogs. Notably, $\f{ll}^+$ does not, which suggests that it may be an artifact of modeling likelihoods as numerical probabilities. 

\f{rb} provides a novel justification for the controversial stopping rule principle, which roughly says that that the evidential strength of some datum $\omega$ does not depend on the stopping rule of the experiment in which $\omega$ is observed. 
Importantly, our argument requires only very weak assumptions about rational belief and the precision of agreement on likelihood functions.  Further, our argument also does not depend upon \f{lp} or the assumption that unobserved outcomes lack evidential importance. Thus, classical statisticians cannot reject \f{srp} by simply rejecting \f{lp}, or by dismissing quantitative versions of Bayesianism, or by insisting that unobserved outcomes sometimes have evidential importance.

Although we hope that \f{rb} will eventually be of use to applied statisticians and empirical scientists, our work thusfar is important primarily to theorists.  Our results do, however, provide a useful litmus test for helping resolve debates about which statistical principles should guide data analysis; if a statistical principle cannot be articulated in our qualitative setting, then one should be suspicious of applying the principle in settings in which (i) experimenters' priors cannot be precisely elicited (and may be probabilistically ``incoherent''), or (ii) experimenters share only loose, comparative judgments about likelihood functions/orderings.

We conclude by highlighting three promising directions for future research; all three focus on applying our theory to practice.

The first two directions concern elicitation. Thus far, we have assumed that the relation $\sqsubseteq$ and the set of permissible orderings $\mathcal{B}$ are given. In practice, however, one will need to elicit these orderings from experts.  Elicitation of numerical probabilistic judgments is notoriously subtle \citep{ohagan_uncertain_2006}.
Nonetheless, we conjecture that, if a statistician's goal is merely to elicit a qualitative judgment of the form $A|B \preceq C|D$,  she may be able to avoid certain problems that belie elicitation in the quantitative setting.\footnote{An anonymous referee has suggested that, in cases of state dependent utility \citep{karni_decision_1985, schervish_state-dependent_1990}, eliciting \emph{conditional} judgments of the form $A|B \preceq C|D$ is no easier in the qualitative setting.  If that is right, then the framework we have introduced will be difficult to apply in practice unless  so-called `act-state-independence' holds.  However, we think the examples in Section 2.1 above motivate thinking that such cases are ubiquitous.} No matter how we elicit expert judgments, an expert's time is valuable; we want to elicit their ordering as efficiently as possible. These remarks motivate the following two research questions.
\begin{enumerate}
    \item How should we elicit an expert's judgment as to whether $A|B \preceq C|D$ or vice versa?
    \item Given an answer to 1., how many comparisons must we ask experts to make so that we can quickly determine the expert's full ordering? Is there an efficient adaptive algorithm that determines the best question to ask the expert next, given her previous answers?
\end{enumerate}

The elicitation questions raise a central computational difficulty: we currently lack an efficient algorithm for computing what a \emph{subset} of comparisons of the form $A|B \btleq C|D$ entail about the \emph{full} order $\btleq$, assuming our axioms hold. Even if we were presented data from a survey of experts indicating that (i) the experts agree upon some unique likelihood ordering $\sqsubseteq$, (ii) their priors are representable by a very small set of orderings, and (iii) they all agree that $E$ constitutes their full data, we would have no efficient way of enumerating all posterior orders compatible with that survey data, except to use a brute force algorithm. The difficulty is serious because the number of all orderings grows superexponentially in the size of $\Theta$ and $\Omega$, and this could hold even after pruning orderings that are impermissible or incompatible with our axioms.

The quantitative setting suggests a potential strategy for dealing with this computational issue.  In the quantitative setting, it is easy to compute which hypotheses are $\mathcal{U}$-supported over others (i.e., supported no matter the prior)  because one only needs to compare the likelihood ratios (by \autoref{clm:BSupportCharacterized}).   Thus, if the  $\mathcal{B}$-support relation is equivalent to $\mathcal{U}$-support relation for the set of priors $\mathcal{B}$ of interest, then one can resort to comparing likelihood ratios instead of computing all posteriors with respect to $\mathcal{B}$.  So, our third question is the following:



\begin{enumerate}
    \item[3.]  What properties must a set of prior probability functions $\mathcal{B}$ possess for $\mathcal{B}$-support to equal $\mathcal{U}$-support?  And the analogous question in the qualitative setting is, what properties must a set of \emph{orderings} $\mathcal{B}$ possess for $\mathcal{B}$-support to equal $\mathcal{U}$-support?   When such conditions obtain, one can at least determine what hypotheses are qualitatively \emph{favored} by the data using \f{qll}.
\end{enumerate}

Positive answers to these research questions would let us directly compute Bayesian and/or qualitative support relations in practice, which could lead to the development of new statistical methods. 

\newgeometry{margin=1in}
\appendix
\setcounter{theorem}{0}
\setcounter{lemma}{0}
\setcounter{assumption}{0}
\setcounter{proposition}{0}
\setcounter{definition}{0}
\setcounter{claim}{0}

\section{Robust Bayesianism and Likelihoodism}

\subsection{LP is Equivalent to Bayesian posterior and support equivalence}

\begin{claim}\label{clm:bSupportPosteriorEquivalence}
    Let $\mathcal{B}$ be any set of priors. Then the following are equivalent:  
        \begin{enumerate}
            \item $E$ and $F$ are $\mathcal{B}$-Bayesian posterior equivalent, 
            \item $E$ and $F$ are $\mathcal{B}$-Bayesian support equivalent
        \end{enumerate}
\end{claim}
\begin{proof}
 Suppose that $E$ and $F$ are posterior equivalent.  We must show that they are support equivalent, i.e., that \[Q^{\mathbb{E}}(H_1 | E \cap (H_1 \cup H_2))= Q^{\mathbb{F}}(H_1 | F \cap (H_1 \cup H_2))\] 
 for all disjoint hypotheses $H_1, H_2 \subseteq \Theta$ and all $Q$ for which those conditional probabilities are well-defined.

    To do so, let $H_1$ and $H_2$ be disjoint hypotheses, and let $Q$ be a prior such that $Q^{\mathbb{E}}(\cdot | E \cap (H_1 \cup H_2))$ and $Q^{\mathbb{F}}(\cdot| F \cap (H_1 \cup H_2))$ are both well-defined.  It follows that $Q^{\mathbb{E}}(E)$ and $Q^{\mathbb{F}}(F)$ are both positive (since $Q^{\mathbb{E}}(E \cap (H_1 \cup H_2))$ and $Q^{\mathbb{F}}(F \cap (H_1 \cup H_2))$ are).  Since $E$ and $F$ are posterior equivalent, we can infer that  $Q(H_1|E) = Q(H_1|F)$ and  $Q(H_2|E) = Q(H_2|F)$.  Thus:
    \begin{equation}\label{eqn:bLPFPE3}
        (Q(H_1|E) \cdot Q(E)) \cdot (Q(H_2|F) \cdot Q(F)) =  (Q(H_1|F) \cdot Q(E)) \cdot (Q(H_2|E) \cdot Q(F))
    \end{equation}
    We obtain the desired result via the following sequences of arithmetic operations:
    \begin{eqnarray*}
    \mbox{ Equation \ref{eqn:bLPFPE3} holds } &\Leftrightarrow& \\
     Q(E \cap H_1) \cdot Q(F \cap H_2) &=&  Q(H_1 \cap F) \cdot Q(E \cap H_2) \\ &\Leftrightarrow& \\
     Q(E \cap H_1) \cdot Q(F \cap H_2)  + Q(E \cap H_1) \cdot Q(F \cap H_1) &=&   Q(F \cap H_1) \cdot Q(E \cap H_2)  + Q(E \cap H_1) \cdot Q(F \cap H_1)\\   
     &\Leftrightarrow& \\
      Q(E \cap H_1) \cdot  \left[ Q(F \cap H_1) + Q(F \cap H_2) \right] &=&    Q(F  \cap H_1)\left[ Q(E \cap H_1) + Q(E \cap H_2) \right]  \\ &\Leftrightarrow& \\
      Q(E \cap H_1)/ \left[ Q(E \cap H_1) + Q(E \cap H_2) \right]     &=&    Q(F  \cap H_1)/ \left[ Q(F \cap H_1) + Q(F \cap H_2) \right] \\
      &\Leftrightarrow& \\
      Q(H_1 | E \cap (H_1 \cup H_2) )     &=&     Q(H_1 | F \cap (H_1 \cup H_2) )  \mbox{ as } H_1 \cap H_2 = \emptyset\\
    \end{eqnarray*}
    
Next we prove the converse. Suppose $E$ and $F$ are Bayesian support equivalent.  We must show that $E$ and $F$ are Bayesian posterior equivalent, i.e., that (1) $Q(\cdot | E)$ is well-defined if and only if $Q(\cdot | F)$ is and (2)  $Q(H|E)=Q(H|F)$ for all hypotheses $H$ and all $Q$ for which those conditional probabilities are well-defined.
    
    To show (1), pick any $H_1$ and $H_2$ such that $H_1 \cup H_2 = \Theta$.  If $Q(\cdot | E)$ is well-defined, then $Q(E) > 0$, and note that $Q(E) = Q(E \cap (H_1 \cup H_2))$ because $H_1 \cup H_2 = \Theta$.  Since $E$ and $F$ are Bayesian support equivalent, it follows that $Q(F) = Q(F \cap (H_1 \cup H_2)) \geq  Q(E \cap (H_1 \cup H_2)) > 0$, and hence, $Q(\cdot|F)$ is all well-defined.   Thus, we've shown that if $Q(\cdot | E)$ is well-defined, so is $Q(\cdot|F)$.  The converse is proven in the exact same manner.
    
   To show (2), let $Q$ be any prior for which $Q(\cdot|E)$ and $Q(\cdot|F)$ are well-defined.
    Let $H \subseteq \Theta$ by some arbitrary hypothesis.  We must show $Q(H|E)=Q(H|F)$.
    Then define $H_1 = H$ and $H_2 = \Theta \setminus H$.  Since $E$ and $F$ are Bayesian support equivalent, we know that 
    $$Q(H_1 | E \cap (H_1 \cup H_2))= Q(H_1 | F \cap (H_1 \cup H_2))$$ 
    from which it follows that
    $$Q(H|E) = Q(H_1 |E) = Q(H_1 | E \cap (H_1 \cup H_2))= Q(H_1 | F \cap (H_1 \cup H_2)) = Q(H|F)$$
    as desired.
\end{proof}

\subsection{$\f{LL}^+$ is Equivalent to Bayesian Support}

The following claim links Bayesian support to $\f{ll}^+$ by showing that $E \bafd F$ exactly when $\f{ll}^+$ says that $E$ provides stronger evidence for $\theta_1$ over $\theta_2$ than $F$ does.

\begin{claim}\label{clm:BSupportCharacterized}
Suppose $H_1$ and $H_2$ are finite and disjoint.  Let $E$ and $F$ be outcomes of experiments $\mathbb{E}$ and $\mathbb{F}$ respectively, and let $\mathcal{B} = \mathcal{U}$ be the universal set of priors. Then  $E \baf{{\cal B}}{H_1}{}{H_2}{} F$ if and only if (1) for all $\theta \in H_1 \cup H_2$, if $P^{\mathbb{F}}_{\theta}(F) > 0$, then $P^{\mathbb{E}}_{\theta}(E) > 0$, and (2) for all $\theta_1 \in H_1$ and $\theta_2 \in H_2$:
\begin{equation}
    P^{\mathbb{E}}_{\theta_1}(E) \cdot  P^{\mathbb{F}}_{\theta_2}(F) \geq P^{\mathbb{E}}_{\theta_2}(E) \cdot P^{\mathbb{F}}_{\theta_1}(F)
\end{equation}
Similarly, $E \sbaf{{\cal B}}{H_1}{}{H_2}{} F$ if and only if the inequality in the above equation is always strict.
\end{claim}
\begin{proof}
    In both directions, we will need the following basic arithmetic fact.\\  
    
    \noindent \textbf{Observation 1:}  Suppose $\langle a_j \rangle_{j \leq n} ,\langle b_k \rangle_{k \leq m}, \langle c_j \rangle_{j \leq n},$ and $\langle d_k \rangle_{k \leq m}$ are sequences of non-negative real numbers.  Then $a_j \cdot d_k \geq b_k  \cdot c_j$ for all $j \leq n$ and $k \leq m$ if and only if
    \begin{equation}\label{eqn:bcfc2}
    \frac{\sum_{j \leq n}r_j \cdot a_j}{\sum_{j \leq n}r_j \cdot a_j + \sum_{k \leq m}s_k \cdot b_k} \geq \frac{\sum_{j \leq n}r_j \cdot c_j}{\sum_{j \leq n}r_j \cdot c_j + \sum_{k \leq m}s_k \cdot d_k} 
    \end{equation}
    for all sequences $\langle r_j \rangle_{j \leq n}$ and $\langle s_k \rangle_{k \leq m}$ of non-negative real numbers such that $\sum_{j \leq n}r_j  a_j + \sum_{k \leq m}s_k  b_k$ and $\sum_{j \leq n}r_j  c_j + \sum_{k \leq m}s_k d_k$ are both positive.\\

    \noindent \textbf{Proof:} First, note that if $\langle r_j \rangle_{j \leq n}$ and $\langle s_k \rangle_{k \leq m}$ are sequences of non-negative real numbers such that $\sum_{j \leq n}r_j a_j + \sum_{k \leq m}s_k  b_k$ and $\sum_{j \leq n}r_j  c_j + \sum_{k \leq m}s_k d_k$ are positive, then both sides of \autoref{eqn:bcfc2} are well-defined and
    \begin{eqnarray*}
    \mbox{\autoref{eqn:bcfc2} holds} &\Leftrightarrow& \left(\sum_{j \leq n}r_j a_j \right) \cdot \left(\sum_{j \leq n}r_j c_j + \sum_{k \leq m}s_k d_k \right) \geq   \left( \sum_{j \leq n}r_j c_j \right) \cdot \left(\sum_{j \leq n}r_j a_j + \sum_{k \leq m}s_k b_k \right) \\
        ~ &~&\mbox{ Cross multiplying} \\
        &\Leftrightarrow& \left(\sum_{j \leq n}r_j a_j \right) \cdot \left(\sum_{k \leq m}s_k d_k \right) \geq   \left( \sum_{j \leq n}r_j c_j \right) \cdot \left(\sum_{k \leq m}s_k b_k \right) \\
        ~ &~&\mbox{ Cancelling common terms} \\
         &\Leftrightarrow& \left(\sum_{j \leq n, k \leq m }r_j a_j s_k d_k \right) \geq   \left( \sum_{j \leq n, k \leq m}r_j c_j s_k b_k  \right)  [\mbox{Condition } \dag] \\
        ~ &~&\mbox{ Rules of double sums}
    \end{eqnarray*}
Now we prove the right-to-left direction.  Assume  \autoref{eqn:bcfc2} holds \emph{for all} sequences of $r_j$s and $s_k$s for which the sums specified above are positive.  Pick arbitrary $j_0 \leq n$ and $k_0 \leq m$.  Our goal is to show $a_{j_0}d_{k_0} \geq c_{j_0}b_{k_0}$.   Consider the specific sequence of $r_j$'s and $s_k$'s such that $r_{j_0} = s_{k_0} = 1$ and $r_j=s_k=0$ for all $j \neq j_0$ and $k \neq k_0$.    There are two cases two consider.

If $\sum_{j \leq n}r_j a_j + \sum_{k \leq m}s_k  b_k$ and $\sum_{j \leq n}r_j  c_j + \sum_{k \leq m}s_k d_k$ are both positive, then  Condition $\dag$ holds and reduces to the claim $a_{j_0} d_{k_0} \geq c_{j_0} b_{k_0}$, as desired.

If $\sum_{j \leq n}r_j a_j + \sum_{k \leq m}s_k  b_k = a_{j_0} + b_{k_0}$ and $\sum_{j \leq n}r_j  c_j + \sum_{k \leq m}s_k d_k = c_{j_0} + d_{k_0}$ are not both positive, then because all the $a$'s, $b$'s, etc. are non-negative, it follows that either (i)  $a_{j_0} = b_{k_0} = 0$  or  (i)  $c_{j_0} = d_{k_0} = 0$.  In either case $a_{j_0}d_{k_0} = 0 \geq 0 = c_{j_0}b_{k_0}$.

The left-to-right direction merely reverses the direction of the above calculations.   In greater detail, assume $a_j \cdot d_k \geq b_k  \cdot c_j$ for all $j \leq n$ and $k \leq m$.  Let $\langle r_j \rangle_{j \leq n}$ and $\langle s_k \rangle_{k \leq m}$ be sequences of non-negative real numbers such that $\sum_{j \leq n}r_j  a_j + \sum_{k \leq m}s_k  b_k$ and $\sum_{j \leq n}r_j  c_j + \sum_{k \leq m}s_k d_k$ are both positive.  Our goal is to show \autoref{eqn:bcfc2}  holds.  By the reasoning above, it suffices to show Condition $\dag$ holds.  Note that because $a_j \cdot d_k \geq b_k  \cdot c_j$ for all $j \leq n$ and $k \leq m$ by assumption, every term in the sum on the left-hand-side of Condition $\dag$ is greater than the corresponding term on the right hand side.  So $\dag$ holds as desired.
    \begin{flushright}
        (End Proof of Observation).
    \end{flushright}

The central claim follows from Observation 1 by appropriate substitutions.  Specifically, enumerate $H_1 = \{\theta_{1,1}, \theta_{1,2}, \ldots \theta_{1,n} \}$ and $H_2 = \{\theta_{2,1}, \theta_{2,2}, \ldots \theta_{2,m} \}$.  To apply the observation, let $a_j = P^{\mathbb{E}}_{\theta_{1,j}}(E)$, $c_j = P^{\mathbb{F}}_{\theta_{1,j}}(F)$, $b_k = P^{\mathbb{E}}_{\theta_{2,k}}(E)$, and $d_k = P^{\mathbb{F}}_{\theta_{2,k}}(F)$.  Those terms are clearly all non-negative.  So Observation 1 entails that
\begin{equation}\label{eqn:bcfc3}
P^{\mathbb{E}}_{\theta_{1,j}}(E) \cdot  P^{\mathbb{F}}_{\theta_{2,k}}(F)  = a_j d_k \geq 
   c_j b_k
    \geq P^{\mathbb{E}}_{\theta_{2,k}}(E) \cdot P^{\mathbb{F}}_{\theta_{1,j}}(F) \mbox{ for all } j, k
\end{equation}
if and only if
 \begin{equation}\label{eqn:bcfc4}
    \frac{\sum_{j \leq n}r_j \cdot P^{\mathbb{E}}_{\theta_{1,j}}(E)}{\sum_{j \leq n}r_j \cdot P^{\mathbb{E}}_{\theta_{1,j}}(E) + \sum_{k \leq m} s_k \cdot P^{\mathbb{E}}_{\theta_{2,k}}(E)} \geq \frac{\sum_{j \leq n}r_j \cdot P^{\mathbb{F}}_{\theta_{1,j}}(F)}{\sum_{j \leq n}r_j \cdot P^{\mathbb{F}}_{\theta_{1,j}}(F) + \sum_{k \leq m}s_k \cdot P^{\mathbb{F}}_{\theta_{2,k}}(F)} 
    \end{equation}
for all sequences $\langle r_j \rangle_{j \leq n}$ and $\langle s_k \rangle_{k \leq m}$ of non-negative real numbers such that $\sum_{j \leq n}r_j  \cdot P^{\mathbb{E}}_{\theta_{1,j}}(E) + \sum_{k \leq m}s_k \cdot P^{\mathbb{E}}_{\theta_{2,k}}(E)$ and $\sum_{j \leq n}r_j \cdot P^{\mathbb{F}}_{\theta_{1,j}}(F) + \sum_{k \leq m}s_k \cdot P^{\mathbb{F}}_{\theta_{2,k}}(F)$ are both positive.\\

Now, we prove the right-to-left direction of \autoref{clm:BSupportCharacterized}.  Suppose (1)  for all $\theta \in H_1 \cup H_2$, if $P^{\mathbb{F}}_{\theta}(F) > 0$, then $P^{\mathbb{E}}_{\theta}(E) > 0$, and (2) \autoref{eqn:bcfc3} holds.  Let $Q$ be some prior such that $Q^{\mathbb{F}}(\cdot|F \cap (H_1 \cup H_2))$ is well-defined.  We claim $Q^{\mathbb{E}}(\cdot|E \cap (H_1 \cup H_2))$ is also well-defined.   Why? Since  $Q^{\mathbb{F}}(\cdot|F \cap (H_1 \cup H_2))$, we have by definition that:
\begin{equation}\label{eqn:bcfc5}
      Q^{\mathbb{F}}(H_1|F \cap (H_1 \cup H_2))  = \frac{\sum_{j \leq n} Q(\theta_{1,j}) \cdot   P_{\theta_{1,j}}(F)}{\sum_{j \leq n} Q(\theta_{1,j}) \cdot P^{\mathbb{F}}_{\theta_{1,j}}(F) + \sum_{k \leq m} Q(\theta_{2,k}) \cdot  P^{\mathbb{F}}_{\theta_{2,k}}(F)}
\end{equation}
That fraction is well-defined only if the denominator is positive,  i.e.,
\begin{equation}\label{eqn:bcfc6}
       \sum_{j \leq n} Q(\theta_{1,j}) \cdot P^{\mathbb{F}}_{\theta_{1,j}}(F) + \sum_{k \leq m} Q(\theta_{2,k}) \cdot  P^{\mathbb{F}}_{\theta_{2,k}}(F) > 0
\end{equation}
Therefore, there is some $\theta \in H_1 \cup H_2$ such that $Q(\theta) \cdot P^{\mathbb{F}}_{\theta}(F) >0$.    By assumption (1), it follows that $Q(\theta) \cdot P^{\mathbb{E}}_{\theta}(E) >0$.  Hence:
\begin{equation}\label{eqn:bcfc7}
      \sum_{j \leq n} Q(\theta_{1,j}) \cdot P^{\mathbb{E}}_{\theta_{1,j}}(E) + \sum_{k \leq m} Q(\theta_{2,k}) \cdot  P^{\mathbb{E}}_{\theta_{2,k}}(E) > 0
\end{equation}
from which it follows that  $Q^{\mathbb{E}}(\cdot|E \cap (H_1 \cup H_2))$ is also well-defined.  It thus remains to be shown that  $Q^{\mathbb{E}}(H_1|E \cap (H_1 \cup H_2)) \geq  Q^{\mathbb{F}}(H_1|F \cap (H_1 \cup H_2))$.

Letting $r_j = Q(\theta_{1,j})$ and $s_k = Q(\theta_{2,k})$, the inequalities in  \autoref{eqn:bcfc6} and \autoref{eqn:bcfc7} entail that $\sum_{j \leq n}r_j  a_j + \sum_{k \leq m}s_k  b_k$ and $\sum_{j \leq n}r_j  c_j + \sum_{k \leq m}s_k d_k$ are both positive.  Since we've assumed (2) that \autoref{eqn:bcfc3} holds, the left-to-right direction of Observation 1 entails that \autoref{eqn:bcfc4} must hold as desired.

In the left-to-right direction, suppose that  $Q^{\mathbb{E}}(H_1|E \cap (H_1 \cup H_2)) \geq Q^{\mathbb{F}}(H_1|F \cap (H_1 \cup H_2))$ for all priors $Q$ for which $Q^{\mathbb{F}}(H_1|F \cap (H_1 \cup H_2))$ is well-defined.  We must show that  (1)  for all $\theta \in H_1 \cup H_2$, if $P^{\mathbb{F}}_{\theta}(F) > 0$, then $P^{\mathbb{E}}_{\theta}(E) > 0$, and (2) \autoref{eqn:bcfc3} holds.

To show (1), suppose $P^{\mathbb{F}}_{\upsilon}(F) > 0$ for some $\upsilon \in H_1 \cup H_2$.  Define $Q$ so that $Q(\upsilon) = 1$.  Then 
\begin{equation}\label{eqn:bcfc8}
    Q^{\mathbb{F}}(F \cap (H_1 \cup H_2)) = \sum_{\theta \in H_1 \cup H_2} Q(\theta) \cdot  P^{\mathbb{F}}_{\theta}(F) = Q(\upsilon) \cdot  P^{\mathbb{F}}_{\upsilon}(F) = P^{\mathbb{F}}_{\upsilon}(F) > 0
\end{equation}
and so $Q^{\mathbb{F}}(\cdot |F \cap (H_1 \cup H_2))$ is well-defined.  Thus, $Q^{\mathbb{E}}(H_1|E \cap (H_1 \cup H_2)) \geq Q^{\mathbb{F}}(H_1|F \cap (H_1 \cup H_2))$ by our assumption.  Hence, $Q^{\mathbb{E}}(\cdot|E \cap (H_1 \cup H_2))$ is also well-defined, which entails $Q^{\mathbb{E}}(E \cap (H_1 \cup H_2)) > 0$.  But using the same reasoning that lead to \autoref{eqn:bcfc8}, we obtain
$P^{\mathbb{E}}_{\upsilon}(E) =Q^{\mathbb{E}}(E \cap (H_1 \cup H_2))$, and so $P^{\mathbb{E}}_{\upsilon}(E) > 0$ as desired.

Next, we must show that \autoref{eqn:bcfc3} holds.  Since $Q^{\mathbb{E}}(H_1|E \cap (H_1 \cup H_2)) \geq Q^{\mathbb{F}}(H_1|F \cap (H_1 \cup H_2))$,  we make the same substitutions as in the left-to-right direction and infer that \autoref{eqn:bcfc4} holds.  We then apply Observation 1 to obtain that  \autoref{eqn:bcfc3} holds, as desired.
\end{proof}

\begin{claim}\label{cor:BLLFavoring}  Let $\mathcal{B} = \mathcal{U}$ be the universal set of priors.  Then  $E$ $\mathcal{B}$-Bayesian favors $H_1$ to $H_2$ if and only if (1) $P_{\theta}(E) > 0$ for all $\theta \in H_1 \cup H_2$ and (2) $P_{\theta_1}(E) \geq P_{\theta_2}(E)$ for all $\theta_1 \in H_1$ and $\theta_2 \in H_2$.  If $H_1=\{\theta_1\}$ and $H_2=\{\theta_2\}$ are  simple, $E$  strictly $\mathcal{B}$-Bayesian favors $H_1$ to $H_2$ if and only if \f{ll} entails $E$ favors $H_1$ to $H_2$.
\end{claim}
\begin{proof}
    This follows immediately from \autoref{clm:BSupportCharacterized}, since the likelihood of $P_{\theta}(\Omega)=1$ for all $\theta$.
\end{proof}

\begin{claim}\label{clm:bLPPosteriorEquivalence}
    If \f{lp} entails $E$ and $F$ are evidentially equivalent, then $E$ and $F$ are $\mathcal{B}$-Bayesian posterior equivalent.  If $\mathcal{B}=\mathcal{U}$ is the universal set of priors, the converse holds as well.
\end{claim}
\begin{proof}
 Suppose there is a $c > 0$ such that $P^{\mathbb{E}}_{\theta}(E) = c \cdot P^{\mathbb{F}}_{\theta}(F)$.  We must show $E$ and $F$ are posterior equivalent.  To do so, we must first show $Q^{\mathbb{E}}(\cdot |E)$ is well-defined if and only if $Q^{\mathbb{F}}(\cdot |F)$ is.  To do that, we must show $Q^{\mathbb{E}}(E)$ is positive if and only if $Q^{\mathbb{F}}(F)$  is.
    
   By definition of $Q^{\mathbb{E}}$, we have
    \begin{equation}\label{eqn:blPFPE1}
    Q^{\mathbb{E}}(E) = \sum_{\theta \in \Theta} Q(\theta) \cdot P^{\mathbb{E}}_{\theta}(E).
    \end{equation}
    Similarly,
   \begin{equation}\label{eqn:blPFPE2}
   Q^{\mathbb{F}}(F) =  \sum_{\theta \in \Theta} Q(\theta) \cdot P^{\mathbb{F}}_{\theta}(F)
    \end{equation}
    Since $P^{\mathbb{E}}_{\theta}(E) = c \cdot P^{\mathbb{F}}_{\theta}(F)$ for some $c>0$, each term in the sum in \autoref{eqn:blPFPE1} is positive if and only if the corresponding term in the sum \autoref{eqn:blPFPE2} is positive.
    Hence, $Q^{\mathbb{E}}(E) > 0$ if and only if $Q^{\mathbb{F}}(F) > 0$, as desired.
    
   The second thing we must show is that $Q^{\mathbb{E}}(H|E) = Q^{\mathbb{F}}(H|F)$ for all $H$, whenever those conditional probabilities are well-defined.  To that end, let $H \subseteq \Theta$ be any (simple or composite) hypothesis.  Then:
    
     \begin{alignat*}{3}
     		Q^{\mathbb{E}}(H| E) &= \frac{Q^{\mathbb{E}}(E \cap H) }{Q^{\mathbb{E}}(E)} && \mbox{definition of conditional probability} \\
     					&= \frac{ \sum_{\theta \in H \cap \Theta} Q^{\mathbb{E}}(E \cap \{\theta\}) }{\sum_{\theta \in \Theta} Q^{\mathbb{E}}(E \cap \{\theta\})} && \mbox{law of total probability} \\
     					&= \frac{ \sum_{\theta \in H \cap \Theta} P^{\mathbb{E}}_{\theta}(E)  \cdot Q(\theta)}{ \sum_{\theta \in \Theta} P^{\mathbb{E}}_{\theta}(E) \cdot Q(\theta)}  && \mbox{ because } Q(\cdot| \theta) = P^{\mathbb{E}}_{\theta}(\cdot)  \mbox{ for all } \theta \in \Theta \mbox{ and } Q(\theta) \mbox{ does not depend upon } \mathbb{E} \\
     					&= \frac{ \sum_{\theta \in H \cap \Theta} c \cdot P^{\mathbb{F}}_{\theta}(F)  \cdot Q(\theta)}{ \sum_{\theta \in \Theta} c \cdot P^{\mathbb{F}}_{\theta}(F) \cdot Q(\theta)}  &&  \mbox{ as } P^{\mathbb{E}}_{\theta}(E) = c \cdot P^{\mathbb{F}}_{\theta}(F) \\
     					&= \frac{ \sum_{\theta \in H \cap \Theta}  P^{\mathbb{F}}_{\theta}(F)  \cdot Q(\theta)}{ \sum_{\theta \in \Theta} P^{\mathbb{F}}_{\theta}(F) \cdot Q(\theta) } &&  \mbox{ canceling } c  \\
     					&= Q^{\mathbb{F}}(H| F)  && \mbox{reversing the above steps}
     \end{alignat*}
    
Now we prove the converse.  Assume $E$ and $F$ are posterior equivalent.   That is, assume that, for all $Q$, we have (i) $Q^{\mathbb{E}}(\cdot|E) $ is well-defined if and only if $Q^{\mathbb{F}}(\cdot|F)$ is and (ii)
   $Q^{\mathbb{E}}(H|E) = Q^{\mathbb{F}}(H|F)$ for all non-empty hypotheses $H \subseteq \Theta$ and all $Q$ for which those conditional probabilities are well-defined.  We must find some $c > 0$ such that $P^{\mathbb{E}}_{\theta}(E)= c \cdot P^{\mathbb{F}}_{\theta}(F)$ for all $\theta$.

    We first assume there is some $\theta \in \Theta$  such that $P^{\mathbb{E}}_{\theta}(E) > 0$ or $P^{\mathbb{F}}_{\theta}(F) > 0$.  Otherwise, $P^{\mathbb{E}}_{\theta}(E)=P^{\mathbb{F}}_{\theta}(F) = 0$ for all $\theta \in \Theta$, and so $P^{\mathbb{E}}_{\theta}(E)= c \cdot P^{\mathbb{F}}_{\theta}(F)$ for all $c > 0$ trivially.

    So pick any $\upsilon \in \Theta$ such that either  $P^{\mathbb{E}}_{\upsilon}(E) > 0$ or $P^{\mathbb{F}}_{\upsilon}(F) > 0$.  Without loss of generality, assume that
    $P^{\mathbb{F}}_{\upsilon}(F) > 0$.  We claim $P^{\mathbb{E}}_{\upsilon}(E) > 0$ as well.  Why? Let $Q$ be the prior such that $Q(\upsilon) = 1$.  Note $Q \in {\cal B}={\cal U}$ the universal set of priors.  Then $Q^{\mathbb{F}}(F) = Q(\upsilon) \cdot P^{\mathbb{F}}_{\upsilon}(F)  = P^{\mathbb{F}}_{\upsilon}(F)  > 0$.  Hence, $Q^{\mathbb{F}}(\cdot|F)$ is well-defined.  Since $E$ and $F$ are posterior equivalent, $Q^{\mathbb{E}}(\cdot|E)$ is also well-defined.  Because $Q^{\mathbb{F}}(E) = Q(\upsilon) \cdot P^{\mathbb{E}}_{\upsilon}(E) = P^{\mathbb{E}}_{\upsilon}(E)$, it follows that $P^{\mathbb{E}}_{\upsilon}(E) > 0$ as desired.
    
    Thus, if we let $c = P^{\mathbb{E}}_{\upsilon}(E)/P^{\mathbb{F}}_{\upsilon}(F)$, it follows that $c > 0$.  We want to show that  $P^{\mathbb{E}}_{\theta}(E)= c \cdot P^{\mathbb{F}}_{\theta}(F) $ for all $\theta$.  It suffices, therefore, to show $P^{\mathbb{E}}_{\upsilon}(E)/P^{\mathbb{F}}_{\upsilon}(F) = P^{\mathbb{E}}_{\theta}(E)/P^{\mathbb{F}}_{\theta}(F)$ for all $\theta$.
    
    To do so, let $\theta$ be arbitrary and 
    now define $Q$ to be any prior such that $Q(\upsilon)$ and $Q(\theta)$ are both positive. 
    Again, it's easy to check that $Q^{\mathbb{E}}(\cdot|E)$ and $Q^{\mathbb{F}}(\cdot|F)$ are well-defined.  
    So Bayes rule entails:
    $$ \frac{Q^{\mathbb{E}}(\theta|E)}{Q^{\mathbb{E}}(\upsilon|E)} = \frac{P^{\mathbb{E}}_{\theta}(E)}{P^{\mathbb{E}}_{\upsilon}(E)} \cdot \frac{Q(\theta)}{Q(\upsilon)}$$
    And similarly for $F$.  By assumption, $Q^{\mathbb{E}}(\{\theta\}|E) = Q^{\mathbb{E}}(\{\theta\}|F)$, and hence:
    $$ \frac{P^{\mathbb{E}}_{\theta}(E)}{P^{\mathbb{E}}_{\upsilon}(E)} \cdot \frac{Q(\theta)}{Q(\upsilon)} = \frac{P^{\mathbb{F}}_{\theta}(F)}{P^{\mathbb{F}}_{\upsilon}(F)} \cdot \frac{Q(\theta)}{Q(\upsilon)}$$
    It immediately follows that $P^{\mathbb{E}}_{\upsilon}(E)/P^{\mathbb{F}}_{\upsilon}(F) = P^{\mathbb{E}}_{\theta}(E)/P^{\mathbb{F}}_{\theta}(F)$ as desired.
    
\end{proof}

\section{Technical Lemmas}

We begin with the analog of the claim that, if $P(C)>0$, then $P(A|C) \leq P(B|C)$ if and only if $P(A \cap C) \leq P(B \cap C)$.  For brevity, we write $A$ instead of $A|\Delta$ when the conditioning event is the sure event $\Delta$.

\setcounter{lemma}{9}

\begin{lemma}\label{lm:qConditionalIntersection} 
If $A| C \preceq B|C$, then $A \cap C \preceq B \cap C$.  The converse holds if $C \not \in \nul$.
\end{lemma}
\begin{proof}
    To prove the left to right direction, we apply \Ax{6b}.  Let $\mathfrak{A}=\mathfrak{A}'=\Delta$; $\mathfrak{B}=\mathfrak{B}'=C$; $\mathfrak{C}=A \cap C$ and $\mathfrak{C}' = B \cap C$.  Notice that   $\mathfrak{C} \subseteq \mathfrak{B} \subseteq \mathfrak{A}$ and $\mathfrak{C}' \subseteq \mathfrak{B}' \subseteq \mathfrak{A}'$.  Because $\mathfrak{A}=\mathfrak{A}'$ and $\mathfrak{B}=\mathfrak{B}'$, it immediately follows from the reflexivity of $\preceq$ that $\mathfrak{B}|\mathfrak{A} \preceq \mathfrak{B}'|\mathfrak{A}'$.  Further:
    \begin{align*}
            \mathfrak{C}|\mathfrak{B} &= A \cap C | C &&\mbox{by definition of } \mathfrak{C} \& \mathfrak{B} \\
            &\sim A | C &&\mbox{by \Ax{4}} \\
            &\preceq B|C &&\mbox{by assumption} \\
            &\sim B \cap C | C &&\mbox{by \Ax{4}} \\
            &= \mathfrak{C}' | \mathfrak{B}' &&\mbox{by definition of } \mathfrak{C}' \mbox{ and } \mathfrak{B}'
    \end{align*}
    So by \Ax{6b}, it follows that $\mathfrak{C}|\mathfrak{A} \preceq \mathfrak{C}'|\mathfrak{A}'$, i.e., that $A\cap C| \Delta \preceq B \cap C | \Delta$, i.e., that $A \cap C \preceq B \cap C$, as desired.

    In the right to left direction, we prove the contrapositive.  That is, suppose that $A| C \not \preceq B | C$.  Because $C \not \in \nul$ and the relation $\preceq$ is total (\Ax{1}), it follows that $B|C \prec A | C$. One can then apply \Ax{6b} in the same way we just did to show that $B \cap C \prec A \cap C$ (since $B|C \prec A | C$, we get the strict conclusion).  By the definition of the $\prec$, it follows that $A \cap C \not \preceq B \cap C$, as desired.
\end{proof}
 
The set $\nul$ in \Ax{2} is the analog of probability zero events.  For technical purposes, it will be useful to define $\nul(C) = \{A:  A|C \preceq \emptyset|C\}$ to be the set of events that are probability zero \emph{conditional} on some non-null event $C \not \in \nul$, i.e., $\nul(C)$ is the analog of the set of events $A$ such that $P(A|C)=0$.  Thus, $\nul = \nul(\Delta)$.

\begin{lemma}\label{lm:qConditionalNull}
If $C \not \in \nul$, then $A \in \nul(C)$ if and only if $A \cap C \in \nul$.
\end{lemma}
\begin{proof}
This follows directly from \autoref{lm:qConditionalIntersection}.  In detail, in the left-to-right direction, suppose $A \in \nul(C)$.  By definition of $\nul(C)$, that means $A|C \preceq \emptyset|C$.  By \autoref{lm:qConditionalIntersection}, it follows that $A \cap C| \Delta \preceq \emptyset|\Delta$, and so $A \cap C \in \nul$ by \Ax{2}.

In the right to left direction, suppose  $A \cap C \in \nul$.  Then $A \cap C| \Delta \preceq \emptyset|\Delta$ by \Ax{2}.  Since $C \not \in \nul$, \autoref{lm:qConditionalIntersection} entaails $A|C \preceq \emptyset|C$ as desired.
\end{proof}

The following results are either exact copies of lemmata in \cite[p. 229]{krantz_foundations_2006} or very slight generalizations thereof; some of Krantz et. al.'s results pertain only to unconditional null sets.  When the results are copied from  \cite[p. 229]{krantz_foundations_2006}, we omit a proof.  When we generalize the original lemma to conditional null sets, we provide a proof but our arguments are more-or-less identical to those of the results for unconditional null sets.

\begin{lemma}\label{lm:krantz6-1}
    $A|B \cap C \succeq \emptyset|\Delta$ whenever $B \not \in \nul(C)$.  It follows that $A|B \succeq \emptyset|\Delta$ whenever $B \not \in \nul$.
\end{lemma}
\begin{proof}
    Suppose $B \not \in \nul(C)$.  Assume for the sake of contradiction that  $A|B \cap C \not \succeq \emptyset|\Delta$.  Since $B \not \in \nul(C)$, by \Ax{1} (specifically, the totality of $\preceq$), it follows that 
        \begin{equation}\label{eqn:krantz6-1_1}
            A|B \cap C \prec \emptyset|\Delta
        \end{equation}
     Further, by \Ax{3}, we have that
    \begin{equation}\label{eqn:krantz6-1_2}
            ((B \cap C) \setminus A )|B \cap C \preceq \Delta|\Delta.
    \end{equation}
    Applying \Ax{5} to \autoref{eqn:krantz6-1_1} and \autoref{eqn:krantz6-1_2} yields
   $A \cup ((B \cap C) \setminus A)|B \cap C \prec \emptyset \cup \Delta | \Delta$.
   Simplicfying that inequality, yields
   $B \cap C|B \cap C \prec \Delta | \Delta$, which contradicts \Ax{3}.
\end{proof}

\begin{lemma}\label{lm:krantz6-2} 
    $\emptyset|A \sim \emptyset|B$ for all $A,B \not \in \nul$.
\end{lemma}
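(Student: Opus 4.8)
The plan is to reduce everything to a single claim: $\emptyset\mid A\sim\emptyset\mid\Delta$ for an arbitrary non-null $A$. Once that is in hand, \lref{krantz6-2} follows immediately by transitivity (\Ax{1}), since $\emptyset\mid A\sim\emptyset\mid\Delta\sim\emptyset\mid B$ for any non-null $A,B$. So fix a non-null $A$. By \lref{krantz6-1} we already have $\emptyset\mid A\succeq\emptyset\mid\Delta$, so the entire remaining task is to prove the reverse inequality $\emptyset\mid A\preceq\emptyset\mid\Delta$.

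First I would record the ``dual'' normalization fact $\Delta\mid A\sim\Delta\mid\Delta$. This is routine: \Ax{4}, applied with the sets $\Delta$ and $A$, gives $A\mid A\sim\Delta\mid A$ (since $\Delta\cap A=A$); \Ax{3} gives $A\mid A\sim\Delta\mid\Delta$; and transitivity (\Ax{1}) chains these into $\Delta\mid A\sim\Delta\mid\Delta$.

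The key step is to apply \Ax{5} (additivity for disjoint events) to the decomposition $\Delta=\emptyset\cup\Delta$ on \emph{both} sides of the comparison, and to exploit its strictness clause. Suppose, toward a contradiction, that $\emptyset\mid A\prec\emptyset\mid\Delta$. Instantiate \Ax{5} with the disjoint pair $(\emptyset,\Delta)$ conditioned on $A$ as the ``smaller'' pair and the disjoint pair $(\emptyset,\Delta)$ conditioned on $\Delta$ as the ``larger'' pair. The two hypotheses of \Ax{5} are then $\emptyset\mid A\preceq\emptyset\mid\Delta$ --- which holds and is \emph{strict} by assumption --- and $\Delta\mid A\preceq\Delta\mid\Delta$ --- which holds as the equivalence established in the previous paragraph. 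Since $\emptyset\cup\Delta=\Delta$, the conclusion of \Ax{5}, upgraded to strict because one hypothesis is strict, is $\Delta\mid A\prec\Delta\mid\Delta$, contradicting $\Delta\mid A\sim\Delta\mid\Delta$. Hence $\emptyset\mid A\preceq\emptyset\mid\Delta$, which is what remained to be shown.

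I expect the only genuinely non-mechanical point to be spotting this instantiation of \Ax{5}: because $\Delta$ splits as $\emptyset\cup\Delta$ and the ``$\Delta$-summand'' is already pinned down (conditioning on $A$ versus $\Delta$ makes no difference to $\Delta$ itself), additivity forces the ``$\emptyset$-summand'' to behave identically as well, with the strictness clause doing the work of turning ``$\preceq$'' into ``$\sim$''. Everything else --- the two normalization facts from \Ax{3} and \Ax{4}, the appeal to \lref{krantz6-1} for the easy direction, and the transitivity chain through $\Delta$ --- is straightforward bookkeeping. The same argument goes through verbatim for $\sqsubseteq$ once $\Delta$ is replaced by the appropriate maximal conditioning event in that setting.
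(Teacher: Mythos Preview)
Your overall strategy is sound, and the paper itself does not give a proof---it simply cites \cite[p.~229]{krantz_foundations_2006-1}---so there is nothing to compare against. However, your reductio assumption has its direction reversed, and as written the argument does not establish what you need.

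You correctly reduce to showing $\emptyset\mid A\preceq\emptyset\mid\Delta$, since \lref{krantz6-1} already gives $\emptyset\mid A\succeq\emptyset\mid\Delta$. But then you write: ``Suppose, toward a contradiction, that $\emptyset\mid A\prec\emptyset\mid\Delta$.'' This is not the negation of what you want to prove; in fact $\emptyset\mid A\prec\emptyset\mid\Delta$ is already impossible given \lref{krantz6-1}, so your reductio shows nothing new. The negation of $\emptyset\mid A\preceq\emptyset\mid\Delta$ is, by totality (\Ax{1}), $\emptyset\mid A\succ\emptyset\mid\Delta$, and \emph{that} is what you must assume for contradiction.

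The fix is a one-line swap: assume $\emptyset\mid\Delta\prec\emptyset\mid A$ and reverse the roles of $A$ and $\Delta$ in your \Ax{5} instantiation. Then the two hypotheses become $\emptyset\mid\Delta\preceq\emptyset\mid A$ (strict by assumption) and $\Delta\mid\Delta\preceq\Delta\mid A$ (holding as $\sim$ by your normalization fact), and the strict conclusion $\Delta\mid\Delta\prec\Delta\mid A$ contradicts $\Delta\mid\Delta\sim\Delta\mid A$ exactly as you intended. With this correction the proof goes through; the idea of using the disjoint decomposition $\Delta=\emptyset\cup\Delta$ together with the strictness clause of \Ax{5} is the right one.
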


\begin{lemma}\label{lm:krantz7}
    If $A \supseteq B$ and $C \not \in \nul$, then $A|C \succeq B|C$.
\end{lemma}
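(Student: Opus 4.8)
The plan is to derive Lemma~\ref{lm:krantz7} directly from the additivity axiom (Axiom~5), using Lemmas~\ref{lm:krantz6-1} and~\ref{lm:krantz6-2} as inputs; this gives a self-contained argument rather than merely deferring to \cite{krantz_foundations_2006}. Throughout I take the conditioning event to be non-null (i.e.\ $C \notin \nul$), since otherwise $A|C$ and $B|C$ are undefined and there is nothing to prove.

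The key observation is that, because $B \subseteq A$, we can split $A$ as the disjoint union $A = B \cup (A \setminus B)$ with $B \cap (A \setminus B) = \emptyset$, and likewise write $B = B \cup \emptyset$ with $B \cap \emptyset = \emptyset$. So I would apply Axiom~5 to the two disjoint pairs $\{B, \emptyset\}$ and $\{B, A \setminus B\}$, each conditioned on $C$. For this I need the two ``coordinatewise'' comparisons $B|C \preceq B|C$ and $\emptyset|C \preceq (A\setminus B)|C$. The first is immediate from reflexivity (Axiom~1). For the second, Lemma~\ref{lm:krantz6-1} gives $(A\setminus B)|C \succeq \emptyset|\Delta$ (using $C \notin \nul$), Lemma~\ref{lm:krantz6-2} gives $\emptyset|\Delta \sim \emptyset|C$, and transitivity (Axiom~1) then yields $(A\setminus B)|C \succeq \emptyset|C$, as needed.

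Feeding these two inequalities into Axiom~5 gives $(B \cup \emptyset)\,|\,C \preceq (B \cup (A\setminus B))\,|\,C$, which is exactly $B|C \preceq A|C$, the claim.

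I do not anticipate a genuine obstacle here; the only points requiring care are bookkeeping. First, one should confirm that $\emptyset|C$ is a well-formed expression — it is, since $\preceq$ is defined on ${\cal P}(\Delta) \times ({\cal P}(\Delta) \setminus \nul)$, we have $C \notin \nul$, and there is no restriction on the set to the left of the conditioning bar (even though $\emptyset$ itself lies in $\nul$). Second, the degenerate case $A = B$ (so $A \setminus B = \emptyset$) deserves an explicit word, though there every step collapses to reflexivity. If one preferred to avoid invoking the two auxiliary lemmas, the alternative route would be to establish $\emptyset|C \preceq (A\setminus B)|C$ more directly from Axioms~2--4; but passing through Lemmas~\ref{lm:krantz6-1}--\ref{lm:krantz6-2} is the cleanest path.
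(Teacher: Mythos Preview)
Your argument is correct. The paper itself does not prove this lemma but defers to \cite[p.~229]{krantz_foundations_2006-1}; the proof you give---splitting $A = B \cup (A\setminus B)$, matching it against $B = B \cup \emptyset$, and invoking Axiom~5 together with Lemmas~\ref{lm:krantz6-1}--\ref{lm:krantz6-2}---is exactly the standard route and goes through as written.
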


\begin{lemma}\label{lm:krantz8}
   For all $C \not \in \nul$:
   \begin{enumerate}
       \item $\emptyset \in \nul(C)$ and $C \not \in \nul(C)$.
       \item If $A \in \nul(C)$, then $C \setminus A \not \in \nul(C)$.  It follows that If $A \in \nul$, then $\Delta \setminus A \not \in \nul$.
       \item If $A \in \nul(C)$ and $B \subseteq A$, then $B \in \nul(C)$.
       \item If $A, B \in \nul(C)$, then $A \cup B \in \nul(C)$.
   \end{enumerate}
\end{lemma}
\begin{proof}
~ \\
    \noindent[Part 1]. By \Ax{1} (specifically, reflexivity of $\preceq$), we know $\emptyset|C \preceq \emptyset|C$, which implies $\emptyset \in \nul(C)$. 
    
    For the second statement, suppose for the sake of contradiction that $C \in \nul(C)$.  Then by definition of $\nul(C)$ we obtain that $C|C = C \cap C|C \preceq \emptyset|C$.   But $C|C \sim \Delta|\Delta$ by \Ax{3} and $\emptyset|C \sim \emptyset|\Delta$ by \autoref{lm:krantz6-2}.  So if $C|C \preceq \emptyset|C$, then $\Delta|\Delta \preceq \emptyset|\Delta$, contradicting \Ax{2}. \\
    
     \noindent[Part 2] Suppose for the sake of contradiction that both $A \in \nul(C)$ and $C \setminus \in \nul(C)$.  Then by definition of $\nul(C)$, we know both $A|C \preceq \emptyset|C$ and  $(C \setminus A)|C \preceq \emptyset|C$.  Applying \Ax{5} yields $C|C \preceq \emptyset|C$. By \autoref{lm:qConditionalIntersection}, that entails $C|\Delta \preceq \emptyset|\Delta$, which means $C \in \nul$ by definition of $\nul$.  But this contradicts the assumption that $C \not \in \nul$.\\
     
     \noindent[Part 3] Suppose for the sake of contradiction that $A \in \nul(C)$ and $B \subseteq A$ but that $B \not \in \nul(C)$.  Then by definition of $\nul(C)$, we know both $B|C \not \preceq \emptyset|C$.  By \Ax{1} (specifically, totality) and the fact that $C \not \in \nul$, it follows that 
     \begin{equation}\label{eqn:krantz8_1}
         B|C \succ \emptyset| C.
     \end{equation}
     Further, by \autoref{lm:krantz6-1}, we know that
     \begin{equation}\label{eqn:krantz8_2}
         A \setminus B |C \succeq \emptyset| C
     \end{equation}
     Applying \Ax{5} to \autoref{eqn:krantz8_1} and \autoref{eqn:krantz8_1} yields $A|C \succ \emptyset|C$, contradicting the assumption that $A \in \nul(C)$.\\
     
      \noindent[Part 4] This is a straightforward consequence of \Ax{5} and the previous part of this lemma.  Since $A \in \nul(C)$ and $(A\setminus B) \subseteq A$, it follows from Part 3 of this lemma that $(A \setminus B) \in \nul(C)$.  By definition of $\nul(C)$, that means 
      \[(A \setminus B)|C \preceq \emptyset|C.\]  
      Similarly, because $B \in \nul(C)$, we know $B|C \preceq \emptyset|C$.  Applying \Ax{5} to to previous two inequalities yields $A \cup B|C \preceq \emptyset|C$, i.e., that $A \cup B \in \nul(C)$ as desired.
\end{proof}

In standard probability theory, calls an event ``$P$-almost sure'' if $P(A) = 1$.  By analogy, define:

\begin{definition}
Given $C \not \in \nul$, an event $A$ is called $C$-\emph{almost sure}  if $C \setminus A \in \nul_{\preceq}(C)$.  Let ${\cal S}_{\preceq}(C)$ denote the set of all $\preceq$-almost sure events.
\end{definition}

As before, we drop the reference to the ordering $\preceq$ when it's clear from context, and we let ${\cal S} = {\cal S}(\Delta)$ be the almost-sure events conditional on $\Delta$.

\begin{lemma}\label{lm:qSureIsNotNull} 
Let $A \in {\cal S}(C)$ be an almost sure event.  Then:
    \begin{enumerate}
        \item $C \in {\cal S}(C)$ and $A \not \in \nul(C)$
        \item $A \cap B \not \in \nul(C)$ for any $B \not \in \nul)(C)$.
        \item $A \not \in \nul$.
    \end{enumerate}
    
\end{lemma}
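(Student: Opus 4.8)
The plan is to derive both parts directly from \autoref{lm:krantz8}, which packages the basic closure properties of the null ideal $\nul$; nothing beyond that lemma (hence nothing beyond the axioms already used to establish it) will be needed.

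For part (1): by definition, $A$ being $\preceq$-almost sure means exactly that $\Delta \setminus A \in \nul$. Since $A = \Delta \setminus (\Delta \setminus A)$, part (2) of \autoref{lm:krantz8} (if $C \in \nul$, then $\Delta \setminus C \notin \nul$), applied with $C = \Delta \setminus A$, immediately gives $A \notin \nul$. This is essentially a one-liner.

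For part (2): I would argue by contradiction. Fix $B \notin \nul$ and suppose toward a contradiction that $A \cap B \in \nul$. Decompose $B$ as the disjoint union $B = (A \cap B) \cup (B \setminus A)$, and note that $B \setminus A = B \cap (\Delta \setminus A) \subseteq \Delta \setminus A$. Since $\Delta \setminus A \in \nul$ because $A$ is almost sure, the downward-closure clause of \autoref{lm:krantz8} (part (3): subsets of null sets are null) yields $B \setminus A \in \nul$. Now both $A \cap B$ and $B \setminus A$ are null, so closure under finite unions (part (4) of \autoref{lm:krantz8}) gives $B = (A \cap B) \cup (B \setminus A) \in \nul$, contradicting the choice of $B$. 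Hence $A \cap B \notin \nul$, as claimed.

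The only thing to get right is the elementary set-theoretic bookkeeping — recording the decomposition $B = (A \cap B) \cup (B \setminus A)$ together with the inclusion $B \setminus A \subseteq \Delta \setminus A$ so that the two closure properties of the null ideal apply in the right order. There is no substantive obstacle: the lemma is a short corollary of \autoref{lm:krantz8} and is isolated only because the ``almost sure'' notion, dual to ``null'', is invoked repeatedly in the arguments that follow.
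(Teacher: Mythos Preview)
Your proof is correct and essentially identical to the paper's: Part 1 is a direct application of \autoref{lm:krantz8}(2), and Part 2 is the same contradiction argument decomposing $B$ into $A\cap B$ and $B\setminus A$ (the paper writes the latter as $\neg A\cap B$), then invoking downward closure and closure under unions. If anything, your citation of Part (3) for the downward-closure step is more accurate than the paper's, which contains a typo citing Part (2) there.
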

\begin{proof} For brevity, we let $\neg A$ abbreviate $C \setminus A$ below.\\

\noindent [Part 1:] These two statements follow immediately from Parts 1 and 2 respectively of \autoref{lm:krantz8}.\\

\noindent [Part 2:] Suppose for the sake of contradiction that $A \cap B \in \nul(C)$.  Since $A \in {\cal S}(C)$, we know $\neg A \in \nul(C)$.  Thus, by Part 2 of \autoref{lm:krantz8}, since $\neg A \cap B \subseteq \neg A$, it follows that $\neg A \cap B \in \nul(C)$.  Since both  $A \cap B$ and $\neg A \cap B$ are members of $\nul(C)$, it follows from Part 4 of \autoref{lm:krantz8} that  $(A\cap B) \cup (\neg A \cap B) \in \nul(C)$.  However, $(A\cap B) \cup (\neg A \cap B) = B$, and so it follows that $B \in \nul(C)$, contradicting assumption.\\

\noindent [Part 3:] By contradiction. Suppose $A \in \nul$.  Since $A \cap C \subseteq A$, Part 3 of \autoref{lm:krantz8} entails that $A \cap C \in \nul$, i.e., that $A \cap C|\Delta \preceq \emptyset|\Delta$.  Since $C \not \in \nul$, \autoref{lm:qConditionalIntersection} entails $A|C \preceq \emptyset|C$.  Thus, $A \in \nul(C)$, contradicting Part 1 of this lemma.
\end{proof}

\noindent In addition to \autoref{lm:krantz8} and \autoref{lm:qSureIsNotNull}, we will frequently need to refer to additional facts about null and almost-sure events.  The first three parts of the following lemma slightly strengthen \cite[p. 230]{krantz_foundations_2006-1}'s Lemma 9. The remaining parts of the are the lemma are analogous to facts from standard probability theory that characterize when measure zero sets can be ignored, and hence, when one can focus on the almost sure events. For instance, in standard probability theory, if $P(A)=1$, then (1) $P(C|B) = P(C \cap A |B)$ and  (2) $P(C|B) = P(C |B \cap A)$ whenever $P(B) > 0$.   The last parts of the lemma are the analog of those two facts.

We separate the following lemma from the previous ones since its proof uses \Ax{6b}.  As we have said, \cite[p. 230]{krantz_foundations_2006-1}, prove the first parts of the lemma, but in doing so, they use a ``structural'' axiom (specifically, what they call Axiom 8) that requires the algebra of events to be sufficiently rich.  What we show is that similar facts can be derived from \Ax{6b}, which poses no constraint on the algebra of events itself.

\setcounter{lemma}{3}

\begin{lemma}\label{lm:krantz9}
Suppose $D \not \in \nul$.
\begin{enumerate}
    \item  If $A \in \nul(D)$, then $A|B \cap D \sim \emptyset|D$ for all $B \not \in \nul(D)$. 
    \item If $A \in \nul(D)$, then $A|B \cap D \sim \emptyset|C$ for all $B \not \in \nul(D)$ and $C \not \in \nul$.
    \item If $A \subseteq B$ and $A \in \nul(B)$, then $A \in \nul$.
     \item  If $A \in {\cal S}(D)$, then $A|A \sim A|B \cap D$ for all $B \not \in \nul(D)$.
    \item If $A \in {\cal S}(D)$, then $C|B \cap D \sim (A \cap C)|B \cap D$ for all $B \not \in \nul(D)$.    Similarly, if $A \in \nul(D)$, then $C|B \cap D \sim C \cup A| B \cap D$  for all $B \not \in \nul(D)$.
    \item If $A \in {\cal S}(D)$, then $C|B \cap D \sim C|(B \cap A) \cap D$ for all $B \not \in \nul(D)$.  Similarly, if $A \in \nul(D)$, then $C|B \cap D \sim C| (B \cup A) \cap D$  for all $B \not \in \nul(D)$.
\end{enumerate}
\end{lemma}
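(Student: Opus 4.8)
The plan is to prove the six parts in dependency order, the two genuinely new ingredients being uses of the \emph{strict} clauses of \hyperlink{Ax6b}{Axiom~6b} and \hyperlink{Ax5}{Axiom~5}. Part~3 is immediate from \hyperlink{Ax6b}{Axiom~6b}: apply it to the nested triples $A\subseteq B\subseteq\Delta$ and $\emptyset\subseteq B\subseteq\Delta$; the hypotheses $B|\Delta\preceq B|\Delta$ and $A|B\preceq\emptyset|B$ (the latter being the assumed $A|B\preceq\emptyset|\Delta$ rewritten via \autoref{lm:krantz6-2}) give $A|\Delta\preceq\emptyset|\Delta$, so $A\in\nul$ by \hyperlink{Ax2}{Axiom~2}.

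The heart of the argument is the auxiliary claim that, whenever $W\notin\nul$ and $N\subseteq W$ is null, $(W\setminus N)|W\sim\Delta|\Delta$. First, $(W\setminus N)|\Delta\sim W|\Delta$: \hyperlink{Ax5}{Axiom~5} applied to the partition $W=(W\setminus N)\sqcup N$ with the comparisons $(W\setminus N)|\Delta\preceq(W\setminus N)|\Delta$ and $N|\Delta\preceq\emptyset|\Delta$ (\hyperlink{Ax2}{Axiom~2}) yields $W|\Delta\preceq(W\setminus N)|\Delta$, and \autoref{lm:krantz7} gives the reverse. Now feed \hyperlink{Ax6b}{Axiom~6b} the triples $(W\setminus N)\subseteq W\subseteq\Delta$ and $(W\setminus N)\subseteq(W\setminus N)\subseteq\Delta$: the hypotheses $W|\Delta\preceq(W\setminus N)|\Delta$ and $(W\setminus N)|W\preceq(W\setminus N)|(W\setminus N)$ both hold (the second by \hyperlink{Ax3}{Axiom~3}), while the conclusion $(W\setminus N)|\Delta\preceq(W\setminus N)|\Delta$ is reflexive and hence cannot be strict. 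Since $W\setminus N\notin\nul$ (else $W\in\nul$ by \autoref{lm:krantz8}), the strict clause of \hyperlink{Ax6b}{Axiom~6b} forces both hypotheses to be equalities, so $(W\setminus N)|W\sim(W\setminus N)|(W\setminus N)\sim\Delta|\Delta$.

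For part~1 (and then part~2, via \autoref{lm:krantz6-2}): by \hyperlink{Ax4}{Axiom~4} and \autoref{lm:krantz8} it suffices to show $N|B\sim\emptyset|\Delta$ for the null set $N:=A\cap B\subseteq B$. \autoref{lm:krantz6-1} gives $N|B\succeq\emptyset|\Delta$, so suppose toward a contradiction that $N|B\succ\emptyset|\Delta$. Apply \hyperlink{Ax5}{Axiom~5} with everything conditioned on $B$, to the partitions $(B\setminus N)\sqcup\emptyset$ and $(B\setminus N)\sqcup N$: the first hypothesis $(B\setminus N)|B\preceq(B\setminus N)|B$ is reflexive, and the second, $\emptyset|B\preceq N|B$, is \emph{strict} under our assumption, so the conclusion $(B\setminus N)|B\prec B|B$ is strict; but the auxiliary claim and \hyperlink{Ax3}{Axiom~3} give $(B\setminus N)|B\sim\Delta|\Delta\sim B|B$, a contradiction. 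Hence $N|B\sim\emptyset|\Delta$ and $A|B\sim N|B\sim\emptyset|\Delta$. Parts~4 and~5 now follow easily: part~4 is $A|A\sim\Delta|\Delta\sim(A\cap B)|B\sim A|B$, using \hyperlink{Ax3}{Axiom~3}, \hyperlink{Ax4}{Axiom~4}, and the auxiliary claim with $W=B$, $N=B\setminus A$ (noting $B\setminus A\subseteq\Delta\setminus A\in\nul$); and each half of part~5 is obtained by writing the relevant set as a piece plus a null remainder, bounding the remainder's conditional above by $\emptyset|\Delta$ via part~1, and combining through \hyperlink{Ax5}{Axiom~5} and \autoref{lm:krantz7}.

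For part~6, by \hyperlink{Ax4}{Axiom~4} and part~5 it is enough to prove $R|B\sim R|(A\cap B)$ for $R\subseteq A\cap B\subseteq B$ when $A\in{\cal S}$ (so $A\cap B\notin\nul$). The direction $R|B\preceq R|(A\cap B)$ comes from \hyperlink{Ax6b}{Axiom~6b} on the triples $R\subseteq A\cap B\subseteq B$ and $R\subseteq A\cap B\subseteq A\cap B$ with only trivial hypotheses; the reverse direction uses the same axiom on $R\subseteq A\cap B\subseteq A\cap B$ and $R\subseteq A\cap B\subseteq B$, with part~4's fact $(A\cap B)|B\sim\Delta|\Delta$ supplying the one non-trivial hypothesis. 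The second half of part~6 is identical with $B\cup A$ (and the null set $A$ removed from it) in place of $A\cap B$. The step I expect to be the real obstacle is part~1: none of the axioms lets one ``divide'' or ``subtract'', so the only route I see from ``$N$ is null'' to ``$N|B$ is null'' is the two-stage maneuver above, in which the strict clauses of \hyperlink{Ax6b}{Axiom~6b} and \hyperlink{Ax5}{Axiom~5} are each exploited to kill a strict inequality that the non-strict axioms alone would permit.
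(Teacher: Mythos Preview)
Your proof is correct and takes a genuinely different route from the paper's. The paper attacks Part~1 first and directly: assuming $A\cap B|B\succ\emptyset|B$, it invokes the strict clause of \hyperlink{Ax6b}{Axiom~6b} on the triples $(\Delta,B,A\cap B)$ and $(\Delta,B,\emptyset)$ to obtain $A\cap B|\Delta\succ\emptyset|\Delta$, contradicting $A\in\nul$; Parts~4--6 are then built on Parts~1--2 (Part~4 via \hyperlink{Ax5}{Axiom~5} together with $\neg A|B\sim\emptyset|B$, Part~6 by contradiction combined with Part~5). You invert the dependency: your auxiliary claim $(W\setminus N)|W\sim\Delta|\Delta$ is essentially Part~4 restricted to subsets, proved by running the strict clause of \hyperlink{Ax6b}{Axiom~6b} in contrapositive (the conclusion $(W\setminus N)|\Delta\preceq(W\setminus N)|\Delta$ is reflexive, so with $C=W\setminus N\notin\nul$ neither hypothesis can be strict); Part~1 then follows from the auxiliary claim plus the strict clause of \hyperlink{Ax5}{Axiom~5}. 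What this buys you is cleaner bookkeeping with the null side-condition in \hyperlink{Ax6b}{Axiom~6b}: in your application the relevant inner set $C=W\setminus N$ is non-null, whereas the paper's direct route in Part~1 applies the strict clause with the smaller inner set equal to $\emptyset\in\nul$, which sits uneasily with the axiom as literally stated. Your Part~6 is also constructive (both inequalities obtained directly from \hyperlink{Ax6b}{Axiom~6b}, the nontrivial hypothesis supplied by the auxiliary claim) rather than by contradiction as in the paper. Parts~2, 3, and~5 are essentially the same in both arguments.
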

\setcounter{lemma}{2}

\begin{proof} For brevity, we let $\neg A$ abbreviate $D \setminus X$ below.\\

\noindent [Part 1:] As $A \in \nul(D)$, we have $A|D \preceq \emptyset|D$.  By \autoref{lm:qConditionalIntersection}, it follows that $A \cap D|\Delta \preceq \emptyset|\Delta$.  Since $A \cap B \cap D \subseteq A \cap D$, \autoref{lm:krantz7} entails that $A\cap B \cap D |\Delta \preceq A \cap D|\Delta$.  By transitivity, we obtain that $A \cap B \cap D|\Delta \preceq \emptyset|\Delta$.  Now since $B \not \in \nul(D)$ by assumption, \autoref{lm:qConditionalNull} entails that $B \cap D \not \in \nul$, and so $S|B \cap C$ is well-defined for all $S$.  Because $A \cap B \cap D|\Delta \preceq \emptyset|\Delta$ and $B \cap D \not \in \nul$, \autoref{lm:qConditionalIntersection} entails that $A|B \cap D \preceq \emptyset|B \cap D$.  But $\emptyset|B \cap D \sim \emptyset|D$ by \autoref{lm:krantz6-2}, and so $A|B \cap D \preceq \emptyset|D$ by transitivity.  Finally,  $A|B \cap D \succeq \emptyset|\Delta \sim \emptyset|D$ by \autoref{lm:krantz6-1} and \autoref{lm:krantz6-2}.  Since the inequality holds in both directions, we have  $A|B \cap D \sim \emptyset|D$ as desired.

\noindent [Part 2:] Follows immediately from Part 1 and \autoref{lm:krantz6-2}.

\noindent [Part 3:]  Suppose $A \subseteq B$ and $A \in \nul(B)$.  Since  $A \in \nul(B)$, we have $A|B \preceq \emptyset|B$.  By \autoref{lm:qConditionalIntersection}, it follows that $A \cap B|\Delta \preceq \emptyset|\Delta$.  Since $A \subseteq B$, we know $A \cap B = A$ and so 
\[A|\Delta = A \cap B|\Delta \preceq \emptyset |\Delta\]
Thus, $A \in \nul$, as desired.\\

\noindent [Part 4:]  Suppose for the sake of contradiction that $A|A \not \sim A|B \cap D$.
That means one of the following three options holds:  (1) $A \in \nul$, (2) $B \cap D \in \nul$, or (3) either $A|A \prec A|B \cap D$ or $A|A \succ A|B \cap D$, where this last option follows from \Ax{1} (specifically, the totality of $\preceq$). Option 1 is impossible because $A \in {\cal S}(D)$ by assumption and therefore $A \not \in \nul$ by Part 3 of \autoref{lm:qSureIsNotNull}.  Option 2 is likewise impossible because $B \not \in \nul(D)$, and thus $B \cap D \not \in \nul$ by \autoref{lm:qConditionalNull}.  So option 3 is the one that must obtain.

By \Ax{3} and \Ax{4}, we have $A|A \sim \Delta|\Delta \succeq A|B \cap D$.  Thus, the first disjunct of option 3 ( that $A|A \prec A|B \cap D$) is impossible.  Thus, it must be the case that the second disjunct holds, i.e., that $A|B \cap D \prec A|A$.  Since $A \in {\cal S}(D)$, by definition of ${\cal S}(D)$, we know $\neg A \in \nul(D)$.  Hence, $\neg A|B \cap D \preceq \emptyset|A$ by Part 2 of this lemma. Since (1)  $A|B \cap D \prec A|A$ and (2) $\neg A|B  \cap D \preceq \emptyset|A$, it follows from \Ax{5} that $A \cup \neg A|B \prec A|A$.  But $A \cup \neg A = D$, and so we've shown $D|B \cap D \prec A|A$.  Since $D|B \cap D \sim B \cap D|B \cap D$ by \Ax{4}, it follows that $B \cap D|B \cap D \prec A|A$, contradicting the \Ax{3}.\\

\noindent [Part 5:]  First, note that because $B \not \in \nul(D)$, we know $B \cap D \not \in \nul$ by \autoref{lm:qConditionalNull}.  Thus, all expressions of the form $\cdot|B \cap D$ below are well-defined.

Next, since $A \in {\cal S}(D)$, we have $\neg A \in \nul(D)$.  Thus, by Part 3 of \autoref{lm:krantz8}, we have $(\neg A \cap C) \in \nul(D)$ since $\neg A \cap C \subseteq \neg A$.  Thus, $\neg A \cap C|B \cap D \sim \emptyset|B \cap D$ by Part 2 of this Lemma.  Since (1) $A \cap C|B \cap D \sim A \cap C|B \cap D$ by \Ax{1} (specifically, the reflexivity of $\preceq$) and (2) $\neg A \cap C|B \cap D \sim \emptyset|B \cap D$ (as just shown),  \Ax{5} entails:
\[C|B \cap D = (A \cap C) \cup (\neg A \cap C) |B \cap D \sim (A \cap C) \cup \emptyset|B \cap D  = A \cap C|B \cap D \]
as desired.

To show the second claim, suppose $A \in \nul(D)$ so that $\neg A \in {\cal S}(D)$ by definition.  Because $\neg A \in {\cal S}(D)$, what we've just shown entails both that $C|B \cap D  \sim C \cap \neg A|B \cap D $ and that $(C \cup A)|B \cap D \sim (C \cup A) \cap \neg A|B \cap D$.  Notice that $C \cap \neg A = (C \cup A) \cap \neg A$, and so by transitivity we get  $C|B \cap D \sim C\cup A|B \cap D$ as desired.  \\

\noindent [Part 6:] As before, note that because $B \not \in \nul(D)$, we know that $B \cap D \not \in \nul$  \autoref{lm:qConditionalNull}.  Thus, all expressions of the form $\cdot|B \cap D$ below are well-defined.

Suppose for the sake of contradiction that $C|B \cap D \not \sim C|(B \cap D) \cap A$.  As we just noted, $B \cap D \not \in \nul$, and so the left-hand-side is well-defined. The right-hand side is also well-defined.  Why? By Part 1 of \autoref{lm:qSureIsNotNull}  $D \in {\cal S}(D)$, and by assumption, $B \not \in \nul(D)$.  Thus, $B \cap D \not \in \nul(D)$ by  Part 2 of \autoref{lm:qSureIsNotNull}. Applying the same logic again, since $A \in {\cal S}(D)$ and  $B \cap D \not \in \nul(D)$, Part 2 of \autoref{lm:qSureIsNotNull} entails $A \cap B \cap D \not \in \nul(D)$.

Since  $C|B \cap D \not \sim C|(B \cap A) \cap D$ and both sides are well-defined, by \Ax{1}, either $C|B \cap D  \succ C|(B \cap A) \cap D $ or vice versa.  Without loss of generality, assume the former.  Define:
    \begin{alignat*}{3}
        &\mathfrak{A}=D  & \quad \quad & \mathfrak{A}' = D  \\
        & \mathfrak{B} = B \cap D& \quad \quad & \mathfrak{B}'= (B \cap A) \cap D \\
        & \mathfrak{C}= C \cap B \cap D & \quad \quad & \mathfrak{C}'= C \cap B \cap A \cap D
    \end{alignat*}
    By \Ax{4} and assumption, $\mathfrak{C}|\mathfrak{B} \succ \mathfrak{C}'|\mathfrak{B}'$.  Because $A \in {\cal S}(D)$, Part 5 of this lemma entails $\mathfrak{B}|\mathfrak{A} = B \cap D|D \sim \mathfrak{B}'|\mathfrak{A}' = (B \cap A) \cap D|D$.  So by \Ax{6b}, it follows that 
    \[ \mathfrak{C}|\mathfrak{A} \succ \mathfrak{C}'|\mathfrak{A}' = (C \cap B \cap A \cap D)|D.\]
    
    But again, by Part 4 of this lemma, $\mathfrak{C}'|\mathfrak{A}' = C \cap B \cap A \cap D|D \sim (C \cap B)|D = \mathfrak{C}|\mathfrak{A}$.  So we've shown $\mathfrak{C}|\mathfrak{A} \succ \mathfrak{C}'|\mathfrak{A}'$ and $\mathfrak{C}|\mathfrak{A} \sim \mathfrak{C}'|\mathfrak{A}'$, which is a contradiction.

The second claim is proven in the same way as the second claim in Part 5.
\end{proof}

The final important lemma is special case of \Ax{6a} that we use frequently; it's proof relies on \Ax{6b} in cases in which null sets are involved.
\begin{lemma}\label{lm:Aditya}
    Suppose $A\supseteq B\supseteq C$ and $A\supseteq B'\supseteq C$. If $B|A \succeq C|B'$ and $B \not \in \nul$, then $B'|A \succeq C|B$. Further, if the antecedent is $\succ$, the consequent is $\succ$.   
\end{lemma}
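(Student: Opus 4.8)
The plan is to prove this as a special case of \Ax{6a}, applied by contradiction, after disposing of the degenerate case in which $C$ is null. First I would record the easy preliminaries: since $B \notin \nul$ and $B \subseteq A$, the contrapositive of part~3 of \autoref{lm:krantz8} gives $A \notin \nul$; and $B' \notin \nul$ is forced by well-definedness of the hypothesis $B|A \succeq C|B'$. Hence every conditional expression occurring in the statement ($B|A$, $B'|A$, $C|B$, $C|B'$, $C|A$) is well-defined, and totality (\Ax{1}) permits comparing $B'|A$ with $C|B$.

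For the non-strict claim I would argue by contradiction, assuming $B'|A \prec C|B$. If $C \in \nul$, then $C|B \sim \emptyset|\Delta$ by part~1 of \autoref{lm:krantz9}, while $B'|A \succeq \emptyset|\Delta$ by \autoref{lm:krantz6-1}, which already contradicts $B'|A \prec C|B$. If $C \notin \nul$, I would invoke \Ax{6a} with the two nested chains $C \subseteq B' \subseteq A$ and $C \subseteq B \subseteq A$: the two antecedents of \Ax{6a} instantiate to $B'|A \preceq C|B$ (which holds, and strictly, by our contradiction hypothesis) and $C|B' \preceq B|A$ (which holds since $B|A \succeq C|B'$), so \Ax{6a} delivers $C|A \preceq C|A$, and since the first antecedent is strict, $C|A \prec C|A$ — contradicting reflexivity of $\preceq$. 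Thus $B'|A \succeq C|B$.

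For the strict claim, assume $B|A \succ C|B'$; the non-strict case already gives $B'|A \succeq C|B$, so suppose for contradiction that in fact $B'|A \sim C|B$. If $C \in \nul$, then $C|B \sim \emptyset|\Delta$ as above, so $B'|A \sim \emptyset|\Delta$, whence part~3 of \autoref{lm:krantz9} forces $B' \in \nul$, a contradiction. If $C \notin \nul$, I apply \Ax{6a} to the same two chains, now with antecedents $B'|A \preceq C|B$ and $C|B' \prec B|A$ (the latter strict, from $B|A \succ C|B'$), which again yields $C|A \prec C|A$, impossible. Hence $B'|A \succ C|B$.

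The only real subtlety — and the reason the argument touches \Ax{6b} at all — is the bookkeeping around null sets: \Ax{6a} is applied cleanly only on the branch $C \notin \nul$ (which also keeps us safely inside the scope of its strict clause), and the branch $C \in \nul$ must be eliminated using \autoref{lm:krantz6-1} and \autoref{lm:krantz9}, whose proofs in turn rely on \Ax{6b}. Everything else is just choosing the instantiation of \Ax{6a} so that its conclusion collapses to the contradictory statement $C|A \prec C|A$.
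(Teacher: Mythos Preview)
Your proof is correct and follows essentially the same approach as the paper's: separate the degenerate case $C\in\nul$ (handled via \autoref{lm:krantz6-1} and \autoref{lm:krantz9}), and on the branch $C\notin\nul$ apply \Ax{6a} to the two chains $C\subseteq B'\subseteq A$ and $C\subseteq B\subseteq A$ so that the conclusion collapses to $C|A\prec C|A$. The only difference is organizational: the paper disposes of $C\in\nul$ once (deriving the strict conclusion $B'|A\succ C|B$ outright from $B'\notin\nul$ via Part~3 of \autoref{lm:krantz9}) and then treats both the weak and strict claims on the $C\notin\nul$ branch, whereas you split weak-versus-strict first and revisit $C\in\nul$ in each.
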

\begin{proof}
    Assume $A\supseteq B\supseteq C$, $A\supseteq B' \supseteq C$, and $B|A \succeq C|B'$. 
    
   First note that if $C \in \nul$, then it follows from \autoref{lm:krantz9} that $C|B \sim \emptyset|\Delta$.  Hence, by \autoref{lm:krantz6-1}, we get $B'|A \succeq C|B$. 
    In this case, we also get the stronger result that  $B'|A \succ C|B$ without any further assumption.  Why?  Since by hypothesis $B|A \succeq C|B'$, the expression $C|B'$ is defined and $B' \not \in \nul$.  Since $B' \subseteq A$, it follows from Part (3) of \autoref{lm:krantz9}  that $B'|A \not \preceq \emptyset |\Delta$.  By \Ax{1} (specifically, totality), we obtain that $B'|A \succ \emptyset | \Delta \sim C|B$ as desired.

    So suppose $C \not \in \nul$.  Suppose, for the sake of contradiction, that $B'|A \not \succeq C|B$. Since $B|A \succeq C|B'$, we know $A \not \in \nul$ and so the expression $B|A$ is well-defined.  Further, $B \not \in \nul$ by assumption.  Thus, if $B'|A \not \succeq C|B$, it follows from by \Ax{1} (specifically, totality of the ordering) that $C|B \succ B'|A$.  Now we apply \Ax{6a} with $A' = A$, $C' = C$. We thus get $C|A \succ C'|A' = C|A$, which contradicts the reflexivity of the weak ordering (i.e., \Ax{1}). Thus, $B'|A \succeq C|B$.
   
   Finally, we must show that if the antecedent is strict (i.e., $B|A \succ C|B'$), then so is the conclusion (i.e., $B'|A \succ C|B$).  Again, we suppose for the sake of contradiction that $B'|A \not \succ C|B$.  Because $A, B \not \in \nul$ by the above reasoning, it follows from  by \Ax{1} (specifically, totality of the ordering), $C|B \succeq B'|A$.   Again, we apply \Ax{6a} with $A' = A$ and $C' = C$ to obtain $C|A \succ C|A$, which again contradicts the reflexivity of the weak ordering.
\end{proof}

\setcounter{lemma}{16}

\section{QLL and Qualitative Favoring}

\begin{theorem}\label{thm:QLLFavoring}
    Suppose $H_1$ and $H_2$ are finite. Then $E$ qualitatively favors $H_1$ over $H_2$ if (1) $\emptyset|\theta \sqsubset E|\theta$ for all $\theta \in H_1 \cup H_2$ and (2) $E|\theta_2 \sqsubseteq E|\theta_1$ for all $\theta_1 \in H_1$ and $\theta_2 \in H_2$. Under \autoref{assum} (below), the converse holds as well. In both directions, the favoring inequality is strict exactly when the likelihood inequality is strict.  
    
    If $H_1 = \{\theta_1\}$ and $H_2 = \{\theta_2\}$ are simple, then $E$ qualitatively favors $H_1$ over $H_2$ if and only if \f{qll} entails so. No additional assumptions are required in this case.
\end{theorem}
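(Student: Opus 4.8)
The proof will follow the template of \autoref{cor:BLLFavoring} — whose quantitative proof is Observation~1 followed by substitution — with the arithmetic of Observation~1 replaced by the qualitative axioms, chiefly \Ax{0} (which lets us read likelihood comparisons $\sqsubseteq$ as $\preceq$-comparisons once the relevant singletons are non-null), \Ax{4}, \Ax{5}, the ``multiplication'' axiom \Ax{6}, and the lemmas above, especially the ratio-swap \autoref{lm:Aditya} and the null-set bookkeeping of \autoref{lm:krantz8} and \autoref{lm:krantz9}. Throughout write $G = (H_1\cup H_2)\times\Omega$ and $E_G = (\Theta\times E)\cap G$, so that ``$E$ qualitatively favors $H_1$ over $H_2$'' unpacks, via \autoref{defn:Qfavors}, to the assertion that $H_1|E_G \succeq H_1|G$ for every ordering $\preceq$ satisfying the axioms with $G\notin\nul$ (and $\succ$ when strictness is demanded).

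\emph{Sufficiency (no appeal to \autoref{assum}).} Fix such a $\preceq$. If $H_1\times\Omega\in\nul$, then \autoref{lm:krantz9} collapses both $H_1|G$ and (when defined) $H_1|E_G$ to $\emptyset|\Delta$, so favoring holds trivially; so assume $H_1\times\Omega\notin\nul$. Hypothesis~(1) together with \Ax{0} and \autoref{lm:krantz9} then forces $H_1\times E$, hence $E_G$, to be non-null, so every expression below is defined. The first move is a reduction: applying \autoref{lm:Aditya} to the diamond $H_1\times E\subseteq H_1\times\Omega,\,E_G\subseteq G$ (with $A=G$, $B=H_1\times\Omega$, $B'=E_G$, $C=H_1\times E$) and using \Ax{4} to rewrite $H_1|E_G\sim(H_1\times E)|E_G$ and $(\Theta\times E)|G\sim E_G|G$, one obtains (including the strict variants, and in fact a biconditional since the conditioning events are non-null)
\[ (\Theta\times E)|(H_1\times\Omega)\ \succeq\ (\Theta\times E)|G \quad\Longrightarrow\quad H_1|E_G \succeq H_1|G . \]
It therefore suffices to compare \emph{pooled likelihoods}, and for that I would establish the key auxiliary fact: if $H$ is finite and $H\times\Omega\notin\nul$, then $(\Theta\times E)|(H\times\Omega)$ lies $\preceq$-between $\min$ and $\max$ of $\{\,E|\theta : \theta\in H,\ \{\theta\}\times\Omega\notin\nul\,\}$, a non-empty set by \autoref{lm:krantz8}. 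Granting this, hypothesis~(2) and \Ax{0} give $(\Theta\times E)|(H_1\times\Omega)\succeq E|\theta_- \succeq E|\theta_+ \succeq (\Theta\times E)|(H_2\times\Omega)$ for suitable $\theta_-\in H_1,\theta_+\in H_2$, and one further application of the same fact to $G$ (whose pooled likelihood lies between those of $H_1\times\Omega$ and $H_2\times\Omega$) yields $(\Theta\times E)|(H_1\times\Omega)\succeq(\Theta\times E)|G$; strictness is carried along at each step. When $H_1=\{\theta_1\}$ and $H_2=\{\theta_2\}$ only the singleton-block instance of the auxiliary fact is needed, no case-splitting beyond the null bookkeeping occurs, and \autoref{assum} is not invoked; the resulting condition — $\emptyset|\theta_i\sqsubset E|\theta_i$ and $E|\theta_2\sqsubseteq E|\theta_1$ — is exactly the premise under which \f{qll} licenses the conclusion that $E$ favors $\theta_1$ over $\theta_2$, with strict favoring matching $E|\theta_2\sqsubset E|\theta_1$.

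\emph{Necessity, under \autoref{assum}.} Conversely, suppose (1) or (2) fails; I exhibit a witnessing $\preceq$. If (2) fails, pick $\theta_1\in H_1,\theta_2\in H_2$ with $E|\theta_1\sqsubset E|\theta_2$ and use \autoref{assum} with $H=\{\theta_1,\theta_2\}$ to get an ordering under which $\{\theta_1,\theta_2\}\times\Omega$ is almost sure and $\theta_1,\theta_2$ are non-null; running the simple-case equivalences above backwards (they are biconditionals when the conditioning events are non-null, again by \autoref{lm:Aditya}, and the pooled-likelihood fact gives the strict comparison $(\Theta\times E)|(\{\theta_1\}\times\Omega)\prec(\Theta\times E)|G$) yields $H_1|E_G \prec H_1|G$. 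If (1) fails, say $E|\theta_0\sqsubseteq\emptyset|\theta_0$ for some $\theta_0\in H_1\cup H_2$, apply \autoref{assum} to a hypothesis containing $\theta_0$ together with one further parameter chosen so that $E_G$ remains non-null (so $H_1|E_G$ is defined) while $\theta_0$ carries positive mass; the posterior then cannot dominate the prior in the required direction, contradicting favoring. The remaining strict claims are the contrapositives of the corresponding arguments.

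\emph{Main obstacle.} I expect essentially all the effort to go into the auxiliary fact that a pooled likelihood lies between the smallest and largest of its constituents. Quantitatively this is the one-line remark that a weighted mediant lies between its terms; qualitatively we cannot form the products and weighted sums that proof relies on, so the statement must be extracted from \Ax{5} and the ratio-swap \autoref{lm:Aditya} by an induction on $|H|$ (the inductive step being a two-block comparison in which one block is a singleton), all the while tracking which of $H\times\Omega$, $\{\theta\}\times\Omega$, $H_1\times E$, and $E_G$ are null under the $\preceq$ at hand — bookkeeping that is routine but pervasive. Once this lemma is established, the two reductions and the necessity direction are comparatively short.
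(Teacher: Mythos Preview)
Your overall decomposition matches the paper's: the reduction via \autoref{lm:Aditya} is the content of the paper's \autoref{prop:QBayesFactorFavoring}, and your ``auxiliary fact'' (that a pooled likelihood lies between its extreme constituents) is what the paper proves via \autoref{lm:qllcf}, \autoref{lm:qbfl1}, and \autoref{lm:qbfl2}, and then assembles into \autoref{prop:qBayesFactorLikelihoods}. Your identification of this as the main technical obstacle is correct, and your necessity argument for condition~(2) is essentially the paper's.

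There is, however, a genuine gap in your necessity argument for condition~(1). You propose, when $E|\theta_0 \sqsubseteq \emptyset|\theta_0$ for some $\theta_0 \in H_1 \cup H_2$, to invoke \autoref{assum} on $\{\theta_0\}$ together with one further parameter ``chosen so that $E_G$ remains non-null,'' and then argue the posterior fails to dominate the prior. But suppose $\theta_0 \in H_2$: conditioning on $E$ annihilates $\theta_0$, which can only \emph{raise} $H_1|E_G$ relative to $H_1|G$; no choice of companion parameter yields a violation in that direction. The paper's move (its \autoref{prop:QRegularLikelihood}) is simpler and works uniformly: concentrate the ordering entirely on $\theta_0$ via \autoref{assum} with $H=\{\theta_0\}$. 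Then $G \notin \nul$ but $E_G \in \nul$, so $H_1|E_G$ is undefined---and that \emph{is} the failure of favoring, since \autoref{defn:Qsupports} requires $H_1|E_G \succeq H_1|G$ for every ordering with $G \notin \nul$. Your parenthetical ``so $H_1|E_G$ is defined'' suggests you read favoring as vacuously satisfied when $E_G$ is null; it is not.

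A smaller omission: in the sufficiency direction you dispose of $H_1 \in \nul$ but never of $H_2 \in \nul$. When $H_2 \in \nul$, the expression $(\Theta\times E)|(H_2\times\Omega)$ in your chain is undefined, and your auxiliary fact has nothing to bound. The paper treats this as a separate case (its Case~C), using Part~6 of \autoref{lm:krantz9} to collapse $G$ to $H_1$ and obtain $H_1|E_G \sim H_1|G$ directly.
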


\begin{assumption}\label{assum}
For all orderings $\sqsubseteq$ satisfying the axioms above and for all non-empty $H \subseteq \Theta$, there exists an ordering $\preceq$ satisfying the axioms such that (A)  $H \in {\cal S}_{\preceq}$ and (B)  $\theta|\Delta \succ \emptyset|\Delta$ for all $\theta \in H$.
\end{assumption}

We believe \autoref{assum} is provable, but we have not yet produced a proof.  The assumption more-or-less says that one's ``prior'' ordering over the hypotheses is unconstrained by the ``likelihood'' relation $\sqsubseteq$.  This is exactly analogous to the quantitative case.  In that case, a joint distribution $Q^{\mathbb{E}}$ on $\Delta = \Theta \times \Omega^{\mathbb{E}}$ is determined (i) the measures $\langle P_{\theta}(\cdot) \rangle_{\theta \in \Theta}$ over $\Omega^{\mathbb{E}}$ and (ii) one's prior $Q$ over $\Theta$.  Although the joint distribution  $Q^{\mathbb{E}}$ is constrained by $\langle P_{\theta}(\cdot) \rangle_{\theta \in \Theta}$, one's prior $Q$ on $\Theta$ is not, and so for any non-empty hypothesis $H \subseteq \Theta$, one can define some $Q$ such that $Q(H)=1$ and $Q(\theta) > 0$ for all $\theta \in H$.  That's what the assumption says.
 
The theorem follows from the following three propositions, which we prove in the ensuing sections.

\begin{lemma}\label{lm:QRegularLikelihood}
    Suppose $E \cap (H_1 \cup H_2) \not \in \nul$ for all orderings satisfying the axioms above and for which $H_1 \cup H_2 \not \in \nul_{\preceq}$.  If \autoref{assum} holds, then $\emptyset|\theta \sqsubset E|\theta$ for all $\theta \in H_1 \cup H_2$.
\end{lemma}

\begin{proposition}\label{prop:QBayesFactorFavoring}
    Suppose $H_1 \cap H_2 = \emptyset$.  
        \begin{enumerate}
            \item Suppose $E \cap (H_1 \cup H_2) \not \in {\cal N}$.  If $E|H_1 \succeq E|H_2$, then $H_1|E\cap (H_1 \cup H_2) \succeq H_1|H_1\cup H_2$.
            \item Suppose  $H_1, H_2 \not \in {\cal N}$.  If $H_1|E \cap (H_1 \cup H_2) \succeq H_1|H_1\cup H_2$, then   $E|H_1 \succeq E|H_2$.
        \end{enumerate}    
    Further, if the antecedent of either conditional contains a strict inequality $\succ$, then so does the consequent.
\end{proposition}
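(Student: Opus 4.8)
The plan is to fix an arbitrary ordering $\preceq$ satisfying the axioms and establish the single biconditional
\[ E|H_1 \succeq E|H_2 \quad\Longleftrightarrow\quad H_1 \mid E\cap(H_1\cup H_2) \,\succeq\, H_1 \mid H_1\cup H_2 \]
(together with its strict counterpart); both parts of the proposition are then immediate, reading the biconditional left-to-right for Part~1 and right-to-left for Part~2. Write $G := H_1\cup H_2$. A preliminary bookkeeping step handles well-definedness and null sets: in Part~1 the hypothesis $E|H_1\succeq E|H_2$ presupposes $H_1,H_2\notin\nul$, which with the stated $E\cap G\notin\nul$ forces $G\notin\nul$; in Part~2 the hypothesis $H_1\mid E\cap G\succeq H_1\mid G$ presupposes $E\cap G\notin\nul$, hence $G\notin\nul$, and $H_1,H_2\notin\nul$ is assumed; so in both parts $H_1,H_2,G,E\cap G\notin\nul$. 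Since $E\cap G=(H_1\cap E)\cup(H_2\cap E)$, \autoref{lm:krantz8} makes at least one of $H_1\cap E,H_2\cap E$ non-null, and the sub-cases in which one of them is null are disposed of using \autoref{lm:krantz9}: the corresponding posteriors collapse to $\emptyset|\Delta$ or $\Delta|\Delta$, which makes the relevant implication either vacuous (its antecedent becomes incompatible with $E\cap G\notin\nul$) or trivially true. This leaves the generic case, where $H_1,H_2,G,E\cap G,H_1\cap E,H_2\cap E$ are all non-null.

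The generic case rests on two ``transfer'' facts proved via \Ax{6}. Step~1: for each $i\in\{1,2\}$,
\[ E|G \succeq E|H_i \quad\Longleftrightarrow\quad H_i\mid G \,\succeq\, H_i\mid E\cap G, \]
and likewise with $\succeq$ replaced by $\succ$ or $\sim$. I would get this from \autoref{lm:Aditya} with $A=G$, $C=H_i\cap E$, and $\{B,B'\}=\{E\cap G,\,H_i\}$: the nestings $C\subseteq B\subseteq A$ and $C\subseteq B'\subseteq A$ hold because $H_i\subseteq G$; taking $B=E\cap G,\,B'=H_i$ gives one direction and swapping gives the other, and after rewriting each term with \Ax{4} (so $(E\cap G)\mid G\equiv E\mid G$, $(H_i\cap E)\mid H_i\equiv E\mid H_i$, $(H_i\cap E)\mid E\cap G\equiv H_i\mid E\cap G$) the hypotheses and conclusions match the display; totality then merges the two one-way implications into a biconditional respecting trichotomy. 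Step~2 (``complementation inside $G$''):
\[ H_1\mid E\cap G \,\succeq\, H_1\mid G \quad\Longleftrightarrow\quad H_2\mid E\cap G \,\preceq\, H_2\mid G, \]
with $\succ$ corresponding to $\prec$ and $\sim$ to $\sim$. This is a single use of \Ax{5}: because $H_1\cap H_2=\emptyset$, $H_1\mid E\cap G\preceq H_1\mid G$ and $H_2\mid E\cap G\preceq H_2\mid G$ together yield $G\mid E\cap G\preceq G\mid G$, with a strict conclusion whenever either premise is strict; but $G\mid E\cap G\equiv\Delta|\Delta\equiv G\mid G$ by \Ax{4} and \Ax{3}, so a strict conclusion is impossible, which yields the implication in one orientation; the three symmetric orientations (swapping the conditioning events, and swapping $H_1,H_2$) give the full biconditional.

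Assembly is then a four-link chain. For Part~1, argue contrapositively: if $H_1\mid E\cap G\prec H_1\mid G$, Step~1 ($i=1$) gives $E|G\succ E|H_1$, Step~2 gives $H_2\mid E\cap G\succ H_2\mid G$, and Step~1 ($i=2$) gives $E|G\prec E|H_2$, so $E|H_1\prec E|G\prec E|H_2$, contradicting $E|H_1\succeq E|H_2$; replacing every $\prec$ by $\preceq$ along the same chain gives the strict clause. For Part~2, run the chain forward: $H_1\mid E\cap G\succeq H_1\mid G$ gives $E|H_1\succeq E|G$ (Step~1), then $H_2\mid G\succeq H_2\mid E\cap G$ (Step~2), then $E|G\succeq E|H_2$ (Step~1), so $E|H_1\succeq E|H_2$; if the initial inequality is strict, every arrow is strict, hence so is the last.

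The main obstacle is structural rather than computational: $E|H_1$ and $E|H_2$ condition on the disjoint sets $H_1\times\Omega$ and $H_2\times\Omega$, so no axiom — in particular no instance of \Ax{6} — compares them directly, and the argument has to be routed through the common superset $G$, with Step~2 supplying the order-reversal on the $H_2$ side that lets the chain close. The remaining delicate points are choosing exactly the nested triples on which \autoref{lm:Aditya} can fire, carrying the strict/non-strict distinction cleanly through a chain of length four, and confirming that the preliminary null-set degeneracies really are trivial — the crucial observation there being that Part~1's own hypothesis, being an inequality between $E|H_1$ and $E|H_2$, already forbids $H_1$ or $H_2$ from being null.
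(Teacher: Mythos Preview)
Your argument is correct and uses exactly the paper's two ingredients: \autoref{lm:Aditya} to swap between comparisons of the form $E|H_i$ vs.\ $E|G$ and comparisons of the form $H_i|G$ vs.\ $H_i|E\cap G$, together with an \Ax{5} complementation to tie the $H_1$ side to the $H_2$ side. The only difference is organizational: the paper performs the complementation on the \emph{likelihood} side, extracted as the standalone \autoref{lm:qllcf} (which shows $E|H_1\succeq E|H_2$ iff $E|H_1\succeq E|G$), and then invokes \autoref{lm:Aditya} once, whereas your Step~2 carries out the dual complementation on the \emph{posterior} side and your Step~1 invokes \autoref{lm:Aditya} once for each $i$---the two arrangements are interchangeable and of equal length.
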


\begin{proposition}\label{prop:qBayesFactorLikelihoods}
    Suppose $H_1$ and $H_2$ are finite and that $H_1 \cap H_2 = \emptyset$.  If $E|\theta_2 \sqsubseteq E|\theta_1$ for all $\theta_1 \in H_1$ and $\theta_2 \in H_2$, then $E|H_2 \preceq E|H_1$ for all orderings $\preceq$ (satisfying the axioms above) for which $H_1, H_2 \not \in \nul$.   The converse holds under \autoref{assum}.
    In both directions, $\preceq$ can be replaced by the strict inequality $\prec$ if and only if the $\sqsubseteq$ can be replaced by the strict relation $\sqsubset$.
\end{proposition}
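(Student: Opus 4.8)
I would prove the two implications separately. Throughout, the bridge between the ``$\preceq$''-statements about composite conditioning sets and the ``$\sqsubseteq$''-statements about single parameters is \Ax{4} (which gives $E\mid H\sim (H\times E)\mid(H\times\Omega)$ and $E\mid\theta\sim(\{\theta\}\times E)\mid(\{\theta\}\times\Omega)$) together with \Ax{0} (which identifies $E\mid\theta$ in the $\preceq$-order with $E\mid\theta$ in the $\sqsubseteq$-order whenever $\{\theta\}\times\Omega\notin\nul$).

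\noindent\textbf{Forward direction.} Fix an admissible $\preceq$ with $H_1,H_2\notin\nul$ and assume $E\mid\theta_2\sqsubseteq E\mid\theta_1$ for all $\theta_1\in H_1,\theta_2\in H_2$. First I would pass to the non-null columns: set $H_i^{+}=\{\theta\in H_i:\{\theta\}\times\Omega\notin\nul\}$. Since $H_i\times\Omega=\bigcup_{\theta\in H_i}\{\theta\}\times\Omega$ is not null, \autoref{lm:krantz8} forces $H_i^{+}\neq\emptyset$; and absorbing or deleting the null columns on both sides of the bar via \autoref{lm:krantz9} and \Ax{4} gives $E\mid H_i\sim (H_i^{+}\times E)\mid(H_i^{+}\times\Omega)$, so I may assume every column of $H_1\cup H_2$ is non-null. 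The engine is then a ``qualitative mediant'' lemma: for arbitrary $G\subseteq\Delta$ and disjoint non-null $B_1,B_2$ with $D=B_1\cup B_2$, the quantity $G\mid D$ lies $\preceq$-between $G\mid B_1$ and $G\mid B_2$, strictly so when $G\mid B_1\not\sim G\mid B_2$. Granting this, a short induction on $|H_1|$ (splitting off one column, the null sub-case again handled by \autoref{lm:krantz9}) shows $E\mid H_1$ lies $\preceq$-between $\min_{\theta\in H_1^{+}}E\mid\theta$ and $\max_{\theta\in H_1^{+}}E\mid\theta$; since $\preceq$ is total (\Ax{1}) there is a $\preceq$-minimiser $\theta_1^{*}\in H_1^{+}$ and $E\mid H_1\succeq E\mid\theta_1^{*}$. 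Symmetrically $E\mid H_2\preceq E\mid\theta_2^{*}$ for a $\preceq$-maximiser $\theta_2^{*}\in H_2^{+}$. As $\theta_1^{*}\in H_1$ and $\theta_2^{*}\in H_2$, the hypothesis and \Ax{0} give $E\mid\theta_2^{*}\preceq E\mid\theta_1^{*}$, so $E\mid H_2\preceq E\mid\theta_2^{*}\preceq E\mid\theta_1^{*}\preceq E\mid H_1$.

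\noindent\textbf{The hard part.} The qualitative mediant lemma is where the real work lies. Using \Ax{4} and replacing $G$ by $G\cap D$, it reduces (modulo the null sub-cases, which fall to \autoref{lm:krantz9} and \autoref{lm:krantz6-1}) to: if $C_i\subseteq B_i$, $B_1\cap B_2=\emptyset$, $B_1,B_2\notin\nul$ and $C_1\mid B_1\preceq C_2\mid B_2$, then $C_1\mid B_1\preceq(C_1\cup C_2)\mid(B_1\cup B_2)\preceq C_2\mid B_2$. Quantitatively this is the schoolbook fact $p/q\le r/s\Rightarrow p/q\le(p+r)/(q+s)\le r/s$, but its proof cross-multiplies the two fractions against one another, and the resulting comparison $P(C_1)P(B_2)\le P(C_2)P(B_1)$ involves products of \emph{disjoint} events, which the qualitative calculus cannot name directly; worse, any lone application of \Ax{6} or \autoref{lm:Aditya} only reshuffles one such cross-product and so merely re-states what is to be proved. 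My expectation is that the argument must genuinely interleave \Ax{5} and \Ax{6}: write $(C_1\cup C_2)\mid D$ via \Ax{5} as the ``sum'' of $C_1\mid D$ and $C_2\mid D$; recognise $C_i\mid D$ via \Ax{6} as the ``product'' of $C_i\mid B_i$ with the prior weight $B_i\mid D$; and exploit that $B_1\mid D$ and $B_2\mid D$ add to the top element $D\mid D$, so that bounding $C_2\mid B_2$ below by $C_1\mid B_1$ in the second summand and re-summing returns exactly $C_1\mid B_1$. Turning that ``sum of products'' heuristic into a bona fide derivation from \Ax{5} and \Ax{6} alone --- with no structural-richness axiom available --- is, I expect, the one delicate step; the strict version should come out of the strict clauses of those two axioms.

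\noindent\textbf{Converse and strictness.} For the converse, assume $E\mid H_2\preceq E\mid H_1$ for every admissible $\preceq$ with $H_1,H_2\notin\nul$, fix $\theta_1\in H_1$, $\theta_2\in H_2$, and suppose toward a contradiction $E\mid\theta_1\sqsubset E\mid\theta_2$. Apply \autoref{assum} to $H=\{\theta_1,\theta_2\}$ to get a $\preceq$ with $\{\theta_1,\theta_2\}\in\mathcal S_{\preceq}$ and $\theta_i\mid\Delta\succ\emptyset\mid\Delta$; by \Ax{2} the columns $\{\theta_i\}\times\Omega$ are non-null, hence $H_1,H_2\notin\nul$ (\autoref{lm:krantz8}) and the hypothesis applies to this $\preceq$. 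All parameters outside $\{\theta_1,\theta_2\}$ are now null, and $H_1\cap H_2=\emptyset$ gives $H_1\cap\{\theta_1,\theta_2\}=\{\theta_1\}$, $H_2\cap\{\theta_1,\theta_2\}=\{\theta_2\}$; so \autoref{lm:krantz9} and \Ax{4} collapse $E\mid H_1\sim E\mid\theta_1$ and $E\mid H_2\sim E\mid\theta_2$, whence $E\mid\theta_2\preceq E\mid\theta_1$ and, by \Ax{0}, $E\mid\theta_2\sqsubseteq E\mid\theta_1$ --- contradicting the choice of $\theta_1,\theta_2$. Reading the same construction with strict inequalities gives the strict converse, and the strict forward implication is the strict form of the mediant argument. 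When $H_1,H_2$ are simple this collapse is immediate from \Ax{4} and \Ax{0} alone, so the stated equivalence with \f{qll} needs no appeal to \autoref{assum}.
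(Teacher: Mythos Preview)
Your plan is essentially the paper's. The forward direction there is likewise an induction whose engine is exactly your qualitative mediant fact, packaged as \autoref{lm:qllcf} ($E\mid H_1\succeq E\mid H_2\Leftrightarrow E\mid H_1\succeq E\mid H_1\cup H_2$), \autoref{lm:qbfl1} (the $\sim$ case), and \autoref{lm:qbfl2} (the three-way case split combining them); the paper's induction runs on $\max(|H_1|,|H_2|)$ and splits both sides at once, re-unioning via \autoref{lm:qbfl2}, but your min/max induction on each side separately is an equivalent organisation. The converse and the null-column bookkeeping are identical to yours.

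The step you flag as delicate --- the mediant lemma --- is settled in the paper not by a direct ``sum of products'' construction but by contradiction, and the missing ingredient is the swap form of \Ax{6a} recorded as \autoref{lm:Aditya}: from $B\mid A\succeq C\mid B'$ with the appropriate nestings one gets $B'\mid A\succeq C\mid B$. Assuming $E\mid H_1\succeq E\mid H_2$ yet $E\mid H_1\cup H_2\succ E\mid H_1$, apply \autoref{lm:Aditya} with $A=H_1\cup H_2$, $B=E\cap(H_1\cup H_2)$, $B'=H_i$, $C=E\cap H_i$ for each $i$; this yields $H_i\mid H_1\cup H_2\succ E\cap H_i\mid E\cap(H_1\cup H_2)$ for both $i$, and summing via \Ax{5} gives $(H_1\cup H_2)\mid(H_1\cup H_2)\succ(E\cap(H_1\cup H_2))\mid(E\cap(H_1\cup H_2))$, contradicting \Ax{3}. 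So your instinct to interleave \Ax{5} and \Ax{6} is exactly right; the swap lemma is what lets you avoid ever naming a cross-product of disjoint events.
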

Notice that the second proposition holds in both directions when $H_1$ and $H_2$ are simple hypotheses by \Ax{0}. The additional assumption is necessary only if $H_1$ and $H_2$ are composite.\\

\noindent \emph{Proof of \autoref{thm:QLLFavoring}:}  We consider the right-to-left direction first.  Suppose (1) $\emptyset|\theta \sqsubset E|\theta$ for all $\theta \in H_1 \cup H_2$ and (2) $E|\theta_2 \sqsubseteq E|\theta_1 $ for all $\theta_1 \in H_1$ and $\theta_2 \in H_2$.  We must show $E$ qualitatively favors $H_1$ over $H_2$.   To do so, let $\preceq$ be any ordering satisfying the axioms such that $H_1 \cup H_2 \not \in \nul$.  We must show $H_1|E \cap (H_1 \cup H_2) \succeq H_1 | H_1 \cup H_2$.  
To do so, we must first show that $E \cap (H_1 \cup H_2) \not \in \nul$, so that the left-hand side of the last inequality is well-defined.  Since $H_1 \cup H_2$ is finite and not a member of $\nul$, it follows from Part 4 of \autoref{lm:krantz8} that there is some $\eta \in H_1 \cup H_2$  such that $\{\eta\} \not \in \nul$.  By assumption (1) that $\emptyset|\theta \sqsubset E|\theta$ for all $\theta \in H_1 \cup H_2$, we know $\emptyset|\eta \sqsubset E|\eta$ in particular.  By \Ax{0} and the fact that $\{\eta\} \not \in \nul$, we can infer $\emptyset|\eta \prec E|\eta$. \autoref{lm:qConditionalIntersection} then entails that $\emptyset \cap \{\eta\} | \Delta \prec E \cap \{\eta\}|\Delta$.  So by \Ax{2}, $ E \cap \{\eta\} \not \in \nul$.  Because  $E \cap \{\eta\} \subseteq E \cap (H_1 \cup H_2) \not \in \nul$ as desired.

Now that we know $E \cap (H_1 \cup H_2) \not \in \nul$, we argue that $H_1|E \cap (H_1 \cup H_2) \succeq H_1 | H_1 \cup H_2$ via proof by cases.  Since $H_1 \cup H_2 \not \in \nul$, it follows from \autoref{lm:krantz8} that at least one of $H_1$ and $H_2$ is not a member of $\nul$.  Thus, there are three cases to consider:  (A) Neither is in $\nul$, (B) $H_1 \in \nul$ but $H_2 \not \in \nul$, and (C) $H_2 \in \nul$ but $H_1 \not \in \nul$.\\

\noindent Case A:  Suppose $H_1, H_2 \not \in \nul$.  By \autoref{prop:qBayesFactorLikelihoods} and our first assumption (that $\emptyset|\theta \sqsubset E|\theta$ for all $\theta \in H_1 \cup H_2$), it follows that $E|H_2 \preceq E|H_1$.  Since $E \cap (H_1 \cup H_2) \not \in \nul$ and  $E|H_2 \preceq E|H_1$, Part 1 of \autoref{prop:QBayesFactorFavoring} allows us to conclude that $H_1|E \cap (H_1 \cup H_2) \succeq H_1 | H_1 \cup H_2$, as desired. \\

\noindent Case B:  Suppose $H_1 \in \nul$ but $H_2 \not \in \nul$.  Since $H_1 \in \nul$, Part 1 of \autoref{lm:krantz9} entails both that $H_1|E \cap (H_1 \cup H_2) \sim \emptyset|\Delta$ and $H_1|H_1 \cup H_2 \sim \emptyset|\Delta$.  By transitivity of $\sim$ (which follows from \Ax{1}), we know that $H_1|E \cap (H_1 \cup H_2) \sim H_1|H_1 \cup H_2$, and hence, $H_1|E \cap (H_1 \cup H_2) \succeq H_1|H_1 \cup H_2$ as desired. \\

\noindent Case C:  Suppose $H_2 \in \nul$ but $H_1 \not \in \nul$.  First note that because $H_2 \in \nul$, we obtain that $\neg H_2 := \Delta \setminus H_2 \in {\cal S}$ by definition of ${\cal S}$.  Further, because $H_1 \cap H_2 = \emptyset$, we know $E \cap H_1 \subseteq H_1 \subseteq \neg H_2$, and hence $(E \cap (H_1 \cup H_2)) \cap \neg H_2 = E \cap H_1$.
Thus:
\begin{eqnarray*}
H_1 | E \cap (H_1 \cup H_2) &\sim& H_1 | (E \cap (H_1 \cup H_2)) \cap \neg H_2 \mbox{ by Part 6 of \autoref{lm:krantz9} since } \neg H_2 \in {\cal S} \\
 &=& H_1 | E \cap H_1 \mbox{ as just shown } \\
 &\sim& E \cap H_1 | E \cap H_1 \mbox{ by \Ax{4}} \\
 &\sim& H_1 | H_1 \mbox{ by \Ax{3}} \\
 &\sim& H_1 | H_1 \cup H_2  \mbox{ by Part 6 of \autoref{lm:krantz9} since } H_2 \in \nul 
\end{eqnarray*}

In the left to right direction, suppose $E$ qualitatively favors $H_1$ over $H_2$, or in other words, $H_1|E \cap (H_1 \cup H_2) \succeq H_1|H_1 \cup H_2$ for all orderings $\preceq$ for which $H_1 \cup H_2 \not \in \nul_{\preceq}$.  Further, suppose \autoref{assum} holds.  We must show  (1) $\emptyset|\theta \sqsubset E|\theta$ for all $\theta \in H_1 \cup H_2$ and (2) $E|\theta_2 \sqsubseteq E|\theta_1$ for all $\theta_1 \in H_1$ and $\theta_2 \in H_2$. 

To show (1), recall we've assumed $H_1|E \cap (H_1 \cup H_2) \succeq H_1|H_1 \cup H_2$ for all orderings $\preceq$ satisfying the axioms and for which $H_1 \cup H_2 \not \in \nul$.  Thus, $E \cap (H_1 \cup H_2) \not \in \nul$ for all orderings $\preceq$ satisfying the axioms and for which $H_1 \cup H_2 \not \in \nul$. By \autoref{lm:QRegularLikelihood} and \autoref{assum}, we're done.

To show (2), recall we've assumed $H_1|E \cap (H_1 \cup H_2) \succeq H_1|H_1 \cup H_2$ for all orderings $\preceq$ for which $H_1 \cup H_2 \not \in \nul$.  Thus, it is also the case theat $H_1|E \cap (H_1 \cup H_2) \succeq H_1|H_1 \cup H_2$ for all orderings $\preceq$ for which \emph{both} $H_1 \not \in \nul$ and $H_2 \not \in \nul$.  It follows from Part 2 of \autoref{prop:QBayesFactorFavoring} that $E|H_2 \preceq E|H_1$ for all orderings $\preceq$ for which \emph{both} $H_1 \not \in \nul$ and $H_2 \not \in \nul$. \autoref{prop:qBayesFactorLikelihoods} then entails our desired conclusion that $E|\theta_2 \sqsubseteq E|\theta_1$ for all $\theta_1 \in H_1$ and $\theta_2 \in H_2$.
\begin{flushright}
  $\Box$
\end{flushright}

\subsection{Proof of \autoref{lm:QRegularLikelihood}}

\noindent \emph{Proof of \autoref{lm:QRegularLikelihood}:}  By contraposition.  Assume that it's not the case that $\emptyset|\theta \sqsubset  E|\theta$ for all $\theta$.  We show there is an ordering $\preceq$ satisfying the axioms such that $H_1 \cup H_2 \not \in \nul_{\preceq}$ and $E \cap (H_1 \cup H_2) \in \nul_{\preceq}$. 

Because it's not the case that $\emptyset|\theta \sqsubset  E|\theta$ for all $\theta \in H_1 \cup H_2$, there exists $\theta \in H_1 \cup H_2$ such that $\emptyset|\theta \not \sqsubset  E|\theta$.  By \Ax{2} (for $\sqsubseteq$), $\theta \not \in \nul_{\sqsubseteq}$ and so the expression $\cdot|\theta$ is well-defined (with respect to $\sqsubseteq$).  Since $\emptyset|\theta \not \sqsubset  E|\theta$ and $\cdot|\theta$ is well-defined, it follows from \Ax{1} (specifically totality of $\sqsubseteq$) that $ E|\theta \sqsubseteq \emptyset|\theta$.

Using \autoref{assum}, pick any ordering $\preceq$ satisfying the axioms such that (1) $H=\{\theta\} \in {\cal S}_{\preceq}$ and (2) $\theta|\Delta \succ \emptyset|\Delta$.  By 2 and \Ax{2}, we know $\{\theta\} \not \in \nul_{\preceq}$, and since $\{\theta\} \subseteq H_1 \cup H_2$, it follows from Part 3 of \autoref{lm:krantz8} that $H_1 \cup H_2 \not \in \nul_{\preceq}$.

Because $E|\theta \sqsubseteq \emptyset|\theta$, \Ax{0} entails that $E|\theta \preceq \emptyset|\theta$.  By \autoref{lm:qConditionalIntersection}, it follows that $E \cap \{\theta\}|\Delta \preceq \emptyset|\Delta$, and hence, $E \cap \{\theta\} \in \nul_{\preceq}$ by \Ax{2}.  By assumption, $\{\theta\} \in {\cal S}_{\preceq}$, and so $\neg \{\theta\} \in \null_{\preceq}$. Because $E \cap \neg \{\theta\} \subseteq \neg \{\theta\}$, Part 3 of \autoref{lm:krantz8} allows us to conclude $E \cap \neg \{\theta\} \in \nul$.  Because both  $E \cap \{\theta\}$ and $ E \cap \neg \{\theta\}$ are null sets, Part 4 of \autoref{lm:krantz8} entails $E = (E \cap \neg \{\theta\}) \cup (E \cap \{\theta\})$ is in $\nul$.  Again, because $E \cap (H_1 \cup H_2) \subseteq E$ and $E \in \nul$, we know from Part 3 of \autoref{lm:krantz8} that $E \cap (H_1 \cup H_2) \in \nul$, and we're done.
\begin{flushright}
  $\Box$
\end{flushright}

\subsection{Proof of \autoref{prop:QBayesFactorFavoring}}
The proof of \autoref{prop:QBayesFactorFavoring} requires the following lemmas.
\setcounter{lemma}{0}
\begin{lemma}\label{lm:qllcf} 
    Suppose $H_1 \cap H_2 = \emptyset$.  If $E|H_1 \succeq E|H_2$, then $E|H_1 \succeq E|(H_1 \cup H_2)$. The converse holds if $H_2 \not \in \nul$.
    Further, if either side of the biconditional is $\succ$, the other side is $\succ$ too.
\end{lemma}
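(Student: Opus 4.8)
The plan is to reduce to the case $H_1,H_2\notin\nul$ and $D:=E\cap(H_1\cup H_2)\notin\nul$ — which is forced as soon as all the conditional expressions in the statement make sense — after disposing of the degenerate cases. If $D\in\nul$, then $E\cap H_1,E\cap H_2\subseteq D$ lie in $\nul$ by Part 3 of \autoref{lm:krantz8}, so Part 1 of \autoref{lm:krantz9} together with \Ax{4} gives $E|H_1\sim E|H_2\sim E|(H_1\cup H_2)\sim\emptyset|\Delta$; both sides of the asserted biconditional then hold weakly and neither holds strictly. If $H_2\in\nul$ but $H_1\notin\nul$, then $E\cap H_2\in\nul$ and Parts 5 and 6 of \autoref{lm:krantz9} give $E|(H_1\cup H_2)\sim E|H_1$, which settles the forward direction (the converse is not claimed in this case — hence the hypothesis $H_2\notin\nul$ there). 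So assume $H_1,H_2,D\notin\nul$ and put $G:=H_1\cup H_2$; note $E\cap H_i\subseteq H_i\subseteq G$ and $E\cap H_i\subseteq D\subseteq G$ for $i=1,2$, that $D=(E\cap H_1)\sqcup(E\cap H_2)$, and that $G=H_1\sqcup H_2$.

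The first step uses \autoref{lm:Aditya} to move the conditioning bar. Applying it with $A=G$, $C=E\cap H_1$, $\{B,B'\}=\{D,H_1\}$ (in both orders — the nestings above hold and $D,H_1\notin\nul$), then rewriting $D|G\sim E|G$ and $(E\cap H_1)|H_1\sim E|H_1$ via \Ax{4}, one gets
$$E|G\succeq E|H_1\quad\Longleftrightarrow\quad H_1|G\succeq (E\cap H_1)|D\qquad (1)$$
and likewise for $\succ$, $\prec$, $\sim$ (each direction of \autoref{lm:Aditya} preserves strictness, and the strict/weak equivalences combine through totality of $\preceq$). The same argument with $H_2$ gives
$$E|G\succeq E|H_2\quad\Longleftrightarrow\quad H_2|G\succeq (E\cap H_2)|D .\qquad (2)$$

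The second step is a complementation argument with \Ax{5}. Since $E\cap H_1,E\cap H_2$ are disjoint with union $D$, and $H_1,H_2$ are disjoint with union $G$, while $D|D\sim G|G$ by \Ax{3}, \Ax{5} shows that $(E\cap H_1)|D\preceq H_1|G$ and $(E\cap H_2)|D\prec H_2|G$ together would force $D|D\prec G|G$, a contradiction; this and its mirror image yield
$$H_1|G\succeq (E\cap H_1)|D\quad\Longleftrightarrow\quad (E\cap H_2)|D\succeq H_2|G\qquad (3)$$
together with its strict and $\sim$ versions. Chaining $(1)$, then $(3)$, then $(2)$ gives $E|G\succeq E|H_1\Leftrightarrow E|H_2\succeq E|G$ (call it $(4)$), and chaining $(2)$, then $(3)$, then $(1)$ gives $E|G\succeq E|H_2\Leftrightarrow E|H_1\succeq E|G$ (call it $(5)$), both with strictness preserved.

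The lemma now drops out of $(4)$ and $(5)$ by order-theoretic bookkeeping with totality and transitivity (\Ax{1}). By $(5)$ the target ``$E|H_1\succeq E|(H_1\cup H_2)$'' is equivalent to ``$E|(H_1\cup H_2)\succeq E|H_2$'', so it suffices to show $E|H_1\succeq E|H_2\Leftrightarrow E|G\succeq E|H_2$. For $(\Leftarrow)$: $E|G\succeq E|H_2$ gives $E|H_1\succeq E|G$ by $(5)$, and transitivity gives $E|H_1\succeq E|H_2$. For $(\Rightarrow)$: if $E|H_1\succeq E|H_2$ but $E|H_2\succ E|G$, then $E|H_2\succeq E|G$ and $(4)$ give $E|G\succeq E|H_1$, whence $E|G\succeq E|H_1\succeq E|H_2$, contradicting $E|H_2\succ E|G$. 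The strict statement is obtained identically from the strict forms of $(4)$ and $(5)$. I expect the main obstacle to be establishing $(1)$ and $(3)$ — the qualitative stand-in for the quantitative cross-multiplication $\frac{p_1}{p_1+p_2}\gtrless\frac{q_1}{q_1+q_2}\iff p_1q_2\gtrless q_1p_2$ — since once those are in hand the remainder is routine manipulation.
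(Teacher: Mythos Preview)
Your proof is correct and uses essentially the same ingredients as the paper's: \autoref{lm:Aditya} to swap the roles of $H_i$ and $D=E\cap(H_1\cup H_2)$ across the conditioning bar, followed by the \Ax{5}/\Ax{3} complementation argument to rule out both ``halves'' pointing the same strict way. The paper runs a direct reductio for each direction separately, whereas you package the same two moves into the biconditionals $(1)$--$(3)$ and chain them into $(4)$, $(5)$; this is a cleaner organization (and your upfront handling of the degenerate cases is more explicit than the paper's), but the mathematical content is the same.
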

\begin{proof}
   In the left-to-right direction, assume that $E|H_1 \succeq E|H_2$, so that $H_1, H_2 \not \in \nul$.  Suppose, for the sake of contradiction that $E|H_1 \not \succeq E|(H_1 \cup H_2)$.  Since $H_1 \not \in \nul$, it follows from \autoref{lm:krantz8} (part 3)  that $H_1 \cup H_2 \not \in \nul$, and hence, the expression $\cdot |(H_1 \cup H_2)$ is well-defined.  Since $E|H_1 \not \succeq E|(H_1 \cup H_2)$, by totality of $\preceq$ (part of \Ax{1}), it follows that $E|H_1 \prec E|(H_1 \cup H_2)$. By \Ax{4}, we get $E \cap (H_1 \cup H_2) | H_1 \cup H_2 \succ E \cap H_1|H_1$. We now apply \autoref{lm:Aditya} with:
   \begin{alignat*}{3}
       &A = H_1 \cup H_2 & & \\ 
       &B = E \cap (H_1 \cup H_2) &\quad \quad   &B' = H_1 \\
       &C = E\cap H_1 & &
   \end{alignat*}
   Clearly, the necessary containment relations hold and we just showed $B \not \in \nul$. Since $B|A \succ C|B'$, we get that $B'|A \succ C|B$. So:
   \begin{equation}\label{eqn:qllbf1}
       H_1|H_1 \cup H_2 \succ E\cap H_1|E \cap (H_1 \cup H_2)
   \end{equation}
   Further, since $E|H_2 \preceq E|H_1$ and $E|H_1 \prec E|(H_1 \cup H_2)$, we know $E|H_2 \prec E|H_1 \cup H_2$. By similar reasoning as before (swapping $H_1, H_2$ in our application of \autoref{lm:Aditya}), we can thus conclude that:
   \begin{equation}\label{eqn:qllbf2}
       H_2|H_1 \cup H_2 \succ E \cap H_2|E \cap (H_1 \cup H_2)
   \end{equation}
   We now apply \Ax{5} to equations \eqref{eqn:qllbf1} and \eqref{eqn:qllbf2}.  To do so, let
   \begin{alignat*}{3}
       &A = H_1, && \quad \quad \quad && A' = E\cap H_1 \\ 
       &B = H_2, && \quad \quad \quad && B' = E\cap H_2 \\
       &C = H_1 \cup H_2, && \quad \quad \quad  && C' = E \cap (H_1 \cup H_2) 
   \end{alignat*}
   Note that $A\cap B = \emptyset$ by our initial assumption that $H_1 \cap H_2 = \emptyset$. From this, it follows from $A' \cap B' = \emptyset$. Now, \autoref{eqn:qllbf1} says that $A|C \succ A'|C'$ and \autoref{eqn:qllbf2} says that $B|C \succ B'|C'$. Thus, we get that $A \cup B|C \succ A' \cup B'|C'$ by \Ax{5}. In other words:
   \begin{equation}
       H_1 \cup H_2|H_1 \cup H_2 \succ (E \cap H_1)\cup (E\cap H_2)|E \cap (H_1 \cup H_2) = E \cap (H_1 \cup H_2)|E \cap (H_1 \cup H_2) 
   \end{equation}
   This equation says that $X|X \succ Y|Y$ for two events $X,Y$.  This contradicts \Ax{3}, so we get that $E|H_1 \succeq E|(H_1 \cup H_2)$, as desired. 
   
Note that if $E|H_1 \succ E|H_2$, \autoref{eqn:qllbf1} would be $\succeq$, since we would get $E|H_1 \preceq E|(H_1 \cup H_2)$ from our initial steps. However, \autoref{eqn:qllbf2} would still be $\succ$, since $E|H_2 \prec E|H_1$. And for \Ax{5}, as long as either premise is $\succ$, the conclusion is $\succ$. Thus, we would still reach a contradiction, and thus get $E|H_1 \succ E|(H_1 \cup H_2)$.
   
Now, for the right-to-left direction assume $E|H_1 \succeq E|(H_1 \cup H_2)$ and that $H_2 \not \in \nul$.  We want to show $E|H_1 \succeq E|H_2$.  

Note that if $E \cap (H_1 \cup H_2) \in \nul$, then since $E \cap H_i \subseteq E \cap (H_1 \cup H_2)$ for $i=1,2$, we know from Part 3 of \autoref{lm:krantz8} that $E \cap H_i \in \nul$.  It follows that:
  \begin{eqnarray*}
    E|H_1 &\sim& E \cap H_1 | H_1 \mbox{ by \Ax{4}} \\ 
    &\sim& \emptyset | H_2 \mbox{ by Part 2 of  \autoref{lm:krantz9}, since } E \cap H_1 \in \nul \\ 
    &\sim& E \cap H_2 | H_2 \mbox{ by Part 2 of  \autoref{lm:krantz9}} E \cap H_2 \in \nul \\
    &\sim& E \cap H_2 | H_2 \mbox{ by \Ax{4}} \\ 
  \end{eqnarray*}
and $E|H_1 \succeq E|H_2$ as desired.
 
Thus, assume that $E \cap (H_1 \cup H_2)\not \in \nul$, and define:
   \begin{alignat*}{3}
       &A = H_1 \cup H_2 & & \\ 
       &B =  E \cap (H_1 \cup H_2) &\quad \quad   &B' = H_1 \\
       &C = E\cap H_1 & &
   \end{alignat*}
 We use \autoref{lm:Aditya}.  Because $C|B' \succeq B|A$ (by \Ax{4}) and we have assumed that $B = E \cap (H_1 \cup H_2)\not \in \nul$, we get that $C|B \succeq B'|A$. This means that:
   \begin{equation}\label{eqn:qllbf3}
       E \cap H_1 | E \cap (H_1 \cup H_2) \succeq H_1 | H_1 \cup H_2
   \end{equation}
   Suppose, for the sake of contradiction, that $E|H_1 \not \succeq E|H_2$. We know $H_1 \not \in \nul$ (because $E|H_1 \succeq E|(H_1 \cup H_2)$) and we've simply assumed that $H_2 \not \in \nul$ for this direction of the proof.  Thus, both $\cdot|H_1$ and $\cdot|H_2$ are well-defined, and so by the totality of $\preceq$ (part of \Ax{1}), it follows that $E|H_2 \succ E|H_1$.  Because $E|H_1 \succeq E|(H_1 \cup H_2)$, by transitivity we get $E|H_2 \succ E|(H_1 \cup H_2)$. So  if we apply the same reasoning we did to derive \autoref{eqn:qllbf3} (switching $H_1$ and $H_2$),we get that:
   \begin{equation}\label{eqn:qllbf4}
       E \cap H_2 | E \cap (H_1 \cup H_2) \succ H_2 | H_1 \cup H_2
   \end{equation}
   Note that this comparison is strict as the premise was strict. From here, as in the other direction of the proof, we use \Ax{5} to derive $E \cap (H_1 \cup H_2)|E \cap (H_1 \cup H_2) \succ H_1 \cup H_2 | H_1 \cup H_2$, contradicting \Ax{3} (the conclusion is strict because at least one of the premises is strict).  So we've shown $E|H_1 \succeq E|H_2$.
   
   If our assumption contained a strict inequality (i.e., $E|H_1 \succ E|(H_1 \cup H_2)$), then the only changes in the preceding argument would be that \autoref{eqn:qllbf3} would be strict whereas \autoref{eqn:qllbf4} would not. This would still lead to a contradiction, and so in this case we would get $E|H_1 \succ E|H_2$.
\end{proof}

\begin{proof}[Proof of \autoref{prop:QBayesFactorFavoring}]
Suppose $H_1 \cap H_2 = \emptyset$. \\

\noindent [Part 1:] Suppose $E|H_1 \succeq E|H_2$ and that $E \cap (H_1 \cup H_2) \not \in \nul$.
By \autoref{lm:qllcf}, it follows that  $E|H_1 \succeq E|(H_1 \cup H_2)$.  Applying \Ax{4}, we get 
    \begin{equation}\label{eqn:qbff1a}
        E\cap H_1|H_1 \succeq E\cap (H_1 \cup H_2)|H_1 \cup H_2
    \end{equation}
    Define:
    \begin{alignat*}{3}
        &A = H_1 \cup H_2 & & \\ 
        &B = E \cap (H_1 \cup H_2)  & \quad \quad & B' = H_1 \\
        &C = E\cap H_1 & &
    \end{alignat*}
    Note that \autoref{eqn:qbff1a} says that $C|B' \succeq B|A$ and we've assumed that $B = E \cap (H_1 \cup H_2) \not \in \nul$. So  \autoref{lm:Aditya} tells us that $C|B \succeq B'|A$, i.e.,
    \begin{equation*}
        E\cap H_1|E\cap (H_1 \cup H_2) \succeq H_1|H_1\cup H_2
    \end{equation*}
    Applying \Ax{4} to the left-hand side of that inequality yields the desired result. Note that if we had assumed $E|H_1 \succ E|H_2$, then our conclusion would contain $\succ$ because both  \autoref{lm:Aditya} and \autoref{lm:qllcf}  yield strict comparisons. 

    In the right to left direction, suppose $H_1|E\cap (H_1 \cup H_2) \succeq H_1|H_1\cup H_2$ and that $H_1, H_2 \not \in \nul$. By \Ax{4}:
    \begin{align*}
        H_1&|E\cap (H_1 \cup H_2) \\
        &\sim H_1\cap E\cap (H_1 \cup H_2)|E\cap (H_1 \cup H_2) \\
        &= E \cap H_1|E\cap (H_1 \cup H_2)
    \end{align*}
    So, we know $E \cap H_1|E\cap (H_1 \cup H_2) \succeq H_1|H_1\cup H_2$. Now, we apply \autoref{lm:Aditya} again with:
    \begin{alignat*}{3}
        &A = H_1 \cup H_2 &  \\
        &B = H_1 & \quad \quad 
        &B' =  E \cap (H_1 \cup H_2) \\
        &C = E\cap H_1 & &
    \end{alignat*}
    The containment relations are satisfied; $B = H_1 \not \in \nul$ by assumption, and we just showed $C|B' \succeq B|A$. Thus, we get $C|B \succeq B'|A$, i.e., 
    \begin{align*}
        E\cap H_1|H_1 &\succeq E\cap (H_1\cup H_2)|(H_1 \cup H_2).
    \end{align*}
    Applying \Ax{4} to both sides of the last inequality yields $E|H_1 \succeq E|H_1 \cup H_2$. So by \autoref{lm:qllcf} and the assumption that $H_2 \not \in \nul$, we get $E|H_1 \succeq E|H_2$, as desired. As before, if the premise were $\succ$, the conclusion would also be $\succ$ because the necessary lemmas would yield strict comparisons.
\end{proof}

\subsection{Proof of \autoref{prop:qBayesFactorLikelihoods}}
\setcounter{lemma}{17}
The proof of \autoref{prop:qBayesFactorLikelihoods} requires a few lemmata.
\begin{lemma}\label{lm:qbfl1} Suppose $H_1 \cap H_2 = \emptyset$.  If $E|H_1 \sim E|H_2$ then $E|(H_1 \cup H_2) \sim E|H_1 \sim E|H_2$.
\end{lemma}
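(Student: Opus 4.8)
The plan is to deduce this directly from \autoref{lm:qllcf} together with the totality and transitivity of $\preceq$ (\Ax{1}). First I would record the well-definedness bookkeeping: writing $E|H_1 \sim E|H_2$ presupposes that both conditioning events are non-null, so $H_1, H_2 \notin \nul$; since $H_1 \subseteq H_1 \cup H_2$, Part 3 of \autoref{lm:krantz8} gives $H_1 \cup H_2 \notin \nul$, so the expression $E|(H_1 \cup H_2)$ is also well-defined. It therefore suffices to prove $E|H_1 \sim E|(H_1 \cup H_2)$, since the full chain $E|(H_1 \cup H_2) \sim E|H_1 \sim E|H_2$ then follows from the hypothesis $E|H_1 \sim E|H_2$ and transitivity of $\sim$.

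For the inequality $E|H_1 \succeq E|(H_1 \cup H_2)$: from $E|H_1 \sim E|H_2$ we have in particular $E|H_1 \succeq E|H_2$, and since $H_1 \cap H_2 = \emptyset$, the forward direction of \autoref{lm:qllcf} immediately yields $E|H_1 \succeq E|(H_1 \cup H_2)$.

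For the reverse inequality $E|(H_1 \cup H_2) \succeq E|H_1$, I would argue by contradiction. If it fails, then because both expressions are well-defined, totality of $\preceq$ gives $E|H_1 \succ E|(H_1 \cup H_2)$. Since $H_2 \notin \nul$, the strict half of the biconditional in \autoref{lm:qllcf} (``if either side is $\succ$, the other is $\succ$ too'') then forces $E|H_1 \succ E|H_2$, contradicting $E|H_1 \sim E|H_2$. Hence $E|(H_1 \cup H_2) \succeq E|H_1$, and combined with the previous paragraph, $E|H_1 \sim E|(H_1 \cup H_2)$, which finishes the proof.

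There is essentially no hard step: the lemma is a short corollary of \autoref{lm:qllcf}. The only things to be careful about are the null-set bookkeeping — ensuring $H_1 \cup H_2 \notin \nul$ before invoking totality, and checking that the hypothesis $H_2 \notin \nul$ needed for the strict (``converse'') half of \autoref{lm:qllcf} is in place, which it is because $E|H_2$ is assumed well-defined.
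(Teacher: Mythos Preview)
Your proof is correct and takes a genuinely different, more economical route than the paper. The paper argues from first principles: it supposes $E|(H_1 \cup H_2) \not\sim E|H_1$, shows $E\cap(H_1\cup H_2)\notin\nul$, and then in each of the two strict cases applies \autoref{lm:Aditya} twice (once for each $H_i$) followed by \Ax{5} to obtain $X|X \succ Y|Y$, contradicting \Ax{3}. You instead recognize that all of this low-level work has already been packaged into \autoref{lm:qllcf}, and simply invoke that lemma's forward direction for one inequality and its strict converse (legitimate since $H_2\notin\nul$) for the other. Since \autoref{lm:qllcf} is proved earlier and does not depend on \autoref{lm:qbfl1}, there is no circularity; your argument genuinely shortens the paper's by eliminating the duplicated \autoref{lm:Aditya}/\Ax{5}/\Ax{3} computation.
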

\setcounter{lemma}{1}
\begin{proof} 
The result follows immediately from \autoref{lm:qllcf}.

\end{proof}

\begin{lemma}\label{lm:qbfl2} Suppose $H_1 \cap H_2 = \emptyset$.
    \begin{enumerate}[a)]
        \item If $E|H_1, E|H_2 \preceq E|H_3$, then $E|(H_1 \cup H_2) \preceq E|H_3$.
        \item If $E|H_3 \preceq E|H_1, E|H_2$, then $E|H_3 \preceq E|(H_1 \cup H_2)$.
    \end{enumerate}
    If the premise is not strict AND $E|H_1 \sim E|H_2$, then the conclusion is not strict. Otherwise, the conclusion is strict. 
\end{lemma}
\begin{proof}
    We first prove a).
    By totality of $\preceq$ (which is part of \Ax{1}), one of the following three cases must hold:
        \begin{enumerate}
            \item $E|H_2 \prec E|H_1$,
            \item $E|H_1 \prec E|H_2$,
            \item $E|H_1 \sim E|H_2$,
        \end{enumerate}
    If the first case holds, then \autoref{lm:qllcf} entails $E|(H_1 \cup H_2) \prec E|H_1$.  Hence, \Ax{1} (transitivity) and the assumption that  $E|H_1, E|H_2 \preceq E|H_3$ together entail that $E|(H_1 \cup H_2) \prec E|H_3$.
    
    If the second case holds, we reason analogously to show
    $E|(H_1 \cup H_2) \prec E|H_2 \prec E|H_3$.
    
    If the third case holds, then \autoref{lm:qbfl1} entails that $E|(H_1 \cup H_2) \sim E|H_1 \preceq E|H_3$, and we're done. Note that in the first two cases the conclusion is always strict. And even in this case, if $E|H_1 \sim E|H_2 \prec E|H_3$, the conclusion would be strict.
    
    The proof of part b) is analogous to a). 
\end{proof}
\setcounter{lemma}{18}

We're now ready to prove the main proposition.

\begin{proof}[Proof of \autoref{prop:qBayesFactorLikelihoods}]
    In the right-to-left direction, suppose $E|\theta_2 \sqsubseteq E|\theta_1$ for all $\theta_1 \in H_1$ and $\theta_2 \in H_2$.  We want to show $E|H_2 \preceq E|H_1$ for all orderings $\preceq$ for which $H_1, H_2 \not \in \nul$.  So let $\preceq$ be any such ordering.

    We show $E|H_2 \preceq E|H_1$ by induction on the maximum of the number of elements in $H_1$ or $H_2$.

    In the base case, suppose both $H_1 = \{\theta_1\}$ and $H_2=\{\theta_2\}$ both have one element.  Then by \Ax{0} and the assumption that $H_1, H_2 \not \in \nul$, it immediately follows that $E|H_2 \preceq E|H_1$.

    For the inductive step, suppose the result holds for all natural numbers $m \leq n$, and assume that $H_1 = \{\theta_{1,1}, \ldots \theta_{1,k}\}$ and $H_2=\{\theta_{2,1} \ldots, \theta_{2,l}\}$ where either $k$ or $l$ (or both) is equal to $n+1$.  Define $H_1'=\{\theta_{1,1}\}$ and $H_1'' = H_1 \setminus H_1'$, and similarly, define $H_2'=\{\theta_{2,1}\}$ and $H_2'' = H_2 \setminus H_2'$.  Then $H_1', H_1'', H_2',$ and $H_2''$ all have fewer than $n$ elements, and by assumption, $E|\theta_2 \sqsubseteq E|\theta_1$ for all $\theta_2 \in H_2', H_2''$ and all $\theta_1 \in H_1', H_1''$.  Suppose, for the moment, that $H_1', H_1'', H_2',$ and $H_2''$ are all non-null.  Then by inductive hypothesis, it would follow that $E|H_1', E|H_1'' \preceq E|H_2', E|H_2''$.  By repeated application of \autoref{lm:qbfl2}, it follows that $E|H_1 = E|(H_1' \cup H_1'') \preceq E|(H_2' \cup H_2'') = E|H_2$.
    
    If any of  $H_1', H_1'', H_2',$ and $H_2''$ are members of $\nul$, the proof involves only a small modification.  Not that since $H_1 \not \in \nul$ and $H_1 = H_1' \cup H_1''$, at least one of $H_1'$ and $H_1''$ must be non-null by Part 4 of \autoref{lm:krantz8}.  Suppose, without lost of generality, that $H_1' \in \nul$ but $H_1'' \not \in \nul$.  Then by Part 6 of \autoref{lm:krantz9}, we get $E|H_1 = E|H_1' \cup H_1'' \sim E|H_1''$ and the inductive hypothesis already applies to $H_1''$ because it contains fewer than $n$ elements.  Similar, reasoning apply to $H_2$.  

    Note that if the premise were $\sqsubset$, \Ax{0} would prove the strict version of the base case. In the inductive step, every inequality would also be strict because \autoref{lm:qbfl2} preserves the strictness. Thus, we would get $E|H_2 \prec E|H_1$.

    In the left-to-right direction, we proceed by contraposition.  Suppose that \autoref{assum} holds and that it's \emph{not} the case that  $E|\theta_2 \sqsubseteq E|\theta_1$ for all $\theta_1 \in H_1$ and $\theta_2 \in H_2$.  We construct an ordering $\preceq$ such that $H_1, H_2 \not \in \nul$ and $E|H_2 \not \preceq E|H_1$.

    To do so, note that because it's \emph{not} the case that  $E|\theta_2 \sqsubseteq E|\theta_1$ for all $\theta_1 \in H_1$ and $\theta_2 \in H_2$, we can fix $\theta_1 \in H_1$ and $\theta_2 \in H_2$ for which $E|\theta_2 \not \sqsubseteq E|\theta_1$.  By totality of $\sqsubseteq$ (which is part of \Ax{1}), it immediately follows that $E|\theta_1 \sqsubset E|\theta_2$. 
    
    Using \autoref{assum} with $H = \{\theta_1,\theta_2\}$, define $\preceq$ so that it satisfies all of the axioms and:
    \begin{eqnarray*}
        \theta_1 |\Delta, \theta_2| \Delta &\succ& \emptyset|\Delta \\
        A|\Delta  &\sim& \emptyset| \Delta \mbox{ if } \theta_1, \theta_2 \not \in A
    \end{eqnarray*}

Because  $\theta_1 |\Delta, \theta_2| \Delta \succ \emptyset|\Delta$, \Ax{2} entails that $\{\theta_1\}, \{\theta_2\} \not \in \nul$. Since $\{\theta_1\}, \{\theta_2\} \not \in \nul$ and $E|\theta_1 \sqsubset E|\theta_2$, by \Ax{0} we obtain that $E|\theta_1 \prec E|\theta_2$.

Now consider $H_1' = H_1 \setminus \{\theta_1\}$ and $H_2' = H_2 \setminus \{\theta_2\}$.  Since $H_1$ and $H_2$ are disjoint, neither $H_1'$ nor $H_2'$ contain either $\theta_1$ or $\theta_2$.  Thus by construction of $\preceq$, it follows that $H_1', H_2' \in \nul$.  So by Part 6 of \autoref{lm:krantz9}, we obtain that 
\begin{align*}
    E|H_1 &= E|H_1' \cup \{\theta_1\} \sim E|\theta_1 \mbox{ and } \\
    E|H_2 &= E|H_2' \cup \{\theta_2\} \sim E|\theta_2
\end{align*}
Since $E|\theta_1 \prec E|\theta_2$, it follows that $E|H_1 \prec E|H_2$, as desired.

If the premise were $\prec$, the conclusion would clearly be $\sqsubset$, by simply swapping all strict and weak inequalities in the preceding argument.
\end{proof}

\section{Posterior and Support Equivalence}

\begin{claim}\label{thm:qfavoring_and_posterior_equivalence}
    $E$ and $F$ are qualitative posterior equivalent if and only if they are qualitative support equivalent.
\end{claim}
\begin{proof}
    In the right to left direction, assume $E$ and $F$ are qualitative support equivalent.  We must show that $E$ and $F$ are posterior equivalent, i.e., for all orderings satisfying the above axioms  (1) $E \in \nul_{\preceq}$ if and only if $F \in \null_{\preceq}$ and (2) $H|E \sim H|F$ for any hypothesis $H \subseteq \Theta$ whenever $E,F \not \in \null_{\preceq}$.   So let $\preceq$ be an arbitrary ordering and $H \subseteq \Theta$ be an arbitrary hypothesis.  Define $H_1 = H$ and $H_2 = \Theta \setminus H$. Notice that $H_1$ and $H_2$ are disjoint and $H_1 \cup H_2 = \Theta$ is the entire space.  We'll now proceed to prove both 1 and 2.
    
    To show 1, note that $E=E \cap (H_1 \cup H_2)$ and $F=F \cap (H_1 \cup H_2)$.  Since $E$ and $F$ are qualitatively support equivalent, it follows that $E \cap (H_1 \cup H_2) \in \nul$ if and only if $F \cap (H_1 \cup H_2) \in \nul$, from which it follows that $E \in \nul$ if and only if $F \in \nul$ as desired. 
    
    To show the 2, again note that by qualitative support equivalence and transitivity, we get
    $$H|E = H_1|E = H_1|E \cap (H_1 \cup H_2) \sim  H_1|F \cap (H_1 \cup H_2) = H_1|F = H|F$$
    
    In the left to right direction, assume that $E$ and $F$ are qualitatively posterior equivalent.  Let $H_1$ and $H_2$ be disjoint hypotheses, and let $\preceq$ be an ordering satisfying the axioms above.  We must show that (1) $E \cap (H_1 \cup H_2) \in \nul_{\preceq}$ if and only if $ F \cap (H_1 \cup H_2)$ and (2) $H_1|E \cap (H_1 \cup H_2) \sim _1|F \cap (H_1 \cup H_2)$ whenever both sides of that expression are well defined.
    
    To show 1, notice that if either $E$ or $F$ is in $\nul$, then because $E$ and $F$ are posterior equivalent, so is the other.  Because $E \cap (H_1 \cup H_2) \subseteq E$ and  $F \cap (H_1 \cup H_2) \subseteq F$, it would then follow from Part 3 of \autoref{lm:krantz8} entails that $E \cap (H_1 \cup H_2), F \cap (H_1 \cup H_2) \in \nul$.  Thus, if  either $E$ or $F$ is in $\nul$, then $E \cap (H_1 \cup H_2)$ if and only if $F \cap (H_1 \cup H_2) \in \nul$.
    
    So consider the case in which neither $E$ nor $F$ is a member of $\nul$, and thus, the expressions $\cdot|E$ and $\cdot|F$ are well-defined.  We'll show that if $E \cap (H_1 \cup H_2) \in \nul$, then $F \cap (H_1 \cup H_2)$; the converse is proved identically.  If $E \cap (H_1 \cup H_2) \in \nul$, then both $E \cap H_1 \in \nul$ and $E \cap H_2 \in \nul$ by Part 3 of \autoref{lm:krantz8} since $E \cap H_1, E\cap H_2 \subseteq E \cap (H_1 \cup H_2)$.  Therefore:
    \begin{alignat*}{3}
        &\emptyset|E &&\sim  &&E \cap H_1|E  \mbox{ by Part 2 of \autoref{lm:krantz9} and as } E \cap H_1 \in \nul \\
        &~&&\sim  && H_1|E  \mbox{ by  \Ax{4}} \\
        &~ &&\sim  && H_1|F  \mbox{ since } E \& F \mbox{ are posterior equivalent and } E, F \not \in \nul \\
        &~  &&\sim  && F \cap H_1|F  \mbox{ by \Ax{4} }
    \end{alignat*}
    Thus, $\emptyset|E \sim F \cap H_1|F$ and so $F \cap H_1 \in \nul$ by Part 2 of \autoref{lm:krantz9}.  By identical reasoning, $F \cap H_2 \in \nul$.  By Part 4 of \autoref{lm:krantz8}, it follows that  $F \cap (H_1 \cup H_2) \in \nul$ as desired.
    
    Now we show 2, i.e., if $E \cap (H_1 \cup H_2), F \cap (H_1 \cup H_2) \not \in \nul$, then $H_1|E \cap (H_1 \cup H_2) \sim  H_1|F \cap (H_1 \cup H_2)$.  Suppose, for the sake of contradiction, that $H_1|E \cap (H_1 \cup H_2) \not \sim  H_1|F \cap (H_1 \cup H_2)$.
    By \Ax{1}, we obtain that either $H_1|E \cap (H_1 \cup H_2) \prec  H_1|F \cap (H_1 \cup H_2)$ or vice versa.  Without loss of generality, assume 
    \begin{equation}\label{eq:qfap_1}
    H_1|E \cap (H_1 \cup H_2) \prec  H_1|F \cap (H_1 \cup H_2)
    \end{equation}
    Since $E \cap (H_1 \cup H_2), F \cap (H_1 \cup H_2) \not \in \nul$, it follows from Part 3 of \autoref{lm:krantz8} that $E, F \not \in \nul$.  Because $E$ and $F$ are posterior equivalent, we know that
    \begin{equation}\label{eq:qfap_2}
    H_1 \cup H_2|E  \sim  H_1 \cup H_2|F.
    \end{equation}
    Define:
    \begin{center}
    \begin{tabular}{l c l}
    $A=F$   & $\quad$ & $A'=E$ \\
    $B=F \cap (H_1 \cup H_2)$   &$\quad$  & $B'=E \cap (H_1 \cup H_2)$ \\ 
    $C=F \cap H_1$   & $\quad$  &$C'=E \cap H_1$ \\
    \end{tabular}
    \end{center}
    Thus:
    \begin{alignat*}{3}
    B|A &= F \cap (H_1 \cup H_2) | F & \\
        &\sim H_1 \cup H_2 | F & \mbox{ by \Ax{4}} \\
        &\sim H_1 \cup H_2 | E & \mbox{ by \autoref{eq:qfap_2}}  \\
        &\sim E \cap (H_1 \cup H_2) | E & \mbox{ by \Ax{4}}  \\
        &= B'|A' &
    \end{alignat*}
    Further:
    \begin{alignat*}{3}
    C|B &= F \cap H_1 | F \cap (H_1 \cup H_2) & \\
        &= H_1 \cap (F \cap (H_1 \cup H_2)) | F \cap (H_1 \cup H_2) & \\
        &\sim H_1 | F \cap (H_1 \cup H_2) & \mbox{ by \Ax{4}} \\
        &\succ H_1  |  E \cap (H_1 \cup H_2)  & \mbox{ by \autoref{eq:qfap_1}}  \\
        &\sim C'|B' & \mbox{ reversing the above steps}  
    \end{alignat*}
    Note that since $C|B \succ C'|B'$, we know that $C \notin \nul$. Further, since $B|A \sim B'|A'$, \Ax{6b} entails $C|A \succ C'|A'$, i.e., that $F \cap H_1 | F \succ E \cap H_1 | E$.  But then \Ax{4} entails $H_1 | F \succ H_1 | E$, and that contradicts the assumption that $E$ and $F$ are qualitative posterior equivalent. Thus, by contradiction, $E$ and $F$ are qualitative support equivalent.
\end{proof}

\section{A Qualitative Likelihood Principle}
In this section, we prove our second major result, which is a qualitative analog of \autoref{clm:bLPPosteriorEquivalence}. Specifically, we prove:

\begin{theorem}
\label{thm:qmsp}
       Let $\omega_1$ and $\omega_2$ be outcomes of experiments $\mathbb{E}$ and $\mathbb{F}$ respectively.  Then the following statements are equivalent:
       
        \begin{enumerate}
            \item In every mixture $\mathbb{M}$ of $\mathbb{E}$ and $\mathbb{F}$, there is a sufficient statistic $T$ such that $T(\omega_1) = T(\omega_2)$.
            \item  $\omega_1$ and $\omega_2$ are $\mathcal{U}$-posterior equivalent, where $\mathcal{U}$ is the (universal) set of all orderings satisfying our axioms for qualitative probability.
        \end{enumerate}
\end{theorem}

Why is this theorem the analog of \autoref{clm:bLPPosteriorEquivalence}?  Recall, \autoref{clm:bLPPosteriorEquivalence} says that \f{lp} characterizes when two pieces of evidence are posterior and Bayesian-posterior equivalent.   Condition 1 in the above statement is equivalent to \f{lp} in the quantitative setting, and so \autoref{thm:qmsp} is the most natural analog of \autoref{clm:bLPPosteriorEquivalence}.

The definitions of ``mixture'' and ``sufficient statistic'' can be found in the main document.  To prove the theorem, we first need to amass a few facts about sufficient statistics.

\subsection{Sufficiency}

\begin{lemma}\label{lm:qSufficientZeroProb}
  If $T$ is sufficient and $T(\omega)=T(\omega')$, then  for all $\theta$
  \[ \omega|\theta\equiv \emptyset \mbox{ if and only if } \omega'|\theta \equiv \emptyset\]
\end{lemma}
\begin{proof}
    Suppose for the sake of contradiction that there is some $\theta$ such that 
    \begin{equation}\label{eqn:lmqSZP}
        \omega'|\theta \sqsupset \emptyset \equiv \omega|\theta.
    \end{equation}
    We will show that $\omega|\upsilon \equiv \emptyset$ for all $\upsilon$.  Hence, by the definition of ``sufficient'' and our assumption  that $T(\omega)=T(\omega')$, it follows that $\omega'|\upsilon \equiv \emptyset$ for all $\upsilon$.  But that contradicts \autoref{eqn:lmqSZP}.
    
    To show that  $\omega|\upsilon \equiv \emptyset$ for all $\upsilon$, note first that $\omega' \in \{T=T(\omega)\}$ because $T(\omega)=T(\omega')$. Next, since $\omega'|\theta \sqsupset \emptyset$ and $\omega' \in \{T=T(\omega)\}$, \autoref{lm:krantz7} entails that $\{T=T(\omega)\}|\theta \sqsupset \emptyset$. Thus, $\cdot|\{T=T(\omega)\}, \theta$ is well-defined. Since $\omega|\theta \equiv \emptyset$, it follows from \autoref{lm:krantz9} (part 1) that $\omega|\{T=T(\omega)\}, \theta \equiv \emptyset$. Now, let $\upsilon$ be arbitrary. We go by cases.
    
    \paragraph{Case 1:} Suppose that $\{T=T(\omega)\}| \upsilon \sqsupset \emptyset$. Then, by the definition of ``sufficient'', it follows that  $\omega|\{T=T(\omega)\}, \upsilon \equiv \emptyset$.
    Also, note that $\omega|\{T\neq T(\omega)\}, \upsilon \equiv \emptyset$, since $\omega \notin \{T\neq T(\omega)\}$. Now, we apply \autoref{lm:qConditionalProbabilityConstantOnPartition}. To do so, let $X_1 = \omega \cap \{T=T(\omega)\}, X_2 = \omega \cap \{T\neq T(\omega)\}, Y = \upsilon, Z = \{T=T(\omega)\}$, and $G = \Delta$. The three conditions of the lemma simplify to:
    \begin{enumerate}
        \item $\omega | \{T=T(\omega)\}, \upsilon \sim \omega | \{T\neq T(\omega)\}, \upsilon$. This is true, as we showed that both expressions are equiprobable with $\emptyset$.
        \item $\omega \cap \{T=T(\omega)\} | \{T\neq T(\omega)\} \sim \emptyset$. This trivially follows from \Ax{4}.
        \item $\omega \cap \{T \neq T(\omega)\} | \{T = T(\omega)\} \sim \emptyset$. This also follows from \Ax{4}.
    \end{enumerate}
    Since the conditions are satisfied, the lemma implies that $\omega|\upsilon \sim \emptyset$. By \Ax{0}, we get that $\omega|\upsilon \equiv \emptyset$.
    
    \paragraph{Case 2:} Suppose that $\{T=T(\omega)\}| \upsilon \equiv \emptyset$. It immediately follows that $\omega|\upsilon \equiv \emptyset$ since $\omega \in \{T=T(\omega)\}$.

    Thus, we've shown that  $\omega|\upsilon \equiv \emptyset$ for all $\upsilon$, as desired.
\end{proof}

\setcounter{lemma}{8}
\begin{lemma}\label{lm:qSufficientStatisticExamples}
Suppose $\mathbb{E}$ is an experiment with a finite set of outcomes $\Omega$.  Then the following are sufficient statistics.
    \begin{enumerate}
        \item The identity map $T:\Omega \rightarrow \Omega$. 
        
        \item Let $S$ be an equivalence relation such that $S(\omega, \omega')$ holds precisely when $\omega | \theta \equiv \omega'|\theta$ for all $\theta$. Let
        $T:\Omega \rightarrow \Omega/S$ be the quotient map $\omega \mapsto [\omega]_S$ that takes each outcome to its $S$-equivalence class.

    \end{enumerate}
\end{lemma}
\setcounter{lemma}{19}
\begin{proof}
~[1.] If $t  \neq \omega$, then for all $\theta$ we have:
\begin{align*}
    \omega ~|~ \{T=t\}, \theta 
    &\equiv \omega \cap  \{T=t\} ~|~ \{T=t\},\theta  && \mbox{by \Ax{4}} \\
    &= \emptyset ~|~ \{T=t\}, \theta  && \mbox{since } t \neq \omega \mbox{ and } T \mbox{ is the identity map} \\
    &= \emptyset ~|~ \Delta && \mbox{by \autoref{lm:krantz6-2} }  
\end{align*}
Note the last line does not depend on $\theta$. If $t  = \omega$, then for all $\theta$ we have:
\begin{align*}
    \omega ~|~ \{T=t\}, \theta 
    &\equiv \omega \cap  \{T=t\} \cap \theta ~|~ \{T=t\},\theta  && \mbox{by \Ax{4}} \\
    &=  \{T=t\} \cap \theta ~|~ \{T=t\}, \theta  && \mbox{since } \{T = t\} = \{\omega\} \\
    &= \Delta ~|~ \Delta && \mbox{by \Ax{3} }  
\end{align*}
Part 2 of this lemma is proven below because the proof is longer, and thus broken up into a series of shorter lemma. The final statement is just \autoref{lm:qLikelihoodFunctionSufficient} below.
\end{proof}

\begin{lemma}\label{lm:qLFS1} If $A|C \sim B|C$ and $A, B \subseteq D$, then $A|C \cap D \sim B| C \cap D$.  And similarly if $\sim$ is replaced by $\equiv$.  
\end{lemma}
\begin{proof}
    Suppose for the sake of contradiction that  $A|C \cap D \not \sim B| C \cap D$.  Then by \Ax{1} (specifically, totality of the ordering), either $A|C \cap D \prec B| C \cap D$ or vice versa.  Without loss of generality, assume $A|C \cap D \prec B| C \cap D$.  Define:
    \begin{align*}
            &X = X' = C       \\
            &Y= Y' = C \cap D\\
            &Z= A \cap C \cap D && Z' = B \cap C \cap D 
        \end{align*}
    Then $Z, Z' \subseteq Y=Y' \subseteq X=X'$.  Clearly $Y|X\equiv Y'|X'$ as $Y|X=Y'|X'$.  Further:
    \begin{eqnarray*}
    Z|Y &=&   A \cap C \cap D |  C \cap D \\
     	&\sim& A | C \cap D \mbox{ by  \Ax{4} } \\
     &\prec&  B| C \cap D \\
     &\sim&  B \cap C \cap D| C \cap D \mbox{ by  \Ax{4} }\\
     &=& Z'|Y'
    \end{eqnarray*}
    So by \Ax{6b}, it follows that $Z|X \prec Z'|X'$, i.e., that 
    \[ A \cap C \cap D|C \prec B \cap C \cap D| C \] 
    By \Ax{4}, it follows that $A \cap D|C \prec B \cap D|C$. Finally, because $A, B \subseteq D$, we know $A \cap D =A$ and $B \cap D =D$.  Thus, we obtain $A|C \prec B|C$, contradicting our initial assumption.
\end{proof}

\begin{lemma}\label{lm:qLFS2} If $[\omega]_S = [\omega']_S$, then $\omega'~|~[\omega]_S \cap \theta\ \equiv \omega~|~[\omega]_S \cap \theta$  for all $\theta$.
\end{lemma}
\begin{proof}
    
We apply \autoref{lm:qLFS1}.  To do so, define:
    \begin{eqnarray*}
        A &=&\{\omega\} \\
        B &=&\{\omega'\} \\
        C &=&  \{\theta\}\\
        D &=& [\omega]_S
    \end{eqnarray*}
Since $[\omega]_S = [\omega']_S$, it follows that $A|C = \omega|\theta \equiv \omega'|\theta = B|C.$. Moreover, since $\omega \in [\omega]_S$, we have $A \subseteq D$.  Similarly, since $[\omega]_S = [\omega']_S$ and $\omega' \in [\omega]_S$, and thus, $B \subseteq D$. Applying \autoref{lm:qLFS1}, we obtain $A| C\cap D \equiv B |C \cap D$, i.e., that $\omega'~|~[\omega]_S \cap \theta\ \equiv \omega~|~[\omega]_S \cap \theta$.  Since $\theta$ was arbitrary, the result follows.
\end{proof}

\begin{lemma}\label{lm:qLikelihoodFunctionSufficient}  Let $T:\omega \mapsto [\omega]_S = \{\omega' \in  \Omega_{\mathbb{E}}:  \omega'|\theta\equiv \omega|\theta \mbox{ for all } \theta \in \Theta\}$. Then $T$ is sufficient.
\end{lemma}
\begin{proof}
    We must show that $\omega|T=t, \theta\equiv \omega|T=t, \upsilon$  for all $\omega$ and for all $\theta, \upsilon \in \Theta$, where $T:\omega \mapsto [\omega]_S$.    So let $\theta, \upsilon$ be arbitrary.

Consider first the case in which $t \neq [\omega]_S$, and so $\omega \cap \{T=t\} = \emptyset$.  Then  $\omega|T=t, \theta \equiv \omega \cap \{T=t\} \cap \{\theta\} |T=t, \theta$ by \Ax{4}, and so   $\omega|T=t, \theta \equiv \emptyset | T=t, \theta$.   Similarly,  $\omega|T=t, \upsilon \equiv \emptyset | T=t, \upsilon$.  It suffices to show only that $\emptyset|A \equiv \emptyset | B$ for all $A, B$ (when defined), and this is precisely what \autoref{lm:krantz6-2} asserts.

Next, consider the case in which $t = [\omega]_S$, and so $\{T=t\} = [\omega]_S$.  Then we must show that  $\omega|[\omega]_S, \theta\equiv \omega|[\omega]_S, \upsilon$. Suppose for the sake of contradiction not.  Then by \Ax{1} (specifically, totality of $\sqsubseteq$), it follows that  $\omega|[\omega]_S, \theta \sqsubset \omega|[\omega]_S, \upsilon$ or vice versa.  Without loss of generality, assume $\omega|[\omega]_S, \theta \sqsubset \omega|[\omega]_S, \upsilon$.

Next, define $R_{\omega} = [\omega]_S \setminus \{\omega\}$.  We claim there is some $\omega' \in R_{\omega}$ such that either (1) $\omega'|[\omega]_S, \theta \not \equiv \omega|[\omega]_S, \theta$ or (2) $\omega'|[\omega]_S, \upsilon \not \equiv \omega|[\omega]_S, \upsilon$.  If neither (1) nor (2) holds, then we have $\omega'|[\omega]_S, \theta \equiv \omega|[\omega]_S, \theta$ AND $\omega'|[\omega]_S, \upsilon \equiv \omega|[\omega]_S, \upsilon$ for all $\omega' \in R_{\omega}$.  So by repeatedly applying \Ax{5} (recall, we've assumed $\Omega_{\mathbb{E}}$ is finite) to the assumption that $\omega|[\omega]_S, \theta \sqsubset \omega|[\omega]_S, \upsilon$, we obtain that $R_{\omega}|[\omega]_S,\theta \sqsubset R_{\omega}|[\omega]_S,\upsilon$.  But then because $R_{\omega}|[\omega]_S,\theta \sqsubset R_{\omega}|[\omega]_S,\upsilon$ and $\omega|[\omega]_S, \theta \sqsubset \omega|[\omega]_S, \upsilon$, one last application of \Ax{5} yields that  $[\omega]_S|[\omega]_S, \theta \sqsubset [\omega]_S|[\omega]_S, \upsilon$.  By \Ax{4}, it follows that $[\omega]_S,\theta|[\omega]_S, \theta \sqsubset [\omega]_S, \upsilon |[\omega]_S, \upsilon$.  But that contradicts \Ax{3}.

So either (1) or (2) obtains.  But notice that both contradict lemma \ref{lm:qLFS2} as $\omega' \in R_{\omega} \subseteq	 [\omega]_S$.
\end{proof}

\subsection{QLP:  The Single Experiment Case}
To prove \autoref{thm:qmsp}, we begin by proving the theorem in the the case in which $\mathbb{E}=\mathbb{F}$, i.e., when $\omega_1$ and $\omega_2$ are outcomes of the same experiment.

\begin{proposition}\label{prop:qSufficientWeakPosteriorEquivalence}
Suppose $\Theta$ is finite and that $\omega_1, \omega_2 \in \Omega^{\mathbb{E}}$ are outcomes of the same experiment.  Then the following two claims are equivalent:
\begin{enumerate}
    \item  $\omega_1$ and $\omega_2$ are ${\cal U}$-posterior equivalent.
    \item There is a sufficient statistic $T$ such that $T(\omega_1)=T(\omega_2)$.
\end{enumerate}
\end{proposition}

The proof of \autoref{prop:qSufficientWeakPosteriorEquivalence} requires three more technical lemmata.

\begin{lemma}\label{lm:qConditionalProbabilityConstantOnPartition}
    Suppose $X = X_1 \cup X_2$, where $X_1 \cap X_2 = \emptyset$. Suppose $X_1|Y \cap G \cap Z \sim X_2|Y \cap (G \setminus Z)$.  Further, suppose that both 
       \begin{enumerate}[(i)]
           \item $X_1| (G \setminus Z) \sim \emptyset|Z$, and
           \item $X_2| G \cap Z \sim \emptyset|Z$. 
       \end{enumerate}
    
    Then $X|Y \cap G \sim X_1|Y \cap G \cap Z \sim X_2|Y \cap (G \setminus Z)$. 
\end{lemma}
\begin{proof}
    First, suppose that $X \cap Y \cap G \in \nul$. Then, $X|Y \cap G \sim \emptyset|Y \cap G$. We also claim that $X_1|Y \cap G \cap Z \sim \emptyset|Y \cap G \cap Z$, which gives us the desired result by \autoref{lm:krantz6-2}. Suppose this wasn't the case -- since $Y \cap G \cap Z \notin \nul$, we know that $X_1|Y \cap G \cap Z \succ \emptyset|Y \cap G \cap Z$. We can use \Ax{5} to add this with $X_2|Y \cap G \cap Z \succeq \emptyset|Y \cap G \cap Z$ (\autoref{lm:krantz6-1}), getting $X|Y \cap G \cap Z \succ \emptyset|Y \cap G \cap Z$. Note that by \autoref{lm:krantz9}, that since $X \cap Y \cap G \in \nul$ we know that $X|Y \cap G \cap Z \sim \emptyset|Y \cap G \cap Z$, we thus get the desired contradiction.

    We now assume $X \cap Y \cap G \notin \nul$. By applying Part 2 of \autoref{lm:krantz9} (since $Y \cap (G \setminus Z) \notin \nul$) and \autoref{lm:krantz6-2} to (i), we get that $X_1|Y \cap (G \setminus Z) \sim \emptyset|Y \cap G \cap Z$. Now, we apply \Ax{5} as follows:
    \begin{align}
        \emptyset | Y \cap G \cap Z &\sim X_1|Y \cap (G \setminus Z) \label{eq:qCond11} \\
        X_1 | Y \cap G \cap Z &\sim X_2 | Y \cap (G \setminus Z) & \text{Given} \label{eq:qCond12} \\
        X_1 | Y \cap G \cap Z &\sim X_1 \cup X_2 | Y \cap (G \setminus Z) &\text{\Ax{5} on \autoref{eq:qCond11} and \autoref{eq:qCond12}} \label{eq:qCond1}
    \end{align}
    We apply the same steps to (ii). Using Part 2 of \autoref{lm:krantz9} (since $Y \cap G \cap Z \notin \nul$) and \autoref{lm:krantz6-2}, we get $X_2 | Y \cap G \cap Z \sim \emptyset|Y \cap (G \setminus Z)$. Then, we apply \Ax{5}:
    \begin{align}
        X_2 | Y \cap G \cap Z           &\sim \emptyset|Y \cap (G \setminus Z) \label{eq:qCond21} \\
        X_1 | Y \cap G \cap Z           &\sim X_2 | Y \cap (G \setminus Z) & \text{Given} \label{eq:qCond22} \\
        X_1 \cup X_2 | Y \cap G \cap Z  &\sim X_2 | Y \cap (G \setminus Z) &\text{\Ax{5} on  \autoref{eq:qCond21} and \autoref{eq:qCond22}} \label{eq:qCond2}
    \end{align}
    Because $X_1|Y \cap G \cap Z \sim X_2|Y \cap (G \setminus Z)$, from \autoref{eq:qCond1} and \autoref{eq:qCond2} we get $X|Y \cap G \cap Z \sim X|Y \cap (G \setminus Z)$. 
    
    Now, suppose for the sake of contradiction that $X|Y \cap G \not \sim X|Y \cap G \cap Z$. We know that $Y \cap G \cap Z \notin \nul$ by the premise, which also means (via \autoref{lm:krantz8}) that $Y \cap G \notin \nul$. Thus, either $X|Y \cap G \succ X|Y \cap G \cap Z$ or vice versa; without loss of generality, assume $X|Y \cap G \succ X|Y \cap G \cap Z$.    
    Now, we apply \autoref{lm:Aditya} (since we are assuming $X \cap Y \cap G \notin \nul$) with:
    \begin{alignat*}{3}
        &A = Y \cap G & & \\ 
        &B = X \cap Y \cap G &\quad \quad   &B' = Y \cap G \cap Z \\
        &C = X \cap Y \cap G \cap Z & &
    \end{alignat*}
    Note that the necessary containment relations hold. Since $B|A \succ C|B'$, we get $B'|A \succ C|B$, or $Y \cap G \cap Z|Y \cap G \succ X \cap Y \cap G \cap Z|X \cap Y \cap G$. After applying \Ax{4}, we get:
    \begin{equation}\label{eq:qLm12-3}
        Z|Y \cap G \succ Z|X \cap Y \cap G
    \end{equation}

    Since $X|Y \cap G \cap Z \sim X|Y \cap (G \setminus Z)$, we know from our contradicting assumption that $X|Y \cap G \succ X|Y \cap (G \setminus Z)$. We now proceed as before, applying \autoref{lm:Aditya} with:
    \begin{alignat*}{3}
        &A = Y \cap G & & \\ 
        &B = X \cap Y \cap G &\quad \quad   &B' = Y \cap (G \setminus Z) \\
        &C = X \cap Y \cap (G \setminus Z) & &
    \end{alignat*}
    Note that the necessary containment relations hold. Since $B|A \succ C|B'$, we get $B'|A \succ C|B$, or (after \Ax{4}):
    \begin{equation} \label{eq:qLm12-4}
    G\setminus Z | Y \cap G \succ G \setminus Z|X \cap Y \cap G
    \end{equation}    
    Applying \Ax{5} to \autoref{eq:qLm12-3} and \autoref{eq:qLm12-4} gives us $G|Y \cap G \succ G|X \cap Y \cap G$. Applying \Ax{4} gives us $\Delta \succ \Delta$, which is a contradiction.
\end{proof}

When trying to grasp the motivation for the steps in the proofs immediately below, the reader may rely on her intuitions about how the standard, quantitative definition of conditional independence behaves.

\begin{lemma}\label{lm:QConditionalIndependenceIsSymmetric}
Conditional independence is symmetric.  In other words, if  $A \ind_C B$, then  $B \ind_C A$.  As special case, when $C=\Delta$, if $A|B \sim A|\Delta$ and $B \not \in \nul$, then $B|A \sim A|\Delta$.
\end{lemma}
\begin{proof}
    From $A \ind_C B$, we get that $B \cap C \in \nul$ or that $A|B \cap C \sim A|C$. Notice that if $A \cap C \in \nul$, we're done. So, suppose that this is not the case. We proceed by cases. First, suppose that $B \cap C \in \nul$. Then, we get that $B \cap C|C \sim \emptyset$ (\autoref{lm:krantz9}). Similarly, we get that $A \cap B\cap C|A \cap C \sim \emptyset$. Thus, by \Ax{4} and transitivity, we get that $B|C \sim B|A \cap C$, which shows that $B \ind_C A$.

    Now, suppose we are the in the second case, where $A|B \cap C \sim A|C$. Then, we apply \autoref{lm:Aditya} with:
    \begin{align*}
       \mathfrak{A} &= C \\
       \mathfrak{B} &= A \cap C, &&\mathfrak{B}' = B \cap C \\
       \mathfrak{C} &= A \cap B \cap C
    \end{align*}
    By \Ax{4} and assumption that $A|B \cap C \sim A|C$, we have $\mathfrak{B}|\mathfrak{A} \sim \mathfrak{C}|\mathfrak{B}'$. Thus, the lemma tells us that $\mathcal{B}'|\mathfrak{A} \sim \mathfrak{C}|\mathfrak{B}$, i.e., that $B|C \sim B|A \cap C$. This shows us that $B \ind_C A$, as desired.
\end{proof}

\begin{lemma}\label{lm:qLPEntailsPosteriorEquivalence2}
    Suppose $B_1, \ldots, B_n$ partition $G$, and that $A \cap B_i \notin \nul$, for all $i$. Further, suppose $A \ind_{B_i} C_i$ with respect to $\preceq$ and for all $i, j \le n: C_i | B_i \sim C_j | B_j$. Then, for all $k \le n$,
    $$\bigcup_{i\le n} C_i \cap B_i|A \cap G \sim C_k |B_k$$
\end{lemma}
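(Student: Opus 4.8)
The plan is to prove the lemma by induction on $n$, using \autoref{lm:qConditionalProbabilityConstantOnPartition} to fuse two cells of the partition at the inductive step. Write $c := C_1 | B_1$; since $C_i | B_i \sim C_j | B_j$ for all $i,j$, it will be enough to establish $\bigcup_{i \le n} C_i \cap B_i | A \cap G \sim c$, after which transitivity of $\sim$ gives $\sim C_k | B_k$ for every $k$. First I would note that $A \cap B_i \notin \nul$ forces each $B_i \notin \nul$ (\autoref{lm:krantz8}), hence $G \notin \nul$ and $A \cap G \notin \nul$ as well, so every conditioning event appearing below is legitimate.

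\textbf{Base case $n = 1$.} Here $B_1 = G$, and the claim reads $C_1 \cap G | A \cap G \sim C_1 | G$. I would apply \Ax{4} twice — using $A \cap G \subseteq G$, so that $(C_1 \cap G) \cap (A \cap G) = C_1 \cap (A \cap G)$ — to get $C_1 \cap G | A \cap G \sim C_1 | A \cap G$; then from $A \ind_G C_1$ and \autoref{lm:QConditionalIndependenceIsSymmetric} obtain $C_1 \ind_G A$, which (since $A \cap G \notin \nul$) gives $C_1 | A \cap G \sim C_1 | G$ by \autoref{defn:qConditionalIndependence}; transitivity closes the case. Run with a single block in place of $G$, the identical computation records the fact I will reuse: $C_i \cap B_i | A \cap B_i \sim C_i | B_i \sim c$ for every $i$.

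\textbf{Inductive step.} Given $B_1,\dots,B_{n+1}$ partitioning $G$, set $G' := \bigcup_{i \le n} B_i$, so that $B_1,\dots,B_n$ partition $G'$ and still satisfy all the hypotheses of the lemma. The inductive hypothesis then gives $\bigcup_{i \le n} C_i \cap B_i | A \cap G' \sim c$, while the base-case fact gives $C_{n+1} \cap B_{n+1} | A \cap B_{n+1} \sim c$. I would now apply \autoref{lm:qConditionalProbabilityConstantOnPartition} relative to the two-cell decomposition $G = G' \cup B_{n+1}$ (disjoint union): take $Y = A$, the cell $Z = G'$ (so $G \setminus Z = B_{n+1}$), and $X_1 = \bigcup_{i \le n} C_i \cap B_i$ (contained in $G'$), $X_2 = C_{n+1} \cap B_{n+1}$ (contained in $B_{n+1}$), so that $X := X_1 \cup X_2 = \bigcup_{i \le n+1} C_i \cap B_i$ is a disjoint union. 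Its main hypothesis — $X_1$ conditioned on $A \cap G'$ is $\sim$ to $X_2$ conditioned on $A \cap B_{n+1}$ — holds because both sides are $\sim c$; and its side conditions — $X_1$ is null given $B_{n+1}$ and $X_2$ is null given $G'$ — hold because $X_1 \cap B_{n+1} = \emptyset = X_2 \cap G'$, so \Ax{4} and \autoref{lm:krantz6-2} make both $\sim \emptyset|\Delta$ (here one uses $B_{n+1}, G' \notin \nul$). The lemma then outputs $X | A \cap G \sim X_1 | A \cap G' \sim c$, i.e. $\bigcup_{i \le n+1} C_i \cap B_i | A \cap G \sim c$, completing the induction.

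\textbf{The hard part.} No new qualitative-probability principle is needed — the whole argument runs on \autoref{lm:qConditionalProbabilityConstantOnPartition}, \autoref{lm:QConditionalIndependenceIsSymmetric}, \autoref{lm:krantz8}, \Ax{4}, and \autoref{lm:krantz6-2}. The one place that demands care is the bookkeeping in the inductive step: correctly identifying the cell $Z = G'$ (so that $G \setminus Z = B_{n+1}$), verifying the two ``empty-intersection'' side conditions of \autoref{lm:qConditionalProbabilityConstantOnPartition}, and keeping track of the non-nullity of $B_{n+1}$, $G'$, and $A \cap G$ so that all the conditionings and lemma applications are valid. Everything else is a routine chain of $\sim$'s.
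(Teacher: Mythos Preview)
Your proposal is correct and follows essentially the same route as the paper: induction on $n$, with the base case handled via \Ax{4} plus the symmetry of independence (\autoref{lm:QConditionalIndependenceIsSymmetric}), and the inductive step carried out by splitting off $B_{n+1}$ and applying \autoref{lm:qConditionalProbabilityConstantOnPartition} with exactly the same choice of $X_1, X_2, Y, Z$. Your explicit non-nullity bookkeeping and the shorthand $c := C_1|B_1$ are tidier than the paper's presentation, but the argument is the same.
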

\begin{proof}
    Since $A \ind_{B_i} C_i$ for all $i \leq n$, we have by \autoref{lm:QConditionalIndependenceIsSymmetric} that $C_i \ind_{B_i} A$.  We now proceed 
    by induction on $n$. When $n = 1$, note that $G = B_1$. So: 
    \begin{eqnarray*} 
        \bigcup_{i\le n} C_1 \cap B_i|A \cap G &=& C_1 \cap G|A \cap G \\
        &\sim& C_1 | A \cap G \mbox{ by \Ax{4}} \\
        &\sim&  C_1 | G \mbox{ because } C_1 \ind_G A
    \end{eqnarray*}
    For the inductive step, suppose $B_1, \ldots, B_{n+1}$ partition $G$, and define $G' = \{B_1, \ldots, B_n\}$. Let $i$ be arbitrary.
    
    Note that the conditions of the inductive hypothesis are satisfied with respect to $G'$ and $B_1, \ldots, B_n$. Thus, we get:
    \begin{equation*}
        \bigcup_{j \le n} C_j \cap B_j|A \cap G' \sim C_i|B_i
    \end{equation*}

    We also know that:
    \begin{align*}
        C_{n+1} \cap B_{n+1} | A \cap B_{n+1} &\sim C_{n+1} | A \cap B_{n+1} &\text{by \Ax{4}}\\
                                            &\sim C_{n+1} | B_{n+1} &\text{since $A \ind_{B_i} C_i$ for all $i$} \\
                                            &\sim C_{i} | B_i &\text{since $C_i | B_i \sim C_j | B_j$ for all $i, j \le n+1$}
    \end{align*}
    So we've shown that (1) $\bigcup_{j \le n} C_j \cap B_j|A \cap G' \sim C_i|B_i$ and (2) $C_{n+1} \cap B_{n+1} | A \cap B_{n+1} \sim C_{i} | B_{i}$. We now apply \autoref{lm:qConditionalProbabilityConstantOnPartition} with $X_1 = \bigcup_{j \le n} C_j \cap B_j$, $X_2 = C_{n+1} \cap B_{n+1}$, $Y = A$, $Z = G'$ and $G = G$. We have shown (from (1) and (2)) that $X_1 | Y \cap G \cap Z \sim X_2 | Y \cap (G \setminus Z)$. Note that $X_1 | (G \setminus Z) \sim \emptyset|Z$ by \Ax{4}, as $X_1 \cap (G \setminus Z) = \emptyset$. Similarly, $X_2 | G \cap Z \sim \emptyset$. With the conditions satisfied, we get $X|Y \cap G \sim X_1|Y \cap G \cap Z$ or $\bigcup_{j \le n+1} C_j \cap B_j|A \cap G \sim \bigcup_{j \le n} C_j \cap B_j|A \cap G'$. And from (1), we get $\bigcup_{j \le n+1} C_j \cap B_j|A \cap G \sim C_i|B_i$, as desired.
\end{proof}

Now we prove \autoref{prop:qSufficientWeakPosteriorEquivalence}.

\begin{proof}
\noindent [Proof of 1 $\Rightarrow$ 2:] For all $\omega$, define:
\[ [\omega] = \{\omega' \in \Omega^{\mathbb{E}}: \omega \mbox{ and } \omega' \mbox{ are } {\cal U} -\mbox{posterior equivalent}\} \]
Define $T$ to be the map $\omega \mapsto [\omega]$ that takes every outcome to its equivalence class.  Clearly, $T(\omega_1)=T(\omega_2)$ if $\omega_1$ and $\omega_2$ are posterior equivalent.  So it remains only to be shown that $T$ is sufficient.

For any $\omega \in \Omega$, we define:
\begin{eqnarray*}
\Theta(\omega,0) :=\{\theta \in \Theta: \omega|\theta \equiv \emptyset\} \\
\Theta(\omega,T>0) := \{\theta \in \Theta: \{T=T(\omega)\}|\theta \sqsupset \emptyset\} 
\end{eqnarray*}

We begin by verifying that $T$ satisfies the second condition in the definition of ``sufficient'', i.e., we must show that if $\omega|\theta \equiv \emptyset$ for all $\theta$ and $T(\omega)=T(\omega')$, then $\omega'|\theta \equiv \emptyset$ for all $\theta$.  

In fact, we will show something strictly stronger then the second condition. We will show that if $T(\omega)=T(\omega')$, then 
\begin{equation}\label{eqn:qSWPE0}
 \omega|\theta \equiv \emptyset \mbox{ if and only if } \omega'|\theta \equiv \emptyset \mbox{ for all } \theta \in \Theta.
\end{equation}
In other words, we show  that $\Theta(\omega,0)= \Theta(\omega',0)$.  If both $\Theta(\omega,0)$ and $\Theta(\omega',0)$ are empty, then they're clearly equal.  So suppose $\Theta(\omega,0)$ is non-empty.  Using \autoref{assum}, there is an ordering $\preceq$ such $\theta \succ \emptyset$ if and only if $\theta \in \Theta(\omega,0)$.  Since $\mathcal{U}$ is the set of \emph{all} orderings, we know $\preceq \in \mathcal{U}$.

We claim that $\omega \in \nul^\preceq$. By the definition of $\Theta(\omega, 0)$, for all $\theta \in \Theta(\omega, 0)$ we know that $\omega | \theta \sim \emptyset$.  By \autoref{lm:qConditionalIntersection} we get that $\omega \cap \theta \sim \emptyset$  for all $\theta \in \Theta(\omega, 0)$. For $\theta \notin \Theta(\omega, 0)$, we know that $\theta \sim \emptyset$ by the definition of the prior $\preceq$.
It follows that $\omega \cap \theta \sim \emptyset$ for all $\theta$.  Because $\Theta$ is finite, repeatedly applying \Ax{5} yields $\omega \sim \emptyset$, i.e., that $\omega \in \nul^{\preceq}$.


Now because $\omega$ and $\omega'$ are posterior equivalent (as $T(\omega)=T(\omega')$) and $\omega \in \nul^{\preceq}$, we obtain that $\omega' \in \nul^{\preceq}$.  Hence, $\omega'|\theta \sim \emptyset$ for all $\theta \in \Theta(\omega,0)$.  
By \Ax{0}, it follows that that $\omega'|\theta \equiv \emptyset$ for all $\theta \in \Theta(\omega,0)$.  Thus, $\Theta(\omega,0) \subseteq \Theta(\omega', 0)$.  The reverse inclusion follows from symmetry.

Next, we verify that $T$ satisfies first condition in the definition of ``sufficient'', i.e., we show that for all $\omega$:
\begin{equation}\label{eqn:qSWEP1}
   \omega|\theta, \{T=T(\omega)\} \equiv \omega|\eta , \{T=T(\omega)\} \mbox{ for all } \theta,\eta \in \Theta(\omega, T>0)
\end{equation}

If $\Theta(\omega, T>0)$ is empty, then \autoref{eqn:qSWEP1} is satisfied vacuously.  So suppose that $\Theta(\omega, T>0)$ is non-empty.   Using \autoref{assum}, there is an ordering $\preceq$ such that $\Theta(\omega,T>0)\sim \Delta$ and $\theta \succ \emptyset$ for all $\theta \in \Theta(\omega,T>0)$.   By \Ax{0}, it suffices to show that  \autoref{eqn:qSWEP1} holds when $\equiv$ is replaced by $\sim$.

To derive  \autoref{eqn:qSWEP1} (with $\equiv$ replaced by $\sim$), it suffices to show:
\begin{equation}\label{eqn:qSWEP2}
   \omega|\theta, \{T=T(\omega)\} \sim \omega| \{T=T(\omega)\}  \mbox{ for all } \theta \in \Theta(\omega, T>0).
\end{equation}
This suffices because the right-hand side of that equation does not depend on $\theta$.

To derive \autoref{eqn:qSWEP2}, we proceed in three steps. 

First, we show that if  $\Theta(\omega, T>0) \neq \emptyset$, then $\omega \not \in \nul^{\preceq}$, and hence, $\cdot | \omega$ is well-defined. Suppose for the sake of contradiction that $\omega \in \nul^{\preceq}$.  Then, given our choice of $\preceq$, it must be the case that $\omega|\theta \equiv \emptyset$ for all $\theta \in \Theta(\omega, T>0)$.  Fix any such $\theta$.  By \autoref{eqn:qSWPE0} , it follows that $\omega'|\theta \equiv \emptyset$ for all $\omega' \in \{T=T(\omega)\}$.  Since $\{T=T(\omega)\}$ is finite, we can repeatedly apply \Ax{5} to show that $\{T=T(\omega)\}|\theta \equiv \emptyset$, which is a contradiction since $\theta \in \Theta(\omega, T>0)$.

Second, we show that
\begin{equation}\label{eqn:qSWEP3}
   \theta|\omega \sim \theta|\{T=T(\omega)\} \mbox{ for all } \theta \in \Theta(\omega, T>0)
\end{equation}
Why? We just showed  that $\omega \not \in \nul^{\preceq}$, and hence, $\cdot | \omega^*$ is well-defined for all $\omega^* \in \{T=T(\omega)\}$.  Thus, by definition of $T$ and posterior equivalence, we have $\theta|\omega \sim \theta|\omega'$ for all $\omega' \in [\omega]$.  By repeatedly applying \autoref{lm:qbfl1}, we obtain that $\theta|\omega \sim \theta|[\omega]$.  By definition of $T$, we have $\{T=T(\omega)\} = [\omega]$, and so we've shown that $\theta|\omega \sim \theta|\{T=T(\omega)\}$. 

Third and finally, we use \autoref{eqn:qSWEP3} to derive \autoref{eqn:qSWEP2} as follows.  Define:
  \begin{alignat*}{3}
        &A = A' = \{T= T(\omega)\}  \\
        & B = \{T= T(\omega)\} \cap \theta & \quad \quad & B'= \omega \\
        & C = C' =  \omega \cap \theta
    \end{alignat*}
By definition, $C|A = C'|A'$.  Further, \autoref{eqn:qSWEP3} (with \Ax{4}) entails that $C'|B' \sim B|A$.  Thus, \Ax{6} entails that $B'|A' \sim C|B$, i.e., that $\omega|\{T= T(\omega)\} \sim \omega \cap \theta| \{T= T(\omega)\} , \theta$.  Applying \Ax{4} to the right hand side of that equation yields \autoref{eqn:qSWEP2}, as desired.\\

\noindent [Proof of 2 $\Rightarrow$ 1:]  Let $T$ be some sufficient statistic such that $T(\omega_1)=T(\omega_2)$.  We must show that $\omega_1$ and $\omega_2$ are posterior equivalent.  So let $\preceq$ be any ordering satisfying our axioms for qualitative probability.

We first show that $\omega_1 \in \nul^{\preceq}$ if and only if  $\omega_2 \in \nul^{\preceq}$. We start by defining $\Theta_N = \{\theta: \theta \in \nul^\preceq\}$ and $\Theta_S = \Theta \setminus \Theta_N$. $\Theta_N$ is a null event, by \autoref{lm:krantz8} part 4, and thus $\Theta_S$ is an almost sure event. 

Now, assume that $\omega_1 \in \nul^\preceq$. By \autoref{lm:krantz8} part 3, this implies that $\omega_1 \cap \theta \in \nul^\preceq$, which implies that $\omega_1|\theta \sim \emptyset$, for all $\theta \in \Theta_S$. By \autoref{lm:qSufficientZeroProb}, it follows that $\omega_2|\theta \sim \emptyset$ for all $\theta \in \Theta_S$. Let $\theta_1, \dots, \theta_n$ denote the elements of $\Theta_S$. We apply \autoref{lm:qLPEntailsPosteriorEquivalence2}. Let $A = \Delta$, $B_i = \theta_i$, and $C_i = \omega_2$ for all $i$. Thus, $G = \Theta_S$. We now check the conditions of the lemma:
\begin{enumerate}
    \item $\Delta \cap \theta_i \notin \nul$. This follows from the definition of $\Theta_S$.
    \item $\Delta \ind_{\theta_i} \omega_2$. This is equivalent to $\Delta | \omega_2, \theta_i \sim \Delta | \theta_i$, which is true as both sides are equiprobable with $\Delta|\Delta$.
    \item $\omega_2|\theta_i \sim \omega_2|\theta_j$. This is true as both sides are equiprobable with $\emptyset$.
\end{enumerate}
With the conditions satisfied, the lemma tells us that $\bigcup_i \omega_2 \cap \theta_i|\Theta_S \sim \emptyset$, which simplifies to $\omega_2|\Theta_S \sim \emptyset$. Now, since $\Theta_S$ is an almost sure event, we can apply \autoref{lm:krantz9} part 5 to get that $\omega_2|\Delta \sim \omega_2|\Theta_S \sim \emptyset$, which shows that $\omega_2 \in \nul^{\preceq}$, as desired.

Next, we show that  for all $\theta$ and $\omega \not \in \nul^{\preceq}$:
\begin{equation}\label{eqn:qSWPE1}
   \theta|\{T=T(\omega)\} \sim  \theta|\omega 
\end{equation}
By repeated applications of \Ax{5}, it follows that $H|\{T=T(\omega)\} \sim  H|\omega$ for all finite $H \subseteq \Theta$ and all $\omega$.  Since $\Theta$ is finite, it follows that $T(\omega)$ and $\omega$ are posterior equivalent for all $\omega$.  Since $T(\omega_1)=T(\omega_2)$ and posterior equivalence is transitive, it follows that $\omega_1$ and $\omega_2$ are posterior equivalent, as desired.

To prove \autoref{eqn:qSWPE1}, first note that if $\theta \cap \{T = T(\omega)\} \in \nul$, then $\theta \cap \omega \in \nul$ as well, as $\omega \subseteq \{T = T(\omega)\}$. So \autoref{eqn:qSWPE1} follows trivially in this case, as both sides are equiprobable with $\emptyset$.

We now focus on the more difficult case. Let $\Theta_s = \{\theta_1, \dots, \theta_n\}$ enumerate all $\theta_i$ such that $\theta_i \cap \{T = T(\omega)\} \notin \nul$. To apply \autoref{lm:qLPEntailsPosteriorEquivalence2}, define:
\begin{eqnarray*}
    A &=&  \Delta \\
    B_i &=& \theta_i \cap \{T=T(\omega)\} \\
    C_i &=& \omega \\
    G &=& \{T=T(\omega)\}
\end{eqnarray*}

By the definition of $\Theta_s$, $A \cap B_i \notin \nul$ for all $i$. Further, $C_i|B_i \sim C_j|B_j$ for all $i, j$ because $T$ is a sufficient statistic. Thus, the conditions of the lemma are met, and so 
\autoref{lm:qLPEntailsPosteriorEquivalence2} entails that for all $i$, we have: 
\[\bigcup_{j \leq n} C_j \cap B_j|A \cap G \sim C_i|B_i\]
which reduces to:
\begin{equation*}
   \omega \cap \Theta_s|\{T=T(\omega)\} \sim \omega | \{T=T(\omega)\} \cap \theta_i\ 
\end{equation*}
Since $\Theta_s$ is $\{T = T(\omega)\}$-almost sure, \autoref{lm:krantz9} allows us to simplify to:
\begin{equation}\label{eqn:qSWPE2}
   \omega |\{T=T(\omega)\} \sim \omega | \{T=T(\omega)\} \cap \theta_i\ 
\end{equation}
Next, fix any $\theta \in \Theta_s$, and define:
  \begin{alignat*}{3}
        &A = A' = \{T= T(\omega)\}  \\
        & B = \{T= T(\omega)\} \cap \theta & \quad \quad & B'= \omega \\
        & C = C' =  \omega \cap \theta
    \end{alignat*}
By \autoref{eqn:qSWPE2}, we have $C|B \sim B'|A'$.  Further,  $C|A = C'|A'$ by definition.  Thus, by \autoref{lm:Aditya}, we get $C'|B' \sim B|A$, i.e., that $\theta|\omega \sim \theta|T=T(\omega)$, as desired.

\end{proof}

\subsection{Ancillary Statistics and Mixtures}
\label{subsec:Ancillary}

\setcounter{lemma}{4}
\begin{lemma}\label{lm:qAncillaryNoUpdate}
    If $T$ is ancillary, then for all $t \in {\cal R}$, the events $\{T=t\}$ and $\Omega$ are posterior equivalent.
\end{lemma}

To understand the significance of this lemma, it helps to compare it to the quantitative analog.  If $T$ is ancillary, then $Q(\cdot|T=t) = Q(\cdot)$ for any joint distribution $Q$ representing the experimenter's beliefs.  In other words, learning the value of an ancillary statistic does not change any Bayesian's degree of beliefs.  \autoref{lm:qAncillaryNoUpdate} says the same holds in the qualitative setting.\\

\begin{proof}[Proof of \autoref{lm:qAncillaryNoUpdate}]
    By applying \autoref{lm:qbfl1} repeatedly to the fact that $\{T = t\}|\theta \sim \{T=t\}|\upsilon$ for all $\theta, \upsilon \in \Theta$, we get that $\{T = t\}|\theta \sim \{T=t\}|\Delta$.  In other words, the events $\{T=t\}$ and $\theta$ are independent.  Now, we use the fact that (unconditional) independence is symmetric (\autoref{lm:QConditionalIndependenceIsSymmetric}), which gives us that $\theta|\{T = t \} \sim \theta|\Delta$. Finally, to get $H | \{T = t\} \sim H| \Delta$, we can apply \Ax{5} repeatedly on the simple hypotheses in $H$.
\end{proof}

\begin{assumption}\label{assum:qMixturesExist}
For any two experiments $\mathbb{E}$ and $\mathbb{F}$, there is a mixture $\mathbb{M}$ of the two experiments such that each of the two component experiments has an equal probability of being conducted, i.e., $\Omega^{\mathbb{E}} |^{\mathbb{M}} | \theta \equiv \Omega^{\mathbb{E}} |^{\mathbb{M}} | \theta$ for all $\theta$.
\end{assumption}

\subsection{Proof of \autoref{thm:qmsp}}
To finish the proof of \autoref{thm:qmsp}, we need one substantial technical lemma.

\begin{lemma}\label{lm:qBayesLTPGeneral}
    Let $A, B_1 \ldots B_n$ be events in some experiment $\mathbb{E}$, and let $C_1, \ldots C_n, B_1' \ldots B_n'$ be events in an experiment $\mathbb{F}$ (which may or may not be identical to $\mathbb{E}$).  Let $G = \bigcup_{i \leq n} B_i$ and $G' = \bigcup_{i \leq n} B_i'$. 
    Further, suppose
    \begin{enumerate}
        \item $B_1, \ldots, B_n$ are pairwise disjoint,
        \item $B_1', \ldots, B_n'$ are pairwise disjoint,
        \item $B_i \ce G \sim B_i' \cf G'$ for all $i \leq n$, and
        \item  $A \ce B_i \sim C_i \cf  B'_i$  for all $i \le n$.
   \end{enumerate}
 If $A \cap G \not \in \nul$, then  $B_i \ce A \cap G \sim B_i \cap C_i \cf \bigcup_{j \le n} B_j' \cap C_j$ for all $i \le n$.
\end{lemma}

\autoref{lm:qBayesLTPGeneral} can be thought of as an application of Bayes' theorem combined with the Law of Total Probability (as well as the conditional extension). If  $C_i = C$ for all $i$, then quantitative analog of the lemma asserts the following: if $B_1 \ldots B_n$ partition $G$ and  $P(A|B_i)=P(C|B_i)$ for all $i$, then $P(B_i|A \cap G)=P(B_i|C \cap G)$ for all $i$. That claim follows easily from an application of Bayes' theorem and the Law of Total Probability. 

\begin{proof}
    We first prove that if $A \cap G \not \in \nul$, then $\bigcup_{j \le n} B_j' \cap C_j \not \in \nul$, and so the expressions in the statement of the lemma are well-defined.
    
    Because $A \cap G = \bigcup_{i\leq n} A \cap B_i$, it follows from Part 4 of \autoref{lm:krantz9} that $A \cap B_i \not \in \nul$ for some $i \leq n$.  Thus, $B_i \not \in \nul$ since  $A \cap B_i \subseteq B_i$.  Further, $A \ce B_i \succ \emptyset \ce B_i$, for otherwise, $A\cap B_i\ce \Delta \preceq \emptyset \ce \Delta$ by \autoref{lm:qConditionalIntersection} and $A \cap B_i \in \nul$, contradicting assumption.  Finally, since $A \ce B_i \sim C_i \cf B_i'$, it follows that $C_i \cf B_i' \succ \emptyset \cf B_i'$.  It follows quickly, via the same sequence of lemmata that we just used, that $C_i \cap B_i' \not \in \nul$, and hence, $\bigcup_{j \le n}B_j' \cap C_j \not \in \nul$ since  $C_i \cap B_i' \subseteq
    \bigcup_{j \le n}B_j' \cap C_j$.
    
    Now we prove the lemma by induction on $n$. In the base case, when $n=1$, $B_1 = G$. Then: 
    \begin{align*}
        G \ce A \cap G    &\sim A \cap G \ce A \cap G & \text{by \Ax{4}}\\
                        &\sim C_1 \cap B_1'\cf C_1 \cap B_1' & \text{by \Ax{3}}
    \end{align*}
    which is the desired equality.
    
    For the inductive step, assume $B_1, \ldots, B_{n+1}$ partition $G$ and $A \ce B_i \sim C_i \cf B_i'$ for all $i \le n+1$. Let $G^- = G \setminus B_{n+1}$. Clearly, $B_1, \ldots, B_n$ partition $G^-$ and $A\ce B_i \sim C_i \cf B_i'$ for all $i \le n$. We consider three cases: either one of $A \cap G^-$ or $A \cap B_{n+1}$ are not members of $\nul$ or neither of them are (since $A \cap G \notin \nul$, they cannot both be members of $\nul$).

    \paragraph{Case 1: $A \cap G^-, A \cap B_{n+1} \notin \nul$} We handle the difficult case first. By the inductive hypothesis, we know that  for all $i \le n$:
    \begin{equation}\label{eq:qBayesLTP1}
        B_i \ce A \cap G^- \sim B_i'\cap C_i \cf  \bigcup_{j \le n} B_j' \cap C_j 
    \end{equation}
    \autoref{eq:qBayesLTP1} also holds for $i = n+1$.  Why?  Both $B_{n+1} \cap (A \cap G^-)$ and $(B_{n+1}' \cap C_{n+1}) \cap \bigcup_{j \le n} B_j' \cap C_j $ are empty.  Thus,
    \begin{eqnarray*}
        B_{n+1} \cap (A \cap G^-) \ce (A \cap G^-)  &=& \emptyset \ce (A \cap G^-) \\ &\sim& \emptyset \cf \bigcup_{j \le n} B_j' \cap C_j \mbox{ by \autoref{lm:krantz6-2}} \\
        &=& (B_{n+1}' \cap C_{n+1}) \cap \bigcup_{j \le n} B_j' \cap C_j \cf \bigcup_{j \le n} B_j' \cap C_j
    \end{eqnarray*}
    By transitivity of $\sim$ (\Ax{1}), it follows that:
    \[ B_{n+1} \cap (A \cap G^-) \ce (A \cap G^-) \sim (B_{n+1}' \cap C_{n+1}) \cap \bigcup_{j \le n} B_j' \cap C_j \cf \bigcup_{j \le n} B_j' \cap C_j\]
    Applying \Ax{4} to both sides of that equality yields \autoref{eq:qBayesLTP1} for $i=n+1$.
    
    Further, it's easy to show that for all $i \le n+1$:
    \begin{equation}\label{eq:qBayesLTP2}
        B_i|A \cap B_{n+1} \sim B_i' \cap C_i | B_{n+1}' \cap C_{n+1}
    \end{equation}
    If $i = n+1$, then both sides are $\sim$-equivalent to the sure event (just use \Ax{4} and \Ax{3} like in the base case). If $i \neq n+1$, then note that $B_i \cap (A \cap B_{n+1}) = (B_i' \cap C_i) \cap (B_i' \cap C_{n+1}) = \emptyset$. So like above, we use \Ax{4} and \autoref{lm:krantz6-2} to get the equality. Now recall, our goal is to show:
    \begin{equation}\label{eq:qBayesLTPGoal1}
        B_i \ce A\cap G \sim B_i' \cap C_i \cf \bigcup_{j \le n+1} B_j' \cap C_j \text{ for all } i \le n+1
    \end{equation}
    \autoref{eq:qBayesLTPGoal1} would follow if we could ``union'' the events to the right of the conditioning bars in \autoref{eq:qBayesLTP1} and \autoref{eq:qBayesLTP2}. \autoref{lm:qPartitionOnRight} was designed for that purpose.  Let $i$ be arbitrary and define:
    \begin{alignat*}{3}
        & X = B_i               & \quad \quad   & W = B_i' \cap C_i \\
        & Y = A \cap B_{n+1}        & \quad \quad   & Z = B_{n+1}' \cap C_{n+1}\\
        & Y' = A \cap G^{-}   & \quad \quad   & Z' = \bigcup_{k\le n} B_k' \cap C_k
    \end{alignat*}
    We want to apply \autoref{lm:qPartitionOnRight}, so we first check all the conditions. Note that $Y \cap Y' = Z \cap Z' = \emptyset$, since both the $B_i$s and $B_i'$s are pairwise disjoint. Further, $X \ce Y \sim W \cf Z$ is simply \autoref{eq:qBayesLTP2} and $X\ce Y' \sim W \cf Z'$ is \autoref{eq:qBayesLTP1}. Lastly, if $i = n+1$, $X \ce Y' = B_i \ce  A \cap G^{-} \sim \emptyset \ce \Delta^{\mathbb{E}}$ by the disjointness of $B_{n+1}$ and $G^{-}$, along with \Ax{4}, and \autoref{lm:krantz6-2}. If $i \le n$, $X \ce Y = B_i \ce A \cap B_{n+1} \sim \emptyset \ce \Delta^{\mathbb{E}}$, again because $B_i$ and $B_{n+1}$ are disjoint. In this case, we can simply swap $Y$ with $Y'$ and $Z$ with $Z'$ when applying the lemma. With the conditions satisfied, \autoref{lm:qPartitionOnRight} tells us that \autoref{eq:qBayesLTPGoal1} holds if and only if $Y \ce Y \cup Y' \sim Z \cf Z \cup Z'$ i.e.:
    \begin{equation}\label{eq:qBayesLTPGoal2}
        A \cap B_{n+1} \ce A \cap G \sim B_{n+1}' \cap C_{n+1} \cf \bigcup_{k \le n+1} B_k' \cap C_k
    \end{equation}
    To prove \autoref{eq:qBayesLTPGoal2}, we first show:
    \begin{equation}\label{eq:qBayesLTPContra}
        A \cap B_{n+1} \ce G \sim B_{n+1}' \cap C_{n+1} \cf G'
    \end{equation}
    Define:
    \begin{alignat*}{3}
        & X = G               & \quad \quad   & X' = G' \\
        & Y = B_{n+1}        & \quad \quad   & Y' = B_{n+1}'\\
        & Z = A \cap B_{n+1}   & \quad \quad   & Z' = B_{n+1}' \cap C_{n+1}
    \end{alignat*}
Assumption 3 in the statement of the lemma says that $Y | X \sim Y' | X'$, and $Z|Y = A \cap B_{n+1} | B_{n+1} \sim A|B_{n+1}$ by \Ax{4}. Similarly, $Z'|Y' \sim C_{n+1} | B_{n+1}'$. By the assumption that  $A | B_i \sim C_i | B_i'$ for all $i \le n+1$, we get $Z|Y \sim Z'|Y'$. So \Ax{6b} tells us that $Z|X \sim Z'|X'$, which is \autoref{eq:qBayesLTPContra}.
    
We now prove \autoref{eq:qBayesLTPGoal2} by contradiction. Suppose $A \cap B_{n+1} \ce A\cap G \not \sim B_{n+1}' \cap C_{n+1} \cf \bigcup_{k \le n+1} B_k' \cap C_k$, and without loss of generality, say $A \cap B_{n+1} \ce A \cap G \succ B_{n+1}' \cap C_{n+1} \cf \bigcup_{k\le n+1} B_k '\cap C_k$. Now, define:
    \begin{alignat*}{3}
        & X = G               & \quad \quad   & X' = G' \\
        & Y = A \cap G        & \quad \quad   & Y' = \bigcup_{k\le n+1} B_k' \cap C_k\\
        & Z = A \cap B_{n+1}   & \quad \quad   & Z' = B_{n+1}' \cap C_{n+1}
    \end{alignat*}
    By \autoref{lm:qBayesLTPHelper} (letting $A_i = A$ for all $i$), we know that $A \cap G \ce G \sim \bigcup_{k\le n+1} B_k' \cap C_k \cf G'$, i.e. $Y \ce X \sim Y' \cf X'$.
    Note that $Z \ce Y \succ Z' \cf Y'$ by assumption. Since this shows that $Z \notin \nul$, \Ax{6b} entails that $Z\ce X \succ Z' \cf X'$,  contradicting \autoref{eq:qBayesLTPContra}. 
    
    \paragraph{Case 2: $A \cap G^{-} \in \nul, A \cap B_{n+1} \notin \nul$} Notice that since $A \cap G^{-} \in \nul$, $A \cap B_i \in \nul$, for all $i \le n$, by \autoref{lm:krantz9}. Thus, $B_i' \cap C_i \in \nul$, by our assumption. So, for all $i \le n$, our goal is to show that $B_i \ce A \cap G \sim \emptyset$. Since $A \cap B_i \in \nul$, this follows by \Ax{4}. All that remains is to show that $B_{n+1} \ce A \cap G \sim B_{n+1}' \cap C_{n+1} \cf \bigcup_{j \le n} B_j' \cap C_j$. Notice that \autoref{eq:qBayesLTP2} still holds in this case. We simply apply Part 6 of \autoref{lm:krantz9} to both sides of this equation, adding in $A \cap G^{-}$ to the left hand side and $\bigcup_{j \le n} B_i' \cap C_i$ to the right hand side, since those events are null. 

    \paragraph{Case 3: $A \cap B_{n+1} \in \nul, A \cap G^{-} \notin \nul$} Since $A \cap B_{n+1} \in \nul$, $B_{n+1}' \cap C_{n+1} \in \nul$, by our assumption. So, $B_{n+1} \ce A \cap G \sim B_{n+1}' \cap C_{n+1} \cf \bigcup_{j \le n} B_j' \cap C_j$ holds since both sides are equiprobable with the emptyset. Further, \autoref{eq:qBayesLTP1}, the inductive hypothesis, still holds in this case. Just as before, we simply apply Part 6 of \autoref{lm:krantz9} to both sides of this equation, adding in $A \cap B_{n+1}$ to the left hand side and $B_{n+1} \cap C_{n+1}$ to the right hand side.
\end{proof}

\begin{lemma} \label{cor:mixtureComponentEquivalence}
 Let $\omega$ be an outcome of some experiment $\mathbb{E}$ and let $\mathbb{M}$ be an arbitrary mixture including $\mathbb{E}$.  Then observing  $\omega$ in the mixture is posterior equivalent to observing $\omega$ in the component experiment.   In symbols, $H \ce \omega \sim H|^{\mathbb{M}} \omega$ for all hypotheses $H$ and all orderings $\preceq$. 
\end{lemma}
\setcounter{lemma}{25}
\begin{proof}
    This follows directly from \autoref{lm:qBayesLTPGeneral}, with the appropriate substitutions. Let:
  
  \begin{align*}
        A &= \omega^{\mathbb{E} }\\
        B_i &= \theta_i \\
        B_i' &= \theta_i \cap \Omega^{\mathbb{E}}\\
        C_i &= \omega^{\mathbb{M}} \\
    \end{align*}
    
Here, we use $\omega^{\mathbb{E}}$ to refer to the outcome $\omega$ when observed in $\mathbb{E}$, and similar remarks apply to $\omega^{\mathbb{M}}$.
    
 We now show that the conditions of \autoref{lm:qBayesLTPGeneral} hold. Obviously, the events $B_1, \ldots B_n$ are pairwise disjoint, as are $B_1', \ldots B_n'$.  So conditions 1 and 2 are met. Condition 4 of the lemma says that $\omega^{\mathbb{E}}|\theta_i \sim \omega^{\mathbb{M}}|\theta_i, \Omega^{\mathbb{E}}$, which follows immediately from the third condition in the definition of a mixture.
 
Condition 3 says that $\theta_i|^{\mathbb{E}}\Theta \sim \theta_i|^{\mathbb{M}}\Theta \cap \Omega^{\mathbb{E}}$. We prove this in two steps. First, recall that $\theta_i|^{\mathbb{E}} \Theta \sim \theta_i|^{\mathbb{M}} \Theta$, as we assume that priors don't vary across experiments (\Ax{7}). Next, the fourth condition in the definition of a mixture says that the projection statistic is ancillary. Thus, we can apply \autoref{lm:qAncillaryNoUpdate}
to the projection statistic, giving us that $\Omega^{\mathbb{E}}$ is posterior equivalent to $\Omega^{\mathbb{M}}$ (and recall that $\Omega^{\mathbb{M}}$ and $\Theta$ are both shorthand for the same event, $\Delta^{\mathbb{M}}$). This means that $\theta_i|^{\mathbb{M}} \Theta \sim \theta_i|^{\mathbb{M}} \Theta, \Omega^{\mathbb{E}}$. So, we've shown Condition 3 by transitivity.
    
    Now, \autoref{lm:qBayesLTPGeneral} says that $\theta_i|^{\mathbb{E}} \omega^{\mathbb{E}} \sim \theta_i|^{\mathbb{M}} \omega^{\mathbb{M}}, \Omega^{\mathbb{E}} = \theta_i|^{\mathbb{M}} \omega^{\mathbb{M}}$, as desired.
\end{proof}

\begin{proof}[Proof of \autoref{thm:qmsp}]
    In the forward direction, suppose that, in every mixture $\mathbb{M}$ of $\mathbb{E}$ and $\mathbb{F}$, there is some sufficient statistic $T$ such that $T(\omega_1^{\mathbb{M}})=  T(\omega_2^{\mathbb{M}})$.  We must show  $\omega_1^{\mathbb{E}}$ and  $\omega_2^{\mathbb{F}}$ are $\mathcal{U}$, where $\mathcal{U}$ is the set of all orderings satisfying our axioms.  Because  there is a sufficient statistic $T$ for $\mathbb{M}$ such that $T(\omega_1^{\mathbb{M}})=  T(\omega_2^{\mathbb{M}})$, \autoref{prop:qSufficientWeakPosteriorEquivalence} entails that $\omega_1^{\mathbb{M}}$ and $\omega_2^{\mathbb{M}}$ are $\mathcal{U}$-posterior equivalent. From \autoref{cor:mixtureComponentEquivalence}, we know that $\omega_1^{\mathbb{M}}$ is $\mathcal{U}$-posterior equivalent to $\omega_1^{\mathbb{E}}$ and that $\omega_2^{\mathbb{M}}$ is $\mathcal{U}$-posterior equivalent to $\omega_2^{\mathbb{F}}$. By the transitivity of posterior equivalence, we conclude that $\omega_1^{\mathbb{E}}$ and $\omega_2^{\mathbb{F}}$ are $\mathcal{U}$-posterior equivalent, as desired.
    
    The reverse direction follows by symmetry; each step of the above argument is reversible. In more detail, first assume that $\omega_1^{\mathbb{E}}$ and $\omega_2^{\mathbb{F}}$ are ${\cal U}$-posterior equivalent.  By the same steps as above, we conclude that $\omega_1^{\mathbb{M}}$ and $\omega_2^{\mathbb{M}}$ are posterior equivalent in any mixture $\mathbb{M}$. Then, we use \autoref{prop:qSufficientWeakPosteriorEquivalence} and the fact that $\mathcal{U}$ is the universal set of orderings to conclude that there is a sufficient statistic $T$ (for $\mathbb{M}$) such that $T(\omega_1^{\mathbb{M}}) = T(\omega_2^{\mathbb{M}})$, and thus $\omega_1^{\mathbb{E}}$ and $\omega_2^{\mathbb{F}}$ are evidentially equivalent by \f{qmsp}.
\end{proof}

\section{No Analog of LL+}
We prove the following proposition, used in our proof that there is no qualitative analog of $\f{ll}^+$. 

\begin{proposition}\label{prop:qLL+NoAnalogMain}
   Suppose the following three conditions hold:
    \begin{enumerate}
        \item $B_1 \cap B_2 = \emptyset$,
        \item $A_1 | B_1 \succ A_2 | B_1 \succ \emptyset$, and
        \item $A_2 | B_2 \succeq A_1 |B_2 \succ \emptyset$.
    \end{enumerate}
Then 
\[ B_1 | A_1 \cap (B_1 \cup B_2) \succ B_1 | A_2 \cap (B_1 \cup B_2)\]
\end{proposition}
\begin{proof}
    The proposition follows immediately from the next two lemmata. In detail, from the conditions 1 and 2,  \autoref{lm:qLL+A} allows us to infer that
    \[B_1 | A_1 \cap (B_1 \cup B_2) \succ B_1 \cap A_2 | (A_1 \cap B_2) \cup (A_2 \cap B_1)\]
    Under conditions 1 and 3, \autoref{lm:qLL+B}  entails:
    \[ B_1 \cap A_2 | (A_1 \cap B_2) \cup (A_2 \cap B_1) \succeq B_1 | A_2 \cap (B_1 \cup B_2)\]
    Because $\preceq$ is transitive (by \Ax{1}), the desired inequality follows.
\end{proof}

\begin{lemma}\label{lm:qLL+A}
    Suppose the following two conditions hold:
    \begin{enumerate}
        \item $B_1 \cap B_2 = \emptyset$, and
        \item $A_1 | B_1 \succ A_2 |B_1 \succ \emptyset$.
    \end{enumerate}
Then 
\[  B_1 | A_1 \cap (B_1 \cup B_2) \succ B_1 \cap A_2 | (A_1 \cap B_2) \cup (A_2 \cap B_1)\]
\end{lemma}
\begin{proof}
First, we will show
\begin{equation}\label{eqn:qLL+A1}
    A_1 | B_1 \cup B_2 \succ (A_1 \cap B_2) \cup (A_2 \cap B_1)| B_1 \cup B_2
\end{equation}
By \Ax{1} (specifically, reflexivity), we have
\begin{equation}\label{eqn:qLL+A2}
    A_1  \cap B_2 | B_1 \cup B_2 \sim A_1 \cap B_2 | B_1 \cup B_2
\end{equation}
and further, we will show:
\begin{equation}\label{eqn:qLL+A3}
    A_1  \cap B_1 | B_1 \cup B_2 \succ A_2 \cap B_1 |B_1 \cup B_2
\end{equation}
Since $B_1 \cap B_2 = \emptyset$, we can apply \Ax{5} to \autoref{eqn:qLL+A2} and \autoref{eqn:qLL+A3} to obtain:
\[ A_1 \cap (B_1 \cup B_2) | B_1 \cup B_2 \succ (A_1 \cap B_2) \cup (A_2 \cap B_1) | B_1 \cup B_2 \]
And applying \Ax{4} to the left-hand side of the last inequality yields \autoref{eqn:qLL+A1} as desired.

To show \autoref{eqn:qLL+A3}, we apply \Ax{6b} to the following events:
\begin{align*}
        \mathfrak{A} &= \mathfrak{A}' = B_1 \cup B_2  \\ 
        \mathfrak{B} &=  \mathfrak{B}' = B_1 \\ 
        \mathfrak{C} &= A_1 \cap B_1 &&  \mathfrak{C}'= A_2 \cap B_1 
    \end{align*}
Because $A_1 | B_1 \succ A_2 |B_1$ (by assumption), \Ax{4} entails that $\mathfrak{C}|\mathfrak{B} \succ \mathfrak{C}'|\mathfrak{B}'$.  And $\mathfrak{B}|\mathfrak{A} \sim \mathfrak{B}'|\mathfrak{A}'$ by reflexivity (part of \Ax{1}).  Thus, by \Ax{6b}, we obtain $\mathfrak{C}|\mathfrak{A} \succ \mathfrak{C}'|\mathfrak{A}'$, which is just \autoref{eqn:qLL+A3}, as desired.  

That completes the proof of \autoref{eqn:qLL+A1}.  Next, we show:
\begin{equation}\label{eqn:qLL+A4}
    A_1 \cap B_2 | A_1 \cap (B_1 \cup B_2) \prec A_1 \cap B_2 | (A_1 \cap B_2) \cup (A_2 \cap B_1)
\end{equation}
For the sake of contradiction, assume \autoref{eqn:qLL+A4} fails.  By totality of $\preceq$ (part of \Ax{1}), we obtain that
\begin{equation}\label{eqn:qLL+A5}
    A_1 \cap B_2 | A_1 \cap (B_1 \cup B_2) \succeq A_1 \cap B_2 | (A_1 \cap B_2) \cup (A_2 \cap B_1)
\end{equation}
Now define:
\begin{align*}
        \mathfrak{A} &= \mathfrak{A}' = B_1 \cup B_2  \\ 
        \mathfrak{B} &= A_1 \cap (B_1 \cup B_2)  && \mathfrak{B}' = (A_1 \cap B_2) \cup (A_2 \cap B_1) \\ 
        \mathfrak{C} &=  \mathfrak{C}'= A_1 \cap B_2
    \end{align*}
By \autoref{eqn:qLL+A5}, we know $\mathfrak{C}|\mathfrak{B} \succeq \mathfrak{C}'|\mathfrak{B}'$.  Further by \autoref{eqn:qLL+A1} and \Ax{4}, it follows that $\mathfrak{B}|\mathfrak{A} \succ \mathfrak{B}'|\mathfrak{A}'$.  Finally, $\mathfrak{C}'$ is not null because, by assumption $A_1|B_2 \succ \emptyset$.  Thus, \Ax{6b} entails that $\mathfrak{C}|\mathfrak{A} \succeq \mathfrak{C}'|\mathfrak{A}'$, and that's a contradiction since $\mathfrak{C}|\mathfrak{A} = \mathfrak{C}'|\mathfrak{A}'$.

That concludes the proof of \autoref{eqn:qLL+A4}.  We're finally ready to show our ultimate conclusion, i.e., that
 \[ B_1 | A_1 \cap (B_1 \cup B_2) \succ B_1 \cap A_2 | (A_1 \cap B_2) \cup (A_2 \cap B_1).\]
To do so, suppose for the sake of contradiction that inequality fails.  By totality of $\preceq$ (part of \Ax{1}), we obtain that:
\[
  B_1 | A_1 \cap (B_1 \cup B_2) \preceq B_1 \cap A_2 | (A_1 \cap B_2) \cup (A_2 \cap B_1)\]
Apply \Ax{4} to the left-hand side of the last inequality to obtain:
\begin{equation}\label{eqn:qLL+A6}
    A_1 \cap B_1 | A_1 \cap (B_1 \cup B_2) \preceq B_1 \cap A_2 | (A_1 \cap B_2) \cup (A_2 \cap B_1)
\end{equation}
Because $B_1 \cap B_2 = \emptyset$, we can apply \Ax{5} to \autoref{eqn:qLL+A6} and \autoref{eqn:qLL+A4} to obtain that:
\begin{equation}\label{eqn:qLL+A7}
    (A_1 \cap B_1) \cup (A_1 \cap B_2) | A_1 \cap (B_1 \cup B_2) \prec (A_1 \cap B_2) \cup (A_2 \cap B_1) | (A_1 \cap B_2) \cup (A_2 \cap B_1)
\end{equation}
But the last inequality contradicts \Ax{3}, since both sides are of the form $X|X$ for appropriate $X$.
\end{proof}

\begin{lemma}\label{lm:qLL+B}
    Suppose the following two conditions hold:
    \begin{enumerate}
        \item $B_1 \cap B_2 = \emptyset$, and
        \item $A_2 | B_2 \succeq A_1 | B_2 \succ \emptyset$.
    \end{enumerate}
Then 
\[ B_1 \cap A_2 | (A_1 \cap B_2) \cup (A_2 \cap B_1) \succeq B_1 | A_2 \cap (B_1 \cup B_2)\]
\end{lemma}
\begin{proof}
Suppose for the sake of contradiction that 
\[B_1 \cap A_2 | (A_1 \cap B_2) \cup (A_2 \cap B_1) \not \succeq B_1 | A_2 \cap (B_1 \cup B_2)\]
By totality of $\preceq$ (which is part of \Ax{1}), we obtain that
\begin{equation}\label{eqn:qLL+B1}
B_1 \cap A_2 | (A_1 \cap B_2) \cup (A_2 \cap B_1) \prec B_1 | A_2 \cap (B_1 \cup B_2)
\end{equation}
Below, we will show that:
\begin{equation}\label{eqn:qLL+B2}
    A_1 \cap B_2 | (A_1 \cap B_2) \cup (A_2 \cap B_1) \preceq A_2 \cap B_2 | A_2 \cap (B_1 \cup B_2)
\end{equation}
Applying \Ax{5} to the last two inequalities yields:
\[(A_1 \cap B_2) \cup (A_2 \cap B_1)| (A_1 \cap B_2) \cup (A_2 \cap B_1) \prec A_2 \cap (B_1 \cup B_2) | A_2 \cap (B_1 \cup B_2)\]
But that contradicts \Ax{3} because both sides of the inequality are of the form $X|X$ for appropriate $X$.

So it suffices to prove \autoref{eqn:qLL+B2}.  To do so, we first show that
\begin{equation}\label{eqn:qLL+B3}
     (A_1 \cap B_2) \cup (A_2 \cap B_1) \preceq A_2 \cap (B_1 \cup B_2)
\end{equation}
By assumption, $A_1|B_2 \preceq A_2|B_2$, which entails \[A_1 \cap B_2 \preceq A_2 \cap B_2\] \autoref{lm:qConditionalIntersection}.  Further, by reflexivity of $\preceq$ (again, part of \Ax{1}), we know:
\[A_2 \cap B_1 \sim A_2 \cap B_1 \]
Because $B_1 \cap B_2 = \emptyset$, we can apply \Ax{5} to the last two inequalities to obtain \autoref{eqn:qLL+B3}.

Now suppose for the sake of contradiction that \autoref{eqn:qLL+B2} failed.  By totality of $\preceq$ (\Ax{1}), that entails:
\begin{equation}\label{eqn:qLL+B4}
    A_1 \cap B_2 | (A_1 \cap B_2) \cup (A_2 \cap B_1) \succ A_2 \cap B_2 | A_2 \cap (B_1 \cup B_2)
\end{equation}
Define:
\begin{align*}
        \mathfrak{A} &= \mathfrak{A}' = \Delta \\ 
        \mathfrak{B} &= (A_1 \cap B_2) \cup (A_2 \cap B_1)  && \mathfrak{B}' = A_2 \cap (B_1 \cup B_2) \\ 
        \mathfrak{C} &= A_1 \cap B_2  && \mathfrak{C}'= A_2 \cap B_2
\end{align*}
 \autoref{eqn:qLL+B4} and \Ax{4} entail that $\mathfrak{C}|\mathfrak{B} \succ \mathfrak{C}'|\mathfrak{B}'$.  Further by \autoref{eqn:qLL+B3} and \Ax{4}, it follows that $\mathfrak{B}|\mathfrak{A} \succeq \mathfrak{B}'|\mathfrak{A}'$.  Finally, $\mathfrak{C}'$ is not null because by (1) assumption $A_1|B_2 \succ \emptyset$ and (2) therefore, $A_1 \cap B_2 \succ \emptyset$ by \autoref{lm:qConditionalIntersection}.  Thus, \Ax{6b} entails that $\mathfrak{C}|\mathfrak{A} \succ \mathfrak{C}'|\mathfrak{A}'$.  In other words, we have shown that $A_1 \cap B_2 \succ A_2 \cap B_2$. 
 
 Finally, we just showed $A_1 \cap B_2$ is not null, and from that, it follows that $B_2$ is not null \autoref{lm:qConditionalNull}.  Since $A_1 \cap B_2 \succ A_2 \cap B_2$ and $B_2$ is not null, it follows from \autoref{lm:qConditionalIntersection} that $A_1|B_2 \succ A_2|B_2$.  Yet that contradicts our assumption that $A_1|B_2 \preceq A_2|B_2$.  And we're done.
\end{proof}

We also use the following theorem, which is a weak qualitative analog of \f{lp}. We postpone the proof and discussion of this to a later section.
\setcounter{theorem}{3}
\begin{restatable}{theorem}{qlpMinus}
    \label{thm:qLPEntailsPosteriorEquivalence}
    Let $\{C_{\theta} \}_{\theta \in \Theta}$ be events such that
    \begin{enumerate}
            \item $E| \theta \equiv F \cap C_{\theta} | \theta$ for all $\theta \in \Theta$, 
            \item $F \ind_\theta C_{\theta}$ with respect to $\sqsubseteq$ for all $\theta \in \Theta$,
            \item $C_{\theta} | \theta \equiv C_{\eta} | \eta$ for all $\theta, \eta \in \Theta$.
            \item $\emptyset|\theta \sqsubset C_\theta|\theta$ for all $\theta$.
    \end{enumerate}	 
    If $\Theta$ is finite, then $E$ and $F$ are  $\mathcal{U}$-posterior equivalent.
\end{restatable}

\section{Irrelevance of Stopping Rules}
In this section we prove the lemmas used in the proof of the irrelevance of qualitative stopping rules. 

The following lemma is an analog of the chain rule for numerical probabilities.

\setcounter{lemma}{7}
\begin{lemma}\label{lm:chainrule}
    Let $1 \le n_0 \le n$ be arbitrary. Suppose, for all $m \le n_0$, we have that $A_m |^{\mathbb{E}} \bigcap_{k > m} A_k \sim B_m |^{\mathbb{F}} \bigcap_{k > m} B_k$. Then, we know that $\bigcap_{j \le n_0} A_j |^{\mathbb{E}} \bigcap_{k > n_0} A_k \sim \bigcap_{j \le n_0} B_j |^{\mathbb{F}} \bigcap_{k > n_0} B_k$.
\end{lemma}
\setcounter{lemma}{30}
\begin{proof}
   The proof goes by induction on $n$. When $n = 1$, the only choice of $n_0$ is $1$. In this case, the premise and conclusion are identical: $A_1 |^{\mathbb{E}} \Delta \sim B_1 |^{\mathbb{F}} \Delta$.
   
   Now, assume the result holds for $n$ events; we prove it for $n+1$. We have two cases:
   \paragraph{Case 1:} $n_0 = 1$. This case is similar to the base case. The premise and conclusion are identical: $A_1|^{\mathbb{E}} \bigcap_{k > 1} A_k \sim B_1|^{\mathbb{F}} \bigcap_{k > 1} B_k$.
   
   \paragraph{Case 2:} $n_0 > 1$. In this case, we apply the inductive hypothesis to the last $n$ events. By the assumption of the $(n+1)$th step, we know that, for all $m \le n_0$, we have that $A_m |^{\mathbb{E}} \bigcap_{k > m} A_k \sim B_m |^{\mathbb{F}} \bigcap_{k > m} B_k$. The inductive hypothesis thus tells us that (1) $\bigcap_{2 \le j \le n_0} A_j |^{\mathbb{E}} \bigcap_{k > n_0} A_k \sim \bigcap_{2 \le j \le n_0} B_j |^{\mathbb{F}} \bigcap_{k > n_0} B_k$. We also know, by assumption, that (2) $A_1|^{\mathbb{E}} \bigcap_{k > 1} A_k \sim B_1|^{\mathbb{F}} \bigcap_{k > 1} B_k$.
   
   Now, define:
  \begin{alignat*}{3}
      X &= \bigcap_{k > n_0} A_k \qquad &X' &= \bigcap_{k > n_0} B_k \\
      Y &= \bigcap_{j \ge 2} A_j \qquad &Y' &= \bigcap_{j \ge 2} B_j \\
      Z &= \bigcap_{i \ge 1} A_i \qquad &Z' &= \bigcap_{j \ge 1} B_i
  \end{alignat*}
    We know that $Z|^{\mathbb{E}}Y \sim Z'|^{\mathbb{F}}Y'$ by \Ax{4} and assumption (2). Similarly, we know that $Y|^{\mathbb{E}} X \sim Y'\sim^{\mathbb{F}} X'$ by \Ax{4} and assumption (1). Thus, \Ax{6b} entails that $Z|^{\mathbb{E}} X \sim Z'|^{\mathbb{F}} X'$, as desired.
\end{proof}

The other lemma used for the result is just part 2 of \autoref{lm:qSufficientStatisticExamples}.
   

\section{A weaker qualitative, likelihood principle}
In this section, we introduce a weaker version of \f{lp} that has a fairly natural qualitative analog; we call the weaker principle $\f{lp}^-$ and its qualitative analog $\f{qlp}^-$.  We then prove that when the qualitative analog $\f{qlp}^-$ entails that two sets of outcomes $E$ and $F$ are evidentially equivalent, then $E$ and $F$ are $\mathcal{U}$-posterior equivalent, where $\mathcal{U}$ is the set of all orderings satisfying the axioms for qualitative probability.  That theorem -- as well as the machinery we develop in proving it -- turn out to be crucial in deriving our remaining major results.

To motivate our weaker principle, notice that we can multiply numerical probabilities when two events are independent.  So suppose $E$ and $F$ are outcomes of the same experiment and
\begin{enumerate}
    \item $P_{\theta}(E) = P_{\theta}(F \cap C_{\theta})$ for all $\theta$,
    \item For all $\theta \in \Theta$, the events $F$ and $C_{\theta}$ are conditionally independent given $\theta$, and
    \item $P_{\theta}(C_{\theta})=P_{\upsilon}(C_{\upsilon}) > 0$ for all $\theta, \upsilon \in \Theta$.
\end{enumerate}
Roughly, the event $C_{\theta}$ acts as a witness to the equality $P_{\theta}(E) = c \cdot P_{\theta}(F)$.  Specifically, assumptions 1 and 2 encode the equality, and assumption 3 asserts this constant is invariant with respect to the parameter $\theta$.

By \f{lp}, the three conditions entail that $E$ and $F$ are evidentially equivalent. The proof is simple.  Let  $c = P_{\theta_0}(C_{\theta_0})$ for any $\theta_0 \in \Theta$.  Then for all $\theta$:
\begin{eqnarray*}
P_{\theta}(E) &=& P_{\theta}(F \cap C_{\theta}) \mbox{ by Assumption 1} \\
 &=& P_{\theta}(F) \cdot P_{\theta}(C_{\theta}) \mbox{ by Assumption 2} \\
 &=& c \cdot P_{\theta}(F) \mbox{ by Assumption 3} 
\end{eqnarray*} 
Since $P_{\theta}(E) = c \cdot P_{\theta}(F)$ for all $\theta$, then $E$ and $F$ are evidentially equivalent by \f{lp}. 

For an example, suppose we are trying to discern the type of an unmarked urn, with colored balls in frequencies according to \autoref{table:lp_urns}. Let $B$ and $W$ respectively denote the events that one draws a blue and a white ball on the first draw. By \f{lp}, these events are equivalent, as $P_\theta(B) = 1/2 \cdot P_\theta(W)$ for all $\theta \in \{\theta_1, \theta_2\}$, which denote the urn type. We can see this equivalence in another way. Let $C_1/C_2$ respectively denote the events that, after replacing the first draw, one draws a cyan/cobalt ball respectively on the second draw. Then, $P_{\theta_1}(W \cap C_1) = P_{\theta_1}(W) \cdot P_{\theta_1}(C_1) = 1/2  \cdot P_{\theta_1}(W)  = P_{\theta_1}(B)$ by independence of the draws, if the urn is Type 1. Similarly, $P_{\theta_2}(W \cap C_2) = P_{\theta_2}(W) \cdot P_{\theta_2}(C_2) = 1/2 \cdot P_{\theta_2}(W)  = P_{\theta_2}(B)$, if the urn is Type 2. By \f{lp}, because $P_{\theta_1}(C_1) = P_{\theta_2}(C_2)  > 0$ and $P_{\theta_i}(B) =  P_{\theta_i}(W  \cap C_i) = P_{\theta_i}(C_i) \cdot P_{\theta_i}(W)$ for all $i$, we know $B$ and $W$ are evidentially equivalent.

\begin{table}[t]
    \centering
    \begin{tabular}{l | l | l | l | l | l}
        $\ $ & \multicolumn{5}{c}{Number of balls} \\
        \hline
        $\ $ & Blue & White & Cyan & Cobalt & Total  \\
        \hline
        Urn Type 1 &  15 & 30 & 50 & 5 & 100 \\
        Urn Type 2 &  10 & 20 & 20 &  50 & 100 \\
       
    \end{tabular}
    \caption{Drawing a blue ball is evidentially equivalent to drawing a white ball according to \f{LP}. In this example, the equivalence can be ``encoded'' by the cyan and cobalt balls instead of a constant, $c$.}
    \label{table:lp_urns}
\end{table}

So define $\f{lp}^{-}$ to be the thesis that, if conditions 1-3 hold, then $E$ and $F$ are evidentially equivalent.  We just showed that, if $\f{lp}^{-}$ entails $E$ and $F$ are evidentially equivalent, then so does \f{lp}.  By \autoref{clm:bLPPosteriorEquivalence}, it follows that $E$ and $F$ are also Bayesian posterior and favoring equivalent.

Because conditions 1-3 do not contain any arithmetic operations, each has a direct qualitative analog.  Thus, we define $\f{qlp}^-$ to be the thesis that if the qualitative analogs of the above three conditions hold, then $E$ and $F$ are $\mathcal{U}$-posterior equivalent, where $\mathcal{U}$ is the universal set of orderings.  The next theorem asserts that if $\f{qlp}^-$ entails two sets of outcomes are evidentially equivalent, then they outcomes are posterior equivalent.

\qlpMinus*
This theorem follows from the following three claims.

\begin{proposition}\label{prop:qlp0}
    If conditions 1, 2, and 4 of \autoref{thm:qLPEntailsPosteriorEquivalence} hold, then $E \in \nul_{\preceq}$ if and only if $F \in \nul_{\preceq}$.
\end{proposition}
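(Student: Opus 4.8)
The plan is to localize the claim to individual parameter values, transfer the question from $\preceq$ to $\sqsubseteq$, and then argue directly from conditions 1, 2, and 4. Since $\Theta$ is finite, $\Theta\times E=\bigcup_{\theta\in\Theta}(\{\theta\}\times E)$ is a finite union, so Parts 3 and 4 of \autoref{lm:krantz8} give that $E\in\nul_{\preceq}$ iff $\{\theta\}\times E\in\nul_{\preceq}$ for every $\theta$, and likewise for $F$. Hence it suffices to fix $\theta\in\Theta$ and show $\{\theta\}\times E\in\nul_{\preceq}\iff\{\theta\}\times F\in\nul_{\preceq}$.

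I would first dispatch the case $\{\theta\}\times\Omega\in\nul_{\preceq}$: then $\{\theta\}\times E$ and $\{\theta\}\times F$ are subsets of $\{\theta\}\times\Omega$, hence both $\preceq$-null by Part 3 of \autoref{lm:krantz8}, and the equivalence holds trivially. So assume $\{\theta\}\times\Omega\notin\nul_{\preceq}$; recall that $\{\theta\}\times\Omega\notin\nul_{\sqsubseteq}$ always, by \Ax{2} for $\sqsubseteq$. I then claim $\{\theta\}\times E\in\nul_{\preceq}\iff\{\theta\}\times E\in\nul_{\sqsubseteq}$, and similarly for $F$: applying \autoref{lm:qConditionalIntersection} with $Y=\{\theta\}\times\Omega$ (non-null) and $W=\emptyset$, together with \Ax{2}, gives $\{\theta\}\times E\in\nul_{\preceq}\iff E|\theta\preceq\emptyset|\theta$; by \Ax{0} the latter is equivalent to $E|\theta\sqsubseteq\emptyset|\theta$; and by \Ax{2} for $\sqsubseteq$ that says exactly $\{\theta\}\times E\in\nul_{\sqsubseteq}$. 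So the whole proposition reduces to the purely-$\sqsubseteq$ statement $\{\theta\}\times E\in\nul_{\sqsubseteq}\iff\{\theta\}\times F\in\nul_{\sqsubseteq}$, to be proved from conditions 1, 2, and 4.

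For $(\Leftarrow)$: if $\{\theta\}\times F\in\nul_{\sqsubseteq}$ then $(F\cap C_{\theta})\cap(\{\theta\}\times\Omega)\subseteq\{\theta\}\times F$ is $\sqsubseteq$-null by Part 3 of \autoref{lm:krantz8}, so $F\cap C_{\theta}|\theta\sqsubseteq\emptyset|\theta$; by condition 1, $E|\theta\equiv F\cap C_{\theta}|\theta\sqsubseteq\emptyset|\theta$, hence $\{\theta\}\times E\in\nul_{\sqsubseteq}$. For $(\Rightarrow)$: if $\{\theta\}\times E\in\nul_{\sqsubseteq}$ then $E|\theta\sqsubseteq\emptyset|\theta$, so by condition 1 the set $(F\cap C_{\theta})\cap(\{\theta\}\times\Omega)$ is $\sqsubseteq$-null. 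Condition 4 says $\emptyset|\theta\sqsubset C_{\theta}|\theta$, so $C_{\theta}\cap(\{\theta\}\times\Omega)\notin\nul_{\sqsubseteq}$; thus the second clause of \autoref{defn:qConditionalIndependence} (read with respect to $\sqsubseteq$) fails, and condition 2 forces $F|C_{\theta}\cap\theta\equiv F|\theta$. Conditioning on the non-null set $C_{\theta}\cap(\{\theta\}\times\Omega)$, \Ax{4} gives $F|C_{\theta}\cap\theta\equiv F\cap C_{\theta}\cap\theta|C_{\theta}\cap\theta$, and Part 2 of \autoref{lm:krantz9} (using that $F\cap C_{\theta}\cap\theta$ is $\sqsubseteq$-null) gives $F\cap C_{\theta}\cap\theta|C_{\theta}\cap\theta\equiv\emptyset|\theta$; chaining with the independence equivalence, $F|\theta\equiv\emptyset|\theta$, i.e.\ $\{\theta\}\times F\in\nul_{\sqsubseteq}$. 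Combining the three reductions finishes the proof.

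The step I expect to be the crux is the $(\Rightarrow)$ direction of the reduced $\sqsubseteq$-claim: one must recognize that condition 4 is precisely what rules out the degenerate disjunct in the definition of qualitative conditional independence, and then push a null set across a change of conditioning event using \autoref{lm:krantz9}. Everything else is bookkeeping about which ordering a given set is null with respect to, plus the localization afforded by \autoref{lm:krantz8}; I would also double-check that the Krantz lemmas of the section, though stated for $\preceq$, apply verbatim to $\sqsubseteq$ since $\sqsubseteq$ satisfies the same Axioms 1, 3, 4, 5, and 6.
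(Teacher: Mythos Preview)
Your proof is correct and follows essentially the same approach as the paper's: localize to each $\theta$ via \autoref{lm:krantz8}, transfer between $\preceq$ and $\sqsubseteq$ using \Ax{0} and \autoref{lm:qConditionalIntersection}, and then use conditions 1, 2, and 4 to establish the $\sqsubseteq$-level equivalence. The only organizational difference is that the paper packages your $(\Rightarrow)$ step as a standalone lemma (\autoref{lm:LP-Null}, which says $A\ind_C B$ and $\emptyset|C\prec B|C$ imply $A\cap B|C\sim\emptyset|C\iff A|C\sim\emptyset|C$), invoked via the auxiliary \autoref{lm:NullIntersection}, whereas you prove this inline using \Ax{4} and Part 2 of \autoref{lm:krantz9}; your remark that the Krantz lemmas apply to $\sqsubseteq$ as well as $\preceq$ is exactly right and worth stating.
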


\begin{proposition}\label{prop:qlp1}
    Suppose $E|\theta \sim F\cap C_\theta|\theta$ for all $\theta\in\Theta_s \subseteq \Theta$. If $\Theta_s$ is finite and $E, F \not \in \nul$, then for all $\theta \in \Theta_s$: 
    $$\theta|E \sim F \cap C_\theta \cap \theta|\bigcup_{\eta \in \Theta_s}F \cap C_\eta \cap \eta.$$
\end{proposition}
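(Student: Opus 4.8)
The plan is to mimic, qualitatively, the elementary computation $Q(\theta\mid E)=Q(\{\theta\}\cap E)\big/\sum_{\eta\in\Theta_s}Q(\{\eta\}\cap E)$, where the denominator has been rewritten, via the hypothesis $E\mid\theta\sim F\cap C_\theta\mid\theta$ and disjointness, as $Q\big(\bigcup_{\eta\in\Theta_s}(\Theta\times F)\cap C_\eta\cap(\{\eta\}\times\Omega)\big)$. Write $G_\eta:=(\Theta\times F)\cap C_\eta\cap(\{\eta\}\times\Omega)$ and $G:=\bigcup_{\eta\in\Theta_s}G_\eta$; since $G_\eta\subseteq\{\eta\}\times\Omega$ the $G_\eta$ are pairwise disjoint, and $G$ is exactly the conditioning event in the claim. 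I read $\Theta_s$ here as the $\preceq$-support, i.e.\ $\{\eta\}\times\Omega\in\nul_\preceq$ precisely for $\eta\in\Theta\setminus\Theta_s$ --- this is the form in which the proposition is invoked (combine \Ax{0} with condition 1 of \autoref{thm:qLPEntailsPosteriorEquivalence}) and it is what makes the $E$-mass over $\Theta\setminus\Theta_s$ vanish. The goal is then $\theta\mid E\sim G_\theta\mid G$ for each $\theta\in\Theta_s$.

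First I would establish the ``numerator'' equivalence $(\{\theta\}\times E)\mid\Delta\sim G_\theta\mid\Delta$ for $\theta\in\Theta_s$. This is immediate from the hypothesis $(\Theta\times E)\mid(\{\theta\}\times\Omega)\sim((\Theta\times F)\cap C_\theta)\mid(\{\theta\}\times\Omega)$: apply the forward implication of \autoref{lm:qConditionalIntersection} to each of the two inequalities comprising $\sim$, noting that intersecting with $\{\theta\}\times\Omega$ turns $\Theta\times E$ into $\{\theta\}\times E$ and $(\Theta\times F)\cap C_\theta$ into $G_\theta$. Next I would establish the ``denominator'' equivalence $(\Theta\times E)\mid\Delta\sim G\mid\Delta$. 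Decompose $\Theta\times E$ as the disjoint union $\bigcup_{\eta\in\Theta}(\{\eta\}\times E)$; the part $N:=\bigcup_{\eta\in\Theta\setminus\Theta_s}(\{\eta\}\times E)$ is null, since each summand sits inside $\{\eta\}\times\Omega\in\nul$ and $\Theta$ is finite (parts 3 and 4 of \autoref{lm:krantz8}), so part 5 of \autoref{lm:krantz9} gives $(\Theta\times E)\mid\Delta\sim\big(\bigcup_{\eta\in\Theta_s}(\{\eta\}\times E)\big)\mid\Delta$. A finite induction on $|\Theta_s|$ then upgrades the right side to $G$: at each step \Ax{5} lets one replace a newly adjoined $\{\eta\}\times E$ by $G_\eta$, using the numerator equivalence and the fact that both sets being added are disjoint from the respective running unions.

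The last and crucial step is the ``division''. Set $X:=\{\theta\}\times E\subseteq Y:=\Theta\times E\subseteq\Delta$ and $X':=G_\theta\subseteq Y':=G\subseteq\Delta$; the two preceding steps give $X\mid\Delta\sim X'\mid\Delta$ and $Y\mid\Delta\sim Y'\mid\Delta$, and $Y\notin\nul$ (as $E\notin\nul$), whence $Y'\notin\nul$. Using \Ax{4} to rewrite $\theta\mid E$ as $X\mid Y$, it suffices to prove $X\mid Y\sim X'\mid Y'$. If $X\in\nul$ then $X'\in\nul$ too, and both conditionals equal $\emptyset\mid\Delta$ by part 1 of \autoref{lm:krantz9}. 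If $X\notin\nul$ (so $X'\notin\nul$), suppose toward a contradiction that $X\mid Y\not\sim X'\mid Y'$; by totality we may take $X\mid Y\prec X'\mid Y'$. Apply \Ax{6b} with $A=A'=\Delta$, $B=Y$, $B'=Y'$, $C=X$, $C'=X'$: the nesting holds, the premises $Y\mid\Delta\preceq Y'\mid\Delta$ and $X\mid Y\preceq X'\mid Y'$ hold with the second strict, and $C=X\notin\nul$, so the conclusion $X\mid\Delta\prec X'\mid\Delta$ is strict, contradicting $X\mid\Delta\sim X'\mid\Delta$. The reverse strict inequality is excluded symmetrically, swapping primed and unprimed coordinates in the application of \Ax{6b}. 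Hence $X\mid Y\sim X'\mid Y'$, i.e.\ $\theta\mid E\sim G_\theta\mid G$.

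The hard part is this final ``division'': since the qualitative calculus has no division operation, passing from equality of the joint events ($X\sim X'$, $Y\sim Y'$) to equality of the conditionals ($X\mid Y\sim X'\mid Y'$) must be routed through \Ax{6b}, and the strictness/null bookkeeping --- needing $C\notin\nul$ for a strict conclusion, and peeling off the case $X\in\nul$ separately --- is where the argument is delicate. A secondary point is that the $E$-mass outside $\Theta_s$ must genuinely be null; this is what fixes the reading of $\Theta_s$ as the support and legitimizes the appeal to \autoref{lm:krantz9} in the denominator step.
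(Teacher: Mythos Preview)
Your argument is correct and takes a genuinely different route from the paper's. The paper proves the proposition as a one-line application of \autoref{lm:qBayesLTP}, whose inductive proof in turn leans on the rather intricate \autoref{lm:qPartitionOnRight} (a lemma for ``unioning'' conditioning events of two $\sim$-equalities). You instead work unconditionally: first establish the per-$\theta$ ``numerator'' equivalence $(\{\theta\}\times E)\mid\Delta\sim G_\theta\mid\Delta$ via \autoref{lm:qConditionalIntersection}, then the ``denominator'' equivalence $(\Theta\times E)\mid\Delta\sim G\mid\Delta$ via \Ax{5} and the nullity of $\Theta\setminus\Theta_s$, and finally recover the conditional equality $X\mid Y\sim X'\mid Y'$ by a direct contradiction argument using \Ax{6b}. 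This is more elementary---it bypasses \autoref{lm:qPartitionOnRight} entirely and reduces the only induction to a straightforward iterated application of \Ax{5} on disjoint unions. The paper's approach buys modularity: \autoref{lm:qBayesLTP} is a standalone qualitative Bayes/total-probability lemma that could be reused elsewhere, whereas your division trick is tailored to the specific nesting $X\subseteq Y\subseteq\Delta$, $X'\subseteq Y'\subseteq\Delta$. Both proofs rely on the (unstated in the proposition) assumption that $\Theta_s$ is $\preceq$-almost sure; you flag this explicitly, and the paper's proof asserts it without comment (``$\Theta_s$ is a sure event''), importing it from the context in the proof of \autoref{thm:qLPEntailsPosteriorEquivalence}.
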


\begin{proposition}\label{prop:qlp2}
    Suppose that $F \ind_\theta C_{\theta}$ with respect to $\preceq$, that $C_{\theta} | \theta \sim C_{\eta} | \eta$, and that $F \cap \theta \notin \nul$ for all $\theta, \eta \in \Theta_F \subseteq \Theta$.   If $\Theta_F$ is finite, then for all $\theta \in \Theta_F$:
    $$\theta | F \sim F \cap C_{\theta} \cap \theta |  \bigcup_{\eta \in \Theta_F} F \cap C_{\eta} \cap \eta.$$
\end{proposition}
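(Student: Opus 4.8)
The plan is to strip the statement down to a single ``qualitative mixture'' lemma and then prove that lemma by induction, the real work being a two--block base case that I would handle in the style of \autoref{lm:qllcf}. Write $G_\eta := F\cap C_\eta\cap\eta$, $D:=\bigcup_{\eta\in\Theta_F}G_\eta$ and $F_{\Theta_F}:=\bigcup_{\eta\in\Theta_F}(F\cap\eta)$, reading $\Theta_F$ as the set of $\eta$ with $F\cap\eta\notin\nul$. First I would do the bookkeeping. For $\eta\in\Theta_F$, \Ax{4} gives $G_\eta|(F\cap\eta)\sim C_\eta|(F\cap\eta)$, and $C_\eta|(F\cap\eta)\sim C_\eta|\eta$ because $F\ind_\eta C_\eta$ (apply \autoref{lm:QConditionalIndependenceIsSymmetric} together with $F\cap\eta\notin\nul$); with the hypothesis $C_\eta|\eta\sim C_{\eta'}|\eta'$ this shows all the ratios $G_\eta|(F\cap\eta)$ are $\sim$ to one value $c$. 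If $c\sim\emptyset|\Delta$ then each $G_\eta\in\nul$ (\autoref{lm:krantz9}, part 3) and hence $D\in\nul$ (\autoref{lm:krantz8}, part 4), making the conclusion vacuous; so I may assume $c\succ\emptyset|\Delta$. Since $F\setminus F_{\Theta_F}$ is a finite union of $\nul$ sets, \autoref{lm:krantz9} (part 6) gives $(F\cap\theta)|F_{\Theta_F}\sim(F\cap\theta)|F$, and \Ax{4} gives $(F\cap\theta)|F\sim\theta|F$; so it remains only to prove $(F\cap\theta)|F_{\Theta_F}\sim G_\theta|D$.

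That is the $I=\Theta_F$, $B_\eta=F\cap\eta$, $A_\eta=G_\eta$ instance of the following \emph{mixture lemma}: if $\{B_\eta\}_{\eta\in I}$ is a finite family of pairwise disjoint non-$\nul$ events, $A_\eta\subseteq B_\eta$, and $A_\eta|B_\eta\sim A_{\eta'}|B_{\eta'}\succ\emptyset|\Delta$ for all $\eta,\eta'\in I$, then for every $\theta\in I$ one has both $B_\theta|\bigcup_\eta B_\eta\sim A_\theta|\bigcup_\eta A_\eta$ and $\bigcup_\eta A_\eta|\bigcup_\eta B_\eta\sim A_\theta|B_\theta$. I would prove this by induction on $|I|$, with $|I|=1$ immediate from \Ax{3}. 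For the step I would peel off an index $\eta_0$, apply the inductive hypothesis to $I'=I\setminus\{\eta_0\}$ to obtain both statements for $B':=\bigcup_{I'}B_\eta$ and $A':=\bigcup_{I'}A_\eta$ (so $A'|B'\sim c$), and then invoke the two-block case on the pair $\{B',B_{\eta_0}\}$ with numerators $\{A',A_{\eta_0}\}$; for $\theta\in I'$ the statement $B_\theta|\bigcup_I B_\eta\sim A_\theta|\bigcup_I A_\eta$ then follows by composing $B_\theta|B'\sim A_\theta|A'$ with $B'|(B'\cup B_{\eta_0})\sim A'|(A'\cup A_{\eta_0})$ via two applications of \Ax{6b} (the ``qualitative multiplication'' step: from $Z\subseteq Y\subseteq X$, $Z'\subseteq Y'\subseteq X'$, $Y|X\sim Y'|X'$ and $Z|Y\sim Z'|Y'$ conclude $Z|X\sim Z'|X'$).

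The hard part will be the two-block base case: given $A_i\subseteq B_i$ ($i=1,2$), $B_1\cap B_2=\emptyset$, $B_i\notin\nul$ and $A_1|B_1\sim A_2|B_2\sim c\succ\emptyset|\Delta$, show $(A_1\cup A_2)|(B_1\cup B_2)\sim c$ and $B_1|(B_1\cup B_2)\sim A_1|(A_1\cup A_2)$. For the first I would argue by contradiction: assuming $(A_1\cup A_2)|(B_1\cup B_2)\prec c$ (the case $\succ c$ being symmetric), I would apply \autoref{lm:Aditya} with ambient $B_1\cup B_2$, middle sets $B_1$ and $A_1\cup A_2$, and bottom set $A_1$, so that the failure of its conclusion $(A_1\cup A_2)|(B_1\cup B_2)\succeq c$ forces the failure of its hypothesis, yielding $B_1|(B_1\cup B_2)\prec A_1|(A_1\cup A_2)$; symmetrically $B_2|(B_1\cup B_2)\prec A_2|(A_1\cup A_2)$. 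Since $B_1,B_2$ are disjoint and $A_1,A_2$ are disjoint, \Ax{5} (strict form) adds these two inequalities to give $(B_1\cup B_2)|(B_1\cup B_2)\prec(A_1\cup A_2)|(A_1\cup A_2)$, an instance of $X|X\prec Y|Y$, contradicting \Ax{3}. With $(A_1\cup A_2)|(B_1\cup B_2)\sim c$ in hand, the remaining identity $B_1|(B_1\cup B_2)\sim A_1|(A_1\cup A_2)$ then drops out of two further applications of \autoref{lm:Aditya}, one in each direction, now with a true antecedent.

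I expect the only genuine subtlety to be locating the right assignment of sets in \autoref{lm:Aditya} so that the assumed strict inequality propagates into a matched pair of strict inequalities on which \Ax{5} can be used against \Ax{3}; the rest is routine manipulation with \Ax{4}, \Ax{6b} and the $\nul$-set lemmas. I would also flag the tacit reading $\Theta_F=\{\theta:F\cap\theta\notin\nul_\preceq\}$, without which the step $(F\cap\theta)|F_{\Theta_F}\sim(F\cap\theta)|F$ (and indeed the statement itself) can fail.
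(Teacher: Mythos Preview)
Your proposal is correct and close in spirit to the paper's argument, but the decomposition is genuinely different. The paper first proves a separate lemma (\autoref{lm:qLPEntailsPosteriorEquivalence2}) asserting only the ``mixture ratio'' statement $\bigcup_i C_i\cap B_i\,|\,A\cap G \sim C_k|B_k$, established by induction through the auxiliary \autoref{lm:qConditionalProbabilityConstantOnPartition}; it then applies \autoref{lm:Aditya} once at the end, outside the induction, to convert that into the desired $(F\cap\theta)|F_{\Theta_F}\sim G_\theta|D$. You instead package both the mixture ratio $\bigcup A_\eta|\bigcup B_\eta\sim c$ \emph{and} the target identity $B_\theta|\bigcup B_\eta\sim A_\theta|\bigcup A_\eta$ into a single mixture lemma, carry both through the induction, and handle the two-block step directly by the \autoref{lm:qllcf}-style contradiction (two strict inequalities summed via \Ax{5} against \Ax{3}). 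Your inductive gluing via \Ax{6b} is cleaner than the paper's route through \autoref{lm:qConditionalProbabilityConstantOnPartition}, which is a somewhat ad~hoc lemma; on the other hand, the paper's factoring keeps the Aditya step outside the induction, so each piece is slightly smaller. Both ultimately rest on the same three ingredients: the \Ax{5}--\Ax{3} contradiction, \autoref{lm:Aditya} as a ``swap middle terms'' device, and \Ax{6b} as qualitative multiplication.

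Your flag about the tacit reading $\Theta_F=\{\theta:F\cap\theta\notin\nul_\preceq\}$ is apt: the paper's own proof silently passes from conditioning on $A\cap G=F\cap\Theta_F$ (which is what \autoref{lm:qLPEntailsPosteriorEquivalence2} actually yields) to conditioning on $F$, and that step likewise needs $F\setminus F_{\Theta_F}\in\nul$. Your handling of the degenerate case $c\sim\emptyset|\Delta$ is also appropriate; neither the proposition as stated nor the paper's proof treats it, but condition~4 of \autoref{thm:qLPEntailsPosteriorEquivalence} rules it out in the only application.
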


\begin{proof}[Proof of \autoref{thm:qLPEntailsPosteriorEquivalence}]
    The first proposition shows that $\cdot|E$ is well-defined if and only if  $\cdot|F$ is, so the first condition of qualitative posterior equivalence is satisfied. Let $\Theta_s$ be the set of $\theta \notin \nul_\preceq$, and let $\Theta_F$ be the set of all $F \cap \theta \notin \nul_\preceq$ (notice that $\Theta_F \subseteq \Theta_s$). We claim that $\theta|E \sim \theta|F$, for all $\theta \in \Theta$. We go by cases.

    \paragraph{Case 1: $\theta \notin \Theta_s$.} Then, $\theta \in \nul$ by definition. This means that $\theta|E \sim \emptyset|E \sim \emptyset|F \sim \theta|F$.

    \paragraph{Case 2: $\theta \in \Theta_s \setminus \Theta_F$.} Then, $F \cap \theta \in \nul$, so $\theta|F \sim \emptyset$. We claim that $\theta|E \sim \emptyset$ as well. To see why, note that, for all $\eta \in \Theta_s$, we get that $E|\eta \sim F \cap C_\eta|\eta$, from condition 1 of the theorem and \Ax{0}. Thus, we can apply \autoref{prop:qlp1} to $\theta$ and $\Theta_s$, yielding:
    \begin{align}\label{eq:qlp1}
        \theta|E \sim F \cap C_\theta \cap \theta|\bigcup_{\eta \in \Theta_s}F \cap C_\eta \cap \eta
    \end{align}
    Now, since $F \cap \theta \in \nul$, it follows from \autoref{lm:krantz8} that $F \cap C_\theta \cap \theta \in \nul$. So, $\theta|E \sim \emptyset$, completing this case.

    \paragraph{Case 3: $\theta \in \Theta_F$.} This implies that $\theta \in \Theta_s$ as well; thus, \autoref{eq:qlp1} remains true. Now, for all $\theta_F, \eta_F \in \Theta_F$, we get that $F \cap \theta_F \notin \nul$, that $F \ind_{\theta_F} C_{\theta_F}$ with respect to $\preceq$, and that $C_{\theta_F}|\theta_F \sim C_{\eta_F}|\eta_F$. We thus apply \autoref{prop:qlp2} to $\theta$ and $\Theta_F$, yielding:
    \begin{align}\label{eq:qlp2}
        \theta|F \sim F \cap C_{\theta} \cap \theta |  \bigcup_{\eta \in \Theta_F} F \cap C_{\eta} \cap \eta
    \end{align}
    Now, for any $\theta_s \in \Theta_s \setminus \Theta_F$, we know that $F \cap C_{\theta_s} \cap \theta \in \nul$, and thus applying Part 6 of \autoref{lm:krantz9} to \autoref{eq:qlp2}, we get $\theta|F \sim F \cap C_{\theta} \cap \theta |  \bigcup_{\eta \in \Theta_s} F \cap C_{\eta} \cap \eta$. Along with \autoref{eq:qlp1}, this means that $\theta|E \sim \theta|F$.

    Now, one can repeatedly apply \Ax{5} over the constituent simple hypotheses in any $H$ to get $H|E \sim H|F$, showing that $E$ and $F$ are qualitative posterior equivalent. With \autoref{thm:qfavoring_and_posterior_equivalence}, we also get qualitative support equivalence. 
\end{proof}

Before getting to the proofs of the propositions, we first prove the following corollary to \autoref{thm:qLPEntailsPosteriorEquivalence}.
\begin{corollary}\label{cor:IdenticalLikelihoodsEntailsPosteriorEquivalence}
    If $\Theta$ is finite and $E|\theta \equiv F|\theta$ for all $\theta \in \Theta$, then $E$ and $F$ are qualitatively posterior and support equivalent.
\end{corollary}
\begin{proof}
    We simply apply \autoref{thm:qLPEntailsPosteriorEquivalence} with $C_\theta = \Delta$ for all $\theta$. To see that the conditions holds:
    \begin{enumerate}
        \item Note that, for any $\theta$, $E|\theta \equiv F|\theta$ implies that $E|\theta \equiv F\cap \Delta|\theta$, since $F \cap \Delta = F$.
        \item Similarly, $F \ind_\theta \Delta$, since $F|\Delta \cap \theta \equiv F|\theta$, since $\Delta \cap \theta = \theta$.
        \item We have to show $\Delta|\theta \equiv \Delta|\eta$ for all $\theta, \eta \in \Theta$. Note that by \Ax{4}, $\Delta|\theta \sim \Delta \cap \theta|\theta = \theta|\theta$. By \Ax{3}, $\theta|\theta \sim \eta|\eta$. Applying \Ax{4} again shows that $\eta|\eta \sim \Delta|\eta$, so by transitivity, $\Delta|\theta \sim \Delta|\eta$. By \Ax{0}, we get that $\Delta|\theta \equiv \Delta|\eta$, as desired.
        \item $\Delta|\theta \sqsupset \emptyset|\theta$ by \Ax{2}.
    \end{enumerate}
    From these conditions and the fact that $\Theta$ is finite, we get that $E$ and $F$ are qualitatively posterior and support equivalent.
\end{proof}

\subsection{Proof of \autoref{prop:qlp0}}
We prove two short lemmata first.    The first is the qualitative analog of the fact that, if $P(B \cap C)>0$, then $P(A \cap B|C) = 0$ if and only if $P(A|B \cap C) = 0$.

\setcounter{lemma}{27}
\begin{lemma}\label{lm:NullIntersection}
  Suppose $B\cap C \not \in \nul$.  Then $A \cap B|C \sim \emptyset|D$ if and only if $A|B \cap C \sim \emptyset|D$ for all $D \not \in \nul$.
\end{lemma}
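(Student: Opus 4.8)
The plan is to show that both sides of the biconditional reduce to the same condition, namely $A \cap B \cap C \in \nul$, and then conclude by chaining equivalences. First I would record two easy preliminaries: since $B \cap C \subseteq C$ and $B \cap C \notin \nul$, Part 3 of \autoref{lm:krantz8} gives $C \notin \nul$, so $\cdot\,|\,C$ is well-defined; and by \autoref{lm:krantz6-2} we have $\emptyset|D \sim \emptyset|\Delta$ for every $D \notin \nul$, so the particular $D$ is immaterial and it suffices to prove the claim with $\emptyset|\Delta$ in place of $\emptyset|D$.

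The key sub-fact I would isolate is: \emph{for any $X \subseteq Y$ with $Y \notin \nul$, $X|Y \sim \emptyset|\Delta$ iff $X \in \nul$}. The forward direction follows from Part 3 of \autoref{lm:krantz9} applied with $A = X$, $B = Y$ (using $X|Y \preceq \emptyset|\Delta$), and the reverse direction is Part 2 of \autoref{lm:krantz9} (taking the conditioning event on the right to be $\Delta$).

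Then I would apply this sub-fact twice, using \Ax{4} to insert the conditioning event into the left-hand slot. On the one hand, \Ax{4} gives $A \cap B \cap C \,|\, C \sim A \cap B \,|\, C$, so $A \cap B \,|\, C \sim \emptyset|\Delta$ iff $A \cap B \cap C \,|\, C \sim \emptyset|\Delta$ iff (by the sub-fact, since $A \cap B \cap C \subseteq C \notin \nul$) $A \cap B \cap C \in \nul$. On the other hand, \Ax{4} gives $A \cap B \cap C \,|\, B \cap C \sim A \,|\, B \cap C$, so $A \,|\, B \cap C \sim \emptyset|\Delta$ iff $A \cap B \cap C \,|\, B \cap C \sim \emptyset|\Delta$ iff (again by the sub-fact, since $A \cap B \cap C \subseteq B \cap C \notin \nul$) $A \cap B \cap C \in \nul$. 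Hence both $A \cap B \,|\, C \sim \emptyset|D$ and $A \,|\, B \cap C \sim \emptyset|D$ are equivalent to $A \cap B \cap C \in \nul$, and therefore to one another.

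I do not expect a genuine obstacle here: the content is entirely in \autoref{lm:krantz9} and \Ax{4}, and the only care needed is the routine bookkeeping that every conditioning event written ($C$ and $B \cap C$) is non-null so the expressions are well-defined, together with tracking the inclusions $A \cap B \cap C \subseteq C$ and $A \cap B \cap C \subseteq B \cap C$ required to invoke \autoref{lm:krantz9}.
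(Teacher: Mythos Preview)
Your proposal is correct and follows essentially the same route as the paper: both arguments pivot through the condition $A\cap B\cap C\in\nul$ (equivalently $A\cap B\cap C|\Delta\sim\emptyset|\Delta$). The only cosmetic difference is that the paper invokes \autoref{lm:qConditionalIntersection} to pass between $X|Y\sim\emptyset|Y$ and $X\cap Y|\Delta\sim\emptyset|\Delta$, whereas you use \Ax{4} together with Parts 2 and 3 of \autoref{lm:krantz9} to encode the same step as your sub-fact; the underlying content is identical.
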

\begin{proof} Since $B \cap C \not \in \nul$, we know $C \not \in \nul$ by Part 3 of \autoref{lm:krantz8}.  Thus:
\begin{alignat*}{3}
&A \cap B|C \sim \emptyset|D && \Leftrightarrow A \cap B|C \sim \emptyset|C & \quad \mbox{by Part 2 of \autoref{lm:krantz9} since } C \not \in \nul \\
&~ && \Leftrightarrow A \cap B \cap C|\Delta \sim \emptyset|\Delta & \quad  \mbox{by \autoref{lm:qConditionalIntersection}} \\
&~  && \Leftrightarrow A|B \cap C \sim \emptyset|B \cap C & \quad \mbox{by \autoref{lm:qConditionalIntersection} since } B \cap C \not \in \nul \\
&~  && \Leftrightarrow A|B \cap C \sim \emptyset|D & \quad \mbox{by Part 2 of \autoref{lm:krantz9} since } D \not \in \nul 
\end{alignat*}  
\end{proof}

\begin{lemma}\label{lm:LP-Null}
    Suppose $A \ind_C B$ and $\emptyset|C \prec B|C$.  Then $A \cap B|C \sim \emptyset|C$ if and only if $A|C \sim \emptyset|C$.
\end{lemma}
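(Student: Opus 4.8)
The plan is to reduce the biconditional to a short chain of equivalences, using the hypothesis $\emptyset|C \prec B|C$ \emph{only} to force $B \cap C \notin \nul$, so that the substantive disjunct of conditional independence applies, and then invoking \autoref{lm:NullIntersection}.

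First I would record two consequences of $\emptyset|C \prec B|C$. Since this comparison holds (and in particular $\cdot | C$ must be well-defined), we have $C \notin \nul$. Moreover $B \cap C \notin \nul$: otherwise Part 2 of \autoref{lm:krantz9} would give $B \cap C \mid C \sim \emptyset \mid C$ (using $C \notin \nul$), while \Ax{4} gives $B \mid C \sim B \cap C \mid C$, so transitivity (\Ax{1}) would yield $B \mid C \sim \emptyset \mid C$, contradicting the hypothesis. Since $B \cap C \notin \nul$, the definition of conditional independence (\autoref{defn:qConditionalIndependence}) forces the first disjunct of $A \ind_C B$, namely $A \mid B \cap C \sim A \mid C$.

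Next I would apply \autoref{lm:NullIntersection} with $D := C$, which is legitimate because $B \cap C \notin \nul$ and $C \notin \nul$. It yields
\[
A \cap B \mid C \sim \emptyset \mid C \quad \Longleftrightarrow \quad A \mid B \cap C \sim \emptyset \mid C .
\]
Finally, combining the right-hand side with $A \mid B \cap C \sim A \mid C$ and transitivity of $\sim$ (\Ax{1}) gives $A \mid B \cap C \sim \emptyset \mid C$ if and only if $A \mid C \sim \emptyset \mid C$; chaining the two equivalences completes the proof.

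The argument has essentially no obstacle: the only point requiring care is the null-event bookkeeping in the first step --- checking $B \cap C \notin \nul$ so that conditional independence does not collapse into its trivial second case, which is exactly where the strictness assumption $\emptyset \mid C \prec B \mid C$ is used. Everything else is a direct appeal to \autoref{lm:NullIntersection}, \Ax{4}, and transitivity of $\sim$.
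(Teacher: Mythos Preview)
Your proof is correct and follows essentially the same route as the paper's: establish $B \cap C \notin \nul$ from the strictness hypothesis, then chain \autoref{lm:NullIntersection} with the equivalence $A\mid B\cap C \sim A\mid C$ supplied by conditional independence. The paper additionally disposes of the right-to-left direction separately via Part~3 of \autoref{lm:krantz8}, but your observation that the biconditional chain already handles both directions is cleaner.
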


\begin{proof} 

The right to left direction follows immediately from Part 3 of \autoref{lm:krantz8} since $A \cap B \subseteq A$.

In the left to right direction, first note that $B \cap C|C \sim B|C$ by \Ax{4} and that  $B|C \succ \emptyset|C$ by assumption.  Thus,  $B \cap C|C \succ \emptyset|C$, from which it follows that $B \cap C \not \in \nul$. Thus:
\begin{alignat*}{3}
&A \cap B|C \sim \emptyset|C && \Leftrightarrow A |B \cap C \sim \emptyset|C& \quad \mbox{by \autoref{lm:NullIntersection} since }  B \cap C \not \in \nul  \\
&~ && \Leftrightarrow A|C \sim \emptyset|C & \quad \mbox{since }  A \ind_C B
\end{alignat*}  
\end{proof}


\begin{proof}[Proof of \autoref{prop:qlp0}:]
Let's first show that if  $E$ is a member of $\nul_{\preceq}$, then so is $F$.  The proof of the converse is nearly identical because the lemmata we use are all biconditionals.

Suppose $E \in \nul_{\preceq}$.  Thus, $E \cap \{\theta\}  \in \nul$ for all $\theta$ by Part 3 of \autoref{lm:krantz8}, as $E \cap \{\theta\} \subseteq E$.  So by \Ax{2}, it follows that $E \cap \{\theta\} | \Delta \sim \emptyset |\Delta$.  Thus, for all $\theta$ such that $\{\theta\} \not \in \nul_{\preceq}$, we obtain that $E|\theta \sim \emptyset|\theta$ by \autoref{lm:qConditionalIntersection}.

Fix any arbitrary $\upsilon \in \Theta$ such that $\{\upsilon\} \not \in \nul_{\preceq}$. Below we'll show $F|\upsilon \sim \emptyset|\upsilon$.  This will suffice to show $F \in \nul_{\preceq}$ by the following reasoning.  Since $F|\upsilon \sim \emptyset|\upsilon$, \autoref{lm:qConditionalIntersection}  entails that $F \cap \{\upsilon\}|\Delta \sim \emptyset|\Delta$.  Since $\upsilon$  was arbitrary, it follows that  $F \cap \{\theta\}|\Delta \sim \emptyset|\Delta$ for all $\theta$ such that $\{\theta\} \not \in \nul_{\preceq}$.  Since $\Theta$ is finite by assumption, it follows from \Ax{5} that $F \cap H|\Delta \sim \emptyset|\Delta$ where $H = \{\theta \in \Theta:  \{\theta\} \not \in \nul_{\preceq}\}$.  Because $\Theta$ is finite, it's clear that $\neg H  := \{\theta \in \Theta: \{\theta\} \in \nul_{\preceq } \}$ is itself a member of $\nul_{\preceq}$, and hence, $H \in {\cal S}$ is an almost-sure event.  Thus, by Part 5 of \autoref{lm:krantz9}, we obtain that $F \cap H|\Delta \sim F|\Delta$, and so $F|\Delta \sim \emptyset|\Delta$.  Thus, by \Ax{2},  $F \in \nul_{\preceq}$ as desired.

Now we show that $F|\upsilon \sim \emptyset|\upsilon$ for all $\upsilon \in \Theta$ such that $\{\upsilon\} \not \in \nul_{\preceq}$, as we claimed.  Above, we showed that $E|\upsilon \sim \emptyset|\upsilon$.  From \Ax{0}, it follows that  $E|\upsilon \equiv \emptyset|\upsilon$.  By the first assumption of the proposition, we know $E|\upsilon \equiv F \cap C_{\upsilon}|\upsilon$.  Thus, $F \cap C_{\upsilon}|\upsilon \equiv \emptyset|\upsilon$.  From our assumptions that $F \ind_{\upsilon} C_{\upsilon}$ and the fact $C_{\upsilon}|\upsilon \sqsupset \emptyset|\upsilon$, \autoref{lm:LP-Null} allows us to infer that $F|\upsilon \equiv \emptyset|\upsilon$.  Since $\{\upsilon\} \not \in \nul_{\preceq}$, we can then apply  \Ax{0} to conclude $F|\upsilon \sim \emptyset|\upsilon$, as desired.

The proof of the converse simply reverses the direction of all the inferences above.  That is, if we assume $F \in \nul_{\preceq}$, we can argue just as above that $F|\theta \sim \emptyset|\theta$ for all $\theta$ such that $\{\theta\} \not \in \nul$.  From there, we use \autoref{lm:LP-Null} to argue $F \cap C_{\theta}|\theta \sim \emptyset|\theta$ for all non-null $\theta$.  By the assumptions of the proposition, that entails $E|\theta \sim \emptyset|\theta$ for all non-null $\theta$.   By \autoref{lm:qConditionalIntersection} and additivity, we can argue that $E|\Delta \sim \emptyset|\Delta$, and hence, $E \in \nul_{\preceq}$ as desired.
\end{proof}

\subsection{Proof of \autoref{prop:qlp1}}
The proof of \autoref{prop:qlp1} requires a few lemmata.
\begin{lemma}\label{lm:qBayesLTPHelper}
    Suppose $B_1, \ldots, B_n$ partition $G$. If $A_i|B_i \sim C_i|B_i$ for all $i \le n$, then
    $$\bigcup_{i \le n}A_i \cap B_i | G \sim \bigcup_{i \le n}C_i \cap B_i | G \text{ for all } i \le n$$
\end{lemma}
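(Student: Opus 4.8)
The plan is to read this as the qualitative law of total probability and reduce it to finite additivity (\Ax{5}) together with the ``multiplication'' axiom (\Ax{6b}), mirroring the quantitative identity $P\big(\bigcup_i A_i \cap B_i \mid G\big) = \sum_i P(A_i \mid B_i)\,P(B_i \mid G)$. Throughout I assume $G \notin \nul$, since otherwise the conditioning expressions on both sides are undefined; note that under this assumption \autoref{lm:krantz8} (part 4, iterated) forces at least one $B_i \notin \nul$. Since the $B_i$ are pairwise disjoint, the sets $A_i \cap B_i$ are pairwise disjoint (each lies inside the disjoint part $B_i$), and likewise the sets $C_i \cap B_i$ are pairwise disjoint.

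The key reduction is: it suffices to prove, for each fixed $i \le n$, that $A_i \cap B_i \mid G \sim C_i \cap B_i \mid G$. Granting this, an induction on $n$ using \Ax{5} --- applied in both directions, which is legitimate because its disjointness hypothesis $A \cap B = A' \cap B' = \emptyset$ is symmetric --- yields $\bigcup_{i \le n} A_i \cap B_i \mid G \sim \bigcup_{i \le n} C_i \cap B_i \mid G$. At the $k$-th step one needs $\big(\bigcup_{j \le k} A_j \cap B_j\big) \cap (A_{k+1} \cap B_{k+1}) = \emptyset$, which holds because the left conjunct lies in $\bigcup_{j \le k} B_j$ and the right in $B_{k+1}$, and these are disjoint; the analogous fact for the $C_j$'s is identical.

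For the fixed-$i$ claim I split into two cases. If $B_i \in \nul$, then $A_i \cap B_i \subseteq B_i$ and $C_i \cap B_i \subseteq B_i$ are null by \autoref{lm:krantz8} (part 3), so both $A_i \cap B_i \mid G$ and $C_i \cap B_i \mid G$ are $\sim \emptyset \mid \Delta$ by \autoref{lm:krantz9} (part 1), hence $\sim$ one another by transitivity (here the hypothesis $A_i \mid B_i \sim C_i \mid B_i$ is ill-formed, so one reads the lemma as quantifying only over the non-null parts). If $B_i \notin \nul$, then \Ax{4} gives $A_i \cap B_i \mid B_i \sim A_i \mid B_i$ and $C_i \cap B_i \mid B_i \sim C_i \mid B_i$, so the hypothesis yields $A_i \cap B_i \mid B_i \sim C_i \cap B_i \mid B_i$. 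I then apply \Ax{6b} to the nested triples $A_i \cap B_i \subseteq B_i \subseteq G$ and $C_i \cap B_i \subseteq B_i \subseteq G$: the ``outer'' premise $B_i \mid G \sim B_i \mid G$ is reflexivity (\Ax{1}), the ``inner'' premise is what I just established, and \Ax{6b} in both directions gives $A_i \cap B_i \mid G \sim C_i \cap B_i \mid G$, completing the case.

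The hard part will not be conceptual --- the quantitative identity makes the whole strategy transparent --- but rather the bookkeeping: ensuring that every conditioning event ($B_i$, $G$, and the partial unions $\bigcup_{j \le k} B_j$) is non-null so the expressions are well-defined, and verifying the nesting and disjointness side-conditions of \Ax{6b} and \Ax{5} at each step. The null-$B_i$ subcase and the inductive disjointness checks are the only points requiring any care.
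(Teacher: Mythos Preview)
Your proof is correct and follows essentially the same strategy as the paper: establish the termwise equivalence $A_i \cap B_i \mid G \sim C_i \cap B_i \mid G$ for each $i$ (via \Ax{4} and the multiplication axiom), then union using \Ax{5}. The only cosmetic difference is that the paper routes the first step through $\Delta$ using \autoref{lm:qConditionalIntersection} (from $A_i \mid B_i \sim C_i \mid B_i$ to $A_i \cap B_i \mid \Delta \sim C_i \cap B_i \mid \Delta$, then back to $\mid G$ after unioning), whereas you apply \Ax{6b} directly with the nesting $A_i \cap B_i \subseteq B_i \subseteq G$; since \autoref{lm:qConditionalIntersection} is itself proved by exactly that application of \Ax{6b}, the two arguments are the same under the hood. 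Your null-$B_i$ digression is unnecessary: the hypothesis $A_i \mid B_i \sim C_i \mid B_i$ already forces every $B_i \notin \nul$, which is how the paper reads it.
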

\begin{proof}
    We first use \autoref{lm:qConditionalIntersection} on the premise, which yields $A_i \cap B_i | \Delta \sim C_i \cap B_i | \Delta$ for all $i \le n$. Applying \Ax{5} $n$ times yields $\bigcup_{i\le n} A_i \cap B_i|\Delta \sim \bigcup_{i\le n} C_i \cap B_i|\Delta$. Because $B_1, \ldots, B_n$ partition $G$, this is equivalent to $G \cap \bigcup_{i\le n} A_i \cap B_i|\Delta \sim G \cap \bigcup_{i\le n} C_i \cap B_i|\Delta$. Finally, we apply \autoref{lm:qConditionalIntersection} again to get the desired result (note that $G \notin \nul$ since $B_i \notin \nul$ by the fact that $A_i|B_i \sim C_i|B_i$).
\end{proof}

The next lemma outlines conditions under which we can ``add'' two qualitative equations by unioning the conditioning events. We will use this in the proof of \autoref{lm:qBayesLTPGeneral}.

\begin{lemma}\label{lm:qPartitionOnRight}
Suppose $Y \cap Y' = Z \cap Z' = \emptyset$ and that $X|Y \sim W|Z$ and $X|Y' \sim W|Z' \sim \emptyset|\Delta $. Then $X|Y \cup Y' \preceq W|Z \cup Z'$ if and only if $Y|Y \cup Y' \preceq Z | Z \cup Z'$.
\end{lemma}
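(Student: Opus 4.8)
The plan is to trade the ``right-hand'' events $Y\cup Y'$ and $Z\cup Z'$ for the conjunctive events $X\cap Y$ and $W\cap Z$ and then read the biconditional off \Ax{6b}, which is the qualitative stand-in for multiplying $P(X\mid Y\cup Y')=P(X\mid Y)\cdot P(Y\mid Y\cup Y')$ and ``cancelling'' the common factor $P(X\mid Y)=P(W\mid Z)$; the cross terms $X\cap Y'$ and $W\cap Z'$ are killed by the hypotheses $X\mid Y'\sim W\mid Z'\sim\emptyset\mid\Delta$.

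First I would do the bookkeeping. Since $X\mid Y$ and $W\mid Z$ are written, $Y,Z\notin\nul$, hence $Y\cup Y',Z\cup Z'\notin\nul$ by Part~3 of \autoref{lm:krantz8}. Applying \autoref{lm:qZeroConditionalProbEqualsZeroConjunctiveProb} to $X\mid Y'\sim\emptyset\mid\Delta$ with $C=\Delta$ gives $X\cap Y'\in\nul$, and likewise $W\cap Z'\in\nul$. Since $X\cap(Y\cup Y')=(X\cap Y)\cup(X\cap Y')$, Part~5 of \autoref{lm:krantz9} and \Ax{4} give
\[
X\mid Y\cup Y'\ \sim\ X\cap Y\mid Y\cup Y',\qquad W\mid Z\cup Z'\ \sim\ W\cap Z\mid Z\cup Z',
\]
so it suffices to prove $X\cap Y\mid Y\cup Y'\preceq W\cap Z\mid Z\cup Z'$ iff $Y\mid Y\cup Y'\preceq Z\mid Z\cup Z'$. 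I would also record the nestings $X\cap Y\subseteq Y\subseteq Y\cup Y'$ and $W\cap Z\subseteq Z\subseteq Z\cup Z'$ and the fact that \Ax{4} together with $X\mid Y\sim W\mid Z$ yields $X\cap Y\mid Y\sim W\cap Z\mid Z$.

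Now both directions come from \Ax{6b}. For ``if'', assume $Y\mid Y\cup Y'\preceq Z\mid Z\cup Z'$ and apply \Ax{6b} to the triples $(Y\cup Y',\,Y,\,X\cap Y)$ and $(Z\cup Z',\,Z,\,W\cap Z)$: the assumption is the ``$B\mid A\preceq B'\mid A'$'' premise and $X\cap Y\mid Y\sim W\cap Z\mid Z$ is the ``$C\mid B\preceq C'\mid B'$'' premise, so the conclusion is $X\cap Y\mid Y\cup Y'\preceq W\cap Z\mid Z\cup Z'$, which with the displayed equivalences yields $X\mid Y\cup Y'\preceq W\mid Z\cup Z'$. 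For ``only if'' I would argue the contrapositive: if $Y\mid Y\cup Y'\not\preceq Z\mid Z\cup Z'$ then $Z\mid Z\cup Z'\prec Y\mid Y\cup Y'$ by totality (\Ax{1}, since $Z\cup Z'\notin\nul$), and \Ax{6b} with the triples in the opposite roles, using this strict premise, gives $W\cap Z\mid Z\cup Z'\prec X\cap Y\mid Y\cup Y'$, which by the two displayed equivalences rewrites as $X\mid Y\cup Y'\not\preceq W\mid Z\cup Z'$.

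The step I expect to be the real obstacle is the null-event bookkeeping in the ``only if'' direction: \Ax{6b} delivers a \emph{strict} conclusion only when its ``$C$'' event $W\cap Z$ is non-null, and $W\cap Z\in\nul$ precisely when $X\cap Y\in\nul$ (by \autoref{lm:qZeroConditionalProbEqualsZeroConjunctiveProb} used in both directions, since $X\mid Y\sim W\mid Z$). So I would split off the degenerate case $X\cap Y\in\nul$: there $W\cap Z\in\nul$ as well, hence, using $X\cap Y',W\cap Z'\in\nul$ and Part~4 of \autoref{lm:krantz8}, $X\cap(Y\cup Y'),W\cap(Z\cup Z')\in\nul$, so that $X\mid Y\cup Y'\sim\emptyset\mid\Delta\sim W\mid Z\cup Z'$ and the ``$\preceq$'' side of the biconditional holds automatically; this case then needs a separate, careful check (or the mild extra input that $X\cap Y\notin\nul$, which holds in the intended applications). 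Everything else is a routine appeal to Axioms~4--6 and the already-established Lemmas~\ref{lm:krantz8}, \ref{lm:krantz9}, \ref{lm:qConditionalIntersection}, and \ref{lm:qZeroConditionalProbEqualsZeroConjunctiveProb}.
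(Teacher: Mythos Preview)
Your approach is essentially the paper's: both directions hinge on \Ax{6b} applied to the nested triples $(Y\cup Y',\,Y,\,X\cap Y)$ and $(Z\cup Z',\,Z,\,W\cap Z)$, using $X\cap Y\mid Y\sim W\cap Z\mid Z$ as the common ``factor.'' The one genuine variation is that you kill the cross terms $X\cap Y'$ and $W\cap Z'$ \emph{upfront} via \autoref{lm:qZeroConditionalProbEqualsZeroConjunctiveProb} and Part~5 of \autoref{lm:krantz9}, reducing the whole claim to the comparison $X\cap Y\mid Y\cup Y'\preceq W\cap Z\mid Z\cup Z'$; the paper instead carries both pieces separately and reassembles them with \Ax{5} at the end (proving the analogues of your displayed equivalences as two separate inequalities and then unioning). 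Your packaging is a little cleaner but the substance is the same.

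You are also right to flag the null-event issue in the ``only if'' direction: the strict conclusion of \Ax{6b} requires the inner event (here $W\cap Z$, equivalently $X\cap Y$) to be non-null, and in the degenerate case $X\cap Y\in\nul$ the left side of the biconditional collapses to $\emptyset\mid\Delta\sim\emptyset\mid\Delta$ while the right side need not. The paper's own proof invokes that strict conclusion (``So by \Ax{6b}, it follows that $C\mid A\succ C'\mid A'$'') without verifying $C\notin\nul$, so you have not overlooked anything the paper supplies; your explicit case split is, if anything, more careful than the original.
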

\begin{proof} Note that since $X|Y \sim W|Z$ and $X|Y' \sim W|Z'$, we know $Y,Y', Z, Z' \not \in \nul$ and can be conditioned on freely.  Thus, by Part 3 of \autoref{lm:krantz8}, it follows that $Y \cup Y'$ and $Z \cup Z'$ are non-null, and so we can condition on them freely as well.  Now we proceed.

In the left to right direction, suppose  $X|Y \cup Y' \preceq W|Z \cup Z'$.  Suppose for the sake of contradiction that $Y|Y \cup Y' \not \preceq Z|Z \cup Z'$.  By \Ax{1}, it follows that $Y|Y \cup Y' \succ Z|Z \cup Z'$.  
    
    Define:
        \begin{alignat*}{3}
        &A = Y \cup Y', && \quad \quad \quad && A' = Z \cup Z' \\ 
        &B = Y && \quad \quad \quad && B' = Z \\
        &C = X \cap Y, && \quad \quad \quad  && C' = W \cap Z
    \end{alignat*}
    By \Ax{4}, $C|B=X \cap Y | Y \sim X|Y$ and $C'|B' =  W \cap Z|Z \sim W|Z$, and by assumption, $X|Y \sim W|Z$.  Thus, $C|B \sim C'|B'$.  Further, by assumption of the reductio, $B|A = Y|Y \cup Y' \succ W|Z \cup Z' = B'|A'$.  So by \Ax{6b}, it follows that $C|A \succ C'|A'$, i.e., that $X \cap Y |Y \cup Y' \succ W \cap Z|Z \cup Z'$.  
    Further, because $Z \cap Z' = \emptyset$ and $W|Z' \sim \emptyset|\Delta$, it follows from \autoref{lm:qConditionalNull} that $W \cap Z' | Z \cup Z' \sim \emptyset | \Delta$.  Hence, $X \cap Y' | Y \cup Y' \succeq W \cap Z' |Z \cup Z'$ by \autoref{lm:krantz6-1}.   So we've shown that
    \begin{align*}
        X \cap Y |Y \cup Y' &\succ W \cap Z | Z \cup Z'\mbox{, and} \\
        X \cap Y' | Y \cup Y' &\succeq W \cap Z'|Z \cup Z'.
    \end{align*}
    Because $Y \cap Y' = Z \cap Z' =  \emptyset$, applying \Ax{5} to these equations yields $X \cap (Y \cup Y')|Y \cup Y' \succ W \cap (Z \cup Z')|Z \cup Z'$.  Applying \Ax{4} to both sides of the equation yields $X|Y \cup Y' \succ W|Z \cup Z'$, which contradicts our assumption.

    In the right to left direction, suppose $Y|Y \cup Y' \preceq Z | Z \cup Z'$. We must show  $X|Y \cup Y' \preceq W|Z \cup Z'$. 
    We first show that 
    
    \begin{equation}\label{eqn:qPartitionOnRight1}
        X \cap Y |Y \cup Y' \preceq W \cap Z|Z \cup Z'
    \end{equation}
    That argument will use the assumption that $X|Y \sim W|Z$.  We then prove 
    \begin{equation}\label{eqn:qPartitionOnRight2}
       X \cap Y' |Y \cup Y' \sim W \cap Z'|Z \cup Z'
    \end{equation}
    That argument will use the assumption that $X|Y' \sim W|Z' \sim \emptyset|\Delta$. Because $Y \cap Y' = Z \cap  Z' = \emptyset$ (and hence $(X \cap Y) \cap (X \cap Y') = (W \cap Z) \cap (W \cap Z') = \emptyset$ ),  applying \Ax{5} to \autoref{eqn:qPartitionOnRight1} and \autoref{eqn:qPartitionOnRight2} yields 
    $$(X \cap Y) \cup (X \cap Y')|Y \cup Y' \preceq (W \cap Z) \cup (W \cap Z')|Z \cup Z'.$$ 
    Applying \Ax{4} to both sides of that equation yields the desired conclusion that $X|Y \cup Y' \preceq W|Z \cup Z'$.

    To prove \autoref{eqn:qPartitionOnRight1}, define:
    \begin{alignat*}{3}
            &A = Y \cup Y', && \quad \quad \quad && A' = Z \cup Z' \\ 
            &B = Y && \quad \quad \quad && B' = Z \\
            &C = X \cap Y, && \quad \quad \quad  && C' = W \cap Z
    \end{alignat*}
    By \Ax{4}, $C|B = X \cap Y|Y \sim X|Y$ and $C'|B' = W \cap Z|Z \sim W|Z$.  Since $X|Y \sim W|Z$ by assumption, we have $C|B \sim C'|B'$.  And our assumption that $Y|Y \cup Y' \preceq Z | Z \cup Z'$ means that $B|A \preceq B'|A'$.  From \Ax{6b}, it follows that $C|A \preceq C'|A'$, i.e., that $X \cap Y|Y \cup Y' \preceq W \cap Z | Z \cup Z'$. 
    
    To prove \autoref{eqn:qPartitionOnRight2}, notice that, since $X|Y' \sim \emptyset|\Delta$ by assumption,  \autoref{lm:qConditionalNull} immediately entails  $X \cap Y' |Y \cup Y' \sim \emptyset|\Delta$.  By the same reasoning, because $W|Z' \sim \emptyset|\Delta$ by assumption, it follows that $W\cap Z'|Z \cup Z' \sim \emptyset|\Delta$.  By transitivity (specifically \Ax{1}), $X \cap Y' |Y \cup Y' \sim W\cap Z'|Z \cup Z'$ as desired.
\end{proof}


\subsection{Proof of \autoref{prop:qlp2}}

\begin{proof}[Proof of \autoref{prop:qlp2}]
    Let $\Theta_F = \{\theta_1, \ldots, \theta_n\}$ and let $B_i = \{\theta_i\}$. Thus, $G = \Theta_F$. Let $A = F$ and $C_i = C_{\theta_i}$. The conditions of \autoref{lm:qLPEntailsPosteriorEquivalence2} are satisfied by the assumptions of the proposition, and thus $\bigcup_{\eta \in \Theta_F}  C_{\eta} \cap \eta|F \sim C_{\theta}|\theta $ for all $\theta \in \Theta_F$. Since $F \ind_\theta C_{\theta}$, it follows from
    \autoref{lm:QConditionalIndependenceIsSymmetric} that $C_{\theta} \ind_\theta F$ and so $C_{\theta}|\theta \sim C_{\theta}|F \cap \theta$.  Thus, we've shown: 
    
    \begin{equation}\label{eq:qLPLm1Prem}
        \bigcup_{\eta \in \Theta_F}  C_{\eta} \cap \eta|F \sim C_{\theta}|F \cap \theta    
    \end{equation}
    Now, we apply \autoref{lm:Aditya} with:
    \begin{alignat*}{3}
        & A = F     \\
        & B = \bigcup_{\eta \in \Theta_F} F \cap C_{\eta} \cap \eta               & \quad \quad   & B' = F \cap \theta  \\
        & C = F \cap C_{\theta} \cap \theta
    \end{alignat*}
    \autoref{eq:qLPLm1Prem} says $B|A \sim C|B'$. So by  \autoref{lm:Aditya}, we get $B'|A \sim C|B$, which is the desired conclusion.
\end{proof}

\section{Conditional Independence and the Semi-Graphoid Axioms}
\label{sec:GraphoidAxioms}


Recall, we say $A$ is qualitatively conditionally independent of $B$ given $C \not \in \nul$ when $A|B\cap C \sim A|C$ or $B \cap C \in \nul$. We write this as $A \ind_C B$.    In this section, we show the relation $\ind$ satisfies the semi-graphoid axioms \citep{pearl_graph-based_1987, dawid_conditional_1979}, which suggests that $\ind$ indeed possesses the same properties as the standard notion of conditional independence from the quantitative setting.

A \textbf{random variable} for an experiment $\mathbb{E}$ is a function $X:\Omega_{\mathbb{E}} \rightarrow {\cal R}$.\footnote{In standard probability theory, a random variable is required to be \emph{measurable}.  Because we have assumed $\Delta$ is finite and that the algebra of events is just the standard power set, we ignore measure-theoretic complications.} In most applications, ${\cal R} = \mathbb{R}^n$ for some $n$.  As is standard, for all $x \in {\cal R}$, we define $\{X = x\} := \{\omega \in \Omega_{\mathbb{E}}: X(\omega) = x \}$ to be the set of experimental outcomes $\omega$ that take the value $x$ under the random variable $X$.  In probability theory, one typically drops the set brackets and writes $P(X=x)$ instead of $P(\{X=x\})$.  We do so the same and write, for example, $X=x|\theta$ instead of $\{X=x\}|\theta$.  

Given random variables $X, Y, Z$, we say that $X \ind_Z Y$  if   $X = x \ind_{Z = z} Y = y$ for all $x \in \ran(X), y \in \ran(Y)$, and $z \in \ran(Z)$, where $\ran(X) = \{X(\omega): \omega \in \Omega \}$ is the range of $X$. 

The (semi-)graphoid aximos for qualitative conditional probability are:
\begin{enumerate}
    \item Symmetry: $X \ind_Z Y \iff Y \ind_Z X$.
    \item Decomposition: $X \ind_Z Y \cap W \implies X \ind_Z Y$.
    \item Weak union: $X \ind_Z Y \cap W \implies X \ind_{Z \cap W} Y$.
    \item Contraction: $X \ind_Z Y \mbox{ and } X \ind_{Z \cap Y} W \implies X \ind_Z Y \cap W$.
\end{enumerate}

We also list some basic properties of conditional independence, which will help us below. 
\begin{lemma}\label{lm:CIUnion}
    If $E_i \ind_G F$ for $i = 1, \dots, n$, where the $E_i$'s are pairwise disjoint, then $\bigcup_i E_i \ind_G F$.
\end{lemma}
\begin{proof}
    If $F \cap G \in \nul$, then we're done. So assume otherwise. From the premise we get that $E_i|F \cap G \sim E_i|G$ for all $i$. Now, we can repeatedly apply \Ax{5}, since the $E_i$ are pairwise disjoint. This results in $\bigcup_i E_i|F \cap G \sim \bigcup_i E_i|G$, as desired.
\end{proof}

We first prove our definition of conditional independence satisfies the Symmetry axiom.

\begin{lemma}\label{lm:GraphoidSymmetry}
$X \ind_Z Y \iff Y \ind_Z X$ for all random variables $X,Y,$ and $Z$.
\end{lemma}
\begin{proof}
    Follows from \autoref{lm:QConditionalIndependenceIsSymmetric}, as symmetry of random variables follows from symmetry of events.
\end{proof}

Next, we prove $\ind$ satisfies the Decomposition axiom.

\begin{lemma}
    For random variables $X$, $Y$, $W$, and $Z$, suppose that $X \ind_Z Y, W$. Then, we get that $X \ind_Z Y$. 
\end{lemma}
\begin{proof}
    We go by induction on $n = |\ran(W)|$, letting $X, Y, Z$ be arbitrary. For notation, index the support of $W$ and let $W_i$ represent the event $W = w_i$. For our base cases, notice that if $n = 1$, the result is trivial, since $W_1 = \Delta$. 

    So, for the inductive hypothesis assume that the result holds for $n \ge 1$. Assume that $X \ind_Z Y, W$, where $W$ partitions the space into $W_1, \dots, W_{n+1}$. Define $W'$ to agree with $W$ everywhere except when $W = w_{n+1}$; in that case, $W' = w_{n}$. So $W'$ partitions the space into $W_1, \dots, W_n \cup W_{n+1}$. Notice that $W'$ has support of size $n$. Further, since we know that $X_i \ind_{Z_j} Y_k \cap W_n$, and $X_i \ind_{Z_j} Y_k \cap W_{n+1}$, \autoref{lm:CIUnion} shows us that $X_i \ind_{Z_j} (Y_k \cap W_n) \cup (Y_k \cap W_{n+1})$ (and conditional independence holds for the other $W_l$ by assumption). So, we get that $X \ind_Z Y, W'$. Since $|\ran(W')| = n$, we can apply the inductive hypothesis, which gives us that $X \ind_Z Y$, as desired.
\end{proof}

Next up, we verify Weak Union.
\begin{lemma}\label{lm:GraphoidWeakUnion}
    For random variables $X, Y, W, Z$, suppose that $X \ind_Z Y, W$. Then, $X \ind_{Z, W} Y$.
\end{lemma}
\begin{proof}
    First, we use decomposition to get that $X \ind_Z W$. Now, notice that for any $Z_i \cap Y_j \cap W_k \in \nul$, the conclusion is automatically satisfied. So, consider the non-null cases. From the premise, we have that $X_i |Y_j, W_k, Z_l \sim X_i | Z_l$, and from decomposition, we get that $X_i|W_k, Z_l \sim X_i|Z_l$. So, by transitivity we get that $X_i|Y_j, W_k, Z_l \sim X_i|W_k, Z_l$. Since this holds for all $i, j, k, l$, we get that $X \ind_{Z, W} Y$, as desired.
\end{proof}


Next, we verify the Contraction axiom. The proof is essentially the previous proof in reverse.
\begin{lemma}\label{lm:GraphoidContraction}
    For random variables $X, Y, W, Z$, suppose that $X \ind_Z Y$ and $X \ind_{Z, Y} W$. Then, $X \ind_Z Y, W$.
\end{lemma}
\begin{proof}
    For $Z_i \cap Y_j \cap W_k \in \nul$, the conclusion is trivially true. For $Z_i \cap Y_j \cap W_k \notin \nul$, we know that $Z_i \cap Y_j \notin \nul$ as well. Thus, we get that $X_l|Z_i, Y_j \sim X_l|Z_i$ and $X_l|Z_i, Y_j, W_k \sim X_l|Z_i, Y_j$. So by transitivity, we get that $X_l|Z_i \sim X_l|Z_i, Y_j, W_k$, i.e. $X \ind_Z Y, W$ as desired.
\end{proof}





\bibliography{References}{}
\bibliographystyle{plainnat}

\end{document}